\def\[#1\]{\begin{equation}#1\end{equation}}
\def\beq{%
   \relax\ifmmode
      \@badmath
   \else
      \ifvmode
         \nointerlineskip
         \makebox[.6\linewidth]%
      \fi
      $$
   \fi
}
\def\eeq{%
   \relax\ifmmode
      \ifinner
         \@badmath
      \else
         $$
      \fi
   \else
      \@badmath
   \fi
   \ignorespaces
}
\def\enddisplaymath{\eeq\global\@ignoretrue}
\newtheorem{thm}{Theorem}
\newtheorem{cor}[thm]{Corollary}
\newtheorem{lem}[thm]{Lemma}
\newtheorem{prop}[thm]{Proposition}
\theoremstyle{remark}
\newtheorem*{rem}{Remark}
\newtheorem{rems}{Remark}[thm]
\theoremstyle{definition}
\newtheorem{defn}{Definition}
\numberwithin{equation}{section}
\numberwithin{thm}{section}
\numberwithin{eg}{section}
\renewcommand{\P}{\mathbb P}
\DeclareMathOperator{\PGL}{PGL}
\DeclareMathOperator{\Pic}{Pic}
\DeclareMathOperator{\Ext}{Ext}
\DeclareMathOperator{\End}{End}
\DeclareMathOperator{\Hom}{Hom}
\DeclareMathOperator{\Fitt}{Fitt}
\DeclareMathOperator{\rank}{rank}
\DeclareMathOperator{\coker}{coker}
\DeclareMathOperator{\supp}{supp}
\DeclareMathOperator{\tr}{tr}
\DeclareMathOperator{\Tr}{Tr}
\DeclareMathOperator{\Spec}{Spec}
\DeclareMathOperator{\im}{im}
\newcommand{\sO}{\mathcal O}
\newcommand{\sExt}{\mathcal Ext}
\newcommand{\sHom}{\mathcal Hom}
\newcommand{\Spl}{\mathcal Spl}
\newcommand{\Tor}{\mathcal Tor}
\newcommand{\Vect}{\mathcal Vect}
\DeclareMathOperator{\Coh}{Coh}
\newcommand{\dR}{{\bf R}}
\newcommand{\dL}{{\bf L}}
\newcommand{\ratto}{\dashrightarrow}
\begin{document}

\title{Birational morphisms and Poisson moduli spaces}
  \author{
Eric M. Rains\\Department of Mathematics, California
  Institute of Technology}

\date{July 25, 2019}
\maketitle

\begin{abstract}
We study birational morphisms between smooth projective surfaces that
respect a given Poisson structure, with particular attention to induced
birational maps between the (Poisson) moduli spaces of sheaves on those
surfaces.  In particular, to any birational morphism, we associate a
corresponding ``minimal lift'' operation on sheaves of homological
dimension $\le 1$, and study its properties.  In particular, we show that
minimal lift induces a stratification of the moduli space of simple sheaves
on the codomain by open subspaces of the moduli space of simple sheaves
on the domain, compatibly with the induced Poisson structures.

\end{abstract}

\tableofcontents

\section{Introduction}

If $X$ is a smooth projective surface equipped with a Poisson structure,
then the moduli space of sheaves on $X$ inherits a corresponding Poisson
structure (\cite{BottacinF:1995,HurtubiseJC/MarkmanE:2002b}, see below for
precise details and a slight extension), and for any line bundle on $X$,
twisting by that bundle preserves the Poisson structure on the moduli
space.  This fact underlies the construction of the (generalized) Hitchin
integrable systems \cite{HurtubiseJC/MarkmanE:2002a}, which correspond to
those sheaves on $X$ which are the direct images of line bundles on smooth
curves.  If the curve meets the center of the Poisson structure (the curve
where the Poisson structure fails to be symplectic) transversely, we can
blow up a point of intersection, and consider the image of the same line
bundle on the strict transform.  After finitely many such steps, one
obtains a sheaf supported entirely on the open symplectic leaf of the
surface, and the corresponding neighborhood of the moduli space is itself
symplectic.  In particular, one in this way obtains a large group acting as
symplectomorphisms of a symplectic space, a discrete integrable system.

Recent developments \cite{P2Painleve} suggest that there should be an
analogue of this picture for noncommutative surfaces, in which the
integrable system is deformed to a generalized discrete Painlev\'e
equation.  The largest conceptual difficulty here is that direct images of
line bundles are quite rare in the noncommutative picture, so we no longer
have such a natural way to lift sheaves through a birational morphism.  For
that matter, even in the commutative setting, while being a direct image of
a line bundle on a smooth curve is an open condition, the standard
construction gives little insight into what happens outside this open
subspace.

The objective of the present note is to give an alternate description for
this lifting operation that immediately extends to arbitrary sheaves of
homological dimension $\le 1$.  Although this does not quite give a morphism of
moduli spaces (as it fails to preserve flatness of a family of sheaves), we
do obtain a natural stratification of the ``downstairs'' moduli space such
that the lifting operation is a morphism on each stratum, with open image.
Moreover, in the Poisson case, the strata are all Poisson subspaces, and
all morphisms are compatible with the Poisson structure.

The main ingredient in the construction is the observation that there are
two other natural ways to lift a sheaf of homological dimension $\le 1$ through a
birational morphism.  The first is the usual inverse image: although this
is larger than we want in the curve case, it is still a very well behaved
functor.  In particular, sheaves of homological dimension $\le 1$ on a surface
are acyclic for the inverse image, and the direct image of the inverse
image is naturally isomorphic to the original sheaf.  The other natural
lifting operation is slightly less obvious: it turns out that in the case
of a birational morphism of smooth surfaces, the exceptional inverse image
functor, normally only a functor on the derived categories, is itself a
left-derived functor, and the resulting functor $\pi^!$ has the same nice
properties as $\pi^*$.  Adjointness then gives a natural transformation
$\pi^*\to \pi^!$, and the lifting operation we want is the image of this
natural transformation.  (This can also be characterized as the minimal
sheaf on the blown up surface which is acyclic and has the correct direct
image.)

As part of our investigations of this ``minimal lift'' operation, we need
to understand the ``invisible'' sheaves, those with trivial direct image
and higher direct images, as these are precisely the sub- and quotient
sheaves that have no effect on the direct image.  It turns out that the
invisible sheaves form an abelian subcategory of the category of
(coherent) sheaves on the domain, equivalent to the category of
(finitely generated) modules on a certain ring (a finite $k$-algebra).
The kernel and cokernel of the natural map $\pi^*M\to\pi^!M$ are
invisible, and a number of questions about the minimal lift reduce to
questions about these sheaves.  (Conversely, the equivalence of the
category of invisible sheaves to a module category relies on a
construction of projective objects as kernels of maps $\pi^*M\to \pi^!M$.)

The plan of the paper is as follows.  First, we remind the reader of what a
Poisson structure means in the algebraic setting (including in finite
characteristic), and give a classification of (smooth, projective) Poisson
surfaces up to birational equivalence respecting the Poisson structure.  We
find that every non-symplectic Poisson surface is birational to a Poisson
ruled surface $\P({\cal O}_C\oplus \omega_C)$ where $C$ is a smooth curve,
and the center of the Poisson structure is disjoint from the given section
of the ruling.  When $g(C)\ge 1$, these are precisely the surfaces that
arise in generalized Hitchin systems; in the rational case, there are some
additional cases not yet covered in the literature, which we will consider
in more detail in \cite{rat_Hitchin}.  Of course, the rational case is
particularly interesting when it comes to birational maps, since only in
that case do we obtain birational maps that do not preserve the ruling.

Next, we consider some technicalities involving the precise moduli space we
will be using.  Since in general the notion of stability requires a choice
of ample bundle, it can be awkward to compare stability conditions on
either side of a morphism; in addition, semistable points can interact
badly with the Poisson structure.  If we allow the moduli space to become
slightly more complicated, we can relax ``stable'' to the more intrinsic
``simple'', i.e., having no nonscalar endomorphisms.  Although this space
is no longer a scheme, we can still make sense of what it means for it to
be a Poisson structure.  The arguments in the literature mostly carry over
to show that this is Poisson, and covered by smooth symplectic leaves; we
use a reduction of \cite{HurtubiseJC/MarkmanE:2002b} to the case of vector
bundles, and give a new proof in that case that is somewhat simpler and
extends with little difficulty to arbitrary characteristic.

We next consider how the two inverse image functors behave on sheaves of
homological dimension $\le 1$, allowing us to define the minimal lift of
such a sheaf.  We then investigate invisible sheaves, as indicated above.
Following that, we prove our main result on the interaction between the
minimal lift and the Poisson structure, and also investigate how certain
natural ``direct image of twist of minimal lift'' operations interact with
the Poisson structure.  We conclude by considering as an application the
question of rigidity: which sheaves on a Poisson surface are the unique
point of their symplectic leaf?  In the case of sheaves with
$1$-dimensional support, we show that the rigid sheaves are just the line
bundles on $-2$-curves of blowups.  This is again mainly a feature of the
rational surface case, and will be investigated further in
\cite{rat_Hitchin}.

{\bf Acknowledgements}. The author would like to thank P. Etingof,
T. Graber, and A. Okounkov for helpful conversations.  This work was
partially supported by a grant from the National Science Foundation,
DMS-1001645.

\section{Poisson surfaces}

Recall that a Poisson algebra over a commutative ring $R$ is a commutative
$R$-algebra $A$ equipped with an $R$-bilinear map $\{,\!\}:A\times A\to A$
which is a biderivation (a derivation in each variable) and satisfies the
identities
\[
\{f,f\}=0,\qquad
\{f,\{g,h\}\}+\{g,\{h,f\}\}+\{h,\{f,g\}\}=0
\]
for all elements $f$, $g$, $h$.  (Note that we make no assumption about
characteristic here.)  By universality of K\"ahler differentials, to
specify an alternating biderivation, it is equivalent to specify a
homomorphism
\[
\wedge^2\Omega_{A/R}\to A.
\]
of $A$-modules.  Moreover, if $\{,\!\}$ is an alternating biderivation, then
the left-hand side of the Jacobi identity is an alternating triderivation,
which is required to vanish.  A map $\pi:A\to B$ of Poisson $R$-algebras is
called a Poisson map if
\[
\pi(\{f,g\}) = \{\pi(f),\pi(g)\};
\]
we will also need to consider {\em anti-Poisson} maps, such that
\[
\pi(\{f,g\}) = -\{\pi(f),\pi(g)\}.
\]

These notions localize in a natural way, so one may thus define a Poisson
structure on a scheme $X/S$ to be a morphism
\[
\alpha:\wedge^2\Omega_{X/S}\to \sO_X
\]
of $\sO_X$-modules such that the associated map
\[
\wedge^3\Omega_{X/S}\to \sO_X
\]
(defined locally in the obvious way) is 0.  Note that this induces a
Poisson algebra structure on $\Gamma(U,\sO_X)$ for all open subsets $U$.

One can define Poisson and anti-Poisson morphisms of schemes accordingly.
In scheme-theoretic terms, a Poisson morphism $(X,\alpha_X)\to (Y,\alpha_Y)$ is
a morphism $f:X\to Y$ such that the diagram
\[
\begin{CD}
f^*\wedge^2\Omega_{Y/S}@>f^*\alpha_Y>> f^*\sO_Y\\
   @VVV @| \\
\wedge^2\Omega_{X/S}   @>\alpha_X>> \sO_X
\end{CD}
\]
commutes, and similarly, with appropriate sign changes, for anti-Poisson
morphisms.  Note that in general the identity morphism gives an
anti-Poisson morphism $(X,\alpha)\cong (X,-\alpha)$.  (We will also have
occasion to use the obvious analogues for a general biderivation.)

\begin{prop}
  Let $X$ be a Poisson scheme, and suppose $f:X\to Y$ is a morphism such
  that the natural map $f_*\sO_X\to \sO_Y$ is an isomorphism.
  Then there is a unique Poisson structure on $Y$ making $f$ Poisson.
\end{prop}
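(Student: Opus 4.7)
The plan is to transport the Poisson bracket from $X$ to $Y$ along the isomorphism $f^\#: \sO_Y \xrightarrow{\sim} f_*\sO_X$. Concretely, for each open $U \subseteq Y$ and each pair $a, b \in \sO_Y(U) \cong \sO_X(f^{-1}(U))$, define
\[
\{a, b\}_Y := (f^\#)^{-1}\bigl(\{f^\#(a), f^\#(b)\}_X\bigr),
\]
where $\{-, -\}_X$ is the bracket on $\sO_X(f^{-1}(U))$ coming from $\alpha_X$. Since the Poisson bracket on $X$ is compatible with restriction to open subsets and $f^\#$ is natural in $U$, these brackets glue into an $\sO_S$-bilinear operation on $\sO_Y$.

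Next I would check that $\{,\!\}_Y$ inherits all the axioms tautologically from $\{,\!\}_X$: it is alternating and a biderivation because $f^\#$ is a ring homomorphism preserving the Leibniz rule and antisymmetry; and it satisfies the Jacobi identity since the Jacobi identity involves only bracketing and ring operations, all preserved under $f^\#$. By the universal property of K\"ahler differentials recalled at the start of the section, the alternating biderivation $\{,\!\}_Y$ corresponds to a unique $\sO_Y$-module morphism $\alpha_Y: \wedge^2\Omega_{Y/S} \to \sO_Y$, and the vanishing of the associated triderivation $\wedge^3\Omega_{Y/S} \to \sO_Y$ is precisely the Jacobi identity.

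To see that $f$ is Poisson with respect to $\alpha_Y$, note that the commutativity of the square in the definition of a Poisson morphism is equivalent, by the universal property applied to both sides, to the identity $f^\#(\{a, b\}_Y) = \{f^\#(a), f^\#(b)\}_X$, which holds by the very definition of $\{,\!\}_Y$. Uniqueness is then immediate: any other Poisson structure $\alpha'_Y$ making $f$ Poisson would satisfy the same identity, which together with $f^\#$ being an isomorphism pins down $\{,\!\}_Y$ and hence $\alpha'_Y$.

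I don't anticipate any real obstacle, since the argument is essentially transport of structure along a ring isomorphism; the only mild care required is in verifying that the bracket-level construction really does yield a morphism of $\sO_Y$-modules out of $\wedge^2\Omega_{Y/S}$ (rather than merely a sheaf-of-sets operation), but this is handled cleanly by the universal property.
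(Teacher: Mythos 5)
Your proof is correct and follows essentially the same route as the paper: the paper obtains $\alpha_Y$ by pushing the composite $f^*\wedge^2\Omega_{Y/S}\to\wedge^2\Omega_{X/S}\to\sO_X$ forward through the adjunction $\wedge^2\Omega_{Y/S}\to f_*\sO_X\cong\sO_Y$ and then checks the Jacobi identity by reducing to the affine case, which is exactly your observation that the bracket on $\Gamma(U,\sO_Y)\cong\Gamma(f^{-1}U,\sO_X)$ is transported along the ring isomorphism $f^\#$. You present it section-wise and invoke the universal property of K\"ahler differentials at the end rather than starting from the adjunction, but the content is the same.
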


\begin{proof}
  The composition
\[
f^*\wedge^2\Omega_{Y/S}\to \wedge^2\Omega_{X/S}\to \sO_X
\]
induces by adjointness a map
\[
\wedge^2 \Omega_{Y/S}\to f_*\sO_X\cong \sO_Y,
\]
i.e. an alternating biderivation.  To check that this satisfies the Jacobi
identity, we may reduce to the case $S=\Spec(R)$, $Y=\Spec(A)$ for an
$R$-algebra $A$, in which case this biderivation is just the induced
biderivation on $\Gamma(X,\sO_X)\cong A$.  But this is clearly a Poisson
bracket.
\end{proof}

\begin{rem}
Examples include a projective birational morphism $\pi:X\to Y$ with $X$
integral and $Y$ normal (and locally Noetherian), as well as the morphism
$\rho:X\to C$ associated to a rationally ruled surface.  (Of course, in the
latter case, the induced Poisson structure is trivial!)
\end{rem}

\begin{prop}
  Let $Y$ be a Poisson scheme, and suppose $f:X\to Y$ is a morphism such
  that $f^*\Omega_{Y/S}\to \Omega_{X/S}$ is an isomorphism (e.g. if $f$ is
  \'etale).  Then there is a unique Poisson
  structure on $X$ making $f$ Poisson.
\end{prop}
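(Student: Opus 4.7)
The plan is to mimic the previous proposition, using pullback instead of adjunction. Specifically, the hypothesis that $f^*\Omega_{Y/S}\to \Omega_{X/S}$ is an isomorphism gives, by functoriality of the exterior power, an isomorphism
\[
f^*\wedge^2\Omega_{Y/S}\cong \wedge^2\Omega_{X/S},
\]
so we may define $\alpha_X$ to be the composition
\[
\wedge^2\Omega_{X/S}\cong f^*\wedge^2\Omega_{Y/S}\xrightarrow{f^*\alpha_Y} f^*\sO_Y\to \sO_X.
\]
This is an alternating biderivation (pullback sends such to such), and commutativity of the defining diagram for a Poisson morphism is automatic by construction, so all that remains is to verify the Jacobi identity and uniqueness.

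For the Jacobi identity, I would reduce to the affine case $Y=\Spec(A)$, $X=\Spec(B)$, where the hypothesis becomes $\Omega_{B/A}=0$. The derived Jacobiator $\mathrm{Jac}_B\colon B^3\to B$ is $B$-trilinear and alternating (a standard calculation valid in any characteristic), so it factors through a $B$-linear map $\wedge^3\Omega_{B/R}\to B$. On triples of the form $(\varphi(a_1),\varphi(a_2),\varphi(a_3))$ with $a_i\in A$ (where $\varphi\colon A\to B$ is the structure map), the pullback construction of $\alpha_B$ together with the fact that $\varphi$ is a ring homomorphism forces $\mathrm{Jac}_B(\varphi(a_1),\varphi(a_2),\varphi(a_3))=\varphi(\mathrm{Jac}_A(a_1,a_2,a_3))=0$. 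But the hypothesis $\Omega_{B/A}=0$ implies that $\Omega_{B/R}$ is generated as a $B$-module by elements of the form $d\varphi(a)$, $a\in A$, so $\wedge^3\Omega_{B/R}$ is generated by $d\varphi(a_1)\wedge d\varphi(a_2)\wedge d\varphi(a_3)$; vanishing on these generators therefore forces $\mathrm{Jac}_B$ to vanish identically.

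For uniqueness, any Poisson structure $\alpha_X'$ making $f$ Poisson must fit into the commuting square of the definition; since the vertical map $f^*\wedge^2\Omega_{Y/S}\to \wedge^2\Omega_{X/S}$ is an isomorphism, this square determines $\alpha_X'$ and forces $\alpha_X'=\alpha_X$.

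The only genuinely delicate step is the Jacobi verification, where the essential input is that ``$\Omega_{B/A}=0$ implies $\Omega_{B/R}$ is generated by pulled-back differentials''; once that is in hand, the tensorial character of the Jacobiator does the rest. Note that étaleness was mentioned only as a typical example: the argument uses only the stated condition on relative differentials, so it applies equally to, say, open immersions or the morphism from $X$ to the quotient by a free group action.
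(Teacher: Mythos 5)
Your proof is correct and follows essentially the same strategy as the paper's: define $\alpha_X$ by transporting $\alpha_Y$ across the isomorphism on $\wedge^2\Omega$, observe alternation is automatic, and kill the Jacobiator by noting it factors through $\wedge^3\Omega_{X/S}$, which is generated by pullbacks of differentials from $Y$. The paper compresses this into the remark that a (multi-)derivation vanishes iff it vanishes on functions pulled back from $Y$; your affine unwinding via the surjectivity $\Omega_{B/A}=0$ (which, as you should note, is a \emph{consequence} of, not equivalent to, the stated hypothesis, though surjectivity is all the Jacobi step needs) makes the same point with more detail.
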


\begin{proof}
Clearly, the biderivation on $X$ must be
\[
\wedge^2 \Omega_{X/S}\to \wedge^2\pi^*\Omega_{Y/S}\to \pi^*\sO_Y\cong \sO_X,
\]
which is manifestly alternating.  The isomorphism on differentials in
particular implies that a derivation vanishes iff it vanishes on functions
pulled back from $Y$ (i.e., on pullbacks of sections of $Y$ on open subsets
$U$).  In particular, it follows that the Jacobi triderivation vanishes as
required.
\end{proof}

A {\em Poisson subscheme} of a Poisson scheme $(X/S,\alpha)$ is a locally
closed subscheme $Y/S\subset X/S$ which locally satisfies $\{f,{\cal
  I}_Y\}\subset {\cal I}_Y$ where ${\cal I}_Y$ is the ideal sheaf of $Y$
and $f$ is any local section of $\sO_X$.  In particular, we immediately
obtain an induced Poisson structure on $Y$ such that the corresponding
inclusion morphism is Poisson, and such a structure exists iff $Y$ is a
Poisson subscheme.

An important class of Poisson schemes are the symplectic schemes.  A {\em
  symplectic scheme} is a pair $(X,\omega)$ such that $X/S$ is smooth and
$\omega\in \wedge^2\Omega_{X/S}$ is a closed $2$-form that induces an
isomorphism $\Omega_{X/S}\cong T_{X/S}$.  Call $(X,\omega)$ {\em
  presymplectic} if $\omega$ is nondegenerate, but not necessarily closed.
Then $\omega$ induces a biderivation
\[
\alpha:\Omega_{X/S}\otimes \Omega_{X/S}\xrightarrow{1\otimes\omega}
\Omega_{X/S}\otimes T_{X/S}\to \sO_X,
\]
which, as the inverse of an alternating form, is alternating.  Conversely,
if $\alpha$ is an alternating biderivation which is a nonsingular alternating
form on $\Omega_{X/S}$ at every point, then it determines a presymplectic
structure on $X$.

\begin{prop}
Suppose $(X,\omega)$ is a presymplectic scheme.  Then $(X,\omega)$ is
symplectic iff the corresponding biderivation is Poisson.
\end{prop}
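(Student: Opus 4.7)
The plan is to reduce the equivalence to a pointwise, Zariski-local identity relating $d\omega$ to the Jacobiator of the biderivation $\alpha$ determined by $\omega$. The nondegeneracy of $\omega$ provides a musical isomorphism $T_{X/S}\cong \Omega_{X/S}$, under which each local section $f$ of $\sO_X$ yields a Hamiltonian derivation $X_f:=\alpha^\sharp(df)$ characterized by $X_f(g)=\{f,g\}$, equivalently by $\omega(X_f,Y)=-Y(f)$ for any local derivation $Y$.

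The key computation is the identity
\[
d\omega(X_f,X_g,X_h)=-\bigl(\{f,\{g,h\}\}+\{g,\{h,f\}\}+\{h,\{f,g\}\}\bigr),
\]
which I would verify by a direct application of the Cartan formula for the exterior derivative of a $2$-form: expanding $d\omega(X_f,X_g,X_h)$ as the alternating sum of three ``$X\omega(Y,Z)$'' terms and three ``$\omega([X,Y],Z)$'' terms, substituting $\omega(X_g,X_h)=\{g,h\}$ and $\omega([X_f,X_g],X_h)=[X_f,X_g](h)=\{f,\{g,h\}\}-\{g,\{f,h\}\}$ (together with its cyclic variants), and collecting terms using antisymmetry of the bracket, yields the displayed formula. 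Because the Cartan formula is a purely algebraic identity among derivations with no denominators, this step is valid in arbitrary characteristic.

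Given the identity, both directions follow by duality via $\omega$. If $\alpha$ is Poisson, then the right-hand side vanishes, so $d\omega$ kills every triple of Hamiltonian vector fields; in a local trivialization where $\Omega_{X/S}$ is free on $dx_1,\dots,dx_n$, nondegeneracy of $\omega$ makes $X_{x_1},\dots,X_{x_n}$ a local frame of $T_{X/S}$, and the perfect pairing $\wedge^3\Omega_{X/S}\times \wedge^3 T_{X/S}\to \sO_X$ then forces $d\omega=0$. Conversely, if $d\omega=0$, the identity says that the Jacobi triderivation of $\alpha$ vanishes on every triple of exact $1$-forms $df,dg,dh$; since $\Omega_{X/S}$ is locally generated by such exact differentials, and this triderivation corresponds by the universal property of K\"ahler differentials to an $\sO_X$-linear map $\wedge^3\Omega_{X/S}\to \sO_X$, it must vanish identically, so $\alpha$ is Poisson.

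The main obstacle is almost entirely cosmetic: one must execute the Cartan expansion carefully, making sure that no sign is dropped and that no characteristic-dependent constant (such as $\tfrac12$) is quietly introduced when rewriting cyclic sums. Once the identity is in place, the scheme-theoretic bookkeeping---that both $d\omega$ and the Jacobi triderivation are tensorial objects determined by their values on a local frame of exact differentials---is routine and uses nothing beyond the universal properties already invoked in the earlier definitions of this section.
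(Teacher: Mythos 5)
Your proof is correct and follows essentially the same route as the paper's: both identify $d\omega$ with the Jacobi triderivation under the $\omega$-induced duality $\wedge^3\Omega_{X/S}\cong\Hom(\wedge^3\Omega_{X/S},\sO_X)$, with the Hamiltonian vector fields $X_{x_i}$ of a local coordinate system supplying the frame needed to pass between the two. The only difference is one of presentation: the paper simply asserts the correspondence ``just as in characteristic $0$,'' while you make it explicit via the Cartan formula and note that no characteristic-dependent constants appear, which is a reasonable amount of extra detail to include.
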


\begin{proof}
Indeed, just as in characteristic 0, $\omega$ induces an isomorphism
\[
\wedge^k\Omega_{X/S}
\cong
\wedge^k\Hom(\Omega_{X/S},\sO_X)
\cong
\Hom(\wedge^k\Omega_{X/S},\sO_X)
\]
for any $k$, which for $k=3$ takes $d\omega$ to the triderivation coming from
the Jacobi identity.
\end{proof}
%

\begin{rem}
  In characteristic 0, it is well-known that a Poisson subscheme of a
  symplectic scheme is necessarily open.  (An ideal of a symplectic local
  ring is Poisson iff it is preserved by all derivations, but this is
  impossible in characteristic 0 for a proper, nontrivial ideal.)  This
  fails in finite characteristic; indeed, in characteristic $p$, the
  principal ideal generated by a $p$-th power will always be Poisson.
\end{rem}

By a {\em Poisson surface}, we will mean a pair $(X,\alpha)$ where $X/S$ is a
projective surface with geometrically smooth and irreducible fibers, and
$\alpha$ is a Poisson structure on $X$ over $S$ which is not identically 0 on
any fiber.  Note that since $X$ is a surface,
\[
\wedge^2\Omega_{X/S}\cong \omega_{X/S}
\]
and
\[
\wedge^3\Omega_{X/S}=0,
\]
and thus any section of the anticanonical bundle $\omega_{X/S}^{-1}$
determines a Poisson structure on $X/S$.  Thus a nontrivial Poisson
structure is determined up to a scalar by the divisor of the corresponding
section of $\omega_{X/S}^{-1}$, the {\em associated anticanonical curve},
on the complement of which the surface is symplectic.  The scalar can
itself be identified: if $C_\alpha$ denotes the associated anticanonical
curve, then the short exact sequence (which depends on $\alpha$ only via
$C_\alpha$) 
\[
0\to \omega_X\to \omega_X(C_\alpha)\to \omega_{C_\alpha}\to 0
\]
induces an injection $H^0(\omega_X(C_\alpha))\to H^0(\omega_{C_\alpha})$.
A Poisson structure with associated anticanonical curve $C_\alpha$ is a
nonzero global section of $\omega_X^{-1}(-C)$, so an isomorphism
$\sO_X\cong \omega_X^{-1}(-C)$.  The inverse of this isomorphism determines
a nonzero global section of $\omega_X(C_\alpha)$, and thus a
nonzero holomorphic differential on $C_\alpha$ in the kernel of the
connecting map $H^0(\omega_{C_\alpha})\to H^1(\omega_X)$.  Conversely, any
nonzero element of that kernel determines a nonzero global section
$\omega_X(C_\alpha)$, the inverse of which is a Poisson
structure with associated anticanonical curve $C_\alpha$.  Note that the
differential on $C_\alpha$ scales inversely with the Poisson structure.

As observed above, one can push Poisson structures forward through proper
birational morphisms; in the case of Poisson surfaces, one can be fairly
precise about lifting as well.  Since everything is local on the base, we
will assume $S=\Spec(\bar{k})$ for some algebraically closed field
$\bar{k}$.  Note that if $\pi:X\to Y$ is a birational morphism of smooth
projective surfaces over $\bar{k}$, then there is a (unique) effective
divisor $e_\pi$ supported on the exceptional locus such that $K_X\sim
\pi^*K_Y + e_\pi$.

\begin{lem}
  Let $(Y,\alpha)$ be a Poisson surface over $\bar{k}$, with associated
  anticanonical curve $C$, and suppose $\pi:X\to Y$ is a birational
  morphism.  Then there is a (unique) Poisson structure on $X$ compatible
  with that of $Y$ iff the divisor $\pi^*C-e_\pi$ is effective.
\end{lem}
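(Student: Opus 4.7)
My plan is to translate the lifting question into a statement about sections of the anticanonical bundle and then reduce it to a comparison of divisors. Since $X$ is a smooth surface, $\wedge^3\Omega_{X/\bar{k}}=0$, so the Jacobi identity is automatic and a Poisson structure on $X$ is nothing more than a global section $\tilde\alpha$ of $\omega_X^{-1}$, viewed as a map $\omega_X\to\sO_X$. Similarly, $\alpha:\omega_Y\to\sO_Y$ is a section of $\omega_Y^{-1}$ whose divisor is $C$. The condition that $\pi$ be Poisson then reads
\[
\tilde\alpha\circ \iota = \pi^*\alpha,
\]
where $\iota:\pi^*\omega_Y\to\omega_X$ is the natural map on $2$-forms coming from $\pi^*\Omega_Y\to\Omega_X$.

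Next, I would use the formula $K_X\sim \pi^*K_Y+e_\pi$ to identify $\iota$ explicitly: since $\pi^*\omega_Y\cong \omega_X(-e_\pi)$, the map $\iota$ is the canonical inclusion $\omega_X(-e_\pi)\hookrightarrow\omega_X$, with cokernel supported on $e_\pi$. Taking $\sHom(-,\sO_X)$ identifies $\omega_X^{-1}$ with the subsheaf $\pi^*\omega_Y^{-1}(-e_\pi)\subset \pi^*\omega_Y^{-1}$. Under these identifications, the compatibility condition says precisely that $\tilde\alpha$ is the (unique) element of $H^0(\pi^*\omega_Y^{-1}(-e_\pi))$ whose image in $H^0(\pi^*\omega_Y^{-1})$ equals $\pi^*\alpha$.

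Such a lift therefore exists if and only if $\pi^*\alpha$, viewed as a section of $\pi^*\omega_Y^{-1}$, vanishes on $e_\pi$. Since $\alpha$ has divisor $C$, its pullback $\pi^*\alpha$ has divisor $\pi^*C$, so this vanishing is equivalent to $\pi^*C\ge e_\pi$, i.e., to $\pi^*C-e_\pi$ being effective. Uniqueness is then immediate from the injectivity of $H^0(\pi^*\omega_Y^{-1}(-e_\pi))\hookrightarrow H^0(\pi^*\omega_Y^{-1})$. There is no real obstacle here; the only point requiring care is keeping track of the direction of $\iota$ and the resulting realization of $\omega_X^{-1}$ as a subsheaf of $\pi^*\omega_Y^{-1}$, after which the argument is pure bookkeeping.
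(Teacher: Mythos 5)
Your proof is correct and is essentially the same argument as the paper's, but presented at a lower level: you unwind the Poisson-morphism compatibility diagram into the concrete lifting problem for sections of anticanonical bundles, whereas the paper works at the level of anticanonical \emph{divisors} and then normalizes a scalar at the end. Specifically, the paper observes that any compatible Poisson structure on $X$ must have anticanonical curve $\pi^*C-e_\pi$ (the unique anticanonical divisor agreeing with $C$ off the exceptional locus), deduces effectiveness, and then argues that effectiveness determines a Poisson structure up to scalar which can be matched to $\alpha$ at the generic point. Your version avoids the ``up to scalar'' detour entirely by pinning down the lift from the start: the identification $\pi^*\omega_Y\cong\omega_X(-e_\pi)$ turns compatibility into the condition that $\pi^*\alpha\in H^0(\pi^*\omega_Y^{-1})$ lie in the image of $H^0(\omega_X^{-1})$, i.e., that $\mathrm{div}(\pi^*\alpha)=\pi^*C\ge e_\pi$; uniqueness is then immediate. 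The trade-off is purely expository --- the paper's phrasing foregrounds the anticanonical curve as the key invariant, while yours makes the bookkeeping explicit and shows more directly why the lift, when it exists, is canonical rather than canonical-up-to-scalar. One small point worth including for completeness: the resulting $\alpha_X$ is automatically nonzero (since $\pi^*\alpha\neq 0$), so $(X,\alpha_X)$ really is a Poisson surface in the paper's sense; you leave this implicit, as does the paper.
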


\begin{proof}
  Indeed, $\pi^*C-e_\pi$ is the unique anticanonical divisor agreeing with
  $C$ where $\pi$ is an isomorphism, so if there is a Poisson structure on
  $X$, its associated anticanonical curve must be $\pi^*C-e_\pi$.  In
  particular, $\pi^*C-e_\pi$ must be effective.  If it is, then it
  determines a Poisson structure up to scalar multiplication, and that
  Poisson structure agrees with $\alpha$ (again up to scalar multiplication)
  where $\pi$ is an isomorphism.  We may thus eliminate the scalar freedom
  and obtain a Poisson structure agreeing with $\alpha$ at the generic point
  of $X$.
\end{proof}

\begin{prop}
  Let $(X,\alpha_X)$, $(Y,\alpha_Y)$ be Poisson surfaces over $\bar{k}$, and
  suppose $f:X\to Y$ is a rational map which is Poisson at the generic
  points of $X$ and $Y$.  Then there exists a Poisson surface $Z$ and
  birational Poisson morphisms $g:Z\to X$, $h:Z\to Y$ such that $f = h
  \circ g^{-1}$.
\end{prop}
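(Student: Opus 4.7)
The plan is to construct $Z$ as a minimal common resolution of $f$ on which the pullback rational Poisson structure extends regularly. Start with any common resolution of $f$: a smooth projective surface $Z_0$ with birational morphisms $g_0:Z_0\to X$ and $h_0:Z_0\to Y$ satisfying $f = h_0 \circ g_0^{-1}$, obtained, e.g., by resolving the singularities of the closure of the graph of $f$ in $X\times Y$. Since $f$ is Poisson at the generic point, the two pullbacks $g_0^*\alpha_X$ and $h_0^*\alpha_Y$ coincide as rational sections $\beta$ of $\omega_{Z_0}^{-1}$, with divisor
\[
\mathrm{div}(\beta) \;=\; g_0^*C_X - e_{g_0} \;=\; h_0^*C_Y - e_{h_0}.
\]
By the previous lemma, $\beta$ extends to a regular Poisson structure on $Z_0$ (automatically making $g_0,h_0$ Poisson) precisely when $\mathrm{div}(\beta)\ge 0$.

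A first observation is that the negative part of $\mathrm{div}(\beta)$ is supported in $e_{g_0}\cap e_{h_0}$: any component $F$ with negative coefficient must lie in $e_{g_0}$ (since $g_0^*C_X$ is effective) and, by the parallel expression, also in $e_{h_0}$. To shrink this negative part, iteratively contract any $(-1)$-curve $F$ lying in $e_{g_0}\cap e_{h_0}$: such $F$ is contracted to smooth points by both $g_0$ and $h_0$, hence is a $(-1)$-curve of $Z_0$ contractible by Castelnuovo, and the contraction yields a smaller smooth common resolution through which both morphisms factor. Since the Picard number strictly decreases, this process terminates, yielding a minimal common resolution $g:Z\to X$, $h:Z\to Y$ with no $(-1)$-curve in $e_g\cap e_h$.

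The main step is to show $\mathrm{div}(\beta)\ge 0$ on this minimal $Z$. Suppose for contradiction that some irreducible component $F\subset e_g\cap e_h$ has negative coefficient; by minimality $F^2\le -2$. Both the $g$-exceptional fiber through $F$ and the $h$-exceptional fiber through $F$ are smoothly contractible, hence each contains at least one $(-1)$-curve in $Z$. The goal is to argue that, after taking Poisson-compatibility into account, one such $(-1)$-curve must lie in both $e_g$ and $e_h$, contradicting minimality: to contract a $(-n)$-curve with $n\ge 2$ in a smooth decomposition one must first contract certain adjacent $(-1)$-curves, and the Poisson identity matches the multiplicities of $C_X$ and $C_Y$ at the two indeterminacy centers $g(F)$ and $h(F)$ in a way that forces the ``first'' $(-1)$-curve of each fiber to in fact be $g$- and $h$-exceptional. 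Granted this, $\mathrm{div}(\beta)\ge 0$ and the previous lemma produces $\alpha_Z$ on $Z$ making $g,h$ Poisson morphisms, so $f = h\circ g^{-1}$ is the desired factorization.

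The main obstacle is the tree-combinatorial analysis in the last step: one must rule out the possibility that the $(-1)$-curves appearing in the decomposition of the $h$-fiber are $h$-exceptional but not $g$-exceptional (and vice versa), and this rests on comparing the two descriptions of $\mathrm{div}(\beta)$ at $F$ via the iterated-blowup discrepancies. An alternative route, which avoids this combinatorics entirely, is to prove the equivalent Poisson-local statement that every indeterminacy point of $f$ lies on $C_X$ (by a direct local-coordinate computation showing that otherwise $\pi^*\alpha_X$ on the blowup has a pole of order one along the exceptional divisor which the Poisson identity cannot match); then iterated blowups at indeterminacy points resolve $f$ while the Poisson structure lifts at each step by the previous lemma.
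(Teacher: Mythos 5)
Your setup is correct and in fact mirrors the paper's: you take $Z$ to be the minimal common resolution (no $(-1)$-curve in $e_g\cap e_h$), you correctly identify the rational section $\beta$ of $\omega_Z^{-1}$ with divisor $g^*C_X-e_g=h^*C_Y-e_h$, and you correctly observe that any negatively-weighted component must lie in $e_g\cap e_h$. But the proof stops exactly where the real work begins. The sentence ``the goal is to argue that \dots one such $(-1)$-curve must lie in both $e_g$ and $e_h$'' followed by ``granted this'' is a statement of what you need, not an argument for it; and the proposed ``alternative route'' has the same problem. The claim that every indeterminacy point of $f$ lies on $C_X$ is (after iteration) essentially equivalent to the proposition itself, and the suggested ``local-coordinate computation'' showing that ``the Poisson identity cannot match'' does not obviously produce a contradiction: if the last $g$-blowup is centered off $C_X$, then $\beta$ has a pole of order one along the resulting $(-1)$-curve $F$, which forces $F$ to lie in $e_h$ with the matching discrepancy---but at that point you are back to the combinatorial comparison of the two exceptional trees, which is the gap you started with.

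The paper closes this gap with a short reordering trick that sidesteps the combinatorics entirely. Factor $g$ and $h$ into monoidal transformations, and commute all blowups whose centers lie on the (evolving) anticanonical curve to the front. (Such a blowup can never be forced to come after a non-anticanonical one: once you blow up a point off the anticanonical divisor, there is no longer an effective anticanonical divisor on which a later center could lie.) The front parts Poisson-lift by the Lemma, so one may replace $X,Y$ by the intermediate surfaces $X',Y'$ and assume the remaining blowups miss the anticanonical curves entirely. Then $f':X'\ratto Y'$ is already an isomorphism on a neighborhood of the anticanonical curves, hence $g^*C_{X'}=h^*C_{Y'}$, hence $e_g=e_h$; and if $g$ is not an isomorphism, $e_g=e_h$ contains a $(-1)$-curve contracted by both, contradicting minimality of $Z$. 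You should incorporate some version of this reordering (or an honest induction on the number of blowups) to make the argument complete.
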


\begin{proof}
  Let $f=h\circ g^{-1}$ be the minimal factorization (i.e., $Z$ is the
  minimal desingularization of the graph of $f$); we need to show that $Z$
  has a compatible Poisson structure.  We can factor each of $g$ and $h$ as
  a product of monoidal transformations, and we can arrange that those
  monoidal transformations centered in points of the anticanonical curves
  come first.  We thus reduce to the case that $X$ and $Y$ are isomorphic
  on some neighborhoods of their respective anticanonical curves, and wish
  to show that $f$ is an isomorphism.  Indeed, otherwise, we have an
  identity
\[
g^*C_X-e_g = h^*C_Y-e_h
\]
of divisors on $Z$.  Since $f$ is an isomorphism on neighborhoods of the
anticanonical curves, $g^*C_X=h^*C_Y$, and thus $e_g=e_h$.  If $g$ is not
an isomorphism, then $e_g$ contains some $-1$ curve, which must be
contracted by $h$, contradicting minimality of $Z$.
\end{proof}

We in particular have a well-behaved notion of a Poisson birational map
between Poisson surfaces.  Since our objective in this note is to
understand how moduli spaces of sheaves on Poisson surfaces are affected by
Poisson birational maps, it will be convenient to give a classification of
Poisson surfaces up to Poisson birational equivalence.  (Compare
\cite{BottacinF:1995,BartocciC/MacriE:2005}, which consider minimal Poisson
surfaces, but without considering birational maps between them.)  Call a
Poisson surface {\em standard} if it is isomorphic as an algebraic surface
to the ruled surface $\P(\sO_C\oplus \omega_C)$ for some smooth curve $C$,
in such a way that the morphism $\sO_C\oplus \omega_C\to \omega_C$
determines a section disjoint from the anticanonical curve.

There is also an odd case in characteristic 2.  Let $C$ be a smooth genus 1
curve, and let $X$ be the ruled surface corresponding to the nontrivial
self-extension of $\sO_C$.  In characteristic different from 2, such a
surface has a unique anticanonical curve, but in characteristic 2, there is
actually an anticanonical pencil.  Geometrically, we may take $C$ to be in
Weierstrass form $y^2+a_1xy+a_3y = x^3+a_2x^2+a_4x+a_6$, and the vector
bundle to have transition matrix
\[
\begin{pmatrix} 1 & 0 \\ y/x &
  1\end{pmatrix}
\]
between the complement of the identity and a suitable neighborhood of the
identity.  In general, a Poisson structure on $\P(V)$ corresponds to a
global section of $S^2(V^*)\otimes \det(V)\otimes \omega_C^{-1}$, and thus
in our case an anticanonical curve is cut out by a global section of
$S^2(V^*)$.  This has transition matrix
\[
\begin{pmatrix}
  1 & 0 & y^2/x^2\\
  0 & 1 & y/x\\
  0 & 0 & 1
\end{pmatrix},
\]
a nontrivial extension of $\sO_C^2$ by $\sO_C$.  Since
$\dim\Ext^1(\sO_C,\sO_C)=1$, this has precisely two dimensions of global
sections.  If $z$, $w$ are the standard basis of the trivialization of
$V^*$ away from the identity, then the global sections of $S^2(V^*)$ are
spanned by $w^2$ and $z^2+a_1 z w + x w^2$.  Indeed, if $u$, $v$ are the
basis of the other trivialization, then the latter becomes
\[
u^2 + a_1 u v + (a_2 + a_3 y/x^2 + a_4/x + a_6/x^2) v^2,
\]
the coefficients of which are holomorphic at the identity.  Thus the
generic anticanonical curve on this surface has the form $z^2 + a_1 z + c =
x$ for some $c$.  Note that we can replace $c$ by $c+d^2+a_1 d$ for any
$d$, and thus all such anticanonical curves are related by geometric
automorphisms of $X$.  With that in mind, we call a Poisson surface of this
form ``quasi-standard''.  Note that in general, the degree 2 map from
$C_\alpha$ to $C$ induces a 2-isogeny $J(C_\alpha)\to J(C)$ which is dual
to the Frobenius $2$-isogeny.

\begin{thm}\label{thm:poisson_class}
  Any Poisson surface $(X,\alpha)$ over $\bar{k}$ such that $\alpha$ is not
  an isomorphism is Poisson birational to a standard or quasi-standard
  Poisson surface.
\end{thm}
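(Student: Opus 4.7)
The plan is a minimal-model-program-style argument: reduce first to a minimal smooth projective surface, classify the possibilities, then apply Poisson-admissible elementary transformations to put the result into standard form. To reduce to the minimal case, I invoke the first proposition of this section together with the remark that projective birational morphisms from a smooth surface to a normal surface satisfy $f_*\sO_X\cong\sO_Y$: any Poisson structure pushes forward through the contraction of any $(-1)$-curve. Iterating, I may replace $X$ by its minimal model while preserving the Poisson structure; the pushforward structure is still not an isomorphism, since it agrees with $\alpha$ at the generic point.

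Now, with $X$ minimal, $-K_X$ effective and nontrivial forces Kodaira dimension $-\infty$: otherwise $K_X$ is nef, and for ample $H$ the inequalities $K_X\cdot H\ge 0$ and $-K_X\cdot H\ge 0$ force $K_X\equiv 0$; since a numerically trivial line bundle with a nonzero section must be trivial, $K_X\cong\sO_X$, making $X$ a K3 or abelian surface and $\alpha$ an isomorphism, a contradiction. Thus $X$ is either $\P^2$ or a geometrically ruled surface $\rho:\P(\mathcal{E})\to C$ for a smooth curve $C$. For $X=\P^2$, blowing up any point of the (nonempty) anticanonical cubic $C_\alpha$ produces, via the lifting lemma, a Poisson surface with underlying surface $\F_1$, reducing to the ruled case.

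For $X=\P(\mathcal{E})\to C$, I use Poisson-admissible elementary transformations: blow up $p\in C_\alpha$ and then contract the strict transform of the fiber through $p$. The first step is Poisson-admissible by the lifting lemma; the second is automatic by the pushforward proposition. Since $-K_X\cdot F=2$ for every fiber $F$, every fiber meets $C_\alpha$, so such transformations are available over every base point of $C$. They change $\mathcal{E}$ by elementary modifications; combined with the freedom to twist $\mathcal{E}$ by a line bundle without changing $\P(\mathcal{E})$, a standard argument allows one to reach $\mathcal{E}\cong\sO_C\oplus\omega_C$. For this $\mathcal{E}$ one computes $-K_X\sim 2C_0$ where $C_0$ is the $\sO_C$-section, and $C_0$ is disjoint from the $\omega_C$-section $C_\infty$; a further finite sequence of elementary transformations centered at any intersection points $C_\alpha\cap C_\infty$ then makes $C_\alpha$ itself disjoint from $C_\infty$, yielding the standard form.

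The principal obstacle is the rational case $g(C)=0$, where $|-K_X|$ is large and $C_\alpha$ can take many forms---reducible, nonreduced, or with vertical components---so the elementary transformations must be arranged carefully to control both the bundle $\mathcal{E}$ and the geometry of $C_\alpha$. The higher-genus cases are far more rigid, because $h^0(-K_X)$ is small: once $\mathcal{E}\cong\sO_C\oplus\omega_C$ is reached, $C_\alpha$ is essentially forced to be $2C_0$, and disjointness from $C_\infty$ is automatic.
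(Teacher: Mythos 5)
Your setup (pushforward to a minimal model, the Kodaira-dimension argument forcing ruledness, blowing up a point of $C_\alpha$ to handle $\P^2$) is sound and parallels the paper, though the detour through a minimal model is unnecessary --- ``Kodaira dimension $-\infty$'' already gives $X\cong\P^2$ or ruled directly. The real divergence, and the real gap, is in the ruled case.

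You assert that Poisson-admissible elementary modifications (centered in $C_\alpha$) together with line-bundle twists ``allow one to reach $\mathcal{E}\cong\sO_C\oplus\omega_C$'' by ``a standard argument.'' This is not standard and is not obviously true: elementary modifications at points restricted to lie on $C_\alpha$ need not connect an arbitrary rank-2 bundle to the specific bundle $\sO_C\oplus\omega_C$, particularly for $g(C)\ge 1$ where this is a very special (maximally unstable) bundle. Moreover, your plan has a structural circularity: your final step performs further elementary transformations at points of $C_\alpha\cap C_\infty$ to achieve disjointness, but each such transformation alters $\mathcal{E}$ again --- so even if you had reached $\sO_C\oplus\omega_C$, the fixup stage would not preserve that form, and you give no argument that you land back there.

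The paper avoids both problems by reversing the logical dependence. It performs elementary transformations only to make \emph{some} section $C$ (not a priori the $\omega_C$-section of a known bundle) disjoint from $C_\alpha$; this is exactly the move you describe at the end, and it terminates since each transformation drops $C\cdot C_\alpha$ by $1$. It then \emph{deduces} that the bundle is $\sO_C\oplus\omega_C$: disjointness forces $\omega_X|_C\cong\sO_C$, adjunction gives $\mathcal{L}(C)|_C\cong\omega_C$, pushing forward yields
\[
0\to\sO_C\to\rho_*\mathcal{L}(C)\to\omega_C\to 0
\]
with $X\cong\P(\rho_*\mathcal{L}(C))$, and a genus case analysis shows the extension splits ($g=0$: automatic; $g=1$: $h^0(-K_X)>1$ since $2C$ and $C_\alpha$ are distinct anticanonical divisors; $g\ge 2$: $\chi(\sO_{C_\alpha})=0$ and the degree-$2$ map $C_\alpha\to C$ force $C_\alpha$ reducible, yielding a second section disjoint from $C$). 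This splitting argument is the crux, and your proof does not reproduce it; without it, there is no way to conclude that the bundle underlying $X$ is of the required form. I'd suggest restructuring: first achieve disjointness of a section (which you can), then derive the bundle identity as a consequence rather than trying to impose it first.
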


\begin{proof}
  Since $-K_X$ is nontrivial and effective, $X$ has Kodaira dimension
  $-\infty$, so either $X\cong \P^2$ or $X$ admits a birational morphism to
  a ruled surface $\rho:X'\to C$.  We may thus reduce to the case that $X$
  is a ruled surface; if $X\cong \P^2$, simply blow up a point of the
  anticanonical curve $C_\alpha$.  Since $\rho$ admits infinitely many
  sections, we may choose a section in such a way that the corresponding
  curve is not a component of the anticanonical curve; by mild
  abuse of notation, we call this curve $C$.

  If $C$ intersects $C_\alpha$, perform an elementary transformation
  (blow up a point, then blow down the fiber) based at a point of
  intersection.  Blowing up the point reduces the intersection
  number by 1, and, since $C$ meets the fiber only in the point
  being blown up, blowing down the fiber does not change the
  intersection number.  Thus by induction, there is a sequence of
  Poisson elementary transformations making $C$ disjoint from
  the anticanonical divisor, so we can reduce to the case that $C$
  was already disjoint from $C_\alpha$.

  In particular, $\omega_X\otimes \sO_C\cong \sO_C$, and thus we may use
  adjunction to compute $\omega_C\cong \sO_C(C)$.  Applying
  $\rho_*$ to the short exact sequence
\[
0\to \sO_X\to \sO_X(C)\to \sO_C(C)\to 0
\]
gives the short exact sequence
\[
0\to \sO_C\to \rho_*(\sO_X(C))\to \omega_C\to 0,
\]
where we observe that
\[
X\cong \P(\rho_*(\sO_X(C))).
\]
We claim that this short exact sequence splits unless the surface is
quasi-standard.  If $g(C)=0$, this is immediate.  If $g(C)=1$, then
$-K_X\sim 2C$; since we also have $-K_X\sim C_\alpha$, we conclude that
$h^0(-K_X)>1$, implying that the sequence splits (or the surface is
quasi-standard).  Finally, if $g(C)\ge 2$, then the fact that
$\chi(\sO_{C_\alpha})=0$ and $C_\alpha$ has a degree 2 map to $C$ (note
that $C_\alpha$ cannot have a fiber as a component, since that would
intersect $C$) implies that $C_\alpha$ cannot be integral.  We can thus
write $C_\alpha = C_1+C_2$; since $C_\alpha.f=2$, where $f$ is the
(numerical) class of a fiber, we must have $C_1.f=C_2.f=1$.  But then $C_1$
and $C_2$ are sections disjoint from $C$, giving the desired splitting of
$\rho_*(\sO_X(C))$.  (Such a splitting is moreover unique, so that
$C_\alpha=2C_1$.)
\end{proof}

In particular, we have the following classification of Poisson surfaces, up
to Poisson birational equivalence:
\begin{itemize}
\item[1.] $X$ is the Hirzebruch surface $F_2$, and $C_\alpha$ is a reduced
  anticanonical curve disjoint from the minimal section of $X$.
\item[2.] $X$ is $C\times \P^1$, with $C$ a smooth genus 1 curve, and
  $C_\alpha$ is a union of two disjoint fibers over $\P^1$.
\item[3.] $X$ is $\P(\sO_C\oplus \omega_C)$ for some smooth curve $C$, and
  $C_\alpha=2C_0$, where $C_0$ is the section corresponding to the morphism
  $\sO_C\oplus \omega_C\to \sO_C$.
\item[4.] $X$ is quasi-standard.
\item[5.] $X$ is a minimal surface with trivial anticanonical bundle
  ($K_3$, abelian, certain surfaces in characteristic 2 or 3 with
  nonreduced $\Pic^0$.)
\end{itemize}
Note that the reduced condition is only there to make cases $1$ and $3$
disjoint.  Also, case $1$ splits into subcases, corresponding to the
Kodaira symbols $I_0$, $I_1$, $II$, $I_2$, $III$ (smooth, nodal integral,
cuspidal integral, or two components meeting in a reduced or nonreduced
scheme, respectively).  Cases $2$ and $3$ and the reducible subcases of $1$
have arisen in the theory of generalized Hitchin systems; for a related
interpretation for rational surfaces with integral anticanonical curve, see
\cite{rat_Hitchin}.

Note that when the surface is not rational, either the anticanonical
divisor is trivial and there are no nontrivial Poisson birational maps, or
the (rational) ruling is uniquely determined, and thus any Poisson
birational map between surfaces of the above canonical form is a composition
of elementary transformations.  In contrast, the rational case admits a
rich structure of birational automorphisms respecting the Poisson
structure, again see \cite{rat_Hitchin}.  In particular, the $I_1$ and
$I_2$ subcases are in the same birational equivalence class, as are the
$II$ and $III$ subcases, though there are significant differences in the
corresponding Hitchin-type systems.

We also note that the surfaces birational to quasi-standard Poisson
surfaces can be characterized as those Poisson surfaces with rational
rulings over genus 1 curves such that the anticanonical curve is integral.

\section{Poisson moduli spaces}

In \cite{BottacinF:1995,BottacinF:2000}, Bottacin showed that the moduli
space of stable vector bundles on a Poisson surface in characteristic 0 has
a natural Poisson structure (extending a result of Tyurin
\cite{TyurinAN:1988}, who constructed the form but did not prove the Jacobi
identity), and identified the symplectic leaves of this structure (and in
particular showed that they are algebraic).  This was extended to general
stable sheaves (in fact, simple sheaves), subject to some smoothness
assumptions, in \cite{HurtubiseJC/MarkmanE:2002b}.

Since we wish to understand how these spaces interact with birational maps,
we immediately encounter a problem: the definition of stability depends on
a choice of ample line bundle.  Since there is no canonical way to
transport ample bundles through birational maps, this adds a great deal of
complexity when trying to understand whether a given operation preserves
stability.  One way to avoid this is to replace ``stable'' with ``simple'',
having no nontrivial endomorphisms.  As simplicity is intrinsic, we avoid
any consideration of ample line bundles entirely, yet a result of Altman
and Kleiman means that we still have a reasonably well-behaved moduli space.

Let $X/S$ be locally projective, finitely presented morphism of schemes.  A
{\em family of simple sheaves} on $X$ is a sheaf $M$ on $T\times_S X$, flat
over $T$, such that the fiber $M$ over every geometric point $t\in T$
satisfies $\End(M(t))\cong k(t)$.  We consider two such families $M$, $M'$
equivalent if there is a line bundle ${\cal L}$ on $T$ such that $M'\cong
M\otimes \pi_1^*{\cal L}$; this naturally preserves isomorphism classes of
fibers.  Note that the simplicity condition implies that the line bundle
${\cal L}$ is unique (up to isomorphism) if it exists, since for a simple
family, we have
\[
\Hom_{T\times_S X}(M,M\otimes \pi_1^*{\cal L})\cong \Gamma(T\times_S
X,{\cal L})
\]

Roughly speaking, this moduli problem is represented by an algebraic space.
More precisely, this needs to be extended to the \'etale topology.  Define
a {\em twisted family of simple sheaves} on $X$ parametrized by $T$ to be a
family of simple sheaves parametrized by some \'etale cover $U\to T$ such
that the two induced families on $U\times_T U$ are equivalent.

\begin{thm}\cite{AltmanAB/KleimanSL:1980}
  There is a quasi-separated algebraic space $\Spl_{X/S}$ locally finitely
  presented over $S$ which represents the moduli functor of simple sheaves,
  in the sense that there is a natural bijection between twisted families
  of simple sheaves on $X$ and morphisms to $\Spl_{X/S}$.
\end{thm}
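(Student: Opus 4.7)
The plan is to realize $\Spl_{X/S}$ locally as an \'etale quotient of open subschemes of Quot schemes by the natural change-of-basis action, then glue across different parameters. For a fixed Hilbert polynomial $P$ (with respect to a chosen relatively ample bundle), any simple sheaf $M$ on a geometric fiber $X_s$ satisfies, for all sufficiently large $n$, that $M(n)$ is globally generated with vanishing higher cohomology; setting $N := P(n) = h^0(M(n))$, a choice of basis of $H^0(M(n))$ yields a surjection $\sO_{X_s}(-n)^N \twoheadrightarrow M$. Let $Q_{n,P}$ denote the Quot scheme parametrizing quotients of $\sO_X(-n)^N$ with Hilbert polynomial $P$, and let $U_{n,P} \subset Q_{n,P}$ be the open locus where the universal quotient is fiberwise simple and the adjoint map $\sO_T^N \to (\pi_1)_*M(n)$ is an isomorphism; openness here follows from upper semicontinuity of $\dim \End$ and of the relevant cohomology.

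The group scheme $\GL_N/S$ acts on $U_{n,P}$ by reparametrizing the surjection, and two geometric points lie in the same orbit precisely when the corresponding quotient sheaves are isomorphic. Simplicity of the universal quotient forces every stabilizer to coincide with the central $\mathbb{G}_m$, so $\PGL_N$ acts freely on $U_{n,P}$. By Artin's theorem on quotients by free actions of smooth affine group schemes (equivalently, by producing \'etale slices using the local freeness of $(\pi_1)_*M(n)$), the quotient $V_{n,P} := U_{n,P}/\PGL_N$ exists as a quasi-separated algebraic space locally of finite presentation over $S$, and the projection $U_{n,P} \to V_{n,P}$ is a $\PGL_N$-torsor in the \'etale topology.

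As $n$ grows, the spaces $V_{n,P}$ form an ascending system of open subspaces of a common $V_P$: a sheaf represented in $V_{n,P}$ is automatically represented in $V_{n',P}$ for every $n' \geq n$, and the transition map is an open immersion because the cohomological conditions defining $U_{n',P}$ cut out an open locus inside $U_{n,P}$. Taking the disjoint union over all Hilbert polynomials $P$ and gluing yields $\Spl_{X/S}$. For the universal property, a morphism $T \to \Spl_{X/S}$ is, \'etale-locally on $T$, represented by an element of $U_{n,P}(T')/\GL_N(T')$ for some \'etale cover $T' \to T$; unwinding, this is a quotient $\sO_X(-n)^N \twoheadrightarrow M$ on $T' \times_S X$ modulo the choice of isomorphism $\sO_{T'}^N \cong (\pi_1)_*M(n)$. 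Any two such trivializations differ by a line bundle on $T'$, so \'etale descent identifies the data with a twisted family of simple sheaves in exactly the sense defined in the paper.

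The principal obstacle is verifying that $U_{n,P}/\PGL_N$ is genuinely an algebraic space rather than a nontrivial stack. Freeness of the action, equivalent to the simplicity hypothesis, rules out residual stabilizers, but further work is required to produce \'etale-local sections of $U_{n,P} \to V_{n,P}$; this is the technical heart of the argument and is where Artin's representability criteria (or an explicit slice construction using the rigidifying data $(\pi_1)_*M(n)$) must be deployed. Once the quotient is known to exist, the remaining steps — openness of $U_{n,P}$, compatibility under varying $n$ and $P$, and the unwinding of the universal property into twisted families — are formal consequences of standard cohomology-and-base-change arguments and \'etale descent for coherent sheaves twisted by line bundles.
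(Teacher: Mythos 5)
This theorem is not proved in the paper: it is cited directly from Altman--Kleiman, so there is no proof of record to compare against. That said, your sketch is a reasonable outline of a valid construction, and is worth assessing on its own terms.

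Your route (cover the functor by open loci in Quot schemes, show $\PGL_N$ acts freely, and take the quotient as an algebraic space) differs in emphasis from what Altman--Kleiman actually do. Their Theorem~7.4 is obtained by verifying Artin's axioms for algebraic spaces directly for the \'etale-sheafified moduli functor: local finite presentation, Schlessinger's conditions, effectivity of formal deformations, openness of versality, and quasi-separatedness of the diagonal. The quotient-of-Quot construction is the more geometric variant one finds, e.g., in Huybrechts--Lehn in the stable setting, and it is also implicitly what the present paper uses later when computing the cotangent sheaf of $\Spl_X$ by realizing an open cover of it as $\PGL_n$-quotients of open subschemes of Quot schemes. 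So you have recovered the ``working'' picture that the paper relies on downstream, rather than reproducing the cited proof. What the Artin-criterion route buys is that it avoids having to construct the quotient explicitly; what your route buys is a concrete atlas together with the torsor structure that makes the twisted-family formulation of the universal property transparent. Your description of the quotient equivalence is correct: the $\mathbb G_m$ stabilizer is exactly what produces the ``twist by a line bundle on $T$'' in the paper's notion of equivalence, so your unwinding at the end is the right picture.

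You are also right to flag the genuine technical gap: the existence of $U_{n,P}/\PGL_N$ as a quasi-separated algebraic space is not free, and neither is the claim that the projection is an \'etale torsor. In particular, the rigidifying datum you invoke --- a trivialization of $(\pi_1)_*M(n)$ --- is what lets one write down slices, but making that precise (and checking quasi-separatedness of the resulting quotient, which amounts to quasi-compactness of the isomorphism functor between two families) is where the real work lies, and is in effect what Altman--Kleiman's verification of Artin's criteria accomplishes. One smaller point to be careful about: the ascending system ``$V_{n,P}\hookrightarrow V_{n',P}$'' is not literally a map of quotients (the group $\GL_{P(n)}$ changes with $n$); the cleaner formulation is that each $V_{n,P}$ maps to the moduli functor via an open monomorphism and these images exhaust it, which is what you mean and is fine once stated that way.
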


In particular, we obtain such an algebraic space associated to any Poisson
surface, and we wish to show that it is Poisson.  Of course, this
encounters another difficulty: what does it mean for an algebraic space to
be Poisson?  This is less of a difficulty than one might think, since as we
have seen, Poisson structures can be pulled back through \'etale morphisms
(and if a Poisson biderivation descends, the image is clearly Poisson).  In
other words, the notion of a Poisson structure makes perfect sense in the
\'etale topology.  We thus obtain the following definition of a Poisson
structure on an algebraic space.

\begin{defn}
Let ${\cal X}$ be an algebraic space.  A Poisson structure on ${\cal X}$ is
an assignment of a Poisson structure to the domain of every \'etale morphism
$f:U\to {\cal X}$, such that if $g:U'\to U$ is another \'etale morphism,
then $g:(U',\alpha_{f\circ g})\to (U,\alpha_f)$ is a Poisson morphism.
\end{defn}

\begin{rem}
If ${\cal X}$ is a Poisson scheme, then we obtain a Poisson structure in
the above sense by assigning to every \'etale morphism the induced Poisson
structure on its domain.  Thus any Poisson scheme remains Poisson as an
algebraic space.
\end{rem}

Thus to obtain our Poisson moduli space, we need simply define a suitably
canonical Poisson structure on the base of every twisted family of simple
sheaves corresponding to an \'etale morphism to $\Spl_{X/S}$.  In fact, it
will be enough to consider only untwisted families, as the required
compatibility of Poisson structures immediately gives us the conditions
needed to descend the Poisson structure from the \'etale cover of the base
of the twisted family.  Note that a family corresponds to an \'etale
morphism iff it is formally universal.

In \cite{BottacinF:2000}, Bottacin showed that the moduli space of stable
vector bundles on a Poisson surface has a well-behaved foliation by
algebraic symplectic leaves: for any bundle $V$, the subscheme
parametrizing stable vector bundles $V'$ with $V'|_{C_\alpha}\cong
V|_{C_\alpha}$ is a symplectic Poisson subscheme (where $C_\alpha$ is the
anticanonical curve).  This description does not quite carry over in full
generality, although it is straightforward enough to fix: with this in
mind, we define the {\em derived restriction} of a sheaf to be the
complex
\[
M|^{\dL}_{C_\alpha}:=\dL i^*M,
\]
where $i:C_\alpha\to X$ is the inclusion morphism.

We will mainly consider the case of a surface over a separably closed
field, which we suppress from the notation.

\begin{thm}\label{thm:poisson}
  Let $(X,\alpha)$ be a Poisson surface which is not symplectic and not
  birational to a quasi-standard Poisson surface.  Then the moduli space
  $\Spl_X$ has a natural Poisson structure, and for any complex
  $M^\bullet_\alpha$ of locally free sheaves on the associated
  anticanonical curve $C_\alpha$, the subspace of $\Spl_X$ parametrizing
  sheaves $M$ with $M|^{\dL}_{C_\alpha}\cong M^\bullet_\alpha$ is a
  (smooth) symplectic Poisson subspace.
\end{thm}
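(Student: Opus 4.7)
The plan is to construct the Poisson bivector $\beta$ on $\Spl_X$ pointwise from the data of $\alpha$ (using Kodaira--Spencer, Yoneda, and Serre duality), to verify the Jacobi identity by reducing to the case of vector bundles, and to describe the symplectic leaves via the long exact sequence associated to $0\to \omega_X\to \sO_X\to \sO_{C_\alpha}\to 0$. For an \'etale $U\to \Spl_X$ carrying a formally universal family $M$ on $U\times X$, Kodaira--Spencer identifies $T_uU$ with $\Ext^1_X(M(u),M(u))$, and Serre duality identifies the cotangent space with $\Ext^1_X(M(u),M(u)\otimes\omega_X)$. Viewing $\alpha$ as a map $\omega_X\to \sO_X$, I define $\beta_u(\xi_1,\xi_2)$ as the image of the Yoneda product $\xi_1\smile\xi_2\in \Ext^2_X(M(u),M(u)\otimes\omega_X^{\otimes 2})$ under
\[
\Ext^2_X(M(u),M(u)\otimes \omega_X^{\otimes 2}) \xrightarrow{\alpha}
\Ext^2_X(M(u),M(u)\otimes \omega_X) \xrightarrow{\tr} H^2(X,\omega_X)\cong k,
\]
using simplicity of $M(u)$ for the last isomorphism. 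Graded commutativity of the Yoneda product makes this alternating; the same construction carried out sheaf-theoretically over $U$ produces an algebraic section of $\wedge^2 T_U$, and invariance of Yoneda and trace under twisting $M$ by a line bundle on $U$ lets $\beta$ descend through twisted families to a candidate Poisson tensor on $\Spl_X$.

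The main obstacle is the Jacobi identity. I would follow the Hurtubise--Markman reduction: locally present a simple sheaf $M$ of homological dimension $\le 1$ as the cokernel of a map of vector bundles and realise the moduli space near $M$ as a Poisson quotient of a moduli space of such complexes, thereby reducing to the case in which $M$ is locally free. For vector bundles the bivector admits a \v{C}ech-cocycle description involving the Atiyah class and $\alpha$, and the Jacobi identity for $\beta$ then follows from the vanishing of the triderivation $\wedge^3\Omega_{X/S}\to \sO_X$ associated to $\alpha$, i.e.\ from the Jacobi identity for $\alpha$ itself. Performing the verification at the cocycle level has the further advantage of using no characteristic hypothesis, extending the existing constructions to positive characteristic.

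For the description of symplectic leaves, the construction of $\beta$ makes the anchor at $M$ equal to the map $\Ext^1_X(M,M\otimes\omega_X)\xrightarrow{\alpha}\Ext^1_X(M,M)$ induced by $\alpha\colon \omega_X\to \sO_X$. Tensoring the short exact sequence $0\to \omega_X\to \sO_X\to \sO_{C_\alpha}\to 0$ with $\dR\sHom(M,M)$ gives a long exact sequence
\[
\cdots\to \Ext^1_X(M,M\otimes\omega_X)\xrightarrow{\alpha} \Ext^1_X(M,M)\to \Ext^1_{C_\alpha}(M|^{\dL}_{C_\alpha},M|^{\dL}_{C_\alpha})\to \Ext^2_X(M,M\otimes\omega_X)\to\cdots,
\]
whose middle arrow is the derivative of $M\mapsto M|^{\dL}_{C_\alpha}$. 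Hence the image of the anchor equals the tangent space to $Z_{M^\bullet_\alpha}$, while Serre duality identifies the kernel of the anchor as the annihilator of that tangent space, so the restriction of $\beta$ to $Z_{M^\bullet_\alpha}$ is symplectic at every point. That $Z_{M^\bullet_\alpha}$ is genuinely a smooth Poisson subspace then follows from a standard obstruction-theoretic argument using the same long exact sequence: obstructions to deforming $M$ while preserving its derived restriction lie in a quotient of $\Ext^2$'s which vanishes on the simple locus, and Hamiltonian flows stay tangent to $Z_{M^\bullet_\alpha}$ since the image of the anchor does.
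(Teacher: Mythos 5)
Your overall architecture matches the paper's: the Tyurin-style bivector from $\alpha$ via Serre duality and trace, the Hurtubise--Markman reduction to vector bundles via a locally free presentation $0\to V\to W\to M\to 0$, a \v{C}ech/Atiyah-class verification for bundles, and the long exact sequence from $0\to\omega_X\to\sO_X\to\sO_{C_\alpha}\to 0$ to control the leaves. But there are real gaps in two places.

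First, your argument for the Jacobi identity is not an argument. You say it ``follows from the vanishing of the triderivation $\wedge^3\Omega_{X/S}\to\sO_X$ associated to $\alpha$,'' but on a surface $\wedge^3\Omega_{X/S}=0$ automatically, so this gives no leverage; if the Jacobi identity on $\Spl_X$ were that formal, there would be nothing to prove. The paper's actual strategy is to \emph{first} establish that the putative leaves are smooth locally closed subspaces on which the bracket is nondegenerate (so the bracket restricts there to a genuine $2$-form), and \emph{then} verify closedness of that $2$-form by a \v{C}ech computation in which the transition functions of $V$ may be taken constant modulo $C_\alpha$; one shows $d_U\gamma_{ijk}$ is a coboundary of a cochain vanishing to third order on $C_\alpha$. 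Without the restriction to a leaf, there is no nondegenerate $2$-form to speak of, so you have skipped the key reduction that makes the Jacobi verification tractable.

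Second, the smoothness of the leaf and its local closedness are not obtained by the soft obstruction-vanishing you describe. The paper needs Artamkin's identification of the trace map with the derivative of $V\mapsto\det V$, together with the injectivity of $\Pic^0(X)\to\Pic^0(C_\alpha)$ (and the consequent vanishing of the bracket on $\Pic(X)$), to kill the relevant obstruction class in $\Ext^2(V,V\otimes\omega_X)\cong H^2(\omega_X)$. Moreover, ``Hamiltonian flows stay tangent to $Z$'' is a characteristic-zero shortcut: in characteristic $p$ a Poisson subscheme of a symplectic scheme need not be open (principal ideals generated by $p$-th powers are Poisson), so the paper must argue directly that the piece of the leaf recovered from the locally free presentation is actually open. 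Your proposal claims to handle positive characteristic but relies on exactly the step that fails there.
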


\begin{rems}
With this in mind, we define a {\em symplectic leaf} of $\Spl_X$ to be
the subspace corresponding to some fixed derived restriction, or more
generally a union of components of such a subspace.
\end{rems}

\begin{rems}
  Of course, this continues to hold when $X$ has symplectic fibers (subject
  to the technical condition that $\Pic^0(X)$ is smooth), with $\Spl_X$
  itself smooth and symplectic.  (See, e.g., Chapter 10 of
  \cite{HuybrechtsD/LehnM:1997}.)  The arguments below encounter
  difficulties in the symplectic case, however.  The claim fails when $X$
  is birational to a quasi-standard surface.
\end{rems}

\begin{rems}
  Since the biderivation can be defined over an arbitrary base, and the
  further conditions to be a Poisson structure are closed on the base, we
  conclude that $\Spl_{X/S}$ is Poisson for any Poisson surface over a
  reduced scheme $S$, such that no fiber of $X$ is symplectic.  The claim
  for the symplectic leaves is harder to generalize, mainly since it is no
  longer true in general that conditions of the form
  $M|^{\dL}_{C_\alpha}\cong M^\cdot_\alpha$ are locally closed.  It appears
  the correct condition to place on a family $M^\cdot_\alpha$ over a
  reduced base is that it can be represented by a two-term perfect complex,
  and that
\[
\dim\Hom(M^\cdot_\alpha,M^\cdot_\alpha)
-
\dim\Ext^{-1}(M^\cdot_\alpha,M^\cdot_\alpha)
\]
is constant.  (By the proof of Proposition \ref{prop:symp_dims} below, this
condition is necessary for the tangent space to the symplectic leaf to have
constant dimension.)  With this proviso, the full Theorem most likely holds
over an arbitrary reduced base, and a careful restatement of the constancy
condition should extend to any locally Noetherian base.  (For $X$ rational
(and probably for ruled surfaces over $C$ with $g(C)=1$), the reduced case
is probably sufficient, since suitable moduli stacks of anticanonical
surfaces are reduced; for $g(C)>1$, though, most components of the moduli
functor have obstructed deformations.)
\end{rems}

In other words, we need to construct a Poisson structure on the base of
every formally universal family of simple sheaves, and show that the
foliation of the base by isomorphism class of derived restrictions to the
anticanonical curve is a foliation by symplectic Poisson subspaces.

The first requirement is to compute the cotangent sheaf.

\begin{lem}
Let $M$ be a formally universal family of simple sheaves on $X$
parametrized by $U$.  Then there is a natural isomorphism
\[
\Omega_U\cong \sExt^1_U(M,M\otimes \omega_X).
\]
\end{lem}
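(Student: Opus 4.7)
The plan is to combine a Kodaira--Spencer isomorphism coming from formal universality of $M$ with relative Serre duality along $\pi_1\colon U\times_S X\to U$.

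First, I would construct the Kodaira--Spencer map using the Atiyah class of $M$ on $U\times_S X$. The decomposition $\Omega_{U\times_S X/S}\cong \pi_1^*\Omega_U\oplus \pi_2^*\Omega_X$ lets one project the Atiyah class onto its ``horizontal'' component, an element of $\Ext^1(M, M\otimes \pi_1^*\Omega_U)$, which by adjunction and the projection formula is the same as a map
\[
T_U\to \sExt^1_U(M,M),
\]
where $\sExt^1_U$ denotes $R^1\pi_{1*}\sHom$. On each fiber over a geometric point $u$, this recovers the canonical map $T_{U,u}\to \Ext^1(M_u,M_u)$ classifying first-order deformations of $M_u$. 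Formal universality of $M$ (equivalently, $U\to \Spl_X$ being \'etale) says precisely that this fiberwise map is an isomorphism; since $M$ is flat over $U$ and $X$ is smooth and projective, cohomology and base change then upgrades it to an isomorphism of $\sO_U$-modules.

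Second, I would apply relative Grothendieck--Serre duality to $\pi_1$, which is smooth and proper of relative dimension $2$ with relative dualizing sheaf $\pi_2^*\omega_X$, abbreviated to $\omega_X$. Simplicity of $M$ gives $\sExt^0_U(M,M)\cong \sO_U$ (the inclusion of scalars is an isomorphism fiberwise), and the Yoneda pairing together with the Serre duality identification $\sExt^2_U(M,M\otimes \omega_X)\cong \sExt^0_U(M,M)^\vee\cong \sO_U$ produces a pairing
\[
\sExt^1_U(M,M)\otimes \sExt^1_U(M,M\otimes \omega_X)\longrightarrow \sExt^2_U(M,M\otimes \omega_X)\cong \sO_U
\]
which is perfect by pointwise Serre duality. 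This yields $\sExt^1_U(M,M\otimes \omega_X)\cong \sExt^1_U(M,M)^\vee$, and combining with Step 1,
\[
\Omega_U\cong T_U^\vee\cong \sExt^1_U(M,M)^\vee\cong \sExt^1_U(M,M\otimes \omega_X).
\]

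The main obstacle is going from the pointwise Serre duality to the sheafy statement. I would address this by first checking that all three relative Ext sheaves $\sExt^i_U(M,M)$, $i=0,1,2$, are locally free: the $i=0$ case by simplicity, $i=1$ by the Kodaira--Spencer identification with $T_U$, and $i=2$ either by a Euler characteristic argument or by Serre duality reducing it to $i=0$. Local freeness makes cohomology and base change behave cleanly and reduces the Grothendieck duality isomorphism on $\dR\pi_{1*}\dR\sHom(M,M\otimes \omega_X)[2]$ to the single desired statement in degree $1$. Naturality in $U$ of the resulting isomorphism then follows from the naturality of the Atiyah class and of Grothendieck duality.
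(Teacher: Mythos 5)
Your proposal takes a genuinely different route than the paper, and unfortunately contains a gap that the paper's approach is specifically designed to avoid.

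You construct the Kodaira--Spencer morphism $T_U\to\sExt^1_U(M,M)$ via the Atiyah class, argue it is a fiberwise isomorphism by formal universality, upgrade it to a sheaf isomorphism by base change, and then dualize via relative Serre duality to land on $\Omega_U$. The problem is that this argument requires $\Omega_U\cong T_U^\vee$ and local freeness of $\sExt^1_U(M,M)$, both of which are equivalent to smoothness of $U$ --- and $U$ need not be smooth at this stage of the development. Formal universality only says that $U\to\Spl_X$ is \'etale, so $U$ inherits whatever singularities $\Spl_X$ has. At a singular point $u$, the dimension of $\Ext^1(M_u,M_u)$ --- which equals the Zariski tangent space dimension --- jumps, so $\sExt^1_U(M,M)$ is not locally free, cohomology and base change does not apply, and the map $T_U\otimes k(u)\to\Ext^1(M_u,M_u)$ is no longer guaranteed to be an isomorphism (for non-locally-free $\sHom$'s the natural map from the fiber of the dual to the dual of the fiber fails to be bijective). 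Your proposed fix --- proving local freeness of $\sExt^1_U(M,M)$ by identifying it with $T_U$ --- is circular, since it presupposes the Kodaira--Spencer isomorphism at the sheaf level and still wouldn't give $\Omega_U\cong T_U^\vee$ unless $U$ is smooth. The statement itself is correct even for singular $U$: both sides have fiber dimension equal to the Zariski cotangent space dimension, which jumps, and this is exactly why one must work directly with $\Omega_U$ rather than its double dual.

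The paper's proof avoids this entirely by using Lehn's Theorem 3.1 (\cite{LehnM:1998}), which is a direct computation of the cotangent sheaf of Quot schemes without any smoothness hypothesis. One embeds a neighborhood of $[M]\in\Spl_X$ as the image of a principal $\PGL_n$-bundle whose total space sits openly in a Quot scheme, computes $\Omega$ of the Quot scheme via Lehn, and descends. This works at singular points precisely because Lehn's result controls $\Omega$ itself, rather than $T^\vee$. If you want to repair your argument, you would need to replace the dualize-the-tangent-sheaf step with a direct construction of a map $\Omega_U\to\sExt^1_U(M,M\otimes\omega_X)$ (or its inverse) that is not filtered through $T_U$, and prove it is an isomorphism without assuming constancy of the relevant Ext dimensions --- which, in essence, is what Lehn's theorem does.
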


\begin{proof}
Fix an ample line bundle $\sO_X(1)$ on $X$, and for each pair $(m,n)$ of
nonnegative integers, consider the fine moduli space of subsheaves
$V\subset \sO_X(-m)^{\oplus n}$ such that the quotient $M$ is simple and
acyclic for $R\Hom(\sO_X(-m),{-})$, and the induced map
\[
\Hom(\sO_X(-m),\sO_X(-m)^{\oplus n})\to \Hom(\sO_X(-m),M)
\]
is an isomorphism.  On the one hand, this moduli space can be directly
identified as a disjoint union of open subschemes of Quot schemes, so
Theorem 3.1 of \cite{LehnM:1998} describes its cotangent sheaf.  On the
other hand, taking the quotient yields a map to $\Spl_X$ making it a
principal $\PGL_n$-bundle over an open subspace of $\Spl_X$.  We may thus
proceed as in the proof of Theorem 4.2 op.~cit.~to calculate the cotangent
sheaf of that subspace of $\Spl_X$.  Since as $m$, $n$ varies these
subspaces cover $\Spl_X$, the result follows.
\end{proof}

The required biderivation on $U$ is then given (following
\cite{TyurinAN:1988}) by
\[
\Omega_U^{\otimes 2}
\cong
\sExt^1_U(M,M\otimes \omega_X)^{\otimes 2}
\xrightarrow{1\otimes \alpha}
\sExt^1_U(M,M\otimes \omega_X)
\otimes
\sExt^1_U(M,M)
\to
\sO_U,
\]
where the last map is the trace pairing (i.e., Serre duality for
the smooth morphism $X\times U\to U$).  Note the commutative diagram
\[
\begin{CD}
\sExt^1_{U}(M,M\otimes\omega_X)^{\otimes 2}
@>>{1\otimes\alpha}>
\sExt^1_{U}(M,M\otimes\omega_X)
\otimes
\sExt^1_{U}(M,M)
\\
@VV{\alpha\otimes 1}V  @VVV
\\
\sExt^1_{U}(M,M)
\otimes
\sExt^1_{U}(M,M\otimes\omega_X)
@>>> \sO_{U}
\end{CD}
\]

Since the above biderivation is clearly functorial in $U$, and is invariant
under twisting $M$ by any line bundle, so in particular by line bundles
pulled back from $U$, it remains only to show that it is Poisson, and to
verify the claim about symplectic leaves.

Following \cite{HurtubiseJC/MarkmanE:2002b}, there are three cases to
consider, depending on the homological dimension of the simple sheaf.
Simple sheaves of homological dimension 2 are necessarily structure sheaves
of closed points, and thus the corresponding component of
$\Spl_X$ is naturally isomorphic to $X$ itself, and the induced
Poisson structure is the same.  The case of simple sheaves of homological
dimension $<2$ can then be reduced to that of locally free simple sheaves.
This reduction applies equally well in finite characteristic, though we
will need to do some more work (beyond the arguments of
\cite{HurtubiseJC/MarkmanE:2002b}) to identify the symplectic leaves.  We
will consider this case shortly, but will first deal with locally free
sheaves.

\subsection{Locally free sheaves}

The argument of \cite{BottacinF:1995} could most likely be carried over to
finite characteristic, but is sufficiently complicated to make it somewhat
difficult to verify this.  It turns out, however, that Bottacin's
identification of the symplectic leaves can be used to give a simpler proof
which also easily carries over to finite characteristic not equal to 2.
(In fact, one can show that Poissonness in characteristic 0 implies it in
all finite characteristics, which is how we deal with characteristic 2, but
smoothness of the symplectic leaves is more delicate, and in any event it
seems worth giving a simpler argument even for the characteristic 0 case.)

We first need to know that the theorem applies to invertible sheaves.

\begin{prop}\label{prop:Pic_inj}
The natural restriction map
\[
\Pic^0(X)\to \Pic^0(C_\alpha)
\]
is an injective map of group schemes, and the natural Poisson structure is
trivial on $\Pic(X)\subset \Spl_X$.
\end{prop}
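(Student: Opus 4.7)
Both assertions of the proposition reduce to the vanishing of the map $\alpha_1 \colon H^1(\omega_X) \to H^1(\sO_X)$ induced by $\alpha$, viewed as the inclusion $\omega_X \hookrightarrow \sO_X$ of the ideal sheaf of $C_\alpha$. For the Poisson triviality, the explicit formula for the biderivation specializes at a line bundle $L$ to the pairing
\[
H^1(\omega_X)^{\otimes 2} \xrightarrow{1 \otimes \alpha_1} H^1(\omega_X) \otimes H^1(\sO_X) \xrightarrow{\cup} H^2(\omega_X) \xrightarrow{\Tr} \bar k,
\]
which vanishes iff $\alpha_1 = 0$; invariance of the biderivation under twisting $M$ by line bundles pulled back from $X$ makes this bivector translation-invariant on $\Pic(X) \subset \Spl_X$, so its vanishing at $\sO_X$ propagates everywhere. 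For the tangent-level part of the injectivity in the first claim, the long exact sequence of $0 \to \omega_X \to \sO_X \to \sO_{C_\alpha} \to 0$ identifies $\ker(H^1(\sO_X) \to H^1(\sO_{C_\alpha}))$ with $\im(\alpha_1)$, so this too is equivalent to $\alpha_1 = 0$.

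To prove $\alpha_1 = 0$: every non-symplectic Poisson surface is ruled (being birational to one of the three standard forms of Theorem~\ref{thm:poisson_class}), so $p_g(X) = h^0(\omega_X) = 0$. The LES then truncates to
\[
0 \to \bar k \to H^0(\sO_{C_\alpha}) \to H^1(\omega_X) \xrightarrow{\alpha_1} H^1(\sO_X),
\]
and $\alpha_1 = 0$ becomes equivalent to $h^0(\sO_{C_\alpha}) = 1 + q(X)$. I would verify this case by case via the classification: in case~1 (rational) $q=0$ and the Kodaira-type anticanonical is connected, so $h^0 = 1$; in case~2 ($E \times \P^1$) $q=1$ and $C_\alpha = E \sqcup E$, so $h^0 = 2$; in case~3 ($\P(\sO_C \oplus \omega_C)$) $q = g(C)$ and $\rho_* \sO_{2 C_0} = \sO_C \oplus \omega_C$ gives $h^0 = 1 + g$. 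For a non-standard $X$ one invokes the invariance of both sides under the admissible blowups (centered on $C_\alpha$) connecting $X$ to a standard form.

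Finally, for $\bar k$-point injectivity of $\Pic^0(X) \to \Pic^0(C_\alpha)$: suppose $L \in \Pic^0(X) \setminus \{\sO_X\}$ satisfies $L|_{C_\alpha} \cong \sO_{C_\alpha}$. Then $H^0(L) = 0$ (a nontrivial numerically trivial line bundle has no section), and the LES for $L$ gives $H^0(\sO_{C_\alpha}) \hookrightarrow H^1(L \otimes \omega_X)$; combining with the same argument applied to $L^{-1}$ and Serre duality yields $h^1(L), h^1(L^{-1}) \ge 1 + q$. Using $\chi(L) = \chi(\sO_X) = 1 - q$, this forces $h^2(L) = h^0(L^{-1} \otimes \omega_X) \ge 2$, so $L^{-1} \otimes \omega_X$ has a nonzero section cutting out an effective divisor numerically equivalent to $-C_\alpha$---impossible since $-C_\alpha$ has negative intersection with any ample class. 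Thus the kernel of $\Pic^0(X) \to \Pic^0(C_\alpha)$ has no nontrivial $\bar k$-points and trivial tangent space at the identity, so is the trivial subgroup scheme, giving the first claim; the second follows from $\alpha_1 = 0$ together with translation invariance. The principal obstacle is verifying $h^0(\sO_{C_\alpha}) = 1 + q(X)$: the three standard cases are routine, but propagating the equality to general $X$ through the Poisson-admissible blowups requires careful bookkeeping of how the structure sheaf of the anticanonical curve evolves.
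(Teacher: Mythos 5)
Your reduction of all three pieces of the statement to the single vanishing $\alpha_1 \colon H^1(\omega_X)\to H^1(\sO_X)$ is sound, and the point-injectivity argument via $\chi(L)$ and effectivity of $K_X$ is a nice self-contained route to the set-theoretic statement. However, the proof as written has a genuine gap exactly where you flagged it: you need the equality $h^0(\sO_{C_\alpha}) = 1+q(X)$ on the \emph{actual} surface $X$, not just on a standard birational representative. The long exact sequence of $0\to\omega_X\to\sO_X\to\sO_{C_\alpha}\to0$ with $p_g=0$ already gives $h^0(\sO_{C_\alpha}) = 1+\dim\ker\alpha_1 \le 1+q$ for free, so the content of the whole proposition is precisely the reverse inequality $h^0(\sO_{C_\alpha})\ge 1+q$. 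Your plan to prove it case-by-case for standard $X$ and then ``propagate through Poisson-admissible blowups'' is circular in spirit: proving that $h^0(\sO_{C_\alpha})$ is preserved under a blowup at a \emph{singular} point of $C_\alpha$ (where $C_\alpha$ changes nontrivially, e.g.\ a node splitting into two branches plus a $-1$-curve, or the multiplicity-two case $C_\alpha = 2C_0$) is not elementary bookkeeping; it amounts to re-proving a version of the statement. You cannot invoke the equality of Corollary 3.5, since that corollary is itself deduced from this proposition.

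The paper sidesteps all of this with a single structural observation: if $X$ is not rational, then $C_\alpha$ contains a component isomorphic to $C$, the base of the birational ruling (this is a \emph{qualitative} consequence of the classification, stable under strict transform through the blowups relating $X$ to a standard surface, since a curve of genus $\geq 1$ cannot be contracted or acquire genus under birational transforms). The composition $\Pic^0(X)\to\Pic^0(C_\alpha)\to\Pic^0(C)$ is then the canonical isomorphism for a rationally ruled surface, which gives injectivity of group schemes outright. Taking tangent spaces at the identity yields $H^1(\sO_X)\hookrightarrow H^1(\sO_{C_\alpha})$, hence $\alpha_1=0$, hence $\Ext^1(M,M\otimes\omega_X)\to\Ext^1(M,M)$ vanishes for invertible $M$, killing the Poisson bracket on $\Pic(X)$. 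In other words, the paper runs the implications in the opposite order from yours --- group-scheme injectivity first, then $\alpha_1=0$ as a corollary --- and in doing so avoids any computation of $h^0(\sO_{C_\alpha})$ or any blowup bookkeeping whatsoever. If you want to salvage your approach, replacing the case-by-case computation with this factorization observation would close the gap cleanly; the remainder of your argument would then be correct but redundant.
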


\begin{proof}
  It follows from the previous classification that either $X$ is
  rational, or there is a splitting of the induced map from $C_\alpha$
  to the base $C$ of the ruling.  If $X$ is rational, then
  $\Pic^0(X)=1$, and there is nothing to prove, while if $C_\alpha\to
  C$ splits, then the composition
\[
\Pic^0(X)\to \Pic^0(C_\alpha)\to \Pic^0(C)
\]
agrees with the natural isomorphism.

Taking derivatives gives an injection
\[
H^1(\sO_X)\to H^1(\sO_{C_\alpha})
\]
implying that
\[
H^1(\alpha):H^1(\omega_X)\to H^1(\sO_X)
\]
is 0.  But this implies for invertible $M$ that the induced map
\[
\Ext^1(M,M\otimes\omega_X)\to \Ext^1(M,M)
\]
is 0, and thus so is the Poisson structure.
\end{proof}

\begin{rem}
  Note that this fails when $X$ is birational to a quasi-standard Poisson
  surface.  Indeed, in that case, the composition
  \[
  \Pic^0(C)\cong \Pic^0(X)\to \Pic^0(C_\alpha)
  \]
  is the Frobenius isogeny, and thus has nonreduced kernel ($\mu_2$
  or $\alpha_2$, depending on whether $J(C)$ is ordinary or
  supersingular).  In particular, the fibers of restriction to
  $C_\alpha$ can fail to be smooth, and thus Theorem \ref{thm:poisson}
  fails in this case.  Moreover, since the map $H^1(\omega_X)\to
  H^1(\sO_X)$ induced by $\alpha$ is an isomorphism, we find that
  the natural biderivation on the $1$-dimensional scheme $\Pic^0(X)$
  induces a nondegenerate pairing on fibers of the cotangent bundle.
  In particular, the natural biderivation does not give a Poisson
  structure, as it is not even alternating.  Note that we can exclude
  both this case and the symplectic cases on which the conclusion
  of Theorem \ref{thm:poisson} fails by insisting that the kernel
  of $\Pic^0(X)\to \Pic^0(C_\alpha)$ be reduced.
\end{rem}

\begin{cor}
If $X$ is rationally ruled over the smooth curve $C$, then
$h^1(\sO_{C_\alpha})=g(C)+1$.
\end{cor}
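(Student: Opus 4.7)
The plan is to read off $h^1(\sO_{C_\alpha})$ from the long exact sequence of cohomology associated to the short exact sequence
\[
0\to \omega_X\to \sO_X\to \sO_{C_\alpha}\to 0,
\]
obtained from the identification $\omega_X\cong \sO_X(-C_\alpha)$, whose first map is (up to scalar) multiplication by the section of $\omega_X^{-1}$ defining $C_\alpha$, i.e.\ the Poisson bivector $\alpha$ viewed as a map $\omega_X\to \sO_X$.

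For the cohomological input, I would use that since $X$ is rationally ruled over $C$ (with $g:=g(C)$), it is birationally a $\P^1$-bundle over $C$, so that $h^0(\sO_X)=1$, $h^1(\sO_X)=g$, and $h^2(\sO_X)=0$; Serre duality together with $H^0(\omega_X)=0$ (Kodaira dimension $-\infty$) then gives $h^0(\omega_X)=0$, $h^1(\omega_X)=g$, and $h^2(\omega_X)=1$. The crucial observation is that the map $H^1(\omega_X)\to H^1(\sO_X)$ appearing in the long exact sequence is precisely $H^1(\alpha)$, which was shown to vanish in the proof of Proposition~\ref{prop:Pic_inj}. Consequently the tail of the long exact sequence collapses to
\[
0\to H^1(\sO_X)\to H^1(\sO_{C_\alpha})\to H^2(\omega_X)\to 0,
\]
and comparing dimensions yields $h^1(\sO_{C_\alpha})=g+1$.

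There is no substantial obstacle here; once the previous proposition is in hand this is essentially a two-line argument. The only point that merits attention is the identification of the map $\omega_X\to \sO_X$ in the short exact sequence with the one induced by the Poisson structure, so that the vanishing of $H^1(\alpha)$ genuinely applies — but this is tautological from the construction of a Poisson surface as a surface equipped with a distinguished section of $\omega_X^{-1}$ vanishing on $C_\alpha$.
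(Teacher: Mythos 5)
Your proof is correct and follows the same route as the paper: both extract the short exact sequence $0\to H^1(\sO_X)\to H^1(\sO_{C_\alpha})\to H^2(\omega_X)\to 0$ from the long exact sequence of $0\to\omega_X\xrightarrow{\alpha}\sO_X\to\sO_{C_\alpha}\to 0$, using the vanishing of $H^1(\alpha)$ established in Proposition~\ref{prop:Pic_inj} together with $h^1(\sO_X)=g$ and $h^2(\omega_X)=1$. You merely spell out the additional fact $h^2(\sO_X)=0$, which the paper leaves implicit.
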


\begin{proof}
Indeed, one has a short exact sequence
\[
0\to H^1(\sO_X)\to H^1(\sO_{C_\alpha})\to H^2(\omega_X)\to 0
\label{ses:cor3.5}
\]
where the first map is injective by the proposition.  Since $h^1(\sO_X)=g(C)$,
$h^2(\omega_X)=1$, the claim follows.
\end{proof}

\begin{rem}
  Again, in the quasi-standard case, this fails with $h^1(\sO_{C_\alpha})=1$.
\end{rem}

Now, let $\Vect_X$ denote the subspace of $\Spl_X$ classifying simple
locally free sheaves; a sheaf being locally free is an open condition, so
$\Vect_X$ is open in $\Spl_X$.

\begin{lem}
The algebraic space $\Vect_X$ is smooth.
\end{lem}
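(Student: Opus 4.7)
The plan is to verify smoothness via standard deformation theory. At $[M]\in\Vect_X$ the tangent space is $\Ext^1(M,M)=H^1(\sEnd M)$ and the primary obstruction lies in $\Ext^2(M,M)=H^2(\sEnd M)$; smoothness reduces to showing this obstruction vanishes at every point. I would split the analysis into the ``determinantal'' piece controlled by $H^2(\sO_X)$ and the traceless piece in $\Ext^2_0(M,M)$, via the trace map $\sEnd M\to\sO_X$ and the identity inclusion $\sO_X\hookrightarrow\sEnd M$.

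I would dispatch the $H^2(\sO_X)$-piece first: it corresponds to obstructions to deforming $\det M\in\Pic(X)$, and these are absorbed by motion in the Picard variety. By Theorem \ref{thm:poisson_class}, a non-symplectic Poisson surface is birational to $\P(\sO_C\oplus\omega_C)$ for a smooth curve $C$ (or to $\P^2$), so $\Pic^0(X)\cong\Pic^0(C)$ is smooth, and the Picard part of the obstruction truly vanishes. For the traceless part, Serre duality identifies $\Ext^2_0(M,M)$ with $\Hom_0(M,M\otimes\omega_X)^*$, and I would pair the Yoneda-square obstruction $\xi\cup\xi$ of a tangent vector $\xi\in\Ext^1(M,M)$ against an arbitrary $\eta\in\Hom_0(M,M\otimes\omega_X)$ using the Poisson structure $\alpha\colon\omega_X\to\sO_X$ and the same trace pairing used above to define the biderivation on $\Vect_X$. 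An explicit \v{C}ech or Atiyah-class computation should reduce this pairing to an expression that vanishes by the antisymmetry and Jacobi identities satisfied by $\alpha$ --- the same compatibility that will later give the Poisson property in Theorem \ref{thm:poisson}.

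The main obstacle is the characteristic-$p$ case when $p\mid\rank M$: the trace no longer splits $\sEnd M$ (indeed, the identity even lies in $\sEnd_0 M$), so instead of a clean decomposition one must work with the short exact sequences $0\to\sO_X\to\sEnd_0 M\to\sEnd_0 M/\sO_X\to 0$ and $0\to\sEnd_0 M\to\sEnd M\to\sO_X\to 0$ and their associated long exact sequences, and verify that the Yoneda/Poisson compatibility is strong enough to kill obstructions in this filtered setting. I expect this to be the only genuinely delicate point; the overall structure of the argument should go through unchanged.
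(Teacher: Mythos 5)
Your plan is far more elaborate than what the situation demands, and along the way it misses the one observation that makes the lemma essentially trivial. The point is not that the obstruction pairs to zero against the dual space --- it is that the dual space is already zero. Since $X$ is not symplectic, $\alpha$ gives an isomorphism $\omega_X\cong{\cal L}(-C_\alpha)$ with $C_\alpha\neq 0$, so by Serre duality $\Ext^2(V,V)\cong\Hom(V,V\otimes{\cal L}(-C_\alpha))^*$, and the inclusion $V\otimes{\cal L}(-C_\alpha)\hookrightarrow V$ induces an injection $\Hom(V,V\otimes{\cal L}(-C_\alpha))\hookrightarrow\Hom(V,V)$. Simplicity of $V$ says the target is $k\cdot\mathrm{id}_V$, and no nonzero scalar multiple of the identity factors through the proper subsheaf $V\otimes{\cal L}(-C_\alpha)$. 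Hence $\Hom(V,V\otimes\omega_X)=0$ and $\Ext^2(V,V)=0$: there is simply no obstruction space. This is the paper's entire proof.

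Because the whole obstruction group vanishes, the determinantal/traceless decomposition is unnecessary (and both pieces are zero anyway, since $H^2(\sO_X)\cong H^0(\omega_X)^*=0$ for the same reason). The plan to pair $\xi\cup\xi$ against $\eta\in\Hom_0(M,M\otimes\omega_X)$ and make it vanish via an Atiyah-class/\v{C}ech computation and the Jacobi identity is a genuine gap: I do not see a mechanism by which antisymmetry or the Jacobi identity for $\alpha$ would force $\langle\xi\cup\xi,\eta\rangle=0$ for a hypothetical nonzero $\eta$ --- in general the Yoneda square is a nontrivial class in $\Ext^2$, and closedness of the $2$-form is about something else (it enters in the Poisson property, not in smoothness). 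The computation you sketch would not go through; what saves you is that there is no nonzero $\eta$ at all, which is the fact you did not use. The resulting proof also requires no case analysis on $\Pic^0(X)$ and no discussion of $p\mid\rank M$: those concerns evaporate once one sees $\Ext^2(V,V)=0$ outright.
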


\begin{proof}
It suffices to prove that the moduli problem is formally smooth;
i.e., any infinitesimal deformation of a vector bundle can be extended.
But the obstructions to deformations of $V$ are classified by
\[
\Ext^2(V,V)\cong \Hom(V,V\otimes\omega_X)^*\cong \Hom(V,V(-C_\alpha))^*.
\]
Since by assumption $\Hom(V,V)$ is generated by the identity, 
the injective map
\[
\Hom(V,V(-C_\alpha))\to \Hom(V,V)
\]
must be 0.
\end{proof}

In particular, since $\Vect_X$ is smooth, its tangent sheaf is
well-behaved, and duality gives
\[
{\cal T}_U\cong \sExt_U(V,V)
\]
where $V$ is any formally universal family with base $U$.

\begin{lem}
  For any locally free sheaf $V_\alpha$ on $C_\alpha$, the subspace of
  $\Vect_X$ on which $V\otimes \sO_{C_\alpha}\cong V_\alpha$ is locally closed
  and smooth, and the corresponding conormal sheaf is the radical of the
  natural bilinear form on $\Omega_{\Vect_X}$.
\end{lem}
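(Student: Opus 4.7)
The plan is to verify, in order, that the stratum $S = \{V : V|_{C_\alpha}\cong V_\alpha\}\subset \Vect_X$ is locally closed, to compute its tangent space, to establish formal smoothness, and to identify the conormal sheaf with the radical of the natural bilinear form. The workhorse throughout is the short exact sequence
\[
0\to V\otimes\omega_X \xrightarrow{\alpha} V\to V|_{C_\alpha}\to 0,
\]
valid because $V$ is locally free, together with the associated $\Hom(V,-)$ long exact sequence.

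Local closedness is standard: since $V|_{C_\alpha\times U}$ is a flat family of coherent sheaves on $C_\alpha$, the locus of $u\in U$ whose fiber is isomorphic to the fixed $V_\alpha$ is locally closed. For the tangent space, a first-order deformation $\tilde V$ over $k[\epsilon]$ lies in $S$ iff $\tilde V|_{C_\alpha}$ is trivial as a deformation of $V_\alpha$, so by the long exact sequence
\[
T_V S = \ker\bigl(\Ext^1(V,V)\to\Ext^1_{C_\alpha}(V_\alpha,V_\alpha)\bigr) = \im\bigl(\alpha_*\colon\Ext^1(V,V\otimes\omega_X)\to\Ext^1(V,V)\bigr).
\]

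The main obstacle is formal smoothness. Given a small extension $A'\twoheadrightarrow A$ with square-zero kernel $I$ and a family $V_A$ in the stratum, I would first extend $V_A$ to some $V_{A'}/A'$, which is possible by the previous lemma ($\Ext^2(V,V)=0$). The restriction $V_{A'}|_{C_\alpha}$ is then a deformation of $V_A|_{C_\alpha}\cong V_\alpha\otimes A$ of some class $\xi\in\Ext^1_{C_\alpha}(V_\alpha,V_\alpha)\otimes I$, and varying the choice of $V_{A'}$ shifts $\xi$ by the image of $\Ext^1(V,V)\otimes I$. The obstruction to lifting $V_A$ within the stratum therefore lives in $\coker\bigl(\Ext^1(V,V)\to\Ext^1_{C_\alpha}(V_\alpha,V_\alpha)\bigr)\otimes I$, which (using $\Ext^2(V,V)=0$) the long exact sequence identifies with $\Ext^2(V,V\otimes\omega_X)\otimes I$; by Serre duality and simplicity of $V$ the trace gives $\Ext^2(V,V\otimes\omega_X)\cong H^2(\omega_X)\cong k$, and naturality of the trace with respect to boundary maps identifies the obstruction with the image of $\tr(\xi)\in H^1(\sO_{C_\alpha})\otimes I$ under the connecting map of $0\to\omega_X\to\sO_X\to\sO_{C_\alpha}\to 0$. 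But $\tr(\xi)$ is the class of $\det V_{A'}|_{C_\alpha}$ as a deformation of $\det V_\alpha\otimes A$, and by construction it lifts to the class of $\det V_{A'}$ in $H^1(\sO_X)\otimes I$; the connecting map annihilates this image by exactness. Hence the obstruction vanishes and $S$ is formally smooth at $V$.

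For the final assertion, the bilinear form on $\Omega_U\cong\sExt^1(V,V\otimes\omega_X)$ is $B(\xi,\eta)=\langle\xi,\alpha_*\eta\rangle$ in the Serre pairing, and the commutative square in the paper's construction of the biderivation also gives $B(\xi,\eta)=\langle\alpha_*\xi,\eta\rangle$; thus $\alpha_*$ is self-adjoint under Serre duality and the radical of $B$ equals $(\im\alpha_*)^\perp=\ker(\alpha_*)$. On the other hand, the conormal sheaf of $S$ in $U$ is the Serre-annihilator of $T_S=\im\alpha_*$, namely the same subsheaf $(\im\alpha_*)^\perp$. Hence the conormal sheaf coincides with the radical.
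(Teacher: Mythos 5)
Your overall strategy (local closedness by the standard argument, identify the tangent space with $\im\alpha_*$ via the long exact sequence for $0\to V\otimes\omega_X\to V\to V|_{C_\alpha}\to 0$, prove formal smoothness by reducing the obstruction to a statement about determinants via the trace isomorphism $\Ext^2(V,V\otimes\omega_X)\cong H^2(\omega_X)$, and then identify the conormal sheaf with the radical by Serre duality and the self-adjointness square) is precisely the paper's approach. The tangent space computation and the final radical step are correct as written. Two points in the smoothness argument, however, deserve attention.

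First, the assertion that ``naturality of the trace with respect to boundary maps identifies the obstruction with the image of $\tr(\xi)$'' and that ``$\tr(\xi)$ is the class of $\det V_{A'}|_{C_\alpha}$'' is exactly the statement that the trace map on $\Ext^1$ agrees with the derivative of $V\mapsto\det(V)$ on deformation spaces; this is Artamkin's theorem, which the paper cites explicitly, and you should too rather than treating it as formal.

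Second, and more seriously, the step ``by construction it lifts to the class of $\det V_{A'}$ in $H^1(\sO_X)\otimes I$'' has a gap. The lifts of $\det V_A$ to $A'$ form a \emph{torsor} under $H^1(\sO_X)\otimes I$, not a vector space: there is no canonical base point, so ``the class of $\det V_{A'}$'' is not well-defined as an element of $H^1(\sO_X)\otimes I$. To convert the torsor of lifts into the group, the natural base point would be a lift $L$ of $\det V_A$ with $L|_{C_\alpha}\cong\det V_\alpha\otimes A'$ --- but the existence of such an $L$ is precisely the line-bundle case of the formal smoothness you are trying to prove, so as written the argument is circular. What makes the argument go through is exactly the ingredient the paper supplies: by Proposition \ref{prop:Pic_inj}, $\Pic^0(X)\to\Pic^0(C_\alpha)$ is an injective morphism of group schemes, so the fiber of $\Pic(X)\to\Pic(C_\alpha)$ containing $\det V$ is discrete and reduced, hence trivially formally smooth, which provides the required lift $L$ (indeed forces the deformation of $\det V$ inside the stratum to be trivial). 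Once this is in place, $\tr(\xi)$ is indeed a restriction of $\det V_{A'}-L\in H^1(\sO_X)\otimes I$, and the connecting map kills it as you say. So the proposal is close to the paper's proof, but the formal smoothness step needs the $\Pic^0$ injectivity input to be made non-circular.
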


\begin{proof}
  That the subspace is locally closed is standard.
%
%
For smoothness, we need
  to show that any obstruction to a deformation inside this space must
  vanish.  As in \cite{BottacinF:1995,BottacinF:2000}, we obtain a
  self-dual long exact sequence the middle of which is
\[
\to
 \Hom(V_\alpha,V_\alpha)
\to \Ext^1(V,V\otimes\omega_X)\to \Ext^1(V,V)\to \Ext^1(V_\alpha,V_\alpha)\to
 \Ext^2(V,V\otimes \omega_X)\to
\label{eq:le_vect}
\]
Since $\Vect_X$ is smooth, any infinitesimal deformation of $V$ can be
extended, and the question is whether the extension can be chosen in such a
way that $V_\alpha$ remains constant.  Equivalently, any such extension
determines (as an extension of the trivial deformation of $V_\alpha$) a
class in $\Ext^1(V_\alpha,V_\alpha)$, which can be made trivial iff it is
in the image of $\Ext^1(V,V)$, or equivalently iff its image in
$\Ext^2(V,V\otimes\omega_X)$ is trivial.  Since $V$ is simple, the trace
map
\[
\tr:\Ext^2(V,V\otimes\omega_X)\to H^2(\omega_X)
\]
is an isomorphism (it is dual to the natural map $k\to \End(V)$).  It is
thus equivalent to check whether the trace of the class in
$\Ext^1(V_\alpha,V_\alpha)$ is in the image of $\tr(\Ext^1(V,V))$.  But by
\cite{ArtamkinIV:1988}, the trace map is the map induced by $V\mapsto
\det(V)$ on the corresponding spaces of deformations.  In particular the
trace of the given class in $\Ext^1(V_\alpha,V_\alpha)$ is the class of the
associated deformation of
\[
\det(V_\alpha)\cong \det(V)|_{C_\alpha}.
\]
We thus reduce to the case that $V_\alpha$, $V$ are invertible sheaves, in which
case it follows by Proposition \ref{prop:Pic_inj} that the deformation of
$V$ must be trivial, so certainly extends.

Now, by smoothness, $U$ is an \'etale neighborhood in the given locally closed
subspace iff we have an exact sequence
\[
0\to {\cal T}_U\to \sExt^1_U(V,V)\to \sExt^1_U(V_\alpha,V_\alpha),
\]
where the first map is the Kodaira-Spencer map and the second map comes
from the relative analogue of the above long exact sequence.  Equivalently,
the dual sequence
\[
\sHom_U(V_\alpha,V_\alpha)\to \sExt^1_U(V,V\otimes\omega_X)\to \Omega_U\to 0
\]
must be exact, and thus the conormal sheaf is equal to the radical, as
required.
\end{proof}

We now assume the characteristic is not 2.  To check that the bracket is
Poisson, it will suffice to verify this for the induced bracket on
$\Omega_U$ with $U$ an \'etale neighborhood in the subspace parametrizing
locally free sheaves with $V|_{C_\alpha}\cong V_\alpha$.  (Indeed, it then
follows that the ideal sheaf measuring the failure to be Poisson is
contained in the ideal sheaf of the symplectic leaf, and the intersection
over all symplectic leaves is the trivial ideal sheaf.)  The claim about
the radical shows that the corresponding map $\Omega_U\to {\cal T}_U$ is an
isomorphism, and thus the bivector induces a 2-form which is closed iff the
bivector is Poisson.

Restricting to an open affine subscheme of $U$ as convenient, we may assume
that $V|_{C_\alpha}$ is isomorphic to the pullback of $V_\alpha$
(rather than merely isomorphic up to twisting by a line bundle on $U$).  If we
then choose an open affine covering $U_i$ of $X$, we find that the
corresponding transition functions $g_{ij}$ are constant on $C_{\alpha}$, and
thus the natural \v{C}ech cocycle (representing the relevant portion of the
Atiyah class of $V$, see \cite{HuybrechtsD/LehnM:1997} for a nice
exposition)
\[
\beta_{ij}:=(d_U g_{ij}) g_{ij}^{-1} \in \sExt^1_U(V,V\otimes\Omega_U)
\]
is trivial on $C_\alpha$, so may be viewed as a cocycle in
\[
\sExt^1_U(V,V(-C_\alpha)\otimes \Omega_U).
\]
The $2$-form associated to the above pairing is then induced by the cocycle
\[
\gamma_{ijk}:=\frac{1}{2}\Tr(\beta_{ij}\wedge g_{ij}\beta_{jk}g_{ij}^{-1})
\in
Z^2(\sO_X(-C_\alpha)\otimes \Omega^2_U)
\]
by applying $\alpha$ and then taking the cohomological trace.  (This of
course fails in characteristic 2.)  Both maps are constant on $U$, so to
show that the $2$-form is closed, it suffices to show that
$d_U\gamma_{ijk}$ is a coboundary.  Since
\begin{align}
  g_{ij}\beta_{jk}g_{ij}^{-1} &= \beta_{ik}-\beta_{ij},\\
  d_U\beta_{ij} &= -\beta_{ij}\wedge\beta_{ij},\\
  \Tr(\beta_{ij}\wedge\beta_{ij}) &= 0,
\end{align}
we can easily compute
\begin{align}
d_U\gamma_{ijk}
&=
\frac{1}{2}d_U\Tr(\beta_{ij}\wedge\beta_{ik})\\
&=
-\frac{1}{2}\Tr(\beta_{ij}\wedge\beta_{ij}\wedge\beta_{ik})
+\frac{1}{2}\Tr(\beta_{ij}\wedge\beta_{ik}\wedge\beta_{ik})\\
&=
-\frac{1}{6}\check{d} \Tr(\beta_{ij}\wedge\beta_{ij}\wedge\beta_{ij}),
\end{align}
and thus $d_U\gamma_{ijk}$ is actually a (\v{C}ech) coboundary of a cochain
that vanishes to third order on $C_\alpha$.  It follows in particular that
$d_U\gamma=0$ in $\sExt^2_U(V,V(-C_\alpha))\otimes\Omega^3_U$,
as required.

This finishes our proof of the theorem for locally free sheaves, except in
characteristic 2 and 3.  For those cases, we observe that we can extend the
Poisson surface to one over a mixed characteristic discrete valuation ring
(this fails for the bad characteristic 2 case!), and since $\Ext^2(V,V)=0$,
we can similarly lift $V$ and the lift will remain simple.  The pairing on
the cotangent bundle is defined on this larger family and on the generic
fiber is alternating and satisfies the Jacobi identity.  But since these
are closed conditions, they also hold on the special fiber.

\begin{rem}
  Note that for a map $A:R^n\to V^n$ with $R$ a commutative ring and $V$ an
  $R$-module, we have
\[
\Tr(A\wedge A\wedge A)
=
\sum_{i,j,k} A_{ij}\wedge A_{jk}\wedge A_{ki}
=
3\sum_{i<j,k} A_{ij}\wedge A_{jk}\wedge A_{ki}
\in \wedge^3(V).
\]
This lets us define $\Tr(A\wedge A\wedge A)/6$ even when $3$ is not
invertible, allowing the above argument to work directly in that case.
\end{rem}

\subsection{Sheaves of homological dimension $\le 1$}
\label{sec:moduli_hd1}

Now let $U\to \Spl_X$ be an \'etale neighborhood with corresponding family
$M$ of simple sheaves, and suppose that the fibers of M have homological
dimension $\le 1$.  Let $\pi$ denote the projection $U\times X\to
U$.

Twisting $M$ by a line bundle has no effect on the biderivation, so we
may assume that the natural map $\pi^*\pi_*M\to M$ is surjective, and both
$M$ and $M\otimes \omega_X$ are $\pi_*$-acyclic.  Then, as observed in
\cite{HurtubiseJC/MarkmanE:2002b}, one obtains a corresponding locally free
resolution
\[
0\to V\to W\to M\to 0
\]
with $W=\pi^*\pi_*M$ and $V$ simple, such that we can reconstruct $M$ from
$V$.  Indeed, acyclicity implies $\pi_*M$ is locally free, and the
homological dimension condition then ensures that $V$ is locally
free.  Moreover, the induced map
\[
\sHom_U(W,\sO_X)\to \sHom_U(V,\sO_X)
\]
is an isomorphism, and thus given $V$ we can recover $M$ as the cokernel of
the natural injection
\[
V\to \sHom_U(\sHom_U(V,\sO_X),\sO_X).
\]
It follows in particular that $V$ is also a family of simple sheaves,
giving an injective morphism $U\to \Vect_X$, and we need to compare the two
induced biderivations, then understand the symplectic leaves.

For the first part, we use a mild variant of the calculation in
\cite{HurtubiseJC/MarkmanE:2002b}.  The functoriality of long exact
sequences gives a commutative diagram
\[
\begin{CD}
\sHom_U(V,M)  @>>> \sExt^1_U(M,M)\\
@VVV                   @VVV\\
\sExt^1_U(V,V)@>>> \sExt^2_U(M,V)
\end{CD}
\]
Here, the top arrow is surjective, since
\[
\sExt^1_U(W,M)\cong \sHom(\pi_*(M),R^1\pi_*(M))=0,
\]
and the right arrow is injective since
\[
\sExt^1_U(M,W)\cong \sHom(R^1\pi_*(M\otimes\omega_X),\pi_*M)=0.
\]
There is also a natural map $\delta:\sExt^1_U(M,M)\to \sExt^1_U(V,V)$
coming from the above construction (the derivative of the map of moduli
spaces).  Since the top arrow above is surjective, we may represent any
self-extension of $M$ as the pushforward of the resolution $0\to V\to W\to
M\to 0$ by some morphism $\phi:V\to M$.  This gives a self-extension $M'$
as the cokernel in the short exact sequence
\[
0\to V\to W\oplus M\to M'\to 0.
\]
The surjection $W\to M$ then induces a surjection $W\oplus W\to M'$,
so an exact sequence
\[
0\to V'\to W\oplus W\to M'\to 0,
\]
where is a self-extension of $V$ (the image of $M'$ under $\delta$).
Pulling this back through the injection $M\to M'$ gives a short exact
sequence
\[
0\to V'\to V\oplus W\to M\to 0,
\]
so that $V'$ is the pullback through $\phi$ of $0\to V\to W\to M\to 0$.

In other words, given a class in $\sExt^1_U(M,M)$, we can compute its image
under $\delta$ by choosing a preimage in $\sHom_U(V,M)$ and mapping that to
$\sExt^1_U(V,V)$.  In particular, we can put the differential into the
diagram without breaking commutativity.  Note in particular that
commutativity implies that the differential is injective, and in fact
identifies $\sExt^1_U(M,M)$ with the image of the map $\sHom_U(V,M)\to
\sExt^1_U(V,V)$.  We similarly have a commutative square
\[
\begin{CD}
\sHom_U(V,M\otimes\omega_X)  @>>> \sExt^1_U(M,M\otimes\omega_X)\\
@VVV                   @VVV\\
\sExt^1_U(V,V\otimes\omega_X)@>>> \sExt^2_U(M,V\otimes\omega_X)
\end{CD}
\]
Since the connecting maps in this diagram are the duals (up to sign) of the
maps in the original diagram, we see that the dual of the differential also
fits into this diagram; more precisely, we must take the negative of the
dual in order to account for the signs.

Without the diagonal maps, the two squares fit into a commutative cube
induced by $\alpha:\omega_X\to \sO_X$.  We claim that this diagram remains
commutative when we introduce the diagonal maps.  In other words, we need
to show that the composition
\[
\sExt^1_U(V,V\otimes\omega_X)\xrightarrow{-\delta^*}
\sExt^1_U(M,M\otimes\omega_X)\xrightarrow{\alpha}
\sExt^1_U(M,M)\xrightarrow{\delta}
\sExt^1_U(V,V)
\]
agrees with the map induced by $\alpha$.  If we compose both maps with the
connecting map $\sExt^1_U(V,V)\to\sExt^2_U(M,V)$, they agree.  If we can show
that the image of $\sExt^1_U(V,V\otimes\omega_X)\to \sExt^1_U(V,V)$ is
contained in the image of $\delta$, we will be done, since then the error
is in the image of $\delta$, which injects in $\sExt^2_U(M,V)$.  The image
of $\delta$ is the same as the image of $\Hom_U(V,M)$, and thus is the same
as the kernel of the map $\sExt^1_U(V,V)\to \sExt^1_U(V,W)$.  By
commutativity of long exact sequences, it will thus suffice to show that
the map $\sExt^1_U(V,W\otimes\omega_X)\to \sExt^1_U(V,W)$ is 0.  We have a
commutative square
\[
\begin{CD}
\sExt^1_U(W,W\otimes\omega_X)@>\alpha >> \sExt^1_U(W,W)\\
       @VVV                              @VVV\\
\sExt^1_U(V,W\otimes\omega_X)@>\alpha >> \sExt^1_U(V,W)
\end{CD}
\]
The left arrow is an isomorphism, by duality from the fact that
$R^1\pi_*(V)\cong R^1\pi_*(W)$, while the top arrow is $0$ by Proposition
\ref{prop:Pic_inj}, since $\pi_*\omega_X\to \pi_*\sO_X$ is the 0 morphism.
It follows that the bottom arrow is 0 as required.

We have thus shown that the above map of moduli spaces identifies the two
biderivations up to sign, and thus since the biderivation on $\Vect_X$ is
Poisson, so is the biderivation on $U$.  (The minus sign above makes this
an anti-Poisson morphism.)  There are two things remaining for us to do:
first, we need to show that any locally closed subscheme of $U$ on which
$M|^{\dL}_{C_\alpha}$ is fixed is a Poisson subscheme, and second that it
is symplectic.  Of course, we will show both at once if we can identify the
putative leaves of $U$ with the leaves of $\Vect_X$.

Now, $M|^{\dL}_{C_\alpha}$ is represented by the complex $V_\alpha\to
W_\alpha$, where $V_\alpha$ and $W_\alpha$ are the respective restrictions to
$C_\alpha$.  We claim that, just as $V$ determines $M$, $V_\alpha$ determines
$M|^{\dL}_{C_\alpha}$.  Consider the commutative diagram
\[
\begin{CD}
0@>>>\Hom(W,\sO_X)@>>>\Hom(W_\alpha,\sO_{C_\alpha})@>>>\Ext^1(W,\omega_X)@>>>\Ext^1(W,\sO_X)\\
@. @VVV             @VVV                @VVV               @VVV  \\
0@>>>\Hom(V,\sO_X)@>>>\Hom(V_\alpha,\sO_{C_\alpha})@>>>\Ext^1(V,\omega_X)@>>>\Ext^1(V,\sO_X)
\end{CD}
\]
coming from the exact sequence
\[
0\to \omega_X\xrightarrow{\alpha} \sO_X\to \sO_{C_\alpha}\to 0.
\]
Each vertical map fits into a long exact sequence; since $M$ and $M\otimes
\omega_X$ are acyclic, we may apply Serre duality to find that the first
and third vertical maps are isomorphisms, while the fourth map is
injective.  We thus find that the given map $V_\alpha\to W_\alpha$ induces an
isomorphism
\[
\Hom(W_\alpha,\sO_{C_\alpha})\cong \Hom(V_\alpha,\sO_{C_\alpha}).
\]
In particular, $\Hom(V_\alpha,\sO_{C_\alpha})$ is a free
$\End(\sO_{C_\alpha})$-module, and the map $V_\alpha\to W_\alpha$ can be
identified with the natural map
\[
V_\alpha\to \Hom_{\End(\sO_{C_\alpha})}(\Hom(V_\alpha,\sO_{C_\alpha}),\sO_{C_\alpha}).
\]
It follows that the symplectic leaves of $\Vect_X$ pull back to closed
subspaces of the putative symplectic leaves of $U$.

Conversely, we can recover $V_\alpha$ from $M|^{\dL}_{C_\alpha}$ and the
numerical invariants of $M$.  (This also implies that the condition
$M|^{\dL}_{C_\alpha}\cong M^\cdot_\alpha$ is locally closed.)  Since $M$
and $M\otimes \omega_X$ are acyclic, we have a four-term exact sequence
\[
0\to H^{-1}(M|^{\dL}_{C_\alpha})\to H^0(M\otimes \omega_X)\to H^0(M)\to
H^0(M|^{\dL}_{C_\alpha})\to 0,
\]
and thus (locally on the base) we may write
\[
W_\alpha \cong H^0(M|^{\dL}_{C_\alpha})\otimes \sO_{C_\alpha}\oplus
\sO_{C_\alpha}^l
\]
where
\[
l = h^0(M|^{\dL}_{C_\alpha})-h^0(M) = h^0(M|^{\dL}_{C_\alpha})-\chi(M).
\]
Moreover, this isomorphism identifies the map $W_\alpha\to
M|^{\dL}_{C_\alpha}$ with the direct sum of the natural morphism
\[
H^0(M|^{\dL}_{C_\alpha})\otimes \sO_{C_\alpha}\to M|^{\dL}_{C_\alpha}
\]
and the zero morphism
\[
\sO_{C_\alpha}^l\to M|^{\dL}_{C_\alpha}.
\]
But then we can recover $V_\alpha$ as the ``kernel'' of this morphism; more
precisely, the mapping cone of this morphism is a complex representing the
sheaf $V_\alpha$.  In particular, we have
\[
V_\alpha \cong \sO_{C_\alpha}^l\oplus V_{\min},
\]
where
\[
V_{\min}\to H^0(M|^{\dL}_{C_\alpha})\otimes \sO_{C_\alpha}
\]
is the canonical complex quasi-isomorphic to $M|^{\dL}_{C_\alpha}$.  (Of
course, when $\Tor_1(M,\sO_{C_\alpha})=0$, so $M|^{\dL}_{C_\alpha}$ is a
sheaf, $V_{\min}$ is an actual kernel, of the natural (surjective!)
morphism $H^0(M|_{C_\alpha})\otimes \sO_{C_\alpha}\to M|_{C_\alpha}$.)

We thus conclude that the putative symplectic leaf of $\Spl_X$ containing
$M$ can be identified with a Poisson subspace of a symplectic leaf of
$\Vect$.  It remains only to show that this Poisson subspace is open.
(This is automatic in characteristic 0, since then any Poisson subspace of a
symplectic space is open.  The reader only interested in characteristic 0
may still find the direct argument instructive, however.)

Fix a simple sheaf $M$ satisfying the above conditions, with corresponding
locally free sheaf $V$.  In the symplectic leaf of $\Vect_X$ corresponding
to $V_\alpha$, we first impose the following open conditions on the fibers
$V'$:
\begin{itemize}
\item[1.] $V'$ has the same numerical Chern class as $V$.  (This is both
  open and closed.)
\item[2.] $H^0(V')=H^2(V')=0$.  (This is open by semicontinuity)
\item[3.] $\Ext^2(V',\sO_X)=0$ and $\dim\Hom(V',\sO_X)\le
  \rank(W)$. (Also semicontinuity).
\end{itemize}
If $U'$ is an \'etale neighborhood in this open subspace, then
$\sExt^1_{U'}(V',\omega_X)$ is locally free, since the other $\Ext$ groups
are trivial, and thus it has the same rank as
\[
\Ext^1(V,\omega_X)\cong \Ext^1(W,\omega_X)\cong H^1(W)^*
\]
Since $W$ is trivial, we conclude
\[
\rank(\sExt^1_{U'}(V',\omega_X)) = g\rank(W),
\]
where $g=\dim H^1(\sO_X)$ is the genus of the curve over which $X$ is
rationally ruled.  Also, since $V_\alpha$, $W_\alpha$ are fixed and
$h^0(\sO_{C_\alpha})=g+1$, we may compute
\[
h^0(V_\alpha^*) = h^0(W_\alpha^*) = (g+1)\rank(W).
\]
But then the long exact sequence
\[
0\to \sHom_{U'}(V',\sO_X)\to \sHom_{U'}(V',\sO_{C_\alpha})
 \to \sExt^1_{U'}(V',\omega_X)\to\cdots
\]
implies that $\sHom_{U'}(V',\sO_X)$ contains a locally free sheaf of rank
at least $\rank(W)$.  Since we have already imposed an upper bound on the
rank, we conclude that $\sHom_{U'}(V',\sO_X)$ is locally free of rank
$\rank(W)$.  We thus recover an injective map
\[
V'\to \sHom_{U'}(\sHom_{U'}(V',\sO_X),\sO_X).
\]
If we let $M'$ be the cokernel of this map, then it certainly is generated
by global sections, and we can further impose the open conditions that $M'$
and $M'\otimes \omega_X$ are $\pi_*$-acyclic.  We thus obtain a
neighborhood of $V$ in its symplectic leaf such that every sheaf in that
neighborhood comes from a sheaf $M'$.

This completes the proof of Theorem \ref{thm:poisson}.
\qed

\section{The minimal lift}

Much of what we have to say about the minimal lift construction is
independent of the Poisson structure, so we will for the moment
drop the assumption that our surfaces are Poisson.  They also involve
only local considerations, so we drop the requirement that the
surfaces be projective, but also restrict to geometric fibers.  Thus
for the purposes of Sections 4-6, an ``algebraic surface'' will mean
an irreducible smooth $2$-dimensional scheme over an algebraically
closed field.  In addition, when we say that $\pi:X\to Y$ is a
birational morphism, we will mean that $X$ and $Y$ are algebraic
surfaces and $\pi$ is projective.

Given a birational morphism $\pi:X\to Y$, there is an obvious way
to transport coherent sheaves on $Y$ to coherent sheaves on $X$,
namely the inverse image functor $\pi^*$.  This is very well-behaved
on sheaves of homological dimension $\le 1$.

\begin{lem}
  Let $\pi:X\to Y$ be a birational morphism.  Then any coherent
  sheaf $M$ on $Y$ of homological dimension $\le 1$ is $\pi^*$-acyclic.
  Moreover, the sheaf $\pi^*M$ is $\pi_*$-acyclic, and the natural
  map $M\to \pi_*\pi^*M$ is an isomorphism.
\end{lem}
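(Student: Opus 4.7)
All three claims are local on $Y$, so I work in a Zariski neighborhood of a chosen $q\in Y$ small enough that $M$ admits a short exact sequence
\[
0\to K\to F\to M\to 0
\]
with $F$ locally free; the homological dimension hypothesis together with smoothness of $Y$ forces $K$ to be locally free as well. The plan is to prove $\pi^*$-acyclicity by a direct rank/minor argument, and then to apply $R\pi_*$ to the pulled-back resolution, feeding it through the projection formula and the standard vanishings $\pi_*\sO_X=\sO_Y$, $R^1\pi_*\sO_X=0$.

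For $\pi^*$-acyclicity, right-exactness of $\pi^*$ together with $\pi^*$-acyclicity of the locally free $F$ identifies $L^1\pi^*M$ with $\ker(\pi^*K\to \pi^*F)$, so it suffices to show this map of locally free sheaves is still injective after pullback. Checked at a stalk $\xi\in X$ over $q'=\pi(\xi)$, the map is the matrix $\phi$ of the original inclusion $K_{q'}\hookrightarrow F_{q'}$, but now regarded over $B:=\sO_{X,\xi}$ rather than $A:=\sO_{Y,q'}$. Since $A$ is a domain and $\phi$ is $A$-injective of rank $a=\rank K$, some $a\times a$ minor of $\phi$ is a nonzero element of $A$. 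Birationality of $\pi$ ensures that $A\to B$ is an injection of domains sharing a common fraction field, so that minor is a nonzero, hence nonzerodivisor, element of $B$, and $\phi\otimes_A B$ is again injective.

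With $L^1\pi^*M=0$ established we have a short exact sequence
\[
0\to \pi^*K\to \pi^*F\to \pi^*M\to 0
\]
on $X$, and I apply $R\pi_*$. The fibers of $\pi$ have dimension at most $1$, so $R^i\pi_*=0$ for $i\ge 2$. For a locally free sheaf $E\in\{K,F\}$, the projection formula gives $R^i\pi_*\pi^*E\cong E\otimes R^i\pi_*\sO_X$. Two standard properties of a projective birational morphism between smooth surfaces, namely $\pi_*\sO_X=\sO_Y$ (Zariski's main theorem, using normality of $Y$) and $R^1\pi_*\sO_X=0$ (factor $\pi$ into blowups of smooth points; each such blowup has exceptional fiber $\mathbb{P}^1$ with $H^1(\sO_{\mathbb{P}^1})=0$), then yield $\pi_*\pi^*E=E$ and $R^1\pi_*\pi^*E=0$.

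The long exact sequence of $R\pi_*$ collapses to $0\to K\to F\to \pi_*\pi^*M\to 0$ together with $R^i\pi_*\pi^*M=0$ for $i\ge 1$, which simultaneously proves the $\pi_*$-acyclicity of $\pi^*M$ and the isomorphism $M\cong F/K\cong \pi_*\pi^*M$ via the unit of adjunction. The only nontrivial step is the injectivity used in establishing $L^1\pi^*M=0$: everything else is formal once one has the projection formula and the vanishing of $R^i\pi_*\sO_X$ for $i>0$, and the injectivity step rests precisely on the fact that birationality makes the local ring extensions inclusions of domains with a common fraction field, a property which would fail for more general morphisms.
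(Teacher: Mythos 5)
Your proof is correct and follows essentially the same route as the paper: both pull back a two-term locally free resolution of $M$, show that the pulled-back inclusion remains injective using birationality (the paper via generic-fiber injectivity and torsion-freeness of the pulled-back subsheaf, you via a nonvanishing minor over the common fraction field --- the same observation), and then apply $R\pi_*$ to the resulting short exact sequence together with the locally free case. The only difference is presentational: you spell out the projection formula and the vanishing $R^i\pi_*\sO_X=0$ for $i>0$ where the paper compresses that step into ``the case $M$ locally free follows easily.''
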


\begin{proof}
If $\pi$ is monoidal and $M=\sO_Y$, this is a standard fact about
birational morphisms of surfaces.  The result for general $\pi$ follows by
induction along a factorization of $\pi$ into monoidal transformations, and
the case $M$ locally free follows easily.

In general, choose a locally free resolution
\[
0\to V\xrightarrow{B} W\to M\to 0.
\]
Applying $\pi^*$ gives an exact sequence
\[
\pi^*V\xrightarrow{\pi^*B} \pi^*W\to \pi^*M\to 0,
\]
with $\pi^*V$ and $\pi^*W$ locally free.  Since $B$ is injective, $\pi^*B$
is still injective on the generic fiber; since $\pi^*V$ is locally free, we
conclude that $\pi^*B$ is injective, and thus $M$ is $\pi^*$-acyclic as
required.  The remaining claim follows upon taking the direct image of the
resulting short exact sequence.
\end{proof}

Regarding sheaves of homological dimension $\le 1$, we have the following
characterization.

\begin{prop}
A coherent sheaf $M$ on an algebraic surface $X$ has homological
dimension $\le 1$ iff it has no subsheaf of the form $\sO_p$ for
$p$ a closed point of $X$.
\end{prop}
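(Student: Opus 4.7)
The plan is to reduce the statement to a local question at each closed point, and then apply the Auslander--Buchsbaum formula. Since $X$ is a smooth algebraic surface and $M$ is coherent, the homological dimension of $M$ equals the supremum over closed points $p \in X$ of the projective dimensions $\mathrm{pd}_{\sO_{X,p}}(M_p)$. Indeed, at the generic point and at codimension-$1$ points the local rings are a field and DVRs respectively, so stalks there automatically have projective dimension $\le 1$; and for a coherent sheaf on a noetherian scheme, a locally free resolution of bounded length can be assembled from local ones by a standard argument. So the statement reduces to showing: for a finitely generated module $N$ over a $2$-dimensional regular local ring $(R,\mathfrak{m},k)$, one has $\mathrm{pd}_R(N) \le 1$ if and only if $\mathrm{Hom}_R(k,N) = 0$.

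For the local claim, I would apply the Auslander--Buchsbaum formula: whenever $N \ne 0$, we have $\mathrm{pd}_R(N) + \mathrm{depth}_R(N) = \mathrm{depth}(R) = 2$. Hence $\mathrm{pd}_R(N) \le 1$ iff $\mathrm{depth}_R(N) \ge 1$, iff $\mathfrak{m}$ is not an associated prime of $N$. The latter is exactly the condition $\mathrm{Hom}_R(k,N) = 0$.

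Translating back to sheaves: $\mathrm{Hom}_{\sO_X}(\sO_p, M) = 0$ is precisely the statement that there is no nonzero morphism $\sO_p \to M$. Since $\sO_p$ is a simple sheaf (a skyscraper with one-dimensional fiber over a closed point), any nonzero morphism $\sO_p \to M$ is automatically injective. So the vanishing of $\mathrm{Hom}(\sO_p, M)$ for every closed point $p$ is equivalent to the absence of any subsheaf of $M$ isomorphic to some $\sO_p$, which finishes the proof.

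I do not expect a serious obstacle here; the only step requiring care is the first reduction, where one should note that on a smooth noetherian scheme a coherent sheaf of bounded stalkwise projective dimension admits a global locally free resolution of the same length (this is standard, e.g.\ by syzygies: the kernel of a surjection from a large locally free sheaf is reflexive hence locally free once its stalkwise projective dimension drops to zero). Everything else is a direct application of Auslander--Buchsbaum and the characterization of depth via associated primes.
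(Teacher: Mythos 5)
Your proof is correct, but it takes a genuinely different route from the paper's at the key local step. You reduce to the claim that over a $2$-dimensional regular local ring $R$, a finitely generated module $N$ has $\mathrm{pd}_R(N)\le 1$ iff $\Hom_R(k,N)=0$, and you derive this from the Auslander--Buchsbaum formula together with the characterization of depth-$0$ via $\mathfrak m\in\mathrm{Ass}(N)$. The paper instead observes directly that the minimal (Koszul) resolution of $\sO_p$ over $\sO_{X,p}$ is self-dual, which yields the identity $\Tor_2(M,\sO_p)\cong\sHom_X(\sO_p,M)$ in one stroke; combined with the testing criterion that $\mathrm{hd}(M)\le 1$ iff $\Tor_2(M,\sO_p)=0$ for all closed $p$, this is the whole argument. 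Your route is a bit longer but uses only standard general-purpose commutative algebra (Auslander--Buchsbaum, depth vs.\ associated primes) and so is arguably more transparent to a reader who doesn't have the Koszul self-duality in mind; the paper's route is more slick and makes the duality that drives the whole section (see the later results relating $\Ext^1(\sO_p,M)$ and $\Hom(M,\sO_p)$) visible from the start. One small stylistic point: your appeal to ``a standard argument'' for globalizing local projective dimension is fine, but you could avoid it entirely by observing, as the paper implicitly does, that the homological dimension is $\le 1$ iff the coherent sheaf $\sExt^2(M,\sO_X)$ (equivalently, the $\Tor_2$ sheaf against residue fields) vanishes, and vanishing is a stalk-local condition.
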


\begin{proof}
  We note that $M$ has homological dimension $\le 1$ iff
  $\Tor_2(M,\sO_{\overline{x}})=0$ for all (not necessarily closed) points
  $x\in X$.  If $\dim(\overline{x})=2$, then $\sO_{\overline{x}}=\sO_X$, so
  there is no condition, while if $\dim(\overline{x})=1$, then
  $\overline{x}$ is a divisor on $X$, and its structure sheaf has
  homological dimension $1$.  In other words, we find that $M$ has
  homological dimension $\le 1$ iff $\Tor_2(M,\sO_p)=0$ for all closed
  points $p$.  Now, the minimal resolution of $\sO_p$ in the local ring at
  $p$ is self-dual, and thus we have an isomorphism
\[
\Tor_2(M,\sO_p)\cong \sHom_X(\sO_p,M).
\]
Since
$\sHom_X(\sO_p,M)$ is supported on $p$, $\sHom_X(\sO_p,M)=0$ iff
$\Hom(\sO_p,M)=0$, iff $\sO_p$ is not a subsheaf of $M$.
\end{proof}

Since $\pi_*\pi^*M=0$, if $\pi_*$ had a right adjoint $\pi^!$, then
the isomorphism $\pi_*\pi^*M\cong M$ would induce morphisms $\pi^*M\to
\pi^!M$ and $M\to \pi_*\pi^!M$.  Of course, since $\pi_*$ is not
right exact, it cannot have a right adjoint, but duality theory
gives an adjoint to the derived functor.  In our case, we actually
obtain something stronger.

\begin{lem}
Let $\pi:X\to Y$ be a birational morphism of algebraic surfaces.
Then there exists a right exact functor $\pi^!:\Coh_Y\to \Coh_X$,
acyclic on locally free sheaves, such that for any coherent sheaves
$M$, $N$ on $X$ and $Y$ respectively (or bounded complexes of such
sheaves),
\[
\dR\sHom(\dR\pi_*M,N) \cong \dR\sHom_Y(M,\dL\pi^!N).
\]
Moreover, there is a natural isomorphism
\[
N\cong \dR\pi_*\dL\pi^!N.
\]
\end{lem}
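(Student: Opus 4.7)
The plan is to define $\pi^! N := \pi^*(N)\otimes \sO_X(e_\pi)$, where $e_\pi$ is the effective exceptional divisor with $K_X\sim \pi^*K_Y+e_\pi$.  This is the composition of the right exact functor $\pi^*$ with an exact functor (tensoring by a line bundle), so it is right exact.  If $N$ is locally free, then $\pi^*N$ and hence $\pi^!N$ are locally free, in particular acyclic; and since tensoring with a line bundle is exact, $\dL\pi^!N \cong \dL\pi^*N\otimes \sO_X(e_\pi)$.

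For the adjointness I would appeal to Grothendieck--Serre duality for the proper morphism $\pi$ between smooth surfaces.  Because $X$ and $Y$ are smooth, $\pi$ is an lci morphism of relative dimension $0$, so the relative dualizing complex is the line bundle $\omega_{X/Y}\cong \sO_X(e_\pi)$, concentrated in degree $0$.  Grothendieck duality then identifies its exceptional inverse image with $\dL\pi^*(-)\otimes \sO_X(e_\pi) = \dL\pi^!(-)$, producing the claimed adjunction between $\dR\pi_*$ and $\dL\pi^!$.  Those preferring to avoid general machinery can instead factor $\pi$ into monoidal transformations and verify the adjunction inductively, where at each step the blow-up of a smooth point reduces the check to a direct computation using the exceptional curve.

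For the final isomorphism, the projection formula gives
\[
\dR\pi_*\dL\pi^!N \;\cong\; \dR\pi_*(\dL\pi^*N\otimes \sO_X(e_\pi))
\;\cong\; N\otimes^{\dL}\dR\pi_*\sO_X(e_\pi),
\]
so it remains to show $\dR\pi_*\sO_X(e_\pi)\cong \sO_Y$.  Factoring $\pi=\pi_1\circ\pi_2$ with $\pi_2$ a single blow-up of a point on an intermediate surface, one has $e_\pi = \pi_2^*e_{\pi_1}+e_{\pi_2}$; the projection formula together with induction on the number of blow-ups reduces the claim to the case of a single monoidal transformation at a smooth point, where the exact sequence $0\to \sO_X\to \sO_X(E)\to \sO_E(-1)\to 0$, combined with $H^i(E,\sO_E(-1))=0$ and the standard computation $R^j\pi_*\sO_X = \delta_{j,0}\sO_Y$, closes the argument.

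The main obstacle is not a single hard computation but the derived-functor bookkeeping: one must be confident that the naively defined $\pi^!$ really implements the Grothendieck-dual functor (so that a left-derivation computed in $\Coh$ matches the object characterized by right-adjointness on derived categories), and that the projection formula and base change apply in the forms invoked.  Both are automatic here because $\sO_X(e_\pi)$ is a line bundle and $\pi$ is a proper lci morphism of smooth schemes, but if one wishes to sidestep Grothendieck--Serre duality altogether, the clean exposition is the blow-up-by-blow-up induction indicated above.
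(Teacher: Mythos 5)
Your proof is correct, and it arrives at the same functor (indeed $\sO_X(e_\pi)\cong\omega_X\otimes\pi^*\omega_Y^{-1}$), but the route differs from the paper's in both halves. For the adjunction, the paper starts from the abstract existence of the right adjoint $\pi^!$ on the derived level and then \emph{derives} the formula $\pi^!N^\cdot\cong\dL\pi^*N^\cdot\otimes\omega_X\otimes\omega_Y^{-1}$ internally, by observing $\pi^!\omega_Y\cong\omega_X$ (compatibility of the dualizing complex with the structure morphisms) and twisting; you instead define the functor by the explicit formula and invoke the lci form of Grothendieck--Serre duality (Deligne's observation, which the paper also cites in its subsequent remark) to identify it with the exceptional inverse image. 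These are essentially the same content packaged differently: the paper's derivation is self-contained modulo the existence of $\pi^!$, while yours front-loads the Deligne formula. For the final isomorphism $N\cong\dR\pi_*\dL\pi^!N$, the paper reads it off adjointness via $\dR\pi_*\sO_X\cong\sO_Y$ (applying $\dR\sHom(-,N)$), whereas you use the projection formula to reduce to $\dR\pi_*\sO_X(e_\pi)\cong\sO_Y$ and then prove that cohomological vanishing by induction on blow-ups using $H^i(\sO_E(-1))=0$. Your route is more computational and makes the exceptional-curve geometry visible; the paper's is shorter and purely formal. Either is fine; the inductive argument has the minor advantage of being usable even if one is nervous about the derived-category bookkeeping you flag at the end.
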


\begin{proof}
Since $\pi_*$ is proper, $\pi_*=\pi_!$, and thus $\dR\pi_*$ has a
right adjoint $\pi^!$ on the derived category.  Since $X$ and $Y$ are smooth,
\[
\omega_X
\cong
(\phi\circ \pi)^!\sO_{\bar{k}}[-2]
\cong
\pi^!(\phi^!\sO_{\bar{k}}[-2])
\cong
\pi^!\omega_Y,
\]
where $\phi:Y\to \Spec(\sO_{\bar{k}})$ is the structure morphism
of the $\bar{k}$-scheme $Y$.  But then for any locally free sheaf $V$ on $Y$,
\[
\pi^!V
\cong
\pi^!(\omega_Y\otimes(\omega_Y^{-1}\otimes V))
\cong
\pi^!\omega_Y\otimes \dL\pi^*(\omega_Y^{-1}\otimes V)
\cong
\pi^*V\otimes \omega_X\otimes \pi^*\omega_Y^{-1}
\]
and thus in particular $\pi^!V$ is a sheaf.  Using a locally free
resolution, we conclude that
\[
\pi^!N^\cdot \cong \dL\pi^*N^\cdot \otimes \omega_X\otimes \pi^*\omega_Y^{-1}
\]
for any complex $N^\cdot$.  In other words, the derived functor $\pi^!$
is the left derived functor of the right exact functor
\[
N\mapsto \pi^*N\otimes \omega_X\otimes \pi^*\omega_Y^{-1}.
\]

For the remaining claim, we have
\[
\dR\pi_*\dL\pi^!N
\cong
\dR\sHom_Y(\sO_X,\dL\pi^!N)
\cong
\dR\sHom(\dR\pi_*\sO_X,N)
\cong
\dR\sHom(\sO_Y,N)
\cong
N.
\]
\end{proof}

\begin{rem}
More generally, if $\pi:X\to Y$ is a proper morphism of Gorenstein
varieties, then
\[
\pi^!N\cong \dL\pi^*N\otimes \omega_X\otimes
\pi^*\omega_Y^{-1} [\dim(X)-\dim(Y)];
\]
this is essentially an observation
of Deligne \cite[Prop.~7]{DeligneP:1966}.
Similarly, acyclicity on sheaves of homological dimension 1 and the
isomorphisms $\dR\pi_*\dL\pi^*N\cong N$, $\dR\pi_*\dL\pi^!N\cong N$ hold
for any proper birational morphism between smooth varieties.  The
next lemma encapsulates the key feature of the surface case, which
is much less common for birational morphisms in higher dimension.
\end{rem}

\begin{lem}
Let $\pi:X\to Y$ be a birational morphism of algebraic surfaces.
Then for any sheaf $M$ on $X$, we have $R^i\pi_*M=0$ for $i\ge 2$.
In particular, any quotient of a $\pi_*$-acyclic sheaf is $\pi_*$-acyclic.
\end{lem}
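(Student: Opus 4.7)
The plan is to deduce both statements from the general fact that higher direct images vanish above the maximum fiber dimension. First I would observe that since $\pi:X\to Y$ is birational and projective, its fibers are all of dimension $\le 1$: the generic fiber is a point, and any fiber of dimension $2$ would be an irreducible component of $X$, contradicting the irreducibility and birationality. (Equivalently, since $X$ and $Y$ are smooth surfaces, $\pi$ factors as a finite sequence of monoidal transformations, and the fibers of each monoidal transformation are either points or isomorphic to $\mathbb{P}^1$.)

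Next I would invoke the standard theorem that for a proper morphism $f:X\to Y$ of Noetherian schemes whose fibers have dimension at most $d$, $R^if_*\mathcal{F}=0$ for every coherent $\mathcal{F}$ on $X$ and every $i>d$. Applied to $\pi$, this immediately gives $R^i\pi_*M=0$ for $i\ge 2$. (One can prove this via the theorem on formal functions, which reduces to the analogous vanishing for coherent cohomology on the fibers, where it is just Grothendieck's vanishing $H^i(Z,\mathcal{G})=0$ for $i>\dim Z$. If a more hands-on argument is preferred, one can factor $\pi$ into monoidal transformations and use the Leray spectral sequence for composition to reduce to the single-blowup case, where the exceptional fiber is $\mathbb{P}^1$.)

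For the second assertion, suppose $0\to K\to M\to N\to 0$ is a short exact sequence of coherent sheaves on $X$ with $R^1\pi_*M=0$. The long exact sequence of higher direct images contains the segment
\[
R^1\pi_*M\to R^1\pi_*N\to R^2\pi_*K.
\]
The leftmost term vanishes by hypothesis, and the rightmost term vanishes by the first part of the lemma. Hence $R^1\pi_*N=0$, i.e. $N$ is $\pi_*$-acyclic.

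No step is really an obstacle here: the only nontrivial ingredient is Grothendieck's vanishing theorem for higher direct images above the fiber dimension, which is a standard black box. The rest is bookkeeping with the long exact sequence.
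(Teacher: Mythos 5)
Your proof is correct and follows the same route as the paper, which simply observes that the fibers of $\pi$ are at most $1$-dimensional and invokes the standard vanishing of higher direct images above the fiber dimension. You have merely spelled out the details (the fiber-dimension bound, the citation of the vanishing theorem, and the long exact sequence for the ``in particular'' clause) that the paper's one-line proof leaves implicit.
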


\begin{proof}
Indeed, the fibers of $\pi$ are 1-dimensional.
\end{proof}

Since $\pi^!$ is a twist of $\pi^*$, we find that sheaves of
homological dimension $1$ are also acyclic for $\pi^!$.  Thus if 
$M$ is a sheaf of homological dimension $1$, we also have a natural
isomorphism
\[
M\cong \pi_*\pi^!M
\]
Either this or the isomorphism $\pi_*\pi^*M\cong M$ induces by adjointness
a natural morphism
\[
\pi^*M\to \pi^!M,
\]
which will be an isomorphism outside the exceptional locus of $\pi$.

\begin{defn}
Let $\pi:X\to Y$ be a birational morphism of algebraic surfaces,
and let $M$ be a coherent sheaf on $Y$ with homological dimension
$\le 1$.  Then the {\em minimal lift} of $M$ is the image $\pi^{*!}M$
of the natural map $\pi^*M\to \pi^!M$.
\end{defn}

\begin{rem}
In the theory of local systems, there is a functor ``middle
extension'', the image of the natural transformation $j_!\to j_*$ where $j$
is an open immersion \cite{KatzNM:1990}.  Since Hitchin systems are
relaxations of moduli spaces of differential equations, it is likely that
this is more than just a formal analogy.  Also, the middle extension is
most naturally defined on perverse sheaves; is there an analogous notion in
the present setting?
\end{rem}

To justify the name, we have the following quasi-universal property.

\begin{prop}
Let $N$ be a sheaf of homological dimension $\le 1$ on $Y$, and
suppose $M$ is a $\pi_*$-acyclic sheaf on $X$ with direct image $N$.
Then $M$ has a natural subquotient isomorphic to $\pi^{*!}N$.
\end{prop}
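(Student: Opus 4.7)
The plan is to realize $\pi^{*!}N$ as $\eta(\im(\epsilon))$ for two canonical maps $\epsilon$ and $\eta$ through $M$, thereby exhibiting it as a quotient of a subsheaf of $M$. Since $\pi_*M \cong N$ and $N$ has homological dimension $\le 1$, the counit of the adjunction $\pi^*\dashv \pi_*$ at $M$ gives a canonical sheaf map
\[
\epsilon: \pi^*N = \pi^*\pi_*M \to M.
\]
On the other side, the derived adjunction $\dR\pi_*\dashv \dL\pi^!$ yields
\[
\Hom_{D(X)}(M,\dL\pi^!N) \cong \Hom_{D(Y)}(\dR\pi_*M, N).
\]
Because $M$ is $\pi_*$-acyclic we have $\dR\pi_*M = N$, and because $N$ has homological dimension $\le 1$ the earlier lemma gives $\dL\pi^!N = \pi^!N$ (a sheaf). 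Hence both sides are honest sheaf-level $\Hom$'s, and the identity on $N$ corresponds to a canonical sheaf morphism
\[
\eta: M \to \pi^!N.
\]

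The key step is to show that $\eta\circ\epsilon: \pi^*N \to \pi^!N$ coincides with the canonical map $\alpha_N$ whose image defines $\pi^{*!}N$. I would verify this by computing the $(\pi_*\dashv \pi^!)$-adjoint of $\eta\circ\epsilon$, namely the composition
\[
\pi_*\pi^*N \xrightarrow{\pi_*\epsilon} \pi_*M \xrightarrow{\pi_*\eta} \pi_*\pi^!N \to N,
\]
where the last arrow is the counit. By the triangle identity for $\pi^*\dashv\pi_*$ applied at $M$, $\pi_*\epsilon$ is the inverse of the unit $\pi_*M \xrightarrow{\sim} \pi_*\pi^*\pi_*M$, and so agrees with the canonical isomorphism $\pi_*\pi^*N \cong N$. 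By the triangle identity for $\pi_*\dashv\pi^!$ applied at $M$ (and the definition of $\eta$ as adjoint to $\id_N$), the composition of $\pi_*\eta$ with the counit $\pi_*\pi^!N \to N$ is the identity on $N$. Splicing, the full composite equals the canonical iso $\pi_*\pi^*N \cong N$, which is exactly the $(\pi_*\dashv\pi^!)$-adjoint of $\alpha_N$. Hence $\eta\circ\epsilon = \alpha_N$.

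With this identification in hand the proposition is immediate: letting $A := \im(\epsilon) \subseteq M$, the restriction $\eta|_A: A \to \pi^!N$ has image $\eta(\im(\epsilon)) = \im(\eta\circ\epsilon) = \im(\alpha_N) = \pi^{*!}N$, so $\pi^{*!}N \cong A/\ker(\eta|_A)$ is exhibited as a canonical subquotient of $M$, functorial in the datum of the isomorphism $\pi_*M \cong N$. The only real work is the adjunction chase in the previous paragraph; I do not expect any genuine obstacle beyond carefully bookkeeping the two triangle identities and remembering that the derived functors collapse to ordinary functors on the classes of sheaves in play.
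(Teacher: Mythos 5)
Your argument is correct and is essentially the paper's own proof: both construct $\pi^*N\to M$ and $M\to\pi^!N$ by adjointness from the isomorphism $\pi_*M\cong N$, and both verify that the composite agrees with the canonical map $\pi^*N\to\pi^!N$ by showing that its $(\pi_*\dashv\pi^!)$-adjoint, the composite $N\to\pi_*\pi^*N\to\pi_*M\to\pi_*\pi^!N\to N$, is the identity (via the triangle identities). Your write-up just makes the adjunction chase more explicit than the paper does.
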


\begin{proof}
The isomorphism $\pi_*M\cong N$ and its inverse induce by adjointness
natural maps
\[
\pi^*N\to M\to \pi^!N.
\]
It remains only to show that the composition agrees with the canonical
map $\pi^*N\to \pi^!N$.  Since we obtained the factors by adjointness,
it follows that the composition
\[
N\to \pi_*\pi^*N\to \pi_*M\to \pi_*\pi^!N\to N
\]
is the identity, which implies that the map $\pi^*N\to \pi^!N$ is adjoint
to the inverse of the canonical morphism $N\to \pi_*\pi^*N$, as required.
\end{proof}

Of course, we also want to know that $\pi^{*!}M$ itself satisfies this!

\begin{prop}
Let $M$ be a sheaf of homological dimension $\le 1$ on $Y$.  Then
$\pi^{*!}M$ is $\pi_*$-acyclic, and there is a natural
isomorphism $\pi_*\pi^{*!}M\cong M$.
\end{prop}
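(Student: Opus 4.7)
The plan is to exploit the factorization $\pi^*M \twoheadrightarrow \pi^{*!}M \hookrightarrow \pi^!M$ given by the definition of $\pi^{*!}M$ as the image of the canonical map $\phi\colon \pi^*M\to \pi^!M$. This splits the problem into two short exact sequences
\[
0\to K\to \pi^*M\to \pi^{*!}M\to 0,\qquad 0\to \pi^{*!}M\to \pi^!M\to C\to 0,
\]
and both assertions of the proposition can be extracted by applying $\pi_*$ to these.

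First I would handle acyclicity. Since $\pi^*M$ is $\pi_*$-acyclic (by the first lemma of this section) and $\pi^{*!}M$ is a quotient of it, the lemma that any quotient of a $\pi_*$-acyclic sheaf is $\pi_*$-acyclic (itself a consequence of the cohomological dimension bound $R^i\pi_*=0$ for $i\ge 2$) immediately gives that $\pi^{*!}M$ is $\pi_*$-acyclic.

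For the isomorphism $\pi_*\pi^{*!}M\cong M$, I would proceed as follows. Applying $\pi_*$ to the second sequence, and using the acyclicity of $\pi^{*!}M$ just established, produces a short exact sequence
\[
0\to \pi_*\pi^{*!}M\xrightarrow{b} \pi_*\pi^!M\to \pi_* C\to 0,
\]
so the map $b$ is injective, and $\pi_*\pi^!M\cong M$ by the previous lemma. On the other hand, writing $\phi=\iota\circ q$ with $q$ the surjection and $\iota$ the injection above, we have $\pi_*\phi=b\circ a$ where $a:=\pi_* q\colon \pi_*\pi^*M\to \pi_*\pi^{*!}M$. Thus the composition $b\circ a\colon M\to M$ (after the canonical identifications $\pi_*\pi^*M\cong M\cong \pi_*\pi^!M$) equals $\pi_*\phi$.

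The key point, and the only nontrivial step, is to identify $\pi_*\phi$ with the identity. This is exactly what the adjunction construction of $\phi$ gives: by definition, $\phi$ is adjoint under $\pi_*\dashv \pi^!$ to the canonical isomorphism $M\cong \pi_*\pi^!M$, so the naturality square for the unit $\eta\colon \mathrm{id}\to \pi_*\pi^*$ forces $\pi_*\phi\circ \eta_M$ to equal the canonical iso $M\cong \pi_*\pi^!M$, and since $\eta_M$ is itself the canonical iso, $\pi_*\phi$ is indeed the identity under these identifications. (Equivalently, one can cite the proof of the preceding proposition, where exactly this composition is shown to be the identity for any $\pi_*$-acyclic sheaf with direct image $M$, applied to the sheaf $\pi^*M$ itself.) Given this, $b\circ a$ is an isomorphism, and since $b$ is injective, both $a$ and $b$ must be isomorphisms, yielding $\pi_*\pi^{*!}M\cong M$ as required.
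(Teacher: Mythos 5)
Your proof is correct and follows essentially the same route as the paper: both establish acyclicity from the fact that $\pi^{*!}M$ is a quotient of the $\pi_*$-acyclic sheaf $\pi^*M$, and both extract the isomorphism from the observation that the composition $M\cong\pi_*\pi^*M\to\pi_*\pi^{*!}M\to\pi_*\pi^!M\cong M$ is the identity together with injectivity of the second map. The paper simply states this more tersely, while you spell out the adjunction argument for why $\pi_*$ of the canonical map $\pi^*M\to\pi^!M$ is the identity (a fact already made explicit in the proof of the preceding proposition, as you note).
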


\begin{proof}
Since $\pi^{*!}M$ is a quotient of the $\pi_*$-acyclic sheaf $\pi^*M$,
it is certainly $\pi_*$-acyclic.  Moreover, the composition
\[
M\cong\pi_*\pi^*M\to \pi_*\pi^{*!}M\to \pi_*\pi^!M\cong M
\]
is the identity, and the map
\[
\pi_*\pi^{*!}M\to \pi_*\pi^!M
\]
is injective.  The claim follows.
\end{proof}

The minimal lift behaves well under composition of birational morphisms.

\begin{prop}
Let $\pi_1:X\to Y$, $\pi_2:Y\to Z$ be birational morphisms of
algebraic surfaces.  Then for any coherent sheaf $M$ on $Z$ with
homological dimension $\le 1$,
\[
(\pi_2\circ\pi_1)^{*!}M
\cong
\pi_1^{*!}\pi_2^{*!}M.
\]
\end{prop}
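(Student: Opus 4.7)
The plan is to verify the isomorphism using the quasi-universal property of the minimal lift, reducing the remaining identification to a Tor-vanishing statement.

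First, I would confirm that $\pi_2^{*!}M$ has homological dimension at most $1$, so that $\pi_1^{*!}$ applies: $\pi_2^{*!}M$ is a subsheaf of $\pi_2^!M \cong \pi_2^*M \otimes \omega_Y \otimes \pi_2^*\omega_Z^{-1}$, which has homological dimension $\le 1$ as a line-bundle twist of $\pi_2^*M$ (the latter inheriting a two-term locally free resolution from $M$). By the characterization of such sheaves via the absence of $\sO_p$-subsheaves, this property is inherited by subsheaves. Setting $\pi := \pi_2 \circ \pi_1$ and $N := \pi_1^{*!}\pi_2^{*!}M$, iterating the identity $\pi_{i*}\pi_i^{*!} \cong \operatorname{id}$ together with the Leray spectral sequence shows that $N$ is $\pi_*$-acyclic with $\pi_*N \cong M$.

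The quasi-universal property then yields a factorization $\pi^*M \to N \to \pi^!M$ of the canonical map, whose image is $(\pi_2\circ\pi_1)^{*!}M$. Using $\pi^* = \pi_1^*\pi_2^*$ and naturality, the first arrow factors as
\[
\pi_1^*\pi_2^*M \twoheadrightarrow \pi_1^*\pi_2^{*!}M \twoheadrightarrow \pi_1^{*!}\pi_2^{*!}M = N,
\]
both arrows being surjective (by right-exactness of $\pi_1^*$ and the construction of $\pi_1^{*!}$, respectively). Hence the image of $N \to \pi^!M$ equals $(\pi_2\circ\pi_1)^{*!}M$, giving a canonical surjection $N \twoheadrightarrow (\pi_2\circ\pi_1)^{*!}M$.

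It remains to show this surjection is injective, equivalently that $N \to \pi^!M$ is injective. The latter factors as
\[
N \hookrightarrow \pi_1^!\pi_2^{*!}M \xrightarrow{\pi_1^!(\iota)} \pi_1^!\pi_2^!M = \pi^!M
\]
with $\iota:\pi_2^{*!}M\hookrightarrow\pi_2^!M$; the first arrow is injective by construction, so the main obstacle is the potential non-injectivity of the second. Writing $\pi_1^! \cong \pi_1^*\otimes L$ for a line bundle $L$ and using exactness of tensoring with $L$, injectivity of $\pi_1^!(\iota)$ reduces to the vanishing of $\Tor_1^{\sO_Y}(\sO_X, Q)$, where $Q := \pi_2^!M/\pi_2^{*!}M$ is concentrated on the exceptional locus of $\pi_2$. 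By the explicit description of the canonical map $\pi_2^*M \to \pi_2^!M$ as multiplication by the defining section of $e_{\pi_2}$, one identifies $Q$ as a twist of a sheaf on the reduced exceptional divisor that itself has homological dimension $\le 1$ on $Y$; combined with the birationality of $\pi_1$, which makes $\sO_X$ locally free over $\sO_Y$ away from a finite set of points, this gives the required vanishing on the relevant open set. When the centers of $\pi_1$ lie on the exceptional locus of $\pi_2$, I would proceed by induction on the number of monoidal transformations factoring $\pi_1$, reducing to an explicit local calculation at a single blowup where the divisor relation $e_{\pi_2 \circ \pi_1} = e_{\pi_1} + \pi_1^* e_{\pi_2}$ and the compatible factorization of defining sections make the identification transparent.
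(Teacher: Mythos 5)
Your overall framing matches the paper's: reduce the claim to injectivity of the last arrow in
\[
\pi_1^*\pi_2^*M \twoheadrightarrow \pi_1^*\pi_2^{*!}M \twoheadrightarrow \pi_1^{*!}\pi_2^{*!}M \hookrightarrow \pi_1^!\pi_2^{*!}M \xrightarrow{\;\pi_1^!(\iota)\;} \pi_1^!\pi_2^!M,
\]
and, via $\pi_1^! \cong \pi_1^*\otimes L$ with $L$ a line bundle, reduce that to $L_1\pi_1^*Q=0$ where $Q:=\pi_2^!M/\pi_2^{*!}M$. Up to that point your argument is sound (the detour through the quasi-universal property of the minimal lift is fine, and your preliminary check that $\pi_2^{*!}M$ has homological dimension $\le 1$ is a reasonable thing to make explicit).

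The gap is in the final step, where you need $L_1\pi_1^*Q=0$. You try to establish this by ``identifying $Q$ as a twist of a sheaf on the reduced exceptional divisor'' of $\pi_2$ and then, in the case where the centers of $\pi_1$ meet that divisor, by an unspecified induction on a factorization of $\pi_1$ and an ``explicit local calculation.'' Neither piece is actually carried out, and the explicit description you gesture at is not needed (and the claim about the \emph{reduced} exceptional divisor is not what one gets in general: $Q$ is naturally supported on the non-reduced scheme $e_{\pi_2}$). The short argument is structural: both $\pi_2^{*!}M$ and $\pi_2^!M$ are $\pi_{2*}$-acyclic with naturally isomorphic direct images $M$, so $\dR\pi_{2*}Q=0$, i.e.\ $Q$ is $\pi_2$-exceptional. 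If $Q$ had a subsheaf $\sO_p$ for a closed point $p$, that subsheaf would have nontrivial direct image, contradicting $\pi_{2*}Q=0$; by the paper's characterization (Proposition~4.3), $Q$ therefore has homological dimension $\le 1$. Now apply the paper's Lemma on pullbacks of sheaves of homological dimension $\le 1$ (Lemma~4.1): any such sheaf is $\pi_1^*$-acyclic, so $L_1\pi_1^*Q=0$ and $\pi_1^!(\iota)$ is injective. This single observation replaces your entire last paragraph, including the case split on whether $\pi_1$ blows up points of the exceptional locus of $\pi_2$, which in the correct argument never needs to be distinguished.
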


\begin{proof}
Consider the composition
\[
\pi_1^*\pi_2^*M\to
\pi_1^*\pi_2^{*!}M\to
\pi_1^{*!}\pi_2^{*!}M\to
\pi_1^!\pi_2^{*!}M\to
\pi_1^!\pi_2^!M.
\]
The second map is surjective and the third map injective by definition
of $\pi_1^{*!}$, and the first map is surjective since $\pi_1^*$ is
right exact.  It remains only to show that the fourth map is injective,
for which it will suffice to show that $\pi_2^!M/\pi_2^{*!}M$ has 
homological dimension 1.  If not, then the quotient contains the structure
sheaf of a point, and thus has nontrivial direct image under $\pi_2$, 
contradicting the fact that $\pi_2^{*!}$ and $\pi_2^!M$ are
$\pi_{2*}$-acyclic sheaves with isomorphic direct images.

It follows that $\pi_1^{*!}\pi_2^{*!}M$ is the image of the natural map
\[
(\pi_2\circ\pi_1)^*M\cong \pi_1^*\pi_2^*M\to \pi_1^!\pi_2^!M
\cong
(\pi_2\circ\pi_1)^!M
\]
as required.
\end{proof}

We also want to compare the minimal lift to the lifting operation in the
theory of Hitchin systems.

\begin{prop}
Let $\pi:X\to Y$ be a birational morphism of algebraic surfaces,
and let $C\subset X$ be a curve intersecting the exceptional locus
of $\pi$ transversely.  Then for any torsion-free coherent sheaf
$M$ on $C$, viewed as a sheaf on $X$,
\[
\pi^{*!}\pi_*M\cong M.
\]
\end{prop}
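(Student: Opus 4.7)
The plan is to exploit the quasi-universal property of $\pi^{*!}$ established above, together with injectivity of the natural map $\phi\colon M\to\pi^!\pi_*M$, to realize $\pi^{*!}\pi_*M$ as a subsheaf of $M$; the quotient is then shown to vanish by a direct-image calculation.

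First I verify the hypotheses needed for $\pi^{*!}\pi_*M$ to be defined and for the quasi-universal property to apply. The transversality assumption forces $C$ to contain no component of $\mathrm{Exc}(\pi)$, so $\pi|_C$ is a finite morphism; hence $\pi|_{\supp(M)}$ has zero-dimensional fibers, giving $R^i\pi_*M=0$ for $i\ge 1$ and showing $M$ is $\pi_*$-acyclic. Moreover, by the $\pi^*\dashv\pi_*$ adjunction, any $\sO_p$-subsheaf of $\pi_*M$ corresponds to a morphism $\pi^*\sO_p\to M$ whose image is supported on the zero-dimensional scheme $\pi^{-1}(p)\cap C$; the torsion-freeness of $M$ on $C$ forces this image to vanish, so $\pi_*M$ has no $\sO_p$-subsheaves and therefore has homological dimension $\le 1$.

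Next I show $\phi$ is injective. Over $X\setminus\mathrm{Exc}(\pi)$, $\pi$ is an isomorphism and $\phi$ reduces to the identity; thus $\ker\phi$ is a subsheaf of $M$ supported on $\mathrm{Exc}(\pi)\cap C$, a zero-dimensional subscheme by transversality. Torsion-freeness of $M$ again yields $\ker\phi=0$. Combining with the quasi-universal property, the image of $\pi^*\pi_*M\to\pi^!\pi_*M$ factors through $\phi(M)$, realizing $\pi^{*!}\pi_*M$ as a subsheaf of $M$.

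To conclude, let $Q:=M/\pi^{*!}\pi_*M$. Since $\pi^{*!}\pi_*M$ is a quotient of the $\pi_*$-acyclic sheaf $\pi^*\pi_*M$, it is itself $\pi_*$-acyclic with direct image $\pi_*M$; the long exact sequence applied to $0\to\pi^{*!}\pi_*M\to M\to Q\to 0$ yields $\pi_*Q=0$. On $X\setminus\mathrm{Exc}(\pi)$ the natural map $\pi^*\pi_*M\to M$ is an isomorphism, so $\pi^{*!}\pi_*M=M$ there and $Q$ is supported on the zero-dimensional set $C\cap\mathrm{Exc}(\pi)$. A zero-dimensional coherent sheaf whose direct image vanishes must itself be zero, so $Q=0$ and $\pi^{*!}\pi_*M\cong M$. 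The main technical hurdle is establishing homological dimension $\le 1$ for $\pi_*M$ so that the minimal lift is even defined; once this and the injectivity of $\phi$ are in hand, the remaining steps follow cleanly from the adjunction formalism and the zero-dimensionality of $C\cap\mathrm{Exc}(\pi)$ forced by transversality.
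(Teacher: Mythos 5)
Your proof is correct and follows essentially the same approach as the paper: apply the quasi-universal property to exhibit $\pi^{*!}\pi_*M$ inside $M$, use torsion-freeness and transversality to kill the residual subsheaf, then show the residual quotient vanishes because it is $0$-dimensional with trivial direct image. The one genuine variation is your argument that $\pi_*M$ has homological dimension $\le 1$: you use the adjunction $\Hom(\sO_p,\pi_*M)\cong\Hom(\pi^*\sO_p,M)$ and note that the image of any such map lies in the $0$-dimensional locus $\pi^{-1}(p)\cap C$, hence vanishes by torsion-freeness; the paper instead twists $M$ by $\pi^*L^{-1}$ for $L$ ample on $Y$ to remove global sections and then rules out $0$-dimensional subsheaves of $\pi_*M$ that way. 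Your version is somewhat cleaner since it avoids the reduction step justifying that twisting commutes with both $\pi_*$ and $\pi^{*!}$, and it ties more directly into the characterization of homological dimension $\le 1$ proved earlier in the paper.
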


\begin{proof}
The transversality assumption implies that $\pi|_C:C\to Y$ has
$0$-dimensional fibers, so $M$ is certainly $\pi_*$-acyclic.
Moreover, if we twist $M$ by the inverse of a sufficiently ample
bundle, we can arrange that $M$ has no global sections, so that the
same applies to its direct image.  But then $\pi_*M$ certainly
cannot have a 0-dimensional subsheaf, so that $\pi_*M$ has homological
dimension $\le 1$.

It follows that $M$ has $\pi^{*!}\pi_*M$ as a subquotient.
Moreover, they are isomorphic away from the exceptional locus, so
the residual sub- and quotient sheaves of $M$ are supported there.
Transversality then makes both sheaves $0$-dimensional, so that the
torsion-free hypothesis makes the subsheaf trivial.  We thus have
a short exact sequence
\[
0\to \pi^{*!}\pi_*M\to M\to T\to 0
\]
where $T$ is $0$-dimensional.  As before, we compute $\pi_*T=0$ from
the long exact sequence, and thus $T=0$ as required.
\end{proof}

\begin{rem}
In particular, if the sheaf $N$ on $Y$ is an invertible
sheaf on a smooth curve (or simply the image of such a curve under
a morphism), then $\pi^{*!}N$ is the corresponding sheaf on the
strict transform.
\end{rem}

Another important example of minimal lifts is the following.

\begin{prop}
  Let $\pi:X\to Y$ be a birational morphism of algebraic surfaces, and
  suppose that $C$ is a curve on $Y$.  Then $\pi^{*!}\sO_C\cong \sO_{C'}$
  for some curve $C'$.  If $X,Y,\pi$ are Poisson and $C$ is the anticanonical
  curve on $Y$, then $C'$ is the anticanonical curve on $X$.
\end{prop}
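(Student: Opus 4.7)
My plan is to compute $\pi^*\sO_C$ and $\pi^!\sO_C$ explicitly, identify the canonical map between them, and determine the image by a local calculation. Using the resolution $0\to \mathcal{L}(-C)\to \sO_Y\to \sO_C\to 0$ together with the $\pi^*$-acyclicity of sheaves of homological dimension $\le 1$, we have $\pi^*\sO_C\cong \sO_{\pi^*C}$, the structure sheaf of the total transform (a Cartier divisor on $X$). Combining with the formula $\pi^!N\cong \pi^*N\otimes \omega_X\otimes \pi^*\omega_Y^{-1}=\pi^*N\otimes \sO_X(e_\pi)$ from the earlier lemma, we obtain $\pi^!\sO_C\cong \sO_{\pi^*C}(e_\pi)$, the line bundle $\sO_X(e_\pi)$ restricted to $\pi^*C$.

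Next I would identify the canonical map $\pi^*\sO_C\to\pi^!\sO_C$. By naturality it is obtained from the one for $M=\sO_Y$, namely a morphism $\sO_X\to \sO_X(e_\pi)$. Using $\pi_*\sO_X(e_\pi)=\pi_*\omega_{X/Y}\cong \sO_Y$ (which follows from the projection formula together with $\pi_*\omega_X\cong \omega_Y$), the adjunction identifies $\Hom(\sO_X,\sO_X(e_\pi))$ with $\Hom(\sO_Y,\sO_Y)$, and the adjoint of the identity on $\sO_Y$ is (up to canonical scaling) the map given by multiplication by the tautological section $s_{e_\pi}\in H^0(\sO_X(e_\pi))$ vanishing exactly on $e_\pi$. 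Tensoring with $\sO_{\pi^*C}$, the natural map is then just multiplication by $s_{e_\pi}$ as a map $\sO_{\pi^*C}\to \sO_{\pi^*C}(e_\pi)$.

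Third, locally on $X$ I trivialize $\sO_X(e_\pi)$ so that $s_{e_\pi}$ becomes a local equation $\sigma$ for $e_\pi$, and write $\pi^*C=V(f)$ with $f=f_1\sigma^n$, $\gcd(f_1,\sigma)=1$. The map becomes multiplication by $\sigma$ on $\sO_X/(f)$; since $\sO_{X,p}$ is a regular (hence UFD) local ring, the image is $(\sigma)/((f)\cap(\sigma))\cong \sO_X/(f_1\sigma^{\max(n-1,0)})$. These local cyclic quotients glue to show the image sheaf is the structure sheaf $\sO_{C'}$ of the effective divisor
\[
C':=\max\bigl(\pi^*C-e_\pi,\,0\bigr),
\]
with the truncation taken componentwise (equivalently, in each irreducible component the multiplicity drops by that of $e_\pi$, but no lower than zero). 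Since the strict transform of every component of $C$ appears in $\pi^*C$ and not in $e_\pi$, the divisor $C'$ is nonzero, and we obtain $\pi^{*!}\sO_C\cong \sO_{C'}$.

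For the Poisson statement, if $C=C_\alpha$ is the anticanonical curve on $Y$ and $\pi$ is compatibly Poisson, then the earlier lemma on lifting Poisson structures says that $\pi^*C_\alpha - e_\pi$ is already effective and is exactly the anticanonical divisor associated to the induced Poisson structure on $X$. In particular the truncation in the definition of $C'$ is vacuous, giving $C'=\pi^*C_\alpha-e_\pi=C_{\pi^*\alpha}$, as required. The main obstacle in this plan is the clean identification in step~2 that no spurious scalar or sign disturbs the canonical map; once that is pinned down (via the uniqueness of a nonzero global section of $\sO_X(e_\pi)$ up to scalar, combined with the adjunction normalization), the rest reduces to a direct calculation in a two-dimensional regular local ring.
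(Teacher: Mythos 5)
Your approach is correct and genuinely different from the paper's. The paper reduces to the monoidal case (a single blowup at a point $p$) via the composition proposition $\pi^{*!}=\pi_1^{*!}\pi_2^{*!}$, then reads off $\pi^{*!}\sO_C\cong\sO_{\pi^*C-e}$ from the minimal resolution of $\sO_C$ in $\sO_{Y,p}$. You instead identify the canonical map $\pi^*\sO_C\to\pi^!\sO_C$ globally as multiplication by the tautological section of $\sO_X(e_\pi)$ and then compute its image by a local colon-ideal calculation, which yields the sharper closed formula $C'=\max(\pi^*C-e_\pi,0)$ (componentwise) for an arbitrary birational morphism. That formula is not explicitly stated in the paper and is a nice byproduct of your route; the paper's reduction avoids ever having to say what $C'$ is except in the single-blowup case.

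One minor slip: the factorization $f=f_1\sigma^n$ with $\gcd(f_1,\sigma)=1$ is only possible when $\sigma$ is (a unit times) a prime power. At a point where two exceptional components of $e_\pi$ meet with different multiplicities, the local equation $\sigma$ of $e_\pi$ is a product of distinct primes with different exponents, and no such $n$ exists unless $\pi^*C$ contains them in the same ratio. The repair is immediate and doesn't change your conclusion: the image of multiplication by $\sigma$ on $\sO_{X,p}/(f)$ is $(\sigma)/\bigl((\sigma)\cap(f)\bigr)\cong\sO_{X,p}/\bigl(f:\sigma\bigr)$, and in a UFD $\bigl(f:\sigma\bigr)=\bigl(f/\gcd(f,\sigma)\bigr)$, which is exactly the principal ideal cutting out $\max(\pi^*C-e_\pi,0)$ componentwise. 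With that phrasing substituted, the local step goes through verbatim, and the Poisson part (using the lemma that $\pi^*C_\alpha-e_\pi$ is effective and anticanonical when $\pi$ is Poisson) is correct as written.
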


\begin{proof}
It suffices to consider the case that $\pi$ is monoidal, with center $p$.
If $p\notin C$, then $\pi^*\sO_C\cong \pi^!\sO_C\cong \sO_{\pi^{-1}C}$,
so the result is immediate.  If $p\in C$, then over the local ring at $p$,
$\sO_C$ has a minimal resolution
\[
0\to (\sO_Y)_p\to (\sO_Y)_p\to \sO_C\otimes (\sO_Y)_p\to 0,
\]
from which we can compute
\[
\pi^{*!}\sO_C\cong \sO_{\pi^*C-e}.
\]

In particular, in the Poisson case, we must have $p\in C_\alpha$, and
$\pi^*C_\alpha-e$ is indeed the anticanonical divisor on $X$.
\end{proof}


\section{Invisible sheaves}

Since $\pi^{*!}M$ is naturally a quotient of $\pi^*M$ and a subsheaf of
$\pi^!M$, and adjointness makes the latter sheaves easy to deal with, this
suggests that an investigation of the corresponding kernel and cokernel
might be rewarding.

A key property of those sheaves is that they are in a sense invisible to
$\pi_*$.  To be precise, as we will see, they satisfy the following
definition.  We again restrict our attention to smooth surfaces over
algebraically closed fields.

\begin{defn}
  Let $\pi:X\to Y$ be a birational morphism of algebraic surfaces.  A
  coherent sheaf $E$ on $X$ is {\em $\pi$-invisible} if it is acyclic
  with trivial direct image.
\end{defn}

\begin{rem}
In other words, $E$ is $\pi$-invisible iff $\dR\pi_*E=0$.
\end{rem}

We will omit $\pi$ from the notation when it is clear from context.
Note that an invisible sheaf is necessarily supported (set-theoretically)
on the exceptional locus, since $\pi$ is an isomorphism elsewhere.
Also, if $E$ had a $0$-dimensional subsheaf, that subsheaf would have
nontrivial direct image; we thus find that invisible sheaves have
homological dimension $1$.

\begin{prop}
Suppose $M^\bullet$ is a complex of sheaves on $X$ such that
$\dR\pi_*M^\bullet=0$.  Then every homology sheaf of $M$ is invisible.
\end{prop}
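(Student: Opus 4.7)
The plan is to exploit the hypercohomology spectral sequence together with the cohomological bound on $\pi_*$ already established.

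First, I would invoke the spectral sequence
\[
E_2^{pq} = R^p\pi_* H^q(M^\bullet) \Longrightarrow R^{p+q}\pi_* M^\bullet.
\]
By the preceding lemma (that $R^i\pi_*$ vanishes on coherent sheaves for $i\ge 2$), only the rows $p=0$ and $p=1$ can be nonzero. Since every $d_r$ with $r\ge 2$ has either source or target in a row with $p\ge 2$, the spectral sequence already degenerates at $E_2$.

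Thus for each $n$ we obtain a short exact sequence
\[
0\to R^1\pi_* H^{n-1}(M^\bullet)\to R^n\pi_* M^\bullet\to \pi_* H^n(M^\bullet)\to 0.
\]
By hypothesis $R^n\pi_* M^\bullet=0$ for every $n$, so both outer terms vanish: $\pi_* H^n(M^\bullet)=0$ and $R^1\pi_* H^{n-1}(M^\bullet)=0$ for all $n$. Equivalently, $\dR\pi_* H^n(M^\bullet)=0$ for every $n$, which is precisely the statement that each homology sheaf is $\pi$-exceptional.

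There is no real obstacle here; the only point to be careful about is that the spectral sequence is available (one can reduce to the bounded case, or simply note that the statement concerns one degree at a time and so the truncation $\tau_{\le N}\tau_{\ge -N} M^\bullet$ suffices). If one wished to avoid spectral sequences entirely, the same argument can be phrased by truncating: if $n$ is maximal with $H^n(M^\bullet)\ne 0$, the distinguished triangle $\tau_{<n}M^\bullet\to M^\bullet\to H^n(M^\bullet)[-n]$ combined with $R^i\pi_*=0$ for $i\ge 2$ forces $\dR\pi_* H^n(M^\bullet)=0$, and one then induces on the length of the complex using $\tau_{<n}M^\bullet$.
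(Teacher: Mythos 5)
Your argument is exactly the paper's: the same hypercohomology spectral sequence, the same observation that $R^p\pi_*=0$ for $p\ge2$ forces collapse at $E_2$, and the same conclusion that vanishing of the abutment kills every $E_2$ term. The extra remarks about truncation are fine but not needed.
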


\begin{proof}
Since $\pi$ has $\le 1$-dimensional fibers, $R^p\pi_*=0$ for $p\ge 2$.
Thus the hypercohomology spectral sequence
\[
R^p\pi_*(H^q(M^\bullet))\Rightarrow R^{p+q}\pi_*M^\bullet
\]
collapses at the $E_2$ page.  Since the limit of the spectral sequence is
0, every term on the $E_2$ page is 0, and thus
\[
\pi_*(H^q(M^\bullet)) = R^1\pi_*(H^q(M^\bullet))=0.
\]
In other words, $H^q(M^\bullet)$ is invisible for all $q$.
\end{proof}

The most important special case of this for our purposes is the following.

\begin{cor}\label{cor:invisible_ker_coker}
  Suppose $f:M\to N$ is a morphism of $\pi_*$-acyclic sheaves on $X$ such
  that $\pi_*f$ is an isomorphism.  Then $\ker(f)$ and $\coker(f)$ are
  invisible.  Conversely, if $f$ has invisible kernel and cokernel,
  then $\pi_*f$ is an isomorphism; moreover, $\im(f)$ is also
  $\pi_*$-acyclic with $\pi_*M\cong \pi_*\im(f)\cong \pi_*N$.
\end{cor}

\begin{proof}
The first claim is immediate from the proposition; for the second,
factoring $f$ through its image reduces to the cases that $f$ is injective
or surjective.  But since the cokernel/kernel is invisible, we obtain an
isomorphism between the remaining two derived direct images, and composing
find that $\pi_*f$ is an isomorphism as required.
\end{proof}

In particular, this implies that the sheaves $\ker(\pi^*M\to \pi^{*!}M)$ and
$\pi^!M/\pi^{*!}M$ are invisible, as we indicated above.

\begin{prop}
The category of $\pi$-invisible sheaves is closed under taking kernels,
cokernels and extensions.  In particular, it is an
abelian category.
\end{prop}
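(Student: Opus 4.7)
The plan is to deduce everything from the long exact sequence of higher direct images, using the earlier vanishing $R^i\pi_* = 0$ for $i \geq 2$ (which holds because $\pi$ has one-dimensional fibers). This means that for any short exact sequence of coherent sheaves, the derived direct image produces only a six-term exact sequence, which is short enough to read off the desired conclusions by inspection.

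\emph{Extensions.} Given $0 \to E_1 \to F \to E_2 \to 0$ with $E_1, E_2$ exceptional, I would apply $\dR\pi_*$ and immediately conclude $\pi_* F = R^1\pi_* F = 0$, since these terms are trapped between vanishing terms on either side.

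\emph{Kernels and cokernels.} For a morphism $f : E_1 \to E_2$ between exceptional sheaves, I would factor $f$ via its image $I := \im(f)$ into two short exact sequences
\[
0 \to \ker f \to E_1 \to I \to 0 \quad\text{and}\quad 0 \to I \to E_2 \to \coker f \to 0.
\]
The first long exact sequence, combined with $\dR\pi_* E_1 = 0$, yields $\pi_*\ker f = 0$, $R^1\pi_* I = 0$, and a connecting isomorphism $\pi_* I \cong R^1\pi_*\ker f$. The second long exact sequence, combined with $\dR\pi_* E_2 = 0$, yields $\pi_* I = 0$ (which forces $R^1\pi_*\ker f = 0$, so $\ker f$ is exceptional) and also $\pi_*\coker f = 0$, $R^1\pi_*\coker f = 0$, so $\coker f$ is exceptional.

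\emph{Abelian structure.} Once the subcategory is closed under kernels, cokernels, and extensions, it inherits an abelian structure from $\Coh_X$: the inclusion is exact and fully faithful, so the canonical map from coimage to image formed inside the subcategory coincides with the isomorphism already present in $\Coh_X$. There is really no obstacle here; the one nontrivial input is the vanishing $R^i\pi_* = 0$ for $i \geq 2$, which was already established. (Alternatively, one could observe that the statement is equivalent to saying that the exceptional sheaves are precisely the objects of the heart $\Coh_X \subset D^b(\Coh_X)$ that lie in the kernel of $\dR\pi_*$, a thick triangulated subcategory, and then apply the general fact that the heart-part of such a kernel is abelian; but the direct long-exact-sequence argument above seems more transparent.)
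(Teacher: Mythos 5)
Your proof is correct and follows essentially the same route as the paper: the paper packages the kernel/cokernel case into a corollary of a spectral-sequence proposition (for complexes with $\dR\pi_* = 0$), whereas you unwind it directly into the two long exact sequences coming from factoring $f$ through its image, but both arguments hinge on exactly the same input, namely $R^i\pi_* = 0$ for $i \ge 2$. The extension case is handled identically in both.
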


\begin{proof}
  The claim for kernels and cokernels follows from Corollary
  \ref{cor:invisible_ker_coker}.  For extensions
\[
0\to E\to E'\to E''\to 0,
\]
the long exact sequence corresponding to $\dR \pi_*$ immediately tells us
that if two of the sheaves are invisible, then so is the third.
\end{proof}

Another source of invisible sheaves is the following.

\begin{prop}\label{prop:invisible_image}
Suppose $f:M\to N$ is a morphism of sheaves on $X$ such that
$M$ is $\pi_*$-acyclic, and $\pi_*N$ has homological dimension $\le 1$.
If $f$ vanishes outside the exceptional locus of $\pi$, then
$\im(f)$ is invisible, and $\pi_*f=0$.
\end{prop}

\begin{proof}
Since $M$ is $\pi_*$-acyclic, so is its quotient $\im(f)$.  Since $f$
vanishes outside the exceptional locus, $\pi_*\im(f)$ is 0-dimensional; but
$\pi_*N$ has no $0$-dimensional subsheaf.
\end{proof}

Invisible sheaves also interact nicely with the lifting operations.

\begin{prop}
Let $E$ be an invisible sheaf on $X$, and $M$ a sheaf on $Y$ with
homological dimension $\le 1$.  Then
\[
\dR\Hom(\pi^*M,E)=\dR\Hom(E,\pi^!M)=0.
\]
\end{prop}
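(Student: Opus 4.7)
The plan is to reduce both vanishing statements to the adjunction identities for $\pi^*$ and $\pi^!$, using the acyclicity properties already established earlier in the paper.

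First I would observe that the hypotheses let us drop all derived symbols on the ``$M$'' side. Since $M$ has homological dimension $\le 1$, the first lemma of Section 4 gives that $M$ is $\pi^*$-acyclic, so $\pi^*M=\dL\pi^*M$. Similarly, because $\pi^!$ differs from $\pi^*$ by tensoring with the line bundle $\omega_X\otimes\omega_Y^{-1}$ (by the formula established in the lemma defining $\pi^!$), the sheaf $M$ is also $\pi^!$-acyclic, so $\pi^!M=\dL\pi^!M$. This identifies the hypothesized complexes with their genuine derived pullbacks.

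Next I would invoke adjunction. For the first vanishing, using that $\dL\pi^*$ is left adjoint to $\dR\pi_*$,
\[
\dR\Hom_X(\pi^*M,E)\cong \dR\Hom_X(\dL\pi^*M,E)\cong \dR\Hom_Y(M,\dR\pi_*E).
\]
For the second, using the adjunction $\dR\Hom_Y(\dR\pi_*F,N)\cong \dR\Hom_X(F,\dL\pi^!N)$ supplied by the lemma in Section~4,
\[
\dR\Hom_X(E,\pi^!M)\cong \dR\Hom_X(E,\dL\pi^!M)\cong \dR\Hom_Y(\dR\pi_*E,M).
\]
Since $E$ is $\pi$-exceptional, $\dR\pi_*E=0$ by definition, and both right-hand sides vanish.

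There is no serious obstacle here: the proof is essentially formal once the acyclicity of $M$ for $\pi^*$ and $\pi^!$ is in hand, and both acyclicities are already in the paper. The only thing to be mildly careful about is applying $\dL\pi^!$ (not just $\pi^!$) on the second line so that the adjunction is literally the one stated earlier; the acyclicity of $M$ then lets one replace $\dL\pi^!M$ by $\pi^!M$ at the end.
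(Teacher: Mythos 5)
Your proof is correct and is essentially the paper's own argument: both vanishings follow from the $(\dL\pi^*,\dR\pi_*)$ and $(\dR\pi_*,\dL\pi^!)$ adjunctions together with $\dR\pi_*E=0$. The only difference is that you explicitly flag the $\pi^*$- and $\pi^!$-acyclicity of $M$ to replace $\pi^*M$ and $\pi^!M$ by their derived versions, a step the paper leaves implicit.
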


\begin{proof}
This follows immediately from adjointness:
\begin{align}
\dR\Hom(\pi^*M,E)&\cong \dR\Hom(M,\dR\pi_*E)=0,\notag\\
\dR\Hom(E,\pi^!M)&\cong \dR\Hom(\dR\pi_*E,M)=0.\notag
\end{align}
\end{proof}

\begin{prop}
Let $E$ and $M$ be as before.  Then
\[
\Hom(\pi^{*!}M,E)=\Hom(E,\pi^{*!}M)=0.
\]
\end{prop}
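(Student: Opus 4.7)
The plan is to deduce this immediately from the previous proposition together with the definition of $\pi^{*!}M$ as the image of the canonical map $\pi^*M\to \pi^!M$. By that definition we have a surjection $\pi^*M\twoheadrightarrow \pi^{*!}M$ and an injection $\pi^{*!}M\hookrightarrow \pi^!M$.

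For the first vanishing, I would apply $\Hom(-,E)$ to the surjection $\pi^*M\twoheadrightarrow \pi^{*!}M$. Left exactness of $\Hom(-,E)$ gives an injection
\[
\Hom(\pi^{*!}M,E)\hookrightarrow \Hom(\pi^*M,E),
\]
and the target vanishes by the preceding proposition. Hence $\Hom(\pi^{*!}M,E)=0$.

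For the second vanishing, I would dually apply $\Hom(E,-)$ to the injection $\pi^{*!}M\hookrightarrow \pi^!M$, getting an injection
\[
\Hom(E,\pi^{*!}M)\hookrightarrow \Hom(E,\pi^!M),
\]
whose target vanishes by the same preceding proposition. There is no hard step here; the only thing to notice is that we are using just the $0$-th Hom (left exactness of the covariant and contravariant Hom functors), so we never need the full derived vanishing $\dR\Hom=0$ — the ordinary Hom vanishing from the previous proposition is already enough.
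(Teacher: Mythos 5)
Your proof is correct and is essentially identical to the paper's: the paper likewise notes $\Hom(E,\pi^{*!}M)\subset \Hom(E,\pi^!M)=0$ and $\Hom(\pi^{*!}M,E)\subset \Hom(\pi^*M,E)=0$, which is exactly your left-exactness argument applied to the canonical surjection from $\pi^*M$ and injection into $\pi^!M$.
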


\begin{proof}
Indeed,
\begin{align}
\Hom(E,\pi^{*!}M)&\subset \Hom(E,\pi^!M)=0,\notag\\
\Hom(\pi^{*!}M,E)&\subset \Hom(\pi^*M,E)=0.\notag
\end{align}
\end{proof}

\begin{rem}
We will see below that this actually characterizes those sheaves which are
minimal lifts.
\end{rem}

Invisible sheaves behave well under lifts and direct images.

\begin{lem}
  Suppose $\pi:X\to Y$ and $\phi:Y\to Z$ are birational morphisms of
  algebraic surfaces.  If a sheaf $E$ on $Y$ is $\phi$-invisible, then
  $\pi^*E$, $\pi^!E$, and $\pi^{*!}E$ are $\phi\circ\pi$-invisible.  If a
  sheaf $E$ on $X$ is $\phi\circ\pi$-invisible, then it is
  $\pi_*$-acyclic and $\pi_*E$ is $\phi$-invisible.  Moreover, $\pi_*$
  and $\pi^*$ take projective objects (of the category of invisible
  sheaves) to projective objects, and $\pi_*$ and $\pi^!$ take injective
  objects to injective objects.
\end{lem}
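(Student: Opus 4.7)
The plan is to exploit the derived adjunctions $\pi^*\dashv \pi_*\dashv \pi^!$ together with the strong acyclicity properties established for sheaves of homological dimension $\le 1$, and the fact that the fibres of both $\pi$ and $\phi$ are at most $1$-dimensional.

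For the first statement, any $\phi$-exceptional sheaf $E$ on $Y$ has homological dimension $\le 1$ (a general property of exceptional sheaves, noted earlier), so the earlier acyclicity lemmas give that $\pi^*E$ and $\pi^!E$ are $\pi_*$-acyclic with direct image $E$.  Composing derived direct images,
\[
\dR(\phi\circ\pi)_*\pi^*E\cong \dR\phi_*\dR\pi_*\pi^*E\cong \dR\phi_*E=0,
\]
and the same argument applies to $\pi^!E$.  Hence $\pi^*E$ and $\pi^!E$ are $\phi\circ\pi$-exceptional.  For $\pi^{*!}E$, observe that the natural map $\pi^*E\to \pi^!E$ is a morphism of $(\phi\circ\pi)_*$-acyclic sheaves both of whose direct images vanish, so $(\phi\circ\pi)_*$ sends it to the (trivially invertible) zero map.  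Corollary \ref{cor:exceptional_ker_coker}, applied with $\phi\circ\pi$ in place of $\pi$, then implies that its image $\pi^{*!}E$ is $(\phi\circ\pi)_*$-acyclic with zero direct image, i.e.\ is $\phi\circ\pi$-exceptional.

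For the converse, use the Leray spectral sequence
\[
E_2^{p,q}=R^p\phi_*R^q\pi_*E\Rightarrow R^{p+q}(\phi\circ\pi)_*E.
\]
Both $\pi$ and $\phi$ have $\le 1$-dimensional fibres, so only $p,q\in\{0,1\}$ contribute.  The $d_2$ differentials connect terms across two columns and one row and therefore land in zero groups, so $E_2=E_\infty$.  Since the abutment vanishes, all four $E_2$-terms vanish.  In particular $\phi_*\pi_*E=R^1\phi_*\pi_*E=0$, so $\pi_*E$ is $\phi$-exceptional, and $\phi_*R^1\pi_*E=0$.  Because $R^1\pi_*E$ is supported on the $0$-dimensional image of the exceptional locus of $\pi$, and $\phi_*$ of a $0$-dimensional sheaf vanishes only if the sheaf itself is $0$, we conclude $R^1\pi_*E=0$, so $E$ is $\pi_*$-acyclic.

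For the projectivity/injectivity statements, the key observation is that for $A$ of homological dimension $\le 1$ and $B$ that is $\pi_*$-acyclic, the derived adjunction $\dR\Hom_X(\dL\pi^*A,B)\cong \dR\Hom_Y(A,\dR\pi_*B)$ collapses to a natural isomorphism
\[
\Ext^i_X(\pi^*A,B)\cong \Ext^i_Y(A,\pi_*B),
\]
and symmetrically $\Ext^i_Y(\pi_*B,A')\cong \Ext^i_X(B,\pi^!A')$ when $B$ is $\pi_*$-acyclic and $A'$ has homological dimension $\le 1$.  The closure of the exceptional subcategories under extensions implies that $\Ext^1$ computed there agrees with $\Ext^1$ in $\Coh$.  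Now $P$ projective in the $\phi$-exceptional category means $\Ext^1_Y(P,-)=0$ on that category; combining the two parts proved above with the displayed adjunction gives
\[
\Ext^1_X(\pi^*P,E')\cong \Ext^1_Y(P,\pi_*E')=0
\]
for any $\phi\circ\pi$-exceptional $E'$, so $\pi^*P$ is projective.  The remaining three claims — $\pi_*$ of a projective is projective, $\pi_*$ of an injective is injective, and $\pi^!$ of an injective is injective — follow in exactly the same way from the two adjunction identities above, using in each case that the first part ensures the relevant image is an exceptional sheaf in the target category.  The main obstacle is really the bookkeeping in paragraph~1 (showing that the minimal-lift image is itself exceptional) and confirming that the derived adjunctions collapse to ungraded $\Ext$-level isomorphisms; once both are in hand the rest is a mechanical consequence.
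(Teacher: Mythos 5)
Your proof is correct and follows essentially the same route as the paper: derived-functor composition and the Leray spectral sequence for the first two claims, and the $\pi^*\dashv\pi_*\dashv\pi^!$ adjunctions (collapsed to $\Ext^1$ level via the acyclicity results) for the projective/injective transfer. The only cosmetic difference is your detour through Corollary \ref{cor:exceptional_ker_coker} to handle $\pi^{*!}E$; the paper observes more directly that all three lifts $E'$ satisfy $\dR\pi_*E'\cong E$, so $\dR(\phi\circ\pi)_*E'=0$ at once.
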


\begin{proof}
First suppose $E$ is $\phi_*$-invisible, and let $E'$ be any of the three
lifts of $E$ to $X$.  Then we have a natural isomorphism
$\dR\pi_*E'\cong E$, and thus
\[
\dR(\phi\circ\pi)_*E'\cong \dR\phi_*\dR\pi_*E'\cong \dR\phi_*E=0
\]
as required.

Now, let $E$ be a $\phi\circ\pi$-invisible sheaf on $X$.  We again have
\[
\dR\phi_*\dR\pi_*E = 0.
\]
Since $\phi$ and $\pi$ have $\le 1$-dimensional fibers, the corresponding
spectral sequence collapses at the $E_2$ page, and thus
\[
R^p\phi_*R^q\pi_*E=0
\]
for $p,q\in \{0,1\}$.  In particular, $\pi_*E$ is invisible, and we need
only show that $E$ is $\pi_*$-acyclic.  But since $R^1\pi_*E$ is supported on
the indeterminacy locus of $\pi^{-1}$, the only way it can have trivial
direct image is to be 0.

The claims about projective and injective objects follow from adjointness.
For instance if $E$ is projective in the category of
$\pi\circ\phi$-invisible sheaves, and $E'$ is any $\phi$-invisible
sheaf, then
\[
\Ext^{p+1}(\pi_*E,E')
\cong
\Ext^{p+1}(E,\pi^!E')
=
0.
\]
\end{proof}

Thus the atomic case (monoidal transformations blowing up a single point)
will be particularly useful.  Here, we can completely characterize the
invisible sheaves.

\begin{lem}
Suppose that $\pi:X\to Y$ is a monoidal transformation, blowing up the
point $p\in Y$ to the exceptional line $e\subset X$.  Then the functor
\[
E\mapsto \Hom(\sO_e(-1),E)
\]
establishes an equivalence between the category of $\pi$-invisible
sheaves and the category of finite-dimensional vector spaces over $\bar{k}$.
\end{lem}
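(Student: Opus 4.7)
The plan is to exhibit $\sO_e(-1)$ as a projective generator of the category of $\pi$-exceptional sheaves, with $\End(\sO_e(-1)) = \bar k$, so that the functor $\Hom(\sO_e(-1),-)$ yields the claimed equivalence with $\bar k$-vector spaces.

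First I would verify the cohomological properties of $\sO_e(-1)$. Since the scheme-theoretic fiber $\pi^{-1}(p)$ equals $e$, the theorem on formal functions gives $R^i\pi_*\sO_e(-1)$ stalk $H^i(\P^1,\sO(-1))$ at $p$, which vanishes for $i=0,1$; so $\sO_e(-1)$ is $\pi$-exceptional. Next, $\End_X(\sO_e(-1)) = H^0(\sO_e) = \bar k$, and applying the local-to-global spectral sequence with $\sExt^1_{\sO_X}(\sO_e,\sO_e) \cong N_{e/X} = \sO_e(-1)$ (using $e^2 = -1$) shows $\Ext^1_X(\sO_e(-1),\sO_e(-1)) = H^0(\sO_e(-1)) + H^1(\sO_e) = 0$. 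Consequently the functor $\Psi\colon V\mapsto V\otimes_{\bar k}\sO_e(-1)$ lands in $\pi$-exceptional sheaves and is fully faithful, with $\Phi := \Hom(\sO_e(-1),-)$ satisfying $\Phi\circ\Psi \cong \mathrm{id}$.

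The substantive task is essential surjectivity of $\Psi$: every $\pi$-exceptional sheaf $E$ is isomorphic to $\sO_e(-1)^a$ for some $a\ge 0$. I would prove this by induction on the minimal $n$ with $\mathcal{I}_e^n E = 0$. The base case $n=1$ places $E$ on $e\cong\P^1$; using that exceptional sheaves are pure of dimension one (they have homological dimension $\le 1$ and hence no zero-dimensional subsheaves), $E$ decomposes as $\bigoplus\sO_e(m_i)$, and $H^0 = H^1 = 0$ forces every $m_i = -1$. For the inductive step, the key is to find an $\sO_e(-1)$-subsheaf of $E$: I consider the socle $E[\mathcal{I}_e] = \sHom(\sO_e,E)$, which is nonzero (containing $\mathcal{I}_e^{n-1}E$) and pure one-dimensional on $e$, hence $\bigoplus\sO_e(m_i)$ with every $m_i \le -1$ (from $H^0(E[\mathcal{I}_e])\subset H^0(E) = 0$). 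Once some $m_i = -1$, we have $\sO_e(-1)\hookrightarrow E$; the quotient $E/\sO_e(-1)$ is again $\pi$-exceptional (since $\sO_e(-1)$ has vanishing cohomology), so by a sub-induction on the generic rank of $E$ along $e$ it is $\sO_e(-1)^{a-1}$, and the $\Ext^1$ vanishing splits the resulting extension.

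The main obstacle is verifying that at least one $m_i = -1$ in the socle decomposition. If all $m_i \le -2$, then $H^1(E[\mathcal{I}_e]) \ne 0$, and the long exact sequence for $0\to E[\mathcal{I}_e]\to E\to E/E[\mathcal{I}_e]\to 0$ combined with $H^*(E) = 0$ forces $H^0(E/E[\mathcal{I}_e]) \cong H^1(E[\mathcal{I}_e]) \ne 0$. I plan to exploit this nonzero section: by adjunction it corresponds to a nonzero map $\sO_e \to E/E[\mathcal{I}_e]$, whose preimage in $E$ yields a subsheaf properly containing $E[\mathcal{I}_e]$ but still annihilated by $\mathcal{I}_e^2$; iterating this construction or tracking its compatibility with the $\mathcal{I}_e$-adic filtration should produce a submodule structure incompatible with the exceptionality of $E$ and the minimality of $n$, yielding the required contradiction. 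Carrying out this socle analysis carefully is the technical heart of the proof.
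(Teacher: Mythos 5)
Your plan is structurally sound and takes a genuinely more elementary route than the paper, but it is incomplete exactly where you flag it. The first part is fine: $\sO_e(-1)$ is exceptional by the formal functions theorem, $\End(\sO_e(-1))=\bar k$, and the local--to--global sequence together with $\sExt^1_X(\sO_e,\sO_e)\cong N_{e/X}\cong\sO_e(-1)$ gives $\Ext^1_X(\sO_e(-1),\sO_e(-1))=0$, so $V\mapsto V\otimes_{\bar k}\sO_e(-1)$ is fully faithful into exceptional sheaves. The $n=1$ base case (pure one-dimensional on $e\cong\P^1$ with $H^0=H^1=0$ forces $\bigoplus\sO_e(-1)$) is also correct.

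The gap is the "main obstacle" paragraph: "iterating this construction\ldots should produce a submodule structure incompatible with the exceptionality of $E$" is not an argument, and in its stated form it would not obviously terminate. A clean repair is the following, which in fact shows $n=1$ outright, so no sub-induction on generic rank is needed. Suppose $\mathcal I_e^n E=0$ with $n\ge 2$ minimal, set $F=E[\mathcal I_e^{n-1}]$, $G=E/F\ne 0$. Multiplication by $\mathcal I_e^{n-1}$ kills $F$ and lands in $E[\mathcal I_e]$, giving a map $\mathcal I_e^{n-1}\otimes G\to E[\mathcal I_e]$ which is \emph{injective} (a class killed by $\mathcal I_e^{n-1}$ lies in $F$); since $\mathcal I_e^{n-1}|_e\cong\sO_e(n-1)$, this is an injection $G\otimes\sO_e(n-1)\hookrightarrow E[\mathcal I_e]$. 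Hence $G$ is pure on $e$, and the long exact sequence for $0\to F\to E\to G\to 0$ together with $H^*(E)=0$ gives $H^1(G)=0$, so $G=\bigoplus\sO_e(b_j)$ with all $b_j\ge -1$; meanwhile $E[\mathcal I_e]\subset E$ has $H^0=0$ so all its summands have degree $\le -1$. A nonzero map $\sO_e(b_j+n-1)\to\sO_e(a_i)$ with $b_j+n-1\ge n-2\ge 0>a_i$ is impossible, forcing $G=0$, a contradiction.

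For comparison, the paper's proof is shorter but less elementary: it realizes $\sO_e(-1)$ as $\ker(\pi^*\sO_D\to\pi^{*!}\sO_D)$ for a curve $D$ through $p$ of multiplicity one, deduces projectivity from the degree shift $R^p\Hom(\sO_e(-1),E)\cong R^{p+1}\Hom(\sO_{\tilde D},E)$ and transversality of the strict transform, obtains generation from an Euler characteristic count for $E\otimes{\cal L}(\tilde D)$, and then appeals to Morita theory. Your approach (once the gap is filled) directly yields the stronger structural statement that every exceptional sheaf is $\sO_e(-1)^a$; the paper's mechanism is the one that generalizes to the projective covers $P_f$ when $\pi$ is not monoidal.
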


\begin{proof}
  Let $D$ be an effective divisor on $Y$ that has multiplicity $1$ at $p$.
  Then we obtain a short exact sequence
\[
0\to \sO_e(-1)\to \pi^*\sO_D\to \pi^{*!}\sO_D\to 0.
\]
(Use the minimal resolution of $\sO_D$ over the local ring at $p$ to
compute $\pi^{*!}\sO_D$.)  We thus find
\[
R^p\Hom(\sO_e(-1),E)\cong R^{p+1}\Hom(\pi^{*!}\sO_D,E)
\cong R^{p+1}\Hom(\sO_{\tilde{D}},E),
\]
where $\tilde{D}$ is the strict transform of $D$.  Since $\tilde{D}$ is
transverse to the exceptional line, it is transverse to the support of $E$,
and thus the only nonvanishing $\Ext$ group is in degree 1.  We thus
conclude that for any invisible sheaf,
\[
R^p\Hom(\sO_e(-1),E)=0
\]
for $p>0$; in other words, $\sO_e(-1)$ is a projective object in the
category of invisible sheaves.

Since $\sO_e(-1)$ is coherent, with endomorphism ring $k$, it remains only
to show that it generates the category, since then the desired equivalence
follows by Morita theory.  In other words, we need to show that
if $\Hom(\sO_e(-1),E)=0$, then $E=0$.  Since $\tilde{D}$ is transverse to
the exceptional locus, we have a short exact sequence
\[
0\to E\to E(\tilde{D})\to T\to 0
\]
for some $0$-dimensional sheaf $T$.  If $T=0$, then $E$ has support
disjoint from $\tilde{D}$; if nontrivial, it would have $0$-dimensional
support, and thus nontrivial direct image.  Thus $T\ne 0$, and we conclude
that
\[
\chi(E(\tilde{D}))=\chi(T)>0,
\]
so that $E(\tilde{D})$ has global sections, and its direct
image is thus a nontrivial $0$-dimensional sheaf.  It follows from the next
lemma that
\[
\Hom(\sO_e(-1),E)
\cong
\Hom(\sO_e,E(\tilde{D}))\ne 0,
\]
as required.
\end{proof}

\begin{lem}
  Let $\pi:X\to Y$ be the blowup in the closed point $p$ of $Y$, with
  exceptional line $e$, and let $M$ be a coherent sheaf on $X$ of
  homological dimension $\le 1$.  If $M$ is not $\pi_*$-acyclic, then
  $\Hom(M,\sO_e(-2))\ne 0$, while if $\pi_*M$ has homological dimension
  $2$, then $\Hom(\sO_e,M)\ne 0$.
\end{lem}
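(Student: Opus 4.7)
The plan is to reduce both statements to properties of the non-derived restriction $M_1:=M\otimes_{\sO_X}\sO_e$, a coherent sheaf on $e\cong\P^1$. For the first statement, I would combine the $i^*\dashv i_*$ adjunction for the closed immersion $i:e\hookrightarrow X$ with Serre duality on $\P^1$ (using $\omega_e\cong\sO_e(-2)$, which follows from $e\cdot e=-1$ via adjunction) to obtain
\[
\Hom_X(M,\sO_e(-2))\;\cong\;\Hom_e(M_1,\omega_e)\;\cong\;H^1(e,M_1)^*.
\]
It then suffices to show that $R^1\pi_*M\ne 0$ forces $H^1(e,M_1)\ne 0$, and I would prove the contrapositive via the theorem on formal functions: since $R^1\pi_*M$ is coherent and set-theoretically supported at $p$,
\[
(R^1\pi_*M)^\wedge_p\;\cong\;\varprojlim_n H^1(X_n,M_n),\qquad M_n:=M\otimes_{\sO_X}\sO_X/{\cal I}_e^n,
\]
reducing the task to showing $H^1(X_n,M_n)=0$ for every $n\ge 1$. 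I would induct on $n$: the short exact sequence $0\to K_n\to M_{n+1}\to M_n\to 0$ with $K_n:={\cal I}_e^n M/{\cal I}_e^{n+1}M$, combined with the $1$-dimensionality of $X_{n+1}$, reduces the step to $H^1(K_n)=0$. The multiplication map gives a surjection $\sO_e(n)\otimes M_1\twoheadrightarrow K_n$ (using ${\cal I}_e^n/{\cal I}_e^{n+1}\cong\sO_e(n)$, a consequence of $e\cdot e=-1$); decomposing $M_1$ on $\P^1$ as $T\oplus\bigoplus_i\sO_e(a_i)$, the hypothesis $H^1(M_1)=0$ forces each $a_i\ge -1$, whence $\sO_e(n)\otimes M_1$ is $H^1$-acyclic for $n\ge 0$, and this property passes to $K_n$ by $H^2$-vanishing on $e$.

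For the second statement, the earlier proposition characterizing homological dimension $\le 1$ via absence of $\sO_q$-subsheaves supplies a closed point $q\in Y$ with $\sO_q\hookrightarrow\pi_*M$. Since $\pi$ is an isomorphism off $p$, and $M$ (of homological dimension $\le 1$) has no $0$-dimensional subsheaves, $\pi_*M$ has none off $p$; hence $q=p$. The scheme-theoretic fiber $\pi^{-1}(p)$ is the reduced Cartier divisor $e$, so $\pi^*\sO_p\cong\sO_e$, and the ordinary $\pi^*\dashv\pi_*$ adjunction converts the inclusion $\sO_p\hookrightarrow\pi_*M$ into a nonzero element of $\Hom_X(\sO_e,M)\cong\Hom_Y(\sO_p,\pi_*M)$.

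The main obstacle is the inductive step in the first part, which combines the Dedekind decomposition of $M_1$ on $\P^1$ with the identification ${\cal I}_e^n/{\cal I}_e^{n+1}\cong\sO_e(n)$ to force the vanishing of $H^1(K_n)$. The second statement, by contrast, is essentially immediate from adjunction once the $0$-dimensional subsheaf is localized at $p$.
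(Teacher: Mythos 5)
Your argument is correct, but it departs substantially from the paper's. The paper treats both halves uniformly via the two derived adjunctions $\dL\pi^*\dashv\dR\pi_*$ and $\dR\pi_*\dashv\pi^!$: it computes $\pi^*\sO_p\cong\sO_e$ and $L_1\pi^*\sO_p\cong\sO_e(-1)$ over the local ring at $p$ (whence $L_1\pi^!\sO_p\cong\sO_e(-2)$ after twisting by $\omega_X\otimes\pi^*\omega_Y^{-1}$), then reads off the degree-$0$ part of $\dR\sHom(\dL\pi^*\sO_p,M)\cong\dR\sHom(\sO_p,\dR\pi_*M)$ and the degree-$(-1)$ part of $\dR\sHom(M,\pi^!\sO_p)\cong\dR\sHom(\dR\pi_*M,\sO_p)$ to get the \emph{isomorphisms} $\Hom(\sO_e,M)\cong\Hom(\sO_p,\pi_*M)$ and $\Hom(M,\sO_e(-2))\cong\Hom(R^1\pi_*M,\sO_p)$. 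Your second half replaces the derived adjunction by its underived shadow $\pi^*\dashv\pi_*$ together with $\pi^*\sO_p\cong\sO_e$, a pleasant simplification; the localization of the offending $\sO_q$ at $q=p$ is handled as in the paper. Your first half is genuinely different in flavor: adjunction for $i:e\hookrightarrow X$ plus Serre duality on $e\cong\P^1$ convert the target to $H^1(e,M\otimes\sO_e)^*$, and then the theorem on formal functions, Grothendieck's splitting on $\P^1$, and the identification ${\cal I}_e^n/{\cal I}_e^{n+1}\cong\sO_e(n)$ drive an induction showing that $H^1(e,M\otimes\sO_e)=0$ forces $H^1(X_n,M_n)=0$ for all $n$ and hence $R^1\pi_*M=0$. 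That route stays entirely out of the derived category at the cost of a longer computation; the paper's version is shorter, treats the two halves symmetrically, and, by producing isomorphisms rather than bare nonvanishing, makes visible from the outset why $\sO_e(-1)$ and $\sO_e(-2)$ recur throughout the subsequent analysis of exceptional sheaves.
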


\begin{proof}
  Since $\pi$ is an isomorphism away from $e$ and $p$, the only way that
  $\pi_*M$ can have homological dimension 2 is if it has a subsheaf
  isomorphic to $\sO_p$.  We can compute
\[
\dR\Hom(\sO_p,\dR\pi_*M)
\cong
\dR\Hom(\dL\pi^*\sO_p,M)
\]
so in particular
\[
\Hom(\sO_p,\pi_*M) \cong \Hom(\pi^*\sO_p,M).
\]
A simple calculation in the local ring at $p$ gives
\[
L_1\pi^*\sO_p\cong \sO_e(-1),
\quad\pi^*\sO_p\cong \sO_e,
\]
giving
\[
\Hom(\sO_p,\pi_*M) \cong \Hom(\sO_e,M),
\]
implying the desired result.

Similarly, if $M$ is not $\pi_*$-acyclic, then $R^1\pi_*M$ is a nontrivial
sheaf supported at $p$, so has a morphism to $\sO_p$.  We compute
\[
\dR\Hom(\dR\pi_*M,\sO_p)
\cong
\dR\Hom(M,\dL\pi^!\sO_p)
\]
which in degree $-1$ gives
\[
\Hom(R^1\pi_*M,\sO_p)
\cong
\Hom(M,L_1\pi^!\sO_p)
\cong
\Hom(M,\sO_e(-2)).
\]
\end{proof}

\begin{cor}
  Let $\pi:X\to Y$ be the blowup in a single point $p\in Y$, with
  exceptional line $e$, and let $M$ be a sheaf of homological dimension $
  \le 1$ on $X$.  If $\Hom(\sO_e(-1),M)=\Hom(M,\sO_e(-1))=0$, then $M\cong
  \pi^{*!}\pi_*M$.
\end{cor}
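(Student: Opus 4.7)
The plan is to verify the auxiliary hypotheses of the quasi-universal property of $\pi^{*!}$, and then use the given Hom-vanishings to force the natural maps to and from $M$ to be respectively surjective and injective.

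First I would establish that $M$ is $\pi_*$-acyclic and that $N:=\pi_*M$ has homological dimension $\le 1$; by the preceding lemma it suffices for this to show $\Hom(M,\sO_e(-2))=0$ and $\Hom(\sO_e,M)=0$. For any closed point $q\in e$, multiplication by a section of $\sO_e(1)$ vanishing at $q$ yields short exact sequences
\[
0\to\sO_e(-2)\to\sO_e(-1)\to\sO_q\to 0,\qquad 0\to\sO_e(-1)\to\sO_e\to\sO_q\to 0.
\]
Applying $\Hom(M,-)$ to the first and using $\Hom(M,\sO_e(-1))=0$ immediately gives $\Hom(M,\sO_e(-2))=0$. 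Applying $\Hom(-,M)$ to the second, together with the hypothesis $\Hom(\sO_e(-1),M)=0$ and the automatic vanishing $\Hom(\sO_q,M)=0$ (any nonzero morphism from the simple sheaf $\sO_q$ would exhibit $\sO_q$ as a subsheaf of $M$, forbidden by the homological-dimension hypothesis), gives $\Hom(\sO_e,M)=0$.

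Next, adjointness applied to $\id_N$ produces canonical morphisms $f:\pi^*N\to M$ and $g:M\to \pi^!N$, and by the quasi-universal property established earlier their composition is the canonical map $\pi^*N\to \pi^!N$, whose image is $\pi^{*!}N$ by definition. Moreover $\pi_*f$ and $\pi_*g$ are isomorphisms (inverse to the natural isomorphisms $N\cong \pi_*\pi^*N$ and $N\cong \pi_*\pi^!N$), so Corollary \ref{cor:exceptional_ker_coker} makes each of $\ker f$, $\coker f$, $\ker g$, $\coker g$ exceptional.

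Finally, by the preceding lemma every nonzero exceptional sheaf is a direct sum of copies of $\sO_e(-1)$, and so both admits and receives a nonzero morphism from $\sO_e(-1)$. Thus a nonzero $\coker f$ would yield a nonzero composition $M\twoheadrightarrow \coker f\to \sO_e(-1)$, and a nonzero $\ker g$ would yield a nonzero composition $\sO_e(-1)\to \ker g\hookrightarrow M$, both contradicting the hypotheses. Hence $f$ is surjective and $g$ is injective, so
\[
M\cong g(M)=g(\im f)=\im(g\circ f)=\pi^{*!}N=\pi^{*!}\pi_*M.
\]
The main subtlety is the first step: the hypotheses are phrased in terms of $\sO_e(-1)$ while the criteria of the preceding lemma involve $\sO_e(-2)$ and $\sO_e$, so one must shuttle between these via elementary sequences on $e\cong \P^1$. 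The remainder is a diagram chase supported by the explicit classification of exceptional sheaves.
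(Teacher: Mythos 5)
Your proof is correct and follows essentially the same approach as the paper: first reduce to the preceding lemma's criteria ($\Hom(M,\sO_e(-2))=\Hom(\sO_e,M)=0$) via standard exact sequences on $e\cong\P^1$, then observe that the kernel and cokernel of the adjunction maps $\pi^*\pi_*M\to M\to\pi^!\pi_*M$ are exceptional, hence (by the classification of $\pi$-exceptional sheaves for a monoidal transformation) vanish under the given Hom-hypotheses. The only cosmetic difference is that the paper argues the first step by composing with nonzero maps $\sO_e(-1)\to\sO_e$ and $\sO_e(-2)\to\sO_e(-1)$ directly rather than invoking the short exact sequences explicitly, but these are the same computation.
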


\begin{proof}
  We first observe that $M$ is $\pi_*$-acyclic with $\pi_*M$ of homological
  dimension $\le 1$.  Indeed, we would otherwise have a nonzero map
  $\sO_e\to M$ or $M\to \sO_e(-2)$, which we could compose with any nonzero
  map $\sO_e(-1)\to \sO_e$ or $\sO_e(-2)\to \sO_e(-1)$.  The second
  composition is necessarily nonzero by injectivity of the second map; the
  first composition could only be zero if the original map had
  $0$-dimensional image.

In particular, we find that we have natural maps
\[
\pi^*\pi_*M\to M\to \pi^!\pi_*M.
\]
The first map is surjective, since its cokernel is invisible, and the
hypotheses imply that $M$ has no nonzero maps to invisible sheaves.
Similarly, the second map is injective since it has invisible kernel.  In
other words, $M$ is the image of the natural map $\pi^*\pi_*M\to
\pi^!\pi_*M$ as required.
\end{proof}

For a more general birational morphism, the exceptional locus is still a
union of finitely many smooth rational curves, which we call the {\em
  exceptional components} of $\pi$.

\begin{lem}
  Let $\pi:X\to Y$ be a birational morphism of algebraic surfaces, and let
  $M$ be a nonzero coherent sheaf on $X$ such that $\pi_*M=0$.  Then there
  is some exceptional component $f$ such that $\Hom(M,\sO_f(-1))\ne 0$.
\end{lem}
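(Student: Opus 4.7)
The plan is to induct on the number $n$ of monoidal transformations in a factorization $\pi=\rho_1\circ\cdots\circ\rho_n$. For the base case $n=1$ (so $\pi$ is a single blowup with exceptional line $e$), the hypothesis $\pi_*M=0$ forces $M$ to have no $0$-dimensional subsheaves, hence homological dimension $\le 1$. If $M$ is also $\pi_*$-acyclic, then $M$ is $\pi$-exceptional, and the Morita equivalence of the preceding lemma forces $M\cong \sO_e(-1)^{\oplus r}$ for some $r\ge 1$, so $\Hom(M,\sO_e(-1))\ne 0$. Otherwise $R^1\pi_*M\ne 0$, and the earlier lemma gives a nonzero map $M\to \sO_e(-2)$; composing with any nonzero inclusion $\sO_e(-2)\hookrightarrow \sO_e(-1)$ yields the required map.

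For the inductive step, write $\pi=\pi'\circ\rho$ where $\rho:X\to X'$ is the final monoidal transformation, with center $p$ and exceptional line $g$. If $\rho_*M=0$, the base case applied to $\rho$ already gives $\Hom(M,\sO_g(-1))\ne 0$. Otherwise $\pi'_*\rho_*M=0$, and induction on $\pi'$ produces an exceptional component $f'$ of $\pi'$ with $\Hom(\rho_*M,\sO_{f'}(-1))\ne 0$. Since $\rho$ has $1$-dimensional fibres, $\dR\rho_*M$ sits in the distinguished triangle $\rho_*M\to \dR\rho_*M\to R^1\rho_*M[-1]\to \rho_*M[1]$; combined with the adjunction $\dR\Hom(\dR\rho_*M,-)\cong \dR\Hom(M,\rho^!\,-)$ this gives an exact sequence
\[
0\to \Ext^1(R^1\rho_*M,\sO_{f'}(-1))\to \Hom(M,\rho^!\sO_{f'}(-1))\to \Hom(\rho_*M,\sO_{f'}(-1))\to \Ext^2(R^1\rho_*M,\sO_{f'}(-1)).
\]
A Koszul calculation at the regular local ring $\sO_{X',p}$ gives $\Ext^1(k(p),\sO_{f'}(-1))=\bar k$, so filtering $R^1\rho_*M$ by length forces the leftmost term to be nonzero whenever $R^1\rho_*M\ne 0$; if $R^1\rho_*M=0$ the two rightmost terms vanish. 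Either way $\Hom(M,\rho^!\sO_{f'}(-1))\ne 0$.

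Next I analyze $\rho^!\sO_{f'}(-1)$ via $\rho^!N\cong \rho^*N\otimes \sO_X(g)$, valid for $N$ of homological dimension $\le 1$. If $p\notin f'$, then $\rho^!\sO_{f'}(-1)\cong \sO_{\tilde f'}(-1)$ where $\tilde f'$ is the strict transform (an exceptional component of $\pi$), and the claim is immediate. If $p\in f'$, the scheme equality $\rho^*f'=\tilde f'+g$ yields
\[
0\to \sO_g(-2)\to \rho^!\sO_{f'}(-1)\to \sO_{\tilde f'}\to 0.
\]
Let $\phi:M\to \rho^!\sO_{f'}(-1)$ be a nonzero map obtained above, and $\phi_2$ its composition with the projection to $\sO_{\tilde f'}$. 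If $\phi_2=0$, then $\phi$ factors through $\sO_g(-2)$ and composition with $\sO_g(-2)\hookrightarrow \sO_g(-1)$ finishes. If $\phi_2\ne 0$ with image $\sO_{\tilde f'}(-d)$ for some $d\ge 1$, composition with $\sO_{\tilde f'}(-d)\hookrightarrow \sO_{\tilde f'}(-1)$ finishes.

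The hard part is the remaining subcase $d=0$, where $\phi_2$ surjects onto $\sO_{\tilde f'}$. Setting $N':=\im(\phi)$, I claim $N'\cap \sO_g(-2)\ne 0$: otherwise $N'\cong \sO_{\tilde f'}$ would embed in $\rho^!\sO_{f'}(-1)$, but a short LES computation shows $\Hom(\sO_{\tilde f'},\rho^!\sO_{f'}(-1))=0$, because the extension above is nontrivial (splitting would give $\rho_*\rho^!\sO_{f'}(-1)\cong \sO_{f'}$, contradicting $\rho_*\rho^!\sO_{f'}(-1)\cong \sO_{f'}(-1)$). Hence $N'\cap \sO_g(-2)\cong \sO_g(-2-e)$ for some $e\ge 0$, producing $0\to \sO_g(-2-e)\to N'\to \sO_{\tilde f'}\to 0$. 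Applying $\Hom(-,\sO_g(-1))$ and using $\Hom(\sO_{\tilde f'},\sO_g(-1))=0$, $\dim \Hom(\sO_g(-2-e),\sO_g(-1))=2+e$, and $\dim \Ext^1(\sO_{\tilde f'},\sO_g(-1))=1$, I find $\dim \Hom(N',\sO_g(-1))\ge 1+e\ge 1$. Since $M$ surjects onto $N'$, this gives $\Hom(M,\sO_g(-1))\ne 0$, completing the proof.
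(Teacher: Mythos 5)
Your proof is correct, but it takes a substantially different and considerably longer route than the paper's. Both proofs induct on the number of monoidal transformations and peel off the first blowdown $\rho : X \to X'$ (with exceptional curve $g$), but the similarity ends there.

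The paper's argument dichotomizes immediately on whether $\Hom(M, \sO_g(-1)) \ne 0$. If not, one gets (via the characterization of exceptional sheaves for a single blowup and the fact that quotients of $\rho_*$-acyclic sheaves are $\rho_*$-acyclic) a short exact sequence $0 \to \sO_g(-1)^r \to \rho^*\rho_*M \to M \to 0$. Then $\Hom(M,\sO_f(-1)) \cong \Hom(\rho^*\rho_*M,\sO_f(-1)) \cong \Hom(\rho_*M,\rho_*\sO_f(-1))$ for $f \ne g$, and $\rho_*\sO_f(-1) \cong \sO_{\rho(f)}(-1)$ because $\rho|_f$ is an isomorphism; one concludes by induction. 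The key technical move is the $\pi^*\dashv\pi_*$ adjunction after reducing to the $\rho$-globally-generated case, which makes everything a matter of honest sheaf $\Hom$'s.

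Your argument instead dichotomizes on $\rho_*M = 0$, and in the nontrivial case lifts $\Hom(\rho_*M,\sO_{f'}(-1)) \ne 0$ through the full derived adjunction $\dR\sHom(\dR\rho_*M,-) \cong \dR\sHom(M,\rho^!-)$. This forces you to (a) control the contribution of $R^1\rho_*M$ to the adjunction, and (b) unpack the structure of $\rho^!\sO_{f'}(-1)$, which when $p \in f'$ is a nonsplit extension supported on $g \cup \tilde f'$. The subsequent case analysis on the image of $\phi_2: M \to \sO_{\tilde f'}$ is correct (I checked the computations $\Hom(\sO_{\tilde f'},\sO_g(-1)) = 0$, $\dim\Hom(\sO_g(-2-e),\sO_g(-1)) = 2+e$, $\dim\Ext^1(\sO_{\tilde f'},\sO_g(-1)) = 1$ all hold for transverse curves, and $\rho_*$ of a putative splitting gives $\sO_{f'} \not\cong \sO_{f'}(-1)$). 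So each path is valid: the paper's is cleaner and more modular; yours is more elementary in that it never appeals to global generation, at the cost of a fairly delicate extension analysis.

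One small inaccuracy worth flagging: you assert $\Ext^1(k(p),\sO_{f'}(-1)) = \bar k$ unconditionally, but the Koszul computation gives $\bar k$ only when $p \in f'$; when $p \notin f'$ the supports are disjoint and all $\Ext^i$ vanish. This does not break the argument — in the $p \notin f'$ case every $\Ext^i(R^1\rho_*M,\sO_{f'}(-1))$ vanishes, so $\Hom(M,\rho^!\sO_{f'}(-1)) \cong \Hom(\rho_*M,\sO_{f'}(-1)) \ne 0$ anyway, and you treat $p \notin f'$ separately in the next paragraph — but the claim as stated is wrong, and the reader has to mentally reorganize the cases to see that the conclusion survives. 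You should either restrict the Koszul assertion to $p \in f'$ or swap the order of the two paragraphs so the disjoint-support case is dispatched first.
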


\begin{proof}
  If $\pi$ is the blowup in a point $p\in Y$, then either $R^1\pi_*M\ne 0$,
  in which case it maps to $\sO_e(-2)$ and thus $\sO_e(-1)$, or
  $R^1\pi_*M=0$, in which case it is invisible, and thus is isomorphic to
  $\sO_e(-1)^r$.

Otherwise, we can factor $\pi=\pi_1\circ \pi_2$ such that $\pi_1$ blows up
a single point.  If $\Hom(M,\sO_e(-1))\ne 0$, where $e$ is the exceptional
locus of $\pi_1$, then we are done.  Otherwise, $M$ is $\pi_{1*}$-acyclic,
and $\pi_{2*}(\pi_{1*}M)\cong \pi_*M=0$.  Moreover, we have a short exact
sequence of the form
\[
0\to \sO_e(-1)^r\to \pi^*_1\pi_{1*}M\to M\to 0.
\]
Since $\Hom(\sO_e(-1),\sO_f(-1))$ for $f\ne e$, it will suffice to show
that
\[
\Hom(\pi^*_1\pi_{1*}M,\sO_f(-1))\ne 0
\]
for some exceptional component $f\ne e$. But
\[
\Hom(\pi^*_1\pi_{1*}M,\sO_f(-1))
\cong
\Hom(\pi_{1*}M,\pi_{1*}\sO_f(-1))
\cong
\Hom(\pi_{1*}M,\sO_{\pi_1(f)}(-1))
\]
Every exceptional component of $\pi_2$ is the image of some $f\ne e$, and
thus at least one of the latter groups is nonzero.
\end{proof}

\begin{cor}
  Any invisible sheaf has a quotient of the form $\sO_f(-1)$ for some
  exceptional component $f$.
\end{cor}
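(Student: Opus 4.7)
The plan is to promote the morphism produced by the previous lemma to a surjection, by recognizing that $\sO_f(-1)$ is a simple object in the abelian category of $\pi$-exceptional sheaves.

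Assuming $E\neq 0$ (the zero case being vacuous), I begin by noting that $\pi_*E=0$, since $E$ is exceptional. The previous lemma then yields an exceptional component $f$ and a nonzero morphism $\phi:E\to \sO_f(-1)$. The next observation is that $\sO_f(-1)$ is itself $\pi$-exceptional: $f\cong \P^1$ by the remark preceding the lemma, and $H^0(\P^1,\sO(-1))=H^1(\P^1,\sO(-1))=0$, so the Leray spectral sequence gives $\dR\pi_*\sO_f(-1)=0$.

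With both source and target exceptional, $\phi$ is a morphism in the abelian subcategory of exceptional sheaves, and the earlier proposition on closure under kernels and cokernels forces $\im(\phi)$ to be exceptional as well. Now $\im(\phi)$ is a nonzero $\sO_X$-submodule of $\sO_f(-1)$; since $\sO_f(-1)$ is scheme-theoretically supported on $f$, every such submodule is a coherent $\sO_f$-submodule of the line bundle $\sO_{\P^1}(-1)$, hence either $0$ or of the form $\sO_f(-n)$ for some $n\ge 1$. For $n\ge 2$ one computes $R^1\pi_*\sO_f(-n)=H^1(\P^1,\sO(-n))=k^{n-1}\neq 0$, so $\sO_f(-n)$ fails to be exceptional. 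Since $\im(\phi)$ is exceptional and nonzero, we must have $n=1$, i.e.\ $\im(\phi)=\sO_f(-1)$, so $\phi$ is the desired surjection.

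There is no serious obstacle: the previous lemma does the analytic work, and the only conceptual content is the observation that $\sO_f(-1)$ is simple in the exceptional category, which upgrades any nonzero morphism into it to a quotient map. The one place where a little care is needed is the classification of subsheaves of $\sO_f(-1)$, which requires the trivial remark that $\sO_f(-1)$ is an $\sO_f$-module and that subsheaves of a line bundle on $\P^1$ are line bundles of smaller degree.
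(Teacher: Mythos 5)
Your proof is correct and follows essentially the same route as the paper's: both obtain a nonzero map $E\to\sO_f(-1)$ from the preceding lemma, identify the image as some $\sO_f(-n)$ with $n\ge 1$, and rule out $n\ge 2$ via the vanishing of $H^1(\P^1,\sO(-n))$ forced by $\pi_*$-acyclicity of the image. The paper gets the acyclicity of the image more directly (quotient of a $\pi_*$-acyclic sheaf is $\pi_*$-acyclic, then Leray), whereas you first verify $\sO_f(-1)$ is exceptional and invoke closure of the exceptional category under images; this is a cosmetic difference, not a different argument.
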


\begin{proof}
  By the lemma, any invisible sheaf has a nonzero morphism to some
  $\sO_f(-1)$.  The image of such a morphism has the form the form
  $\sO_f(-d)$ for some $d>1$.  Since it is a quotient of an invisible,
  thus $\pi_*$-acyclic, sheaf, it must be $\pi_*$-acyclic.  But then
\[
H^1(\sO_f(-d)) = H^1(\pi_*\sO_f(-d)) = 0,
\]
since it has $0$-dimensional direct image.  It follows that $d=1$.
\end{proof}

\begin{cor}
The category of invisible sheaves is artinian.
\end{cor}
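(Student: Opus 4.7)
My approach is to prove that every exceptional sheaf has finite length, from which the artinian property follows.  Since the ambient category of coherent sheaves on the noetherian surface $X$ is noetherian, ACC on subobjects of exceptional sheaves holds automatically, and it suffices to establish DCC.

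The key tool I will introduce is a family of length functions indexed by the exceptional components.  For each irreducible component $f$ of the exceptional locus of $\pi$, the generic point $\eta_f$ is of codimension one in the smooth surface $X$, so $\sO_{X,\eta_f}$ is a DVR.  For any exceptional sheaf $E$, define
\[
\mu_f(E) := \ell_{\sO_{X,\eta_f}}(E_{\eta_f}).
\]
This is a well-defined non-negative integer: $E_{\eta_f}$ is finitely generated over $\sO_{X,\eta_f}$ because $E$ is coherent, and it is torsion because $E$ has support contained in the $1$-dimensional exceptional locus and therefore vanishes at the generic point of $X$.  Manifestly $\mu_f$ is additive on short exact sequences.

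The crucial step is to show that $\sum_f \mu_f(E) \ge 1$ for every nonzero exceptional sheaf $E$.  As observed in the paragraph immediately following the definition of exceptional, such a sheaf can have no $0$-dimensional subsheaf, since $\sO_p \hookrightarrow E$ would yield a nonzero injection $\sO_{\pi(p)} \cong \pi_*\sO_p \hookrightarrow \pi_*E = 0$.  Thus a nonzero $E$ cannot itself be $0$-dimensional; its support must therefore contain the generic point $\eta_f$ of some exceptional component $f$, giving $E_{\eta_f} \ne 0$ and hence $\mu_f(E) \ge 1$.

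Finally, given any strictly descending chain $E_1 \supsetneq E_2 \supsetneq \cdots \supsetneq E_N$ of exceptional subsheaves, each quotient $E_i/E_{i+1}$ is again exceptional (the exceptional category being closed under cokernels by the earlier proposition) and nonzero, so contributes at least $1$ to $\sum_f \mu_f$.  Additivity yields $\sum_f \mu_f(E_1) \ge N - 1$, bounding $N$ by the finite quantity $1 + \sum_f \mu_f(E_1)$.  This establishes DCC.  I anticipate no serious obstacle; the only subtlety is verifying finiteness and additivity of $\mu_f$, which is immediate from standard facts about finitely generated torsion modules over a DVR.
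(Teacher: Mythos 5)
Your proof is correct, and it takes a genuinely different route from the paper. The paper first establishes (via a lemma about morphisms to $\sO_f(-1)$) that every nonzero exceptional sheaf admits $\sO_f(-1)$ as a quotient, and then observes that $\deg c_1(E_i)$ against a very ample bundle is a strictly decreasing sequence of nonnegative integers along any descending chain. Your argument dispenses with the $\sO_f(-1)$ quotient result entirely: you only need the earlier, easier observation that a nonzero exceptional sheaf has no $0$-dimensional subsheaf (hence has $1$-dimensional support contained in the exceptional locus), which already forces $\sum_f \mu_f(E)\ge 1$. The two proofs are ultimately tracking the same numerical invariant — $c_1(E)$ is precisely $\sum_f \mu_f(E)[f]$, so $\deg c_1(E)$ is a weighted version of your unweighted sum — but yours is more elementary and self-contained, and sidesteps any appeal to a very ample bundle on $X$ (which is not assumed projective in this section; the paper's phrasing there is slightly loose, whereas your argument raises no such concern). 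The paper's route has the advantage of simultaneously establishing the structural fact about $\sO_f(-1)$ quotients, which it needs again immediately afterward to produce filtrations by $\sO_f(-1)$'s; your proof would leave that to be established separately. One small stylistic remark: the detour through ACC and finite length is unnecessary, since artinian is by definition DCC and your bound $N\le 1+\sum_f\mu_f(E_1)$ delivers that directly.
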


\begin{proof}
Let $E_1\supsetneq E_2\supsetneq E_3\supsetneq\cdots$ be a descending
chain of invisible sheaves.  Each quotient $E_i/E_{i+1}$ is
a nontrivial invisible sheaf, so surjects on some $\sO_f(-1)$.
In particular, relative to any very ample bundle on $X$, $\deg(c_1(E_i))$
is a strictly decreasing sequence of nonnegative integers.
\end{proof}

\begin{cor}
  Any invisible sheaf admits a filtration in which the successive
  quotients are all of the form $\sO_f(-1)$ with $f$ an exceptional
  component.
\end{cor}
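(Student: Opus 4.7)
The plan is to build the filtration from the top down by iterating the previous two corollaries. Given a nonzero exceptional sheaf $E$, the immediately preceding corollary produces an exceptional component $f_1$ and a surjection $E \twoheadrightarrow \sO_{f_1}(-1)$; let $E^{(1)}$ denote the kernel. Since exceptional sheaves form an abelian category (closed under taking kernels), $E^{(1)}$ is again exceptional.

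Next I would iterate: assuming $E^{(i)}$ has been constructed and is nonzero, apply the corollary to get a surjection $E^{(i)} \twoheadrightarrow \sO_{f_{i+1}}(-1)$ and set $E^{(i+1)}$ to be its kernel. This produces a strictly descending chain
\[
E \supsetneq E^{(1)} \supsetneq E^{(2)} \supsetneq \cdots
\]
of exceptional subsheaves of $E$. By the artinian property of the category of exceptional sheaves just established, this chain must terminate after finitely many steps, say $E^{(n)} = 0$.

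Reversing the indexing by setting $E_i := E^{(n-i)}$ then yields an ascending filtration
\[
0 = E_0 \subset E_1 \subset \cdots \subset E_n = E
\]
with successive quotients $E_i/E_{i-1} \cong \sO_{f_{n-i+1}}(-1)$, as required. There is no real obstacle here; the entire content of the argument is the combination of the two preceding corollaries, which respectively guarantee the existence of the quotient at each stage and the termination of the process.
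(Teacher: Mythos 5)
Your proof is correct and follows essentially the same route as the paper: use the quotient corollary to peel off a copy of $\sO_f(-1)$, note the kernel is again exceptional, iterate, and invoke the artinian property for termination. You make the termination step and the reindexing slightly more explicit, but the argument is identical in substance.
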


\begin{proof}
Any nontrivial invisible sheaf has a nontrivial map to some $\sO_f(-1)$,
and the kernel is invisible.  Iterating gives a descending chain
of invisible sheaves, which must eventually reach 0.
\end{proof}

This gives us an alternate characterization of minimal lifts.

\begin{thm}\label{thm:minimal_if_no_exceptional}
  Let $\pi:X\to Y$ be a birational morphism of algebraic surfaces, and
  suppose that $M$ is a sheaf on $X$ of homological dimension $\le 1$.  If
  $\Hom(M,\sO_f(-1))=0$ for all exceptional components $f$, then $M$ is
  $\pi_*$-acyclic and $\pi$-globally generated.  If $\Hom(\sO_f(-1),M)=0$
  for all exceptional components $f$, then $\pi_*M$ has homological
  dimension $\le 1$.  Moreover, $M$ is a minimal lift iff
  $\Hom(\sO_f(-1),M)=\Hom(M,\sO_f(-1))=0$ for all exceptional components $f$.
\end{thm}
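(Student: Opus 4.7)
The three assertions are proved in order, the first two being used to prove the ``if'' direction of the third. The ``only if'' direction of the iff is immediate from the earlier proposition: $\Hom(\pi^{*!}N,E)=\Hom(E,\pi^{*!}N)=0$ for every $\pi$-exceptional sheaf $E$, and each $\sO_f(-1)$ is $\pi$-exceptional (supported in a fiber of $\pi$ with $H^*(\P^1,\sO(-1))=0$).

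For the first assertion, assume $\Hom(M,\sO_f(-1))=0$ for every exceptional component $f$. For $\pi$-global generation, let $I$ be the image of $\pi^*\pi_*M\to M$ and $C=M/I$: the unit-counit triangle makes $\pi_*M=\pi_*\pi^*\pi_*M\to\pi_*I\hookrightarrow\pi_*M$ the identity, while $\pi_*I\hookrightarrow\pi_*M$ is injective by left exactness, forcing $\pi_*I=\pi_*M$, so the long exact sequence of $0\to I\to M\to C\to 0$ gives $\pi_*C=0$; by the earlier lemma any nonzero such $C$ would admit a nonzero map to some $\sO_f(-1)$, contradicting the hypothesis, so $C=0$. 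For $\pi_*$-acyclicity, I induct on the number of blowups constituting $\pi$. The base case (single blowup) is the earlier lemma: $R^1\pi_*M\ne 0$ implies $\Hom(M,\sO_e(-2))\ne 0$, which composed with the injection $\sO_e(-2)\hookrightarrow\sO_e(-1)$ coming from a nonzero section of $\sO_e(1)$ gives the contradiction. For the inductive step, factor $\pi=\pi_2\circ\pi_1$ with $\pi_1\colon X\to X'$ a single blowup and exceptional curve $e$; the Leray short exact sequence
\[
0\to R^1\pi_{2*}(\pi_{1*}M)\to R^1\pi_*M\to\pi_{2*}(R^1\pi_{1*}M)\to 0
\]
either reduces via nonvanishing of $R^1\pi_{1*}M$ to the base case, or—after quotienting $\pi_{1*}M$ by its $0$-dimensional torsion, which contributes nothing to $R^1\pi_{2*}$—the inductive hypothesis produces a nonzero $\pi_{1*}M\to\sO_{f'}(-1)$ for some exceptional component $f'$ of $\pi_2$. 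Adjunction lifts this to a nonzero morphism $M\to\pi_1^!\sO_{f'}(-1)$; the target is $\pi$-exceptional since $R\pi_*\pi_1^!\sO_{f'}(-1)\cong R\pi_{2*}\sO_{f'}(-1)=0$, and the earlier filtration corollary then extracts a nonzero composition $M\to\sO_f(-1)$ for some exceptional component $f$ of $\pi$ (take the deepest step of the filtration the image avoids).

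The second assertion is proved dually. The base case uses the earlier lemma ($\pi_*M$ of homological dimension $2$ forces $\Hom(\sO_e,M)\ne 0$, hence $\sO_e\hookrightarrow M$ because $M$ has no $0$-dimensional subsheaves) combined with the inclusion $\sO_e(-1)\hookrightarrow\sO_e$ to produce the forbidden morphism. The inductive step parallels the previous one: a nonzero $\sO_{f'}(-1)\hookrightarrow\pi_{1*}M$ adjoints to a nonzero $\pi_1^*\sO_{f'}(-1)\to M$ whose source is $\pi$-exceptional, and the filtration argument yields a nonzero $\sO_f(-1)\to M$.

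For the ``if'' direction of the iff, the first two assertions give that $M$ is $\pi_*$-acyclic, $\pi$-globally generated, and $N:=\pi_*M$ has homological dimension $\le 1$. Hence $\pi^*N$ and $\pi^!N$ are genuine sheaves, with natural surjection $\pi^*N\twoheadrightarrow M$ and natural map $M\to\pi^!N$ adjoint to the identity on $N$. By Corollary~\ref{cor:exceptional_ker_coker} the kernel of $M\to\pi^!N$ is $\pi$-exceptional; by the filtration corollary a nonzero such kernel would contain $\sO_f(-1)$ as its bottom subsheaf, contradicting $\Hom(\sO_f(-1),M)=0$. So $M\hookrightarrow\pi^!N$, and the factorization $\pi^*N\twoheadrightarrow M\hookrightarrow\pi^!N$ of the canonical map realizes $M$ as its image $\pi^{*!}\pi_*M$. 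The main obstacle throughout is the inductive lifting step in the first two assertions, where one must extract a morphism landing in a specific $\sO_f(-1)$ from an abstract morphism into an exceptional sheaf; the filtration corollary for exceptional sheaves, together with $\pi$-exceptionality of $\pi_1^!\sO_{f'}(-1)$ and $\pi_1^*\sO_{f'}(-1)$, is the ingredient that makes this precise.
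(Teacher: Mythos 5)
Your proof is correct and follows the same overall strategy as the paper (characterizing exceptional sheaves via $\sO_f(-1)$, adjunctions, and the filtration/lemma that any nonzero sheaf with vanishing direct image admits a nonzero map to some $\sO_f(-1)$), but it is considerably longer than necessary on the $\pi_*$-acyclicity claim. Once you have shown $\pi$-global generation, that is $M=I$ is the image of $\pi^*\pi_*M\to M$, acyclicity is a one-liner: $\pi^*\pi_*M$ is $\pi_*$-acyclic (it is a quotient of the inverse image of a locally free sheaf, and quotients of $\pi_*$-acyclic sheaves are $\pi_*$-acyclic because the fibers of $\pi$ have dimension $\le 1$), so its quotient $M$ is $\pi_*$-acyclic as well. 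This is the paper's route, and it makes your entire second induction (with the Leray short exact sequence, the case split on $R^1\pi_{1*}M$, the torsion quotient, and the adjunction $\Hom(\pi_{1*}M,\sO_{f'}(-1))\cong\Hom(M,\pi_1^!\sO_{f'}(-1))$) superfluous. Two further small points: the assertion $\pi_*M=\pi_*\pi^*\pi_*M$ at the start of your global-generation argument is a red herring, since the triangle identity already gives that the composite $\pi_*M\to\pi_*\pi^*\pi_*M\to\pi_*I\hookrightarrow\pi_*M$ is the identity without knowing the middle term; and in your inductive step for the second assertion you implicitly assume $\pi_{1*}M$ has homological dimension $\le1$ before applying the inductive hypothesis --- this is fine because the other case ($\pi_{1*}M$ of homological dimension $2$) is exactly your base case, but it is worth making the dichotomy explicit as the paper does.
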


\begin{proof}
  Suppose that $\Hom(M,\sO_f(-1))=0$ for all exceptional components, and
  consider the natural map
\[
\pi^*\pi_*M\to M.
\]
The direct image of this map is an isomorphism, and thus the usual spectral
sequence shows that the cokernel has trivial direct image.  It follows that
if the cokernel is nonzero, then it maps to $\sO_f(-1)$ for some $f$,
giving a corresponding map on $M$.  We thus conclude that $M$ is
$\pi$-globally generated, and thus $\pi_*$-acyclic (as a quotient of the
$\pi_*$-acyclic sheaf $\pi^*\pi_*M$).

Next, note that if $\Hom(\sO_f(-1),M)=0$ for all exceptional components,
then $\Hom(E,M)=0$ for all invisible sheaves $E$.  So it will suffice to
show that if $\pi_*M$ has homological dimension 2, then $\Hom(E,M)\ne 0$
for some invisible $E$.  Factor $\pi=\pi_1\circ \pi_2$ with $\pi_1$
monoidal.  If $\pi_{1*}M$ has homological dimension 2, then
$\Hom(\sO_e,M)\ne 0$, and thus $\Hom(\sO_e(-1),M)\ne 0$.  Otherwise, we
have $\Hom(E,\pi_{1*}M)\ne 0$ for some $\pi_2$-invisible $E$, and thus
$\Hom(\pi_1^*E,M)\ne 0$.  Since $\pi_1^*E$ is $\pi$-invisible, the claim
follows.

Now, suppose that both conditions hold.  Then in the composition
\[
\pi^*\pi_*M\to M\to \pi^!\pi_*M
\]
we have already shown the first map to be surjective, and the second map
has invisible kernel, so must be injective.  That $M\cong
\pi^{*!}\pi_*M$ follows.
\end{proof}

Given an exceptional component $f$, let $f^\vee$ denote the linear
combination of exceptional components such that $f^\vee\cdot
g=-\delta_{fg}$ for exceptional components $g$.  (Note that although $X$ is
not assumed projective, it still has a well-behaved intersection theory, at
least where exceptional divisors are concerned.)  Equivalently,
\[
\sO_g(f^\vee)\cong \sO_g(-\delta_{fg}).
\]
By induction on a factorization of $\pi$ into monoidal transformations, we
find that $f^\vee$ is effective.  Define a sheaf $P_f$ by the exact sequence
\[
0\to P_f\to \pi^*\pi_*\sO_X(-f^\vee)\to \sO_X(-f^\vee).
\]

\begin{lem}
The sheaves $P_f$ are projective objects in the category of
$\pi$-invisible sheaves.  More precisely, $P_f$ is the projective cover
of $\sO_f(-1)$.
\end{lem}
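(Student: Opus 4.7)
The plan is to verify three properties in turn: that $P_f$ is $\pi$-exceptional, that it is projective in the category of exceptional sheaves, and that its top in that category is the simple object $\sO_f(-1)$. Together these make $P_f$ the projective cover of $\sO_f(-1)$, since the preceding corollaries identify the exceptional category as artinian with simple objects precisely the $\sO_g(-1)$ for exceptional components $g$.

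I would first verify that the defining sequence is a genuine short exact sequence (i.e., the right arrow is surjective) and that $\pi_*{\cal L}(-f^\vee)$ has homological dimension $\le 1$ on $Y$. Using the stated identity $\sO_g\otimes {\cal L}(f^\vee)\cong \sO_g(-\delta_{fg})$, one computes $\Hom({\cal L}(-f^\vee),\sO_g(-1)) = H^0(\sO_g(-1-\delta_{fg})) = 0$, while $\Hom(\sO_g(-1),{\cal L}(-f^\vee)) = 0$ because no sheaf with $1$-dimensional support embeds in a locally free sheaf. Theorem~\ref{thm:minimal_if_no_exceptional} then tells us that ${\cal L}(-f^\vee)$ is $\pi_*$-acyclic and $\pi$-globally generated (giving the SES), and that $\pi_*{\cal L}(-f^\vee)$ has homological dimension $\le 1$ (allowing us to apply the first lemma of Section~4). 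Applying $\pi_*$ to the SES and using that lemma to identify $\pi_*\pi^*\pi_*{\cal L}(-f^\vee)$ with $\pi_*{\cal L}(-f^\vee)$ via the counit, one reads off $\pi_*P_f = R^1\pi_*P_f = 0$ directly from the long exact sequence, so $P_f$ is exceptional.

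For both projectivity and the top, I would apply $\dR\Hom(-,E)$ for exceptional $E$ to the SES. Derived adjunction kills the middle term, $\dR\Hom(\pi^*\pi_*{\cal L}(-f^\vee),E)\cong \dR\Hom(\pi_*{\cal L}(-f^\vee),\dR\pi_*E) = 0$, collapsing the long exact sequence to
\[
\Ext^i(P_f,E)\cong \Ext^{i+1}({\cal L}(-f^\vee),E)\cong H^{i+1}(E\otimes {\cal L}(f^\vee)),\quad i=0,1.
\]
The $i=1$ case vanishes because $E$ is supported on the $1$-dimensional exceptional locus, so $P_f$ is projective. Specializing $i=0$ to $E=\sO_g(-1)$ yields $\Hom(P_f,\sO_g(-1)) = H^1(\sO_g(-1-\delta_{fg})) = \delta_{fg}k$. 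Since the top of a projective in an artinian category is a semisimple direct sum whose multiplicities are the dimensions $\dim\Hom(P_f,S)$ for simples $S$ (with $\End(\sO_g(-1))=k$), the top of $P_f$ must be $\sO_f(-1)$; hence $P_f$ is indecomposable projective with simple top $\sO_f(-1)$, which is exactly what is needed for it to be the projective cover.

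The step most likely to require care is the preliminary homological-dimension bound on $\pi_*{\cal L}(-f^\vee)$, since this is what lets us invoke the reconstruction $\pi_*\pi^*\pi_*{\cal L}(-f^\vee)\cong \pi_*{\cal L}(-f^\vee)$ used in the exceptionality argument; after this is in place, the rest is a formal consequence of derived adjunction together with the surface-specific fact that exceptional loci are $1$-dimensional, which is what kills the relevant $H^2$ and so delivers both the projectivity and the cleanly computable top.
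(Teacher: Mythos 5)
Your proposal is correct and follows essentially the same route as the paper: verify the defining sequence is a genuine SES and $\pi_*{\cal L}(-f^\vee)$ has homological dimension $\le 1$, deduce $P_f$ is exceptional, then use $\dR\Hom(\pi^*\pi_*{\cal L}(-f^\vee),E)=0$ by adjointness to compute $\Ext^i(P_f,E)$ as $\Ext^{i+1}({\cal L}(-f^\vee),E)$, obtaining projectivity from vanishing of $H^2$ on a $1$-dimensional support and the projective-cover property from $\dim\Hom(P_f,\sO_g(-1))=\delta_{fg}$. The only small divergence is that the paper gets the homological dimension bound immediately from $\pi_*{\cal L}(-f^\vee)\subset\pi_*\sO_X\cong\sO_Y$ (a subsheaf of a locally free sheaf on a surface has homological dimension $\le 1$) rather than invoking Theorem~\ref{thm:minimal_if_no_exceptional}, and it proves projectivity by checking the simple objects $\sO_g(-1)$ and appealing to the filtration of exceptional sheaves, rather than testing against an arbitrary exceptional $E$ directly; both variants work.
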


\begin{proof}
Since
\[
\Hom(\sO_X(-f^\vee),\sO_g(-1)) \cong H^0(\sO_g(-1-\delta_{fg}))=0,
\]
it follows that $\sO_X(-f^\vee)$ is $\pi_*$-acyclic and $\pi$-globally
generated, so that the exact sequence defining $P_f$ extends to a short
exact sequence.  In addition, we have
\[
\pi_*\sO_X(-f^\vee)\subset \pi_*\sO_X\cong \sO_Y,
\]
and subsheaves of locally free sheaves have homological dimension $\le 1$.

Thus $P_f$ is an invisible sheaf, and we have
\[
R^p\Hom(P_f,\sO_g(-1))
\cong
R^{p+1}\Hom(\sO_X(-f^\vee),\sO_g(-1))
\cong
H^{p+1}(\sO_g(-1-\delta_{fg}))
\]
In particular, $R^p\Hom(P_f,\sO_g(-1))=0$ for $p>0$; since every
invisible sheaf is an extension of sheaves $\sO_g(-1)$, it follows that
$P_f$ is projective.  Moreover,
\[
\dim H^1(\sO_g(-1-\delta_{fg})) = \delta_{fg},
\]
and thus $P_f$ has a unique map to $\sO_f(-1)$, and maps to no other
component; it is thus the projective cover of $\sO_f(-1)$.
\end{proof}

\begin{rem}
Taking the direct image of the short exact sequence
\[
0\to \sO_X(-f^\vee)\to \sO_X\to \sO_{f^\vee}\to 0
\]
gives
\[
0\to \pi_*\sO_X(-f^\vee)\to \sO_Y\to \pi_*\sO_{f^\vee}\to 0,
\]
so that $\pi_*\sO_X(-f^\vee)$ is the ideal sheaf of the $0$-dimensional
subscheme $\pi(f^\vee)\subset Y$, and
\[
P_f\cong L_1\pi^*\pi_*\sO_{f^\vee}\cong L_1\pi^*\sO_{\pi(f^\vee)}.
\]
\end{rem}

Since the category of $\pi$-invisible sheaves has finitely many
irreducible objects, and every irreducible object has a projective cover,
it is isomorphic to a module category.  More precisely, we have the
following, by a straightforward application of Morita theory.

\begin{thm}
  There is a natural equivalence between the category of $\pi$-invisible
  sheaves and the category of finitely generated modules over the
  finite-dimensional $k$-algebra $\End(\bigoplus_f P_f)$, given by
\[
E\mapsto \Hom(\bigoplus_f P_f,E).
\]
Moreover, the Yoneda $\Ext$ groups in this category agree with the $\Ext$ groups
in the category of sheaves.
\end{thm}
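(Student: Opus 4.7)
Set $P := \bigoplus_f P_f$, the sum running over exceptional components. The plan is to verify that $P$ is a finitely generated projective generator of the category $\mathcal{E}$ of $\pi$-exceptional sheaves, and then apply the standard Morita formalism; the Yoneda $\Ext$ statement will fall out from a separate cohomological check using the projective resolution guaranteed by Morita.

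First I would show that $P$ is a projective generator of $\mathcal{E}$. Projectivity is automatic from the preceding lemma. For generation: any nonzero $E\in\mathcal{E}$ admits a surjection onto some $\sO_f(-1)$ by the corollary above, and since $P_f$ is the projective cover of $\sO_f(-1)$, lifting this surjection along $P_f\twoheadrightarrow\sO_f(-1)$ produces a nonzero morphism $P_f\to E$. Next I would argue that every object of $\mathcal{E}$ is finitely $P$-presented. Iterating the ``surjection to $\sO_f(-1)$'' construction yields a descending chain of subsheaves which terminates by the artinian property already established, so every exceptional sheaf has finite length. Combined with $\End(\sO_f(-1))=k$ and dévissage along the length filtration, this makes $\Hom(P,E)$ a finite-dimensional $k$-space for every $E\in\mathcal{E}$; in particular $A:=\End(P)$ is a finite-dimensional $k$-algebra. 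Moreover, a finite-length $E$ is a quotient of some $P^n$ with kernel again of finite length, hence itself finitely $P$-generated, giving a presentation $P^m\to P^n\to E\to 0$.

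With these inputs, Morita theory proceeds in the usual manner. The functor $\Phi:=\Hom(P,-):\mathcal{E}\to\text{mod-}A$ is exact by projectivity of $P$ and lands in finitely generated modules by the finite presentation above. It is fully faithful on finite direct sums of $P$ (where it is tautological), and this extends to all of $\mathcal{E}$ via the five-lemma applied to the induced map between presentations. Essential surjectivity onto finitely generated modules is produced by taking cokernels of the $P$-maps induced by a finite module presentation $A^m\to A^n\to M\to 0$.

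For the final claim about $\Ext$-agreement, the key computation was already carried out in the proof of the lemma producing $P_f$: one has $\Ext^p_{\Coh_X}(P_f,\sO_g(-1))\cong H^{p+1}(\sO_g(-1-\delta_{fg}))=0$ for $p>0$, since $\sO_g\cong\P^1$. By the long exact sequence applied to the length filtration of an arbitrary exceptional $E'$ by $\sO_g(-1)$'s, we deduce $\Ext^p_{\Coh_X}(P,E')=0$ for all $p>0$ and all exceptional $E'$. A projective resolution $P^\bullet\to E$ in $\mathcal{E}$ is automatically exact as a sequence in $\Coh_X$ (since $\mathcal{E}\subset\Coh_X$ is closed under kernels and cokernels), so both Yoneda $\Ext^p_{\mathcal{E}}(E,E')$ and $\Ext^p_{\Coh_X}(E,E')$ are computed by the same complex $\Hom(P^\bullet,E')$ and hence coincide. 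The only mild subtlety I anticipate is bookkeeping left/right module conventions so that ``finitely generated'' matches the finite presentations produced above; everything substantive has already been built in the preceding lemmas.
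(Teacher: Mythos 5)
Your proof is correct and takes essentially the same approach as the paper: verify that $P=\bigoplus_f P_f$ is a coherent projective generator of the category of exceptional sheaves and invoke Morita theory, then reduce the Yoneda-$\Ext$ agreement to the vanishing of $\Ext^p_{\Coh_X}(P_f,\sO_g(-1))$ for $p>0$ (already established in the lemma on $P_f$). You simply spell out in detail the finite-length/finite-dimensionality bookkeeping and the generation argument that the paper leaves implicit.
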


\begin{proof}
  The main claim follows from the fact that $\bigoplus_f P_f$ is a
  progenerator of the category (it is coherent, projective, and generates
  the category).  

  By using a projective resolution to compute the Yoneda $\Ext$ groups, we
  see that the remaining claim reduces to showing that the group
  $\Ext^p_{\Coh(X)}(P,E)$ vanishes for $p>0$, $P$ projective, and $E$
  invisible.  But this is true for any projective object of the form
  $P_f$, and thus for direct summands of direct sums of such sheaves.
\end{proof}

Since the canonical map $\pi^*\sO_Y\to \pi^!\sO_Y$ is injective (it has
invisible kernel, but $\sO_X$ is torsion-free), it gives rise to a
canonical divisor $e_\pi$ representing $K_X-\pi^*K_Y$, with $e_\pi$
supported on the exceptional locus.

\begin{cor}
  Every invisible sheaf is scheme-theoretically supported on a subscheme
  of $e_\pi$.
\end{cor}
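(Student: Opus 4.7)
The plan is to reduce the statement to showing each projective generator $P_f$ is scheme-theoretically supported on $e_\pi$, and then verify this via an explicit local Koszul computation.

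First I would observe that by the preceding theorem $\bigoplus_f P_f$ is a progenerator of the category of exceptional sheaves, so every exceptional sheaf admits a surjection from a finite direct sum of copies of the $P_f$. Since the property of being scheme-theoretically supported on a given closed subscheme passes to quotients, it suffices to verify the claim for each $P_f$.

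To compute $P_f$ I would invoke the identification $P_f \cong L_1\pi^*\sO_{\pi(f^\vee)}$ from the preceding remark. At a point of the 0-dimensional support of $\pi(f^\vee)$, pick a regular sequence $(a,b)$ generating the local ideal $I_{\pi(f^\vee)}$. Pulling back the Koszul resolution
\[
0\to \sO_Y \to \sO_Y^2 \to \sO_Y \to \sO_{\pi(f^\vee)} \to 0
\]
to $X$ and taking first homology identifies $P_f$ locally with $\sO_X/(c)$, where $(c)$ is the ideal generated by the greatest common factor of $\pi^*a$ and $\pi^*b$. On the other hand, since $\pi_*\sO_X(-f^\vee) = I_{\pi(f^\vee)}$ and the counit $\pi^*\pi_*\sO_X(-f^\vee)\to \sO_X(-f^\vee)$ is surjective (from the defining sequence of $P_f$), the ideal $(\pi^*a,\pi^*b)\cdot\sO_X$ equals $\sO_X(-f^\vee)$, i.e.\ no spurious codimension-two component appears. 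Hence $(c)=\sO_X(-f^\vee)$ and $P_f \cong \sO_{f^\vee}$.

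It remains to verify $f^\vee \le e_\pi$, i.e.\ that $e_\pi - f^\vee$ is effective for every exceptional component $f$. This is a combinatorial fact about the exceptional intersection matrix, proved by induction along a factorization of $\pi$ into monoidal transformations, using the recursion $e_\pi = \sigma^* e_{\pi'} + e_\sigma$ for $\pi = \pi' \circ \sigma$ and the corresponding behavior of duals under pullback. The main obstacle is the local Koszul identification $P_f \cong \sO_{f^\vee}$; once it is in hand, the inequality $f^\vee\le e_\pi$ is routine bookkeeping, and the scheme-theoretic support of the arbitrary exceptional sheaf $E$ is then automatically contained in that of its projective cover, hence in $e_\pi$.
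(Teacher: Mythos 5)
Your opening reduction (via the progenerator $\bigoplus_f P_f$) matches the paper, but the core of the argument has a genuine gap. You pick a \emph{two-element} regular sequence $(a,b)$ generating $I_{\pi(f^\vee)}$, i.e., you assume this ideal is a local complete intersection, and then run the Koszul complex. This fails in general. Consider $\pi=\pi_3\circ\pi_2\circ\pi_1$ where $\pi_1$ blows up $p\in Y$ with exceptional curve $e$, $\pi_2$ blows up $q\in e$ with exceptional curve $e'$, and $\pi_3$ blows up the intersection point $r$ of $e'$ with the strict transform of $e$. Writing $f_0,f_1,f_2$ for the (strict transforms of the) three exceptional curves in order, the intersection matrix is
\[
\begin{pmatrix}-3&0&1\\0&-2&1\\1&1&-1\end{pmatrix},
\qquad e_\pi=f_0+2f_1+4f_2,\qquad f_2^\vee=2f_0+3f_1+6f_2 ,
\]
and a direct computation in local coordinates $(x=u^2v,\ y=u^3v^2)$ shows $I_{\pi(f_2^\vee)}=(y^2,x^2y,x^3)$, which is minimally \emph{three}-generated (equivalently $\mathrm{ord}_p(I)=2$). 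So there is no Koszul resolution to pull back. Worse, the two downstream claims are also false in this example: $P_{f_2}$ is locally isomorphic to $\sO_X/(u^4v^2)\oplus\sO_X/(u^2v)$ — a rank-two sheaf on its support, not an invertible sheaf on $f_2^\vee$, so $P_{f_2}\not\cong\sO_{f_2^\vee}$ even up to a twist — and $e_\pi-f_2^\vee=-(f_0+f_1+2f_2)$ is anti-effective, so the ``routine bookkeeping'' $f^\vee\le e_\pi$ simply does not hold. (What is true is that $c_1(P_f)=f^\vee$, hence $\mathrm{Fitt}_0(P_f)=f^\vee$, but the Fitting scheme strictly contains the annihilator support when $P_f$ has rank $>1$ on a component; conflating the two is what makes $f^\vee\le e_\pi$ look necessary.)

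The paper sidesteps all of this by working with an \emph{arbitrary} locally free resolution $0\to V\to W\to\pi_*{\cal L}(-f^\vee)\to 0$, with no rank constraint, and applying the snake lemma to the vertical map $\pi^*\to\pi^!$ on the two pulled-back rows. Since $\pi^*V\to\pi^!V$ is injective with cokernel $(\pi^!\sO_Y/\pi^*\sO_Y)\otimes\pi^*V\cong\sO_{e_\pi}\otimes{\cal L}(e_\pi)\otimes\pi^*V$, the connecting map of the snake lemma gives an embedding of $P_f=\ker(\pi^*\pi_*{\cal L}(-f^\vee)\to\pi^!\pi_*{\cal L}(-f^\vee))$ into a sheaf that is manifestly annihilated by $\sO_X(-e_\pi)$, regardless of how many generators $I_{\pi(f^\vee)}$ needs. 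I would encourage you to rework your proof along these lines: the right move is not to identify $P_f$ explicitly, but to bound it inside $\sO_{e_\pi}\otimes{\cal L}(e_\pi)\otimes\pi^*V$.
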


\begin{proof}
Every invisible sheaf is a quotient of a sum of sheaves $P_f$, so it
suffices to consider those.  Then $P_f$ is contained in the kernel $E$ of the
natural map
\[
\pi^*\pi_*\sO_X(-f^\vee)\to \pi^!\pi_*\sO_X(-f^\vee).
\]
Starting with a locally free resolution
\[
0\to V\to W\to \pi_*\sO_X(-f^\vee)\to 0,
\]
we can use the snake lemma to obtain an exact sequence
\[
0\to E\to \pi^!V/\pi^*V\to \pi^!W/\pi^*W.
\]
Since
\[
\pi^!V/\pi^*V
\cong 
(\pi^!\sO_Y/\pi^*\sO_Y)\otimes \pi^*V
\cong
\sO_{e_\pi}(e_\pi)\otimes
\pi^*V,
\]
the result follows.
\end{proof}

\begin{rem}
  The invisible sheaf $\pi^!\sO_Y/\pi^*\sO_Y\cong \sO_{e_\pi}(e_\pi)$ shows
  that this bound on the support is tight.
\end{rem}

The injective objects in the category of invisible sheaves can also be
identified, most easily using the following duality, a special case of a
standard duality for sheaves of homological dimension 1 and codimension 1
support.

\begin{lem}
If $E$ is an invisible sheaf, then so is $\sExt^1(E,\omega_X)$, and
$\sExt^1(\sExt^1(E,\omega_X),\omega_X)\cong E$.
\end{lem}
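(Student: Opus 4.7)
The plan is to exploit that exceptional sheaves are of homological dimension~$1$ with support of codimension~$1$, so that $\dR\sHom(E,\omega_X)$ collapses to a single sheaf in degree~$1$, and then to use Grothendieck--Serre duality to transport the vanishing of $\dR\pi_*E$ to the dual.

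First I would check that $\dR\sHom(E,\omega_X)\cong \sExt^1(E,\omega_X)[-1]$. Since $E$ is $\pi$-exceptional, it was already observed that $E$ has homological dimension~$1$, so $\sExt^i(E,\omega_X)=0$ for $i\ge 2$. On the other hand, $E$ is supported on the exceptional locus, which has pure codimension~$1$; picking a locally free resolution $0\to V\xrightarrow{B} W\to E\to 0$, the dual map $W^\vee\to V^\vee$ is injective because $B$ is injective on the generic fibre, whence $\sHom(E,\omega_X)=0$. Twisting the resolution of $E$ then gives a locally free presentation
\[
0\to W^\vee\otimes\omega_X\to V^\vee\otimes\omega_X\to \sExt^1(E,\omega_X)\to 0,
\]
so $\sExt^1(E,\omega_X)$ is itself coherent of homological dimension~$\le 1$, and sits in the derived category as claimed.

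Next I would prove exceptionality of $\sExt^1(E,\omega_X)$ via Grothendieck duality. Since $\pi$ is a birational morphism between smooth surfaces, the previous lemma gives $\pi^!\omega_Y\cong \omega_X$, and the duality isomorphism
\[
\dR\pi_*\dR\sHom(E,\omega_X)\cong \dR\sHom(\dR\pi_*E,\omega_Y)
\]
applies. The right-hand side vanishes because $\dR\pi_*E=0$, so the left-hand side vanishes too; using the identification $\dR\sHom(E,\omega_X)\cong \sExt^1(E,\omega_X)[-1]$ this is precisely the statement that $\dR\pi_*\sExt^1(E,\omega_X)=0$, so $\sExt^1(E,\omega_X)$ is exceptional.

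Finally, for the biduality, I would appeal to the standard Grothendieck--Serre biduality on the smooth surface $X$: for any coherent sheaf $F$ of homological dimension $\le 1$ one has $\dR\sHom(\dR\sHom(F,\omega_X),\omega_X)\cong F$ (proved by applying $\sHom(-,\omega_X)$ twice to a two-term locally free resolution and using that $\omega_X$ is an invertible dualising sheaf). Applied to $E$, this reads
\[
\dR\sHom\bigl(\sExt^1(E,\omega_X)[-1],\omega_X\bigr)\cong E,
\]
and since $\sExt^1(E,\omega_X)$ is again an exceptional sheaf (hence codimension~$1$ support, homological dimension~$1$) the same collapse argument as in the first step turns the left-hand side into $\sExt^1(\sExt^1(E,\omega_X),\omega_X)$, yielding the required isomorphism. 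The only potentially delicate point is verifying the collapse $\dR\sHom(-,\omega_X)=\sExt^1(-,\omega_X)[-1]$ for both $E$ and its dual simultaneously, but this is immediate once one knows that $\sHom$ vanishes for any coherent sheaf whose support meets every component of $X$ in codimension at least~$1$, which is automatic for sheaves supported on the exceptional locus.
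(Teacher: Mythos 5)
Your proof is correct and follows essentially the same route as the paper: collapse $\dR\sHom(E,\omega_X)$ to degree $1$ using the homological-dimension and codimension hypotheses, then invoke $\omega_X\cong\pi^!\omega_Y$ and Grothendieck duality $\dR\pi_*\dR\sHom(E,\omega_X)\cong\dR\sHom(\dR\pi_*E,\omega_Y)=0$, and finally deduce biduality from a two-term locally free resolution. The only difference is that you spell out (via the resolution $0\to V\to W\to E\to 0$ and torsion-freeness of $W^\vee$) the vanishing $\sHom(E,\omega_X)=\sExt^2(E,\omega_X)=0$, which the paper simply asserts from the support and homological-dimension conditions.
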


\begin{proof}
  Since $E$ has $1$-dimensional support and homological dimension 1,
\[
\sHom(E,\omega_X)=\sExt^2(E,\omega_X)=0,
\]
and thus $\dR\sHom(E,\omega_X)$ is concentrated in degree 1.  Since
$\omega_X=\pi^!\omega_Y$, we have
\[
\dR\pi_*\dR\sHom(E,\omega_X)
\cong
\dR\sHom(\dR\pi_*E,\omega_Y)
= 0,
\]
and thus $\dR\pi_*\sExt^1(E,\omega_X)=0$ as required.

That this operation is a duality follows by considering how it acts on a
locally free resolution $0\to V\to W\to E\to 0$.
\end{proof}

\begin{rem}
This implies that the $k$-algebra associated to $\pi$ above is isomorphic
to its opposite.
\end{rem}

In particular, we can define injective objects
\[
I_f:= \sExt^1(P_f,\omega_X)\cong \sExt^1(\pi^*\pi_*\sO_X(-f^\vee),\omega_X)
\]
and find that $I_f$ is the injective hull of $\sO_f(-1)$.  Note that the
sheaves $\sO_f(-1)$ are self-dual: considerations of support show that the
dual of $\sO_f(-1)$ has the form $\sO_f(-d)$, and $\sO_f(-1)$ is the only
invisible possibility.  Another self-dual invisible sheaf is the sheaf
$\pi^!\omega_Y/\pi^*\omega_Y$: the dual is most naturally expressed as
$\pi^!\sO_Y/\pi^*\sO_Y$, but twisting by the pullback of a line bundle has
no effect on an invisible sheaf.

\section{Invisible sheaves and minimal lifts}

In the case of direct images of line bundles on smooth curves, the purpose
of lifting is to make the lifted sheaf disjoint from the anticanonical
curve.  This is of course not possible for sheaves of positive rank, but we
can still hope to make the restriction to the anticanonical curve simpler.
Thus we would like to understand how the restriction of the minimal lift is
related to the original restriction.  This is not quite the correct
question, as it turns out: it turns out to be easier to twist by a line
bundle and consider
\[
M\otimes \sO_{C_\alpha}(C_\alpha).
\]
The standard resolution
\[
0\to \sO_X\to \sO_X(C_\alpha)\to \sO_{C_\alpha}(C_\alpha)\to 0
\]
shows that
\[
\Tor_p(M,\sO_{C_\alpha}(C_\alpha))
\cong
\sExt^{1-p}_Y(\sO_{C_\alpha},M).
\]
Thus more generally we want to understand how
$\sExt^*_Y(\pi^{*!}M,\pi^{*!}N)$ and $\sExt^*_Y(M,N)$ are related.

\begin{lem}\label{lem:lift_exts}
Let $\pi:X\to Y$ be a birational morphism of smooth surfaces, and suppose
$M$, $N$ are sheaves on $Y$ with homological dimension $\le 1$.  Then
there is an isomorphism
\[
\sHom_Y(\pi^{*!}M,\pi^{*!}N)\cong \sHom_Y(M,N)
\]
and an exact sequence
\[
0\to \sExt^1_Y(\pi^{*!}M,\pi^{*!}N)
 \to \sExt^1_Y(M,N)
 \to \sHom_Y(E_1,E_2)
 \to \sExt^2_Y(\pi^{*!}M,\pi^{*!}N)
 \to 0,
\]
where $E_1$ and $E_2$ are the $\pi$-invisible sheaves fitting into the
short exact sequences
\begin{align}
&0\to E_1\to \pi^*M\to \pi^{*!}M\to 0\\
&0\to \pi^{*!}N\to \pi^!N\to E_2\to 0.
\end{align}
\end{lem}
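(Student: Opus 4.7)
The plan is to relate both sides of the claimed sequence to the common object $\dR\sHom_Y(M,N)$ via the two adjunctions $(\pi^*,\dR\pi_*)$ and $(\dR\pi_*,\pi^!)$, using the short exact sequences defining $E_1$ and $E_2$ to measure the discrepancy. Throughout, $\sExt^i_Y(-,-)$ applied to sheaves on $X$ is to be interpreted as $R^i\pi_*\dR\sHom_X(-,-)$, matching the usual $\sExt^i_Y$ on genuine sheaves of $Y$ via $\dR\pi_*\pi^{*!}N\cong N$.

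First I would apply $\dR\sHom_X(-,\pi^{*!}N)$ to the short exact sequence
$0\to E_1\to \pi^*M\to\pi^{*!}M\to 0$ and then $\dR\pi_*$. The middle term becomes
\[
\dR\pi_*\dR\sHom_X(\pi^*M,\pi^{*!}N)\cong \dR\sHom_Y(M,\dR\pi_*\pi^{*!}N)\cong \dR\sHom_Y(M,N)
\]
by the $(\pi^*,\dR\pi_*)$ adjunction, yielding a triangle
\[
\dR\pi_*\dR\sHom_X(\pi^{*!}M,\pi^{*!}N)\to \dR\sHom_Y(M,N)\to \dR\pi_*\dR\sHom_X(E_1,\pi^{*!}N)\to[1].
\]
To identify the last term, apply $\dR\sHom_X(E_1,-)$ to $0\to \pi^{*!}N\to \pi^!N\to E_2\to 0$ and push forward. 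By the $(\dR\pi_*,\pi^!)$ adjunction and $\dR\pi_*E_1=0$,
\[
\dR\pi_*\dR\sHom_X(E_1,\pi^!N)\cong \dR\sHom_Y(\dR\pi_*E_1,N)=0,
\]
so the triangle coming from the second exact sequence gives the shift identification
\[
\dR\pi_*\dR\sHom_X(E_1,\pi^{*!}N)\cong \dR\pi_*\dR\sHom_X(E_1,E_2)[-1].
\]

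Substituting and taking the long exact sequence of cohomology sheaves on $Y$ finishes the job. Since $E_1$ has homological dimension $1$, the complex $\dR\sHom_X(E_1,E_2)$ is concentrated in degrees $0,1$, so after $\dR\pi_*$ and the shift $[-1]$ its cohomology lies in degrees $\ge 1$, with $\mathcal{H}^1$ equal to $\pi_*\sHom_X(E_1,E_2)=\sHom_Y(E_1,E_2)$. On the other side, $M$ has homological dimension $\le 1$ on $Y$, so $\sExt^2_Y(M,N)=0$. Splicing the long exact sequence for the triangle yields the isomorphism $\sHom_Y(\pi^{*!}M,\pi^{*!}N)\cong \sHom_Y(M,N)$ in degree $0$ and exactly the claimed four-term exact sequence in degrees $1,2$, the termination at the right being forced by the vanishing of $\sExt^2_Y(M,N)$. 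The only delicate point is the shift identification in the second step, where one must confirm both that $\dR\pi_*\dR\sHom_X(E_1,\pi^!N)$ truly vanishes (using $\dR\pi_*E_1=0$ rather than merely $\pi_*E_1=0$) and that the connecting map is the one needed; everything else is a routine consequence of the two adjunctions and the homological dimension hypothesis on $M$.
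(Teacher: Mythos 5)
Your proof is correct and follows essentially the same route as the paper: both arguments use the two adjunctions $(\pi^*,\dR\pi_*)$ and $(\dR\pi_*,\pi^!)$ together with the two short exact sequences defining $E_1$ and $E_2$, and both invoke $\dR\pi_* E_1 = 0$ to identify the error term as $\sHom_Y(E_1,E_2)$ with a degree shift, terminating the sequence with the vanishing of $\sExt^2_Y(M,N)$. The only cosmetic difference is that you work with distinguished triangles and the derived category throughout, whereas the paper splices together the corresponding long exact sequences of $R^p\sHom_Y$ directly; your phrasing has the minor advantage of making the compatibility of connecting maps automatic.
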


\begin{proof}
Using the $E_1$ exact sequence, we obtain the long exact sequence
\[
\cdots\to
R^p\sHom_Y(\pi^{*!}M,\pi^{*!}N)
\to
R^p\sHom_Y(\pi^*M,\pi^{*!}N)
\to
R^p\sHom_Y(E_1,\pi^{*!}N)
\to
\cdots
\]
Moreover, $R^p\sHom_Y(\pi^*M,\pi^{*!}N)\cong R^p\sHom_Y(M,N)$ by
adjointness.

Similarly, since $R^p\sHom_Y(E_1,\pi^!N)=0$, the $E_2$ exact sequence gives
isomorphisms
\[
R^p\sHom_Y(E_1,E_2)
\cong
R^{p+1}\sHom_Y(E_1,\pi^{*!}N).
\]

Combining these gives a long exact sequence
\[
\cdots\to
R^p\sHom_Y(\pi^{*!}M,\pi^{*!}N)
\to
R^p\sHom_Y(M,N)
\to
R^{p-1}\sHom_Y(E_1,E_2)
\to
\cdots,
\]
from which the claim follows immediately.  Note that $\sExt^2_Y(M,N)=0$
since $M$ has homological dimension $\le 1$ by assumption.
\end{proof}

\begin{rem}
Similarly, we have an isomorphism
\[
\Hom(\pi^{*!}M,\pi^{*!}N)\cong \Hom(M,N)
\]
and a long exact sequence
\[
\cdots
\to
\Ext^{p-2}(E_1,E_2)
\to
\Ext^p(\pi^{*!}M,\pi^{*!}N)
\to
\Ext^p(M,N)
\to
\Ext^{p-1}(E_1,E_2)
\to
\cdots
\]
\end{rem}

\begin{cor}
Let $\pi:X\to Y$ be a birational morphism of smooth surfaces, and suppose
$M$ is a sheaf on $Y$ with homological dimension $\le 1$.
Then $\End(\pi^{*!}M)\cong \End(M)$.  In particular, the minimal lift of a
simple sheaf is always simple.
\end{cor}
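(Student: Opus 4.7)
The proof is essentially a specialization of what has already been proved. My plan is to apply the global Hom analogue of Lemma \ref{lem:lift_exts} (stated in the remark immediately preceding the corollary) with $N = M$. That remark gives a canonical isomorphism
\[
\Hom(\pi^{*!}M,\pi^{*!}M)\cong \Hom(M,M),
\]
which is precisely the assertion $\End(\pi^{*!}M)\cong \End(M)$ as sets of morphisms. So the only work is to check that this identification respects the algebra structure, and then to observe that simplicity is an elementary consequence.

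For the ring structure, I would unwind the construction of the isomorphism. In one direction, an endomorphism $\phi \in \End(\pi^{*!}M)$ is sent to $\pi_*\phi$ viewed as an endomorphism of $M$ via the natural isomorphism $\pi_*\pi^{*!}M\cong M$ established earlier; since $\pi_*$ is a functor, this map is multiplicative and unital. In the other direction, an endomorphism $\psi\in \End(M)$ produces $\pi^*\psi$ and $\pi^!\psi$, and by functoriality of the image these induce an endomorphism of $\pi^{*!}M$; this assignment is likewise multiplicative. Since these two maps are inverse to each other by the uniqueness statement behind Lemma \ref{lem:lift_exts}, the identification is an isomorphism of $k$-algebras.

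For the final sentence, suppose $M$ is simple, so $\End(M)\cong k$. The algebra isomorphism above gives $\End(\pi^{*!}M)\cong k$. The sheaf $\pi^{*!}M$ is nonzero since $\pi_*\pi^{*!}M\cong M\ne 0$, and it is coherent by construction, so the condition $\End(\pi^{*!}M)\cong k$ says exactly that $\pi^{*!}M$ is simple.

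There is no real obstacle here; the one item that requires any care is checking that the bijection of $\Hom$ sets from Lemma \ref{lem:lift_exts} respects composition, which follows from the fact that it is constructed entirely out of the functors $\pi_*$, $\pi^*$, $\pi^!$ and their adjunctions.
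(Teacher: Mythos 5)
Your proof is correct, and it matches the paper's intent: the corollary is stated as an immediate specialization of the remark preceding it (the global $\Hom$ version of Lemma \ref{lem:lift_exts}) to $N=M$, and the paper gives no further argument. One small note: since $\pi^{*!}M$ is a nonzero coherent sheaf, $\End(\pi^{*!}M)$ already contains $k\cdot\mathrm{id}$, so the dimension count alone forces $\End(\pi^{*!}M)=k$ when $M$ is simple, and the careful verification that the bijection respects composition, while correct, is not strictly needed for the conclusion.
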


\begin{cor}
  Let $\pi:X\to Y$ be a Poisson birational morphism of Poisson surfaces,
  and suppose $M$ is a sheaf on $Y$ of homological dimension $\le 1$.  Then
\[
\sHom_Y(\sO_{C_{\alpha_X}},\pi^{*!}M)\cong \sHom_Y(\sO_{C_{\alpha_Y}},M)
\]
and there is an exact sequence
\begin{align}
0\to \sExt^1_Y(\sO_{C_{\alpha_X}},\pi^{*!}M)
 &\to \sExt^1_Y(\sO_{C_{\alpha_Y}},M)\notag\\
 &\to \sHom_Y(\omega_X/\pi^*\omega_Y,\pi^!M/\pi^{*!}M)
 \to \sExt^2_Y(\sO_{C_{\alpha_X}},\pi^{*!}M)
 \to 0.\notag
\end{align}
\end{cor}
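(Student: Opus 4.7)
The plan is to read this off from Lemma~\ref{lem:lift_exts} applied with the pair $(M_{\mathrm{lem}}, N_{\mathrm{lem}}) := (\sO_{C_{\alpha_Y}}, M)$. Both sheaves have homological dimension $\le 1$: for $M$ by hypothesis, and for $\sO_{C_{\alpha_Y}}$ via the standard resolution $0\to \sO_Y(-C_{\alpha_Y})\to \sO_Y\to \sO_{C_{\alpha_Y}}\to 0$. The earlier Proposition computing $\pi^{*!}\sO_C$ for curves (and its Poisson addendum identifying the resulting curve as the anticanonical curve on $X$) gives $\pi^{*!}\sO_{C_{\alpha_Y}} \cong \sO_{C_{\alpha_X}}$, so the $\sHom$ isomorphism of the corollary follows immediately from the corresponding part of the Lemma. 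The rest reduces to identifying the middle term $\sHom(E_1, E_2)$ of the Lemma's four-term sequence with $\sHom_Y(\omega_X/\pi^*\omega_Y,\, \pi^!M/\pi^{*!}M)$; since $E_2 = \pi^!M/\pi^{*!}M$ by definition, the only remaining task is to show $E_1 \cong \omega_X/\pi^*\omega_Y$.

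This is the one step with any content. Using $\pi^*\sO_{C_{\alpha_Y}}\cong \sO_{\pi^*C_{\alpha_Y}}$ and the Poisson identity $C_{\alpha_X} = \pi^*C_{\alpha_Y}-e_\pi$ obtained in Section~2, the standard short exact sequence for nested effective divisors gives
\[
0\to \sO_{e_\pi}(-C_{\alpha_X})\to \sO_{\pi^*C_{\alpha_Y}}\to \sO_{C_{\alpha_X}}\to 0,
\]
so $E_1\cong \sO_{e_\pi}\otimes \sO_X(-C_{\alpha_X})$. Since $C_{\alpha_X}$ is anticanonical, $\sO_X(-C_{\alpha_X})\cong \omega_X$, hence $E_1\cong \omega_X\otimes \sO_{e_\pi}$. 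On the other hand, tensoring the sequence $0\to \sO_X(-e_\pi)\to \sO_X\to \sO_{e_\pi}\to 0$ with $\omega_X$ and using $\pi^*\omega_Y\cong \omega_X(-e_\pi)$ gives $0\to \pi^*\omega_Y\to \omega_X\to \omega_X\otimes \sO_{e_\pi}\to 0$, yielding $\omega_X/\pi^*\omega_Y\cong \omega_X\otimes \sO_{e_\pi}$, which matches.

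Feeding these identifications into Lemma~\ref{lem:lift_exts} produces the four-term exact sequence of the corollary verbatim. The hardest step is the identification $E_1\cong \omega_X/\pi^*\omega_Y$, and it is essentially just a comparison of two short exact sequences that each describe $\omega_X\otimes \sO_{e_\pi}$; everything else is formal bookkeeping inside the Lemma.
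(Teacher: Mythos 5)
Your proof is correct. The paper states this as an unproved corollary of Lemma~\ref{lem:lift_exts}, and your argument---specializing the lemma to the pair $(\sO_{C_{\alpha_Y}},M)$, invoking $\pi^{*!}\sO_{C_{\alpha_Y}}\cong\sO_{C_{\alpha_X}}$ from the earlier proposition, and then matching $E_1\cong\omega_X\otimes\sO_{e_\pi}\cong\omega_X/\pi^*\omega_Y$ via the anticanonical divisor identity $C_{\alpha_X}=\pi^*C_{\alpha_Y}-e_\pi$---is exactly the intended route.
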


For a monoidal transformation, the invisible sheaves associated to a
minimal lift are straightforward to compute.

\begin{prop}
  Let $\pi:X\to Y$ be a monoidal transformation, and let $M$ be a sheaf on
  $Y$ of homological dimension $\le 1$.  Then there is an exact sequence
\[
0\to E_1\to \pi^*M\to \pi^!M\to E_2\to 0,
\]
where we have non-canonical isomorphisms
\begin{align}
E_1\cong \Ext^1(\sO_p,M)\otimes_k \sO_e(-1),\notag\\
E_2\cong \Hom_k(\Hom(M,\sO_p),\sO_e(-1)).\notag
\end{align}
\end{prop}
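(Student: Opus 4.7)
The plan is to compute the kernel and cokernel of the natural map $\pi^*M\to \pi^!M$ directly via the snake lemma applied to a minimal locally free resolution of $M$ near the blown-up point $p$. First, by the earlier lemma $\pi^!N\cong \pi^*N\otimes \omega_X\otimes \pi^*\omega_Y^{-1}$, and for a monoidal transformation $\omega_X\otimes \pi^*\omega_Y^{-1}\cong \sO_X(e)$; under this isomorphism I would identify the natural map $\pi^*M\to \pi^!M$ with multiplication by the canonical section $s:\sO_X\to \sO_X(e)$ cutting out $e$ (tensored with $\pi^*M$). This is verified for $M=\sO_Y$ by chasing the adjunction, where both maps are adjoint to the identity $\sO_Y\cong \pi_*\sO_X\to \sO_Y$, and extended to arbitrary $M$ of homological dimension $\le 1$ by naturality along a locally free resolution.

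Next, take a minimal locally free resolution $0\to V\to W\to M\to 0$ in a neighborhood of $p$, so $V\cong \sO_Y^a$, $W\cong \sO_Y^b$, and the connecting matrix $\phi$ has all entries in $\mathfrak{m}_p$. Applying the snake lemma to
\[
\begin{CD}
0 @>>> \pi^*V @>\pi^*\phi>> \pi^*W @>>> \pi^*M @>>> 0\\
@. @VVV @VVV @VVV @.\\
0 @>>> \pi^!V @>\pi^!\phi>> \pi^!W @>>> \pi^!M @>>> 0
\end{CD}
\]
(with vertical maps given by multiplication by $s$) produces the exact sequence $0\to E_1\to \sO_e(-1)^a\to \sO_e(-1)^b\to E_2\to 0$, using that the vertical maps on the locally free sheaves are injective with cokernel a direct sum of copies of $\sO_X(e)|_e=\sO_e(-1)$. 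The crux is that the middle map vanishes: the entries of $\pi^*\phi$ lie in the ideal sheaf $\mathfrak{m}_p\cdot\sO_X=\sO_X(-e)$, and the inclusion $\sO_X(-e)\hookrightarrow \sO_X$ restricts to zero on $e$ (as $s$ vanishes there). Hence $E_1\cong \sO_e(-1)^a$ and $E_2\cong \sO_e(-1)^b$.

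To identify $a$ and $b$ canonically, apply $\dR\Hom(\sO_p,-)$ and $\Hom(-,\sO_p)$ to the resolution. Since $V$ and $W$ are locally free, the only nonvanishing $\Ext^i(\sO_p,V)$ is $\Ext^2(\sO_p,V)\cong V\otimes k(p)\cong k^a$, and similarly $\Ext^2(\sO_p,W)\cong k^b$; the resulting four-term sequence $0\to \Ext^1(\sO_p,M)\to k^a\to k^b\to \Ext^2(\sO_p,M)\to 0$ has middle map $\phi\bmod \mathfrak{m}_p=0$, giving $\Ext^1(\sO_p,M)\cong k^a$. The analogous computation with $\Hom(-,\sO_p)$ gives $\Hom(M,\sO_p)\cong k^b$. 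Combining these yields the stated isomorphisms $E_1\cong \Ext^1(\sO_p,M)\otimes_k \sO_e(-1)$ and $E_2\cong \Hom_k(\Hom(M,\sO_p),\sO_e(-1))$, both non-canonical (depending on the choice of bases of $V$ and $W$).

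The main obstacle is the initial identification of the adjunction-induced map $\pi^*\to \pi^!$ with multiplication by the canonical section $s$: one needs to unwind the $(\pi_*,\pi^!)$ adjunction carefully enough to rule out spurious scalars or twists. Once this is secured, the remainder is a routine snake-lemma calculation whose success depends entirely on the minimality of the chosen resolution.
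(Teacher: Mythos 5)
Your proof is correct, but it takes a genuinely different route from the paper's. The paper starts from the already-established fact that, for a monoidal transformation, every exceptional sheaf is a power of $\sO_e(-1)$ (so $E_1$ and $E_2$ are automatically of this form), and then pins down the multiplicities purely by adjunction: from the short exact sequence $0\to E_1\to\pi^*M\to\pi^{*!}M\to 0$ and the vanishing $\Hom(\sO_e(-1),\pi^{*!}M)=0$ one gets $\Hom(\sO_e(-1),E_1)\cong\Hom(\sO_e(-1),\pi^*M)$, which after twisting by $\sO_X(-e)$ and applying the $(\dR\pi_*,\pi^!)$ adjunction becomes $\Ext^1(R^1\pi_*\sO_e(-2),M)\cong\Ext^1(\sO_p,M)$; the $E_2$ side is dual. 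Your argument instead identifies the transformation $\pi^*\to\pi^!$ concretely as tensoring with the section cutting out $e$, chooses a minimal free presentation of $M$ at $p$, and applies the snake lemma, using minimality to force the connecting map $\sO_e(-1)^a\to\sO_e(-1)^b$ to vanish. This is more computational but also more self-contained: it does not invoke the classification of exceptional sheaves or the $\Hom$-vanishing propositions, and it transparently exhibits both isomorphisms as coming from the fibers $V\otimes k(p)$ and $W\otimes k(p)$. The one point worth flagging explicitly — as you yourself note — is the identification of the adjunction map with multiplication by $s$; your check on $\sO_Y$ via $\Hom(\pi^*\sO_Y,\pi^!\sO_Y)\cong\Hom(\sO_Y,\sO_Y)$, followed by naturality in a free presentation, does close this, so the argument is complete.
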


\begin{proof}
  There certainly is an exact sequence of the given form, since the kernel
  and cokernel are invisible, thus powers of $\sO_e(-1)$.  From the exact
sequence
\[
0\to E_1\to \pi^*M\to \pi^{*!}M\to 0,
\]
we find
\begin{align}
\Hom(\sO_e(-1),E_1)
\cong
\Hom(\sO_e(-1),\pi^*M)
&\cong
\Hom(\sO_e(-2),\pi^!M)\notag\\
&\cong
\Ext^1(R^1\pi_*\sO_e(-2),M)
\cong
\Ext^1(\sO_p,M).\notag
\end{align}
Similarly,
\[
\Hom(E_2,\sO_e(-1))
\cong
\Hom(\pi^!M,\sO_e(-1))
\cong
\Hom(\pi^*M,\sO_e)
\cong
\Hom(M,\sO_p).
\]
\end{proof}


This has a nice consequence for twists of minimal lifts, which we will
generalize below.

\begin{cor}
  Let $\pi:X\to Y$ be a monoidal transformation, and $M$ a sheaf on $Y$ of
  homological dimension $\le 1$.  Then the sheaves $\pi^{*!}M(\pm e)$ are
  $\pi_*$-acyclic, with direct image of homological dimension $\le 1$.
  Moreover, we have short exact sequences
\[
0\to M\to \pi_*(\pi^{*!}M(e))\to \Ext^1(\sO_p,M)\otimes_k \sO_p \to 0,
\]
and
\[
0\to \pi_*(\pi^{*!}M(-e))\to M\to 
\Hom_k(\Hom(M,\sO_p),\sO_p)\to 0.
\]
In other words, $\pi_*(\pi^{*!}M(e))$ is the universal
extension of $\sO_p$ by $M$, and $\pi_*(\pi^{*!}M(-e))$ is
the kernel of the universal homomorphism from $M$ to $\sO_p$.
\end{cor}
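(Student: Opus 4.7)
The plan is to tensor each of the two exact sequences from the preceding proposition with ${\cal L}(\pm e)$ and push forward by $\pi$. For the ${\cal L}(e)$ twist I start from $0 \to E_1 \to \pi^*M \to \pi^{*!}M \to 0$. Since ${\cal L}(e)|_e \cong \sO_e(-1)$ and $E_1 \cong V \otimes_k \sO_e(-1)$ with $V := \Ext^1(\sO_p, M)$, we have $E_1 \otimes {\cal L}(e) \cong V \otimes_k \sO_e(-2)$. The projection formula together with the short exact sequence $0 \to \sO_X \to {\cal L}(e) \to \sO_e(-1) \to 0$ gives $\dR\pi_*{\cal L}(e) \cong \sO_Y$, so $\dR\pi_*(\pi^*M \otimes {\cal L}(e)) \cong M$; combined with $\dR\pi_*\sO_e(-2) \cong \sO_p[-1]$, the long exact sequence of the twisted short exact sequence produces the $\pi_*$-acyclicity of $\pi^{*!}M \otimes {\cal L}(e)$ together with the first short exact sequence.

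For the ${\cal L}(-e)$ twist, I use $\pi^!M \cong \pi^*M \otimes {\cal L}(e)$ to rewrite $0 \to \pi^{*!}M \to \pi^!M \to E_2 \to 0$ (after tensoring with ${\cal L}(-e)$) as
\[
0 \to \pi^{*!}M \otimes {\cal L}(-e) \to \pi^*M \to E_2 \otimes {\cal L}(-e) \to 0,
\]
where $E_2 \otimes {\cal L}(-e) \cong \Hom_k(\Hom(M, \sO_p), \sO_e)$. Since $\dR\pi_*\sO_e \cong \sO_p$, the long exact sequence gives the second short exact sequence. The cokernel map $M \to \Hom_k(\Hom(M, \sO_p), \sO_p)$ is identified with the natural evaluation map (which in the local ring at $p$ is just the quotient of $M$ by its intersection with $\mathfrak{m}_p M$); its surjectivity forces $\pi_*$-acyclicity of $\pi^{*!}M \otimes {\cal L}(-e)$, and its kernel is by definition the kernel of the universal homomorphism from $M$ to $\sO_p$.

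It remains to verify homological dimension and universality. The ${\cal L}(-e)$ case is immediate: $\pi_*(\pi^{*!}M \otimes {\cal L}(-e))$ is a subsheaf of $M$ and inherits homological dimension $\le 1$. The ${\cal L}(e)$ case hinges on the vanishing
\[
\Hom(\sO_p, \pi_*(\pi^{*!}M \otimes {\cal L}(e))) \cong \Hom(\sO_e, \pi^{*!}M \otimes {\cal L}(e)) \cong \Hom(\sO_e(-1), \pi^{*!}M) = 0,
\]
where the last equality is the vanishing, established earlier, that no exceptional sheaf maps into a minimal lift. This kills every $\sO_p$-subsheaf of $\pi_*(\pi^{*!}M \otimes {\cal L}(e))$, and since the sheaf agrees with $M$ away from $p$, its homological dimension is $\le 1$. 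The same vanishing, inserted into the $\Hom(\sO_p, -)$ long exact sequence of the first short exact sequence, forces the boundary map $V = \Hom(\sO_p, V \otimes \sO_p) \to \Ext^1(\sO_p, M) = V$ to be injective, hence bijective by finite-dimensionality; thus the extension class in $\Ext^1(V \otimes \sO_p, M) \cong \End(V)$ is invertible, and the extension is isomorphic to the universal one. The main obstacle is keeping the various identifications of $E_1$, $E_2$, and the twisting isomorphisms consistent, but given the preceding proposition the computation is essentially formal.
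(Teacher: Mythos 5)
Your overall strategy---twist the first short exact sequence by ${\cal L}(e)$ and the second by ${\cal L}(-e)$, then push forward---is exactly the paper's. Where you diverge is in how the two ``remaining'' claims (that $\pi^{*!}M\otimes{\cal L}(-e)$ is $\pi_*$-acyclic and that $\pi_*(\pi^{*!}M\otimes{\cal L}(e))$ has homological dimension $\le 1$) are established. The paper uses the same $\Hom$-vanishing criterion for both, checking on $X$ that $\Hom(\pi^{*!}M\otimes{\cal L}(-e),\sO_e(-1))=0$ and $\Hom(\sO_e(-1),\pi^{*!}M\otimes{\cal L}(e))=0$; you instead argue acyclicity of the ${\cal L}(-e)$ twist via surjectivity of the induced map $M\to\Hom_k(\Hom(M,\sO_p),\sO_p)$, and check homological dimension of the ${\cal L}(e)$ direct image via $\Hom(\sO_p,-)$ and adjunction.

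Two points deserve attention. First, a sign error: since ${\cal L}(-e)|_e\cong\sO_e(1)$, you have $\Hom(\sO_e,\pi^{*!}M\otimes{\cal L}(e))\cong\Hom(\sO_e(1),\pi^{*!}M)$, not $\Hom(\sO_e(-1),\pi^{*!}M)$. The conclusion is still reachable---any nonzero map $\sO_e(1)\to\pi^{*!}M$ restricts nontrivially to a subsheaf $\sO_e(-1)\hookrightarrow\sO_e(1)$, since the cokernel of that inclusion is $0$-dimensional and $\pi^{*!}M$ has none such subsheaf---but as written the step is wrong. Second, your surjectivity argument for the ${\cal L}(-e)$ case rests on identifying the cokernel map $M\to\Hom_k(\Hom(M,\sO_p),\sO_p)$ coming from the long exact sequence with the natural evaluation map; this is plausible but you assert it without proof, and unwinding the various identifications (including the isomorphism $\pi^!M\otimes{\cal L}(-e)\cong\pi^*M$ and the non-canonical description of $E_2$) is the sort of thing that needs care. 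The paper's route avoids this entirely by establishing acyclicity first and deducing surjectivity from the long exact sequence. On the other hand, your verification that the extension class in $\Ext^1(V\otimes\sO_p,M)\cong\End(V)$ is invertible is a genuine addition: it is needed to justify the phrase ``universal extension,'' and the paper leaves it implicit.
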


\begin{proof}
Take the short exact sequences
\begin{align}
0\to E_1\to \pi^*M\to \pi^{*!}M\to 0\notag\\
0\to \pi^{*!}M\to \pi^!M\to E_2\to 0\notag,
\end{align}
twist the first by $\sO_X(e)$ and the second by $\sO_X(-e)$, then
take (higher) direct images.  Note that after twisting, the middle terms in
the short exact sequence are still $\pi_*$-acyclic (since they are $\pi^!M$
and $\pi^*M$ respectively).

It remains only to show that $\pi^{*!}M(-e)$ is
$\pi_*$-acyclic and $\pi^{*!}M(e)$ has direct image of
homological dimension 1; the other two claims follow from the facts that
they are a subsheaf or quotient of a well-behaved sheaf.  But
\[
\Hom(\pi^{*!}M(-e),\sO_e(-1))
\cong
\Hom(\pi^{*!}M,\sO_e(-2))
\subset
\Hom(\pi^{*!}M,\sO_e(-1))
=0,
\]
and similarly for
\[
\Hom(\sO_e(-1),\pi^{*!}M(e)).
\]
\end{proof}

Note that if we twist by more than just $\pm e$, then we cannot expect to
obtain a well-behaved direct image.  Indeed, twisting $\sO_X\cong
\pi^{*!}\sO_Y$ by $\sO_X(2e)$ gives (naturally enough) $\sO_X(2e)$,
which fits in an exact sequence
\[
0\to \sO_X(e)\to \sO_X(2e)\to \sO_e(-2)\to 0
\]
so that $R^1\pi_*\sO_e(-2)\cong \sO_p$.  So we cannot iterate the twisting
itself, but must instead iterate the direct-image-of-twist-of-minimal-lift
operation (which we call a {\em pseudo-twist}).  Again, we must be careful
to note that the two pseudo-twists are not inverses to each other.  Indeed,
if $M$ is not torsion, then the pseudo-twist by $\sO_X(e)$ and the
pseudo-twist by $\sO_X(-e)$ change $[M]\in K_0$ by different multiples of
$[\sO_p]$, since
\begin{align}
\dim\Hom(M,\sO_p)
-
\dim\Ext^1(\sO_p,M)
&{}=
\dim\Ext^2(\sO_p,M)
-
\dim\Ext^1(\sO_p,M)\notag\\
&{}=
\dim\Ext^2(\sO_p,M)-\dim\Ext^1(\sO_p,M)+\dim\Hom(\sO_p,M)\notag\\
&{}=
\rank(M).\notag
\end{align}

Even if $M$ is torsion, they need not be inverses, e.g., if $M$ is an
invertible sheaf on a curve singular at $p$.  However, here the situation
is much nicer.  First, if $\pi^{*!}M$ is transverse to $e$, then no matter
how far we twist $\pi^{*!}M$ in either direction, the result will still be
a minimal lift, and thus in the transverse case, the pseudo-twists act as a
group.  More generally, the subsheaf $\Tor_1(\pi^{*!}M,\sO_e(-1))\subset
\pi^{*!}M$ will be a vector bundle on $e\cong \P^1$ having no global
sections.  If we repeatedly pseudo-twist by $\sO_X(-e)$, then each step
either strictly decreases the rank of this vector bundle or strictly
increases its degree.  Thus after a finite number of downwards
pseudo-twists, we obtain a sheaf such that $\pi^{*!}M$ is transverse to the
exceptional curve.  We could also obtain a sheaf with transverse minimal
lift via finitely many upwards pseudo-twists, or by some combination of the
two; the resulting sheaves need not be in the same orbit under the group of
pseudo-twists.

In generalizing pseudo-twists beyond monoidal transformations, we encounter
the problem of determining which twists of a minimal lift are guaranteed to
have well-behaved direct image (i.e., $\pi_*$-acyclic with direct image of
homological dimension $\le 1$).  This appears to be a tricky problem in
general (especially in the presence of exceptional components of
self-intersection $<-2$), but there is one sufficiently large special
case.

\begin{thm}
Let $\pi:X\to Y$ be a birational morphism of algebraic surfaces, and
suppose $D$ is a divisor on $X$ which is supported on the exceptional locus
and satisfies $D^2=-1$.  If $M$ is any sheaf of homological dimension
$\le 1$ on $Y$, then $\pi^{*!}M(D)$ is $\pi_*$-acyclic,
with direct image of homological dimension $\le 1$.
\end{thm}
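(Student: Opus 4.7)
The plan is to induct on the number of monoidal transformations in a factorization of $\pi$. The base case is $\pi$ itself monoidal, with exceptional line $e$: then $D=ne$ and $D^2=-n^2=-1$ forces $n=\pm 1$, so $D=\pm e$ and the statement is the corollary immediately preceding the theorem.

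For the inductive step, factor $\pi=\bar\pi\circ\sigma$ where $\sigma:X\to X'$ is the final monoidal transformation, with exceptional curve $e$ and center $p\in X'$. Every divisor on $X$ supported on the exceptional locus of $\pi$ decomposes uniquely as $D=\sigma^*D''+me$ with $D''$ a divisor on $X'$ supported on the exceptional locus of $\bar\pi$ and $m\in\Z$. Using $(\sigma^*D'')^2=(D'')^2$, $(\sigma^*D'')\cdot e=0$, and $e^2=-1$, the identity $D^2=(D'')^2-m^2=-1$, combined with negative definiteness of the intersection form on the exceptional locus of $\bar\pi$, forces $m\in\{-1,0,1\}$, and $m=\pm 1$ moreover forces $D''=0$. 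Thus either $D=\sigma^*D''$ with $(D'')^2=-1$, or $D=\pm e$.

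In the case $D=\sigma^*D''$, the composition law $\pi^{*!}M\cong\sigma^{*!}\bar\pi^{*!}M$ together with the projection formula for the line bundle $\mathcal{L}(D'')$ yields both $\sigma_*(\pi^{*!}M\otimes\mathcal{L}(D))\cong\bar\pi^{*!}M\otimes\mathcal{L}(D'')$ and the vanishing $R^1\sigma_*(\pi^{*!}M\otimes\mathcal{L}(D))=0$; the inductive hypothesis applied to $\bar\pi$ and $D''$, together with the Leray spectral sequence, then finishes this case. In the case $D=+e$ (the case $D=-e$ being dual), set $N=\bar\pi^{*!}M$ and $N''=\sigma_*(\sigma^{*!}N\otimes\mathcal{L}(e))$. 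The atomic corollary provides $\sigma_*$-acyclicity of $\sigma^{*!}N\otimes\mathcal{L}(e)$ together with a short exact sequence
\begin{equation*}
0\to N\to N''\to \Ext^1(\sO_p,N)\otimes_k\sO_p\to 0
\end{equation*}
on $X'$. Pushing forward through $\bar\pi$, using $\bar\pi_*$-acyclicity of $N$ and that the rightmost term is a skyscraper, yields $\bar\pi_*$-acyclicity of $N''$ and an extension $0\to M\to \bar\pi_*N''\to \sO_{\bar\pi(p)}^r\to 0$ on $Y$ with $r=\dim\Ext^1(\sO_p,N)$.

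The remaining task, and the main obstacle, is to show that $\bar\pi_*N''$ has no $0$-dimensional subsheaf. A standard argument about extensions by skyscrapers reduces this to injectivity of the pushdown map of extension classes $\Ext^1(\sO_p,N)\to\Ext^1(\sO_{\bar\pi(p)},M)$. This map factors as $\Ext^1(\sO_p,\bar\pi^{*!}M)\to\Ext^1(\sO_p,\bar\pi^!M)\cong\Ext^1(\sO_{\bar\pi(p)},M)$, where the first arrow is injective because $\Hom(\sO_p,\bar\pi^!M/\bar\pi^{*!}M)=0$ (the quotient is exceptional, hence of homological dimension $\le 1$, and admits no $0$-dimensional subsheaves), and the second is the derived adjunction. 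Thus the heart of the proof is this injectivity, which rests on the hd $\le 1$ property of exceptional sheaves.
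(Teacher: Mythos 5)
Your proof is correct, but the treatment of the crucial case $D=\pm e$ is genuinely different from the paper's.  Both arguments reduce to the case where $D$ is (up to sign) a $-1$-curve arising as the last exceptional curve of a factorization of $\pi$; you do this by an explicit recursion on the decomposition $D=\sigma^*D''+me$ (using negative definiteness to force $m\in\{0,\pm 1\}$, with $m=\pm 1$ implying $D''=0$), while the paper observes directly that such $D$ are exactly $\pm e_i$ and that twisting by ${\cal L}(\pm e_i)$ commutes with blowing down the later points.  For the residual case $D=\pm e$, the paper's key idea is geometric: it constructs, in a formal neighborhood of the exceptional locus on the intermediate surface $\pi_n(X)$, a curve transverse to $\pi_{[n]}^{*!}M$ passing through the center of $\pi_n$, whose strict transform is effective and transverse to $\pi^{*!}M$; this exhibits $\pi^{*!}M\otimes{\cal L}(e_n)$ as a subsheaf of a minimal lift and $\pi^{*!}M\otimes{\cal L}(-e_n)$ as a quotient of one, from which the remaining two properties are inherited.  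You instead take a homological route: after applying the monoidal corollary to $N=\bar\pi^{*!}M$ to get the universal extension $N''$, you reduce the absence of $0$-dimensional subsheaves of $\bar\pi_*N''$ to the injectivity of the extension-class pushdown $\Ext^1(\sO_p,N)\to\Ext^1(\sO_{\bar\pi(p)},M)$, factor it through $\Ext^1(\sO_p,\bar\pi^!M)$ via the adjunction isomorphism, and conclude from $\Hom(\sO_p,\bar\pi^!M/\bar\pi^{*!}M)=0$ (the quotient being exceptional, hence of homological dimension $\le 1$).  This buys a more formal and self-contained argument that avoids the local divisor construction, at the cost of needing the identification of the connecting map with the pushdown of extension classes, which you compress into ``a standard argument about extensions by skyscrapers'' and the adjunction factorization; both of these are correct but could use a few more words (in particular, that the $D=-e$ case really is dual: there the relevant injectivity of $\Hom(N,\sO_p)\to\Hom(M,\sO_{\bar\pi(p)})$ comes more cheaply from left-exactness of $\Hom$ applied to the surjection $\bar\pi^*M\twoheadrightarrow\bar\pi^{*!}M$).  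You should also record explicitly that $\bar\pi^{*!}M$ has homological dimension $\le 1$ (it is a subsheaf of $\bar\pi^!M$, a line-bundle twist of $\bar\pi^*M$) before invoking the monoidal corollary with it.
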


\begin{proof}
Since the intersection pairing on the exceptional locus is integral and
negative definite, there are at most $2n$ exceptional divisors of
self-intersection $-1$, where $n$ is the number of monoidal transformations
in a factorization of $\pi$.  We can moreover construct those divisors
easily enough.  Given a factorization of $\pi$, let $e_i$ be the total
transform of the $i$-th exceptional curve through the remaining $n-i$
blowups; then $e_i^2=-1$, so $\pm e_i$ satisfy the hypotheses.

In other words, we need to show that $\pi^{*!}M(\pm e_i)$ have well-behaved
direct images.  Moreover, we may assume $i=n$, since twisting by $\sO_X(\pm
e_i)$ commutes with blowing down the last $n-i$ points.  Factor
$\pi=\pi_n\circ \pi_{[n]}$, where $\pi_n$ is the monoidal transformation
blowing down the $-1$-curve $e_n$.  Then $\pi^{*!}M(e_n)$ is a quotient of
$\pi_n^!\pi_{[n]}^*M$, so is $\pi_*$-acyclic, while $\pi^{*!}M(-e_n)$ is a
subsheaf of $\pi_n^*\pi_{[n]}^!M$, so has direct image of homological
dimension $\le 1$.

It remains to show that $\pi^{*!}M(-e_n)$ is $\pi_*$-acyclic and
$\pi^{*!}M(e_n)$ has direct image of homological dimension 1.  The key idea
is that not only is $e_n$ effective, but up to twisting by pullbacks of
line bundles, so is $-e_n$.  More precisely, on $\pi_n(X)$, we may choose a
divisor $D$ which is transverse to $\pi^{*!}_{[n]}M$ and meets the
exceptional locus only in $\pi_n(e_n)$ (this certainly exists in a formal
neighborhood of the exceptional locus).  Then the strict transform
$\pi^*D-e_n$ of $D$ is transverse to $\pi^{*!}M$, so that
\[
\pi^{*!}M(e_n)\subset\pi^{*!}(M(D)),
\]
implying that $\pi_*(\pi^{*!}M(e_n))$ has homological
dimension $\le 1$.  Similarly, $\pi^{*!}M(-e_n)$ is
a quotient of $\pi^{*!}(M(-D))$, so is $\pi_*$-acyclic.
\end{proof}

Note that changing the factorization of $\pi$ only permutes the divisors
$e_i$, since they are characterized as the unique divisors of
self-intersection $-1$ which are nonnegative linear combinations of
exceptional components.  Each divisor $e_i$ is the total transform of a
$-1$-curve, the strict transform of which is an exceptional component
$f_i$; this correspondence is canonical, so that we can more naturally
label the orthonormal divisors by exceptional components.  I.e., for any
exceptional component $f$, $e_f$ is the unique effective exceptional
divisor such that $e_f^2=-1$ and $e_f\cdot f=-1$.

\begin{cor}
Let $\pi:X\to Y$ be a birational morphism of algebraic surfaces, and let
$f$ be an exceptional component of $\pi$.  If $M$ is any sheaf of
homological dimension $\le 1$ on $Y$, then there are short exact sequences
of the form
\[
0\to M\to \pi_*(\pi^{*!}M(e_f))\to \sO_{\pi(f)}^{r_1}\to 0
\]
and
\[
0\to \pi_*(\pi^{*!}M(-e_f))\to M\to 
\sO_{\pi(f)}^{r_2}\to 0.
\]
The dimensions $r_1$, $r_2$ are given by
\[
r_1 = c_1(\pi^{*!}M)\cdot e_f,\qquad
r_2 = c_1(\pi^{*!}M)\cdot e_f+\rank(M).
\]
\end{cor}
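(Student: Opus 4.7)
The plan is to reduce to the monoidal case established in the preceding corollary by factoring $\pi = \lambda \circ \tau \circ \sigma$, where $\tau: X_i \to X_{i-1}$ is the monoidal transformation whose exceptional divisor is the $-1$-curve $f_0$ corresponding to $f$ (so that $p := \tau(f_0) \in X_{i-1}$), $\sigma: X \to X_i$ is the composite of the blowups occurring on $f_0$, and $\lambda: X_{i-1} \to Y$ is the remainder.  By the characterization of $e_f$ as the total transform of $f_0$, $\mathcal{L}(e_f) \cong \sigma^*\mathcal{L}(f_0)$; writing $M_0 := \lambda^{*!}M$ and $M_1 := \tau^{*!}M_0$, the composition property of minimal lifts gives $\pi^{*!}M = \sigma^{*!}M_1$.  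The projection formula, applied to the $\sigma_*$-acyclic sheaf $\sigma^{*!}M_1$ and the locally free twist $\sigma^*\mathcal{L}(\pm f_0)$, then yields $\pi_*(\pi^{*!}M \otimes \mathcal{L}(\pm e_f)) \cong \lambda_*\tau_*(M_1 \otimes \mathcal{L}(\pm f_0))$, with all higher direct images vanishing by combining the theorem's acyclicity with $\tau_*$-acyclicity from the monoidal corollary.

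Next I apply the monoidal corollary to $\tau$ and $M_0$ to obtain short exact sequences on $X_{i-1}$:
\[
0 \to M_0 \to \tau_*(M_1 \otimes \mathcal{L}(f_0)) \to \Ext^1(\sO_p,M_0)\otimes_k \sO_p \to 0,
\]
\[
0 \to \tau_*(M_1 \otimes \mathcal{L}(-f_0)) \to M_0 \to \Hom_k(\Hom(M_0,\sO_p),\sO_p) \to 0.
\]
Applying $\lambda_*$, the facts that $\lambda_*M_0 \cong M$ and $R^1\lambda_*M_0 = 0$ (since $M_0$ is a minimal lift), together with $\lambda_*\sO_p \cong \sO_{\pi(f)}$ and $R^q\lambda_*\sO_p = 0$ for $q>0$ (as $\sO_p$ has zero-dimensional support), produce the claimed short exact sequences on $Y$ with $r_1 = \dim\Ext^1(\sO_p,M_0)$ and $r_2 = \dim\Hom(M_0,\sO_p)$.

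For the numerical identifications, the projection formula gives
\[
c_1(\pi^{*!}M) \cdot e_f = \sigma_*(c_1(\sigma^{*!}M_1)) \cdot f_0 = c_1(M_1) \cdot f_0,
\]
since $c_1(\sigma^{*!}M_1) - \sigma^*c_1(M_1)$ is supported on the $\sigma$-exceptional locus and hence $\sigma_*$-annihilated.  The same computation applied one level further down --- using the description $\ker(\tau^*M_0 \to \tau^{*!}M_0) \cong \Ext^1(\sO_p,M_0) \otimes \sO_{f_0}(-1)$ from the proposition just before the monoidal corollary, together with $f_0^2 = -1$ --- evaluates $c_1(M_1) \cdot f_0 = r_1$.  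The second identity $r_2 - r_1 = \rank(M_0) = \rank(M)$ follows from the computation $\chi(\sO_p,N) = \rank(N)$ for $N$ of homological dimension $\le 1$ on a smooth surface, already invoked in the paper's discussion of pseudo-twists (via Serre duality and the vanishing $\Hom(\sO_p,N)=0$).  The main obstacle is purely bookkeeping: one must carefully track acyclicities through the composite $X \to X_i \to X_{i-1} \to Y$ to ensure that each pushforward collapses its long exact sequence to a short one.
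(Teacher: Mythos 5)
Your proof is correct, and it takes a genuinely different route from the paper's. The paper works directly with $\pi$: it twists the two canonical short exact sequences $0\to E_1\to\pi^*M\to\pi^{*!}M\to 0$ and $0\to\pi^{*!}M\to\pi^!M\to E_2\to 0$ by ${\cal L}(\pm e_f)$, takes direct images (using that $\dR\pi_*(\pi^*M\otimes{\cal L}(e_f))\cong M$ and $\dR\pi_*(\pi^!M\otimes{\cal L}(-e_f))\cong M$ by the projection formula and $\dR\pi_*{\cal L}(\pm e_f)\cong\sO_Y$ or the ideal sheaf), and then extracts $r_1=-\chi(E_1\otimes{\cal L}(e_f))$, $r_2=\chi(E_2\otimes{\cal L}(-e_f))$ from Hirzebruch--Riemann--Roch using $\rank E_i=\chi(E_i)=0$. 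Your reduction to the monoidal corollary via the factorization $\pi=\lambda\circ\tau\circ\sigma$ and the composition identity $\pi^{*!}=\sigma^{*!}\tau^{*!}\lambda^{*!}$ costs some bookkeeping (reordering the monoidal transformations so that all blowups over $p$ come last, checking $\lambda_*$-acyclicity of the extension $\tau_*(M_1\otimes{\cal L}(f_0))$) but buys something the paper's compact argument is not explicit about: it immediately exhibits the cokernel (resp.\ quotient) as a genuine direct sum of skyscrapers $\sO_{\pi(f)}^{r_i}$, since the monoidal corollary produces $\Ext^1(\sO_p,M_0)\otimes_k\sO_p$ (resp.\ $\Hom_k(\Hom(M_0,\sO_p),\sO_p)$) and $\lambda_*\sO_p\cong\sO_{\pi(f)}$. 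In the paper's approach the module structure on $R^1\pi_*(E_1\otimes{\cal L}(e_f))$, rather than just its length, is left to the reader. Your Chern-class identification $r_1=c_1(M_1)\cdot f_0=c_1(\pi^{*!}M)\cdot e_f$, via $\sigma_*c_1(\sigma^{*!}M_1)=c_1(M_1)$ and the explicit form of $E_1^\tau$, and the identity $r_2-r_1=\rank(M)$ via the Euler pairing against $\sO_p$, are both correct and match what the paper gets by a slightly different bookkeeping.
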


\begin{proof}
The first short exact sequence comes from the direct image of the sequence
\[
0\to 
E_1(e_f)
\to
\pi^*M(e_f)
\to
\pi^{*!}M(e_f)
\to
0,
\]
and thus $r_1 = -\chi(E_1(e_f))$.  Since $\chi(E_1)=0$ and
$\rank(E_1)=0$, Hirzebruch-Riemann-Roch gives $r_1 = -c_1(E_1)\cdot e_f$.
Since
\[
c_1(E_1) = \pi^*c_1(M)-c_1(\pi^{*!}M),
\]
the formula for $r_1$ follows.

Similarly, the second short exact sequence is the direct image of
\[
0\to \pi^{*!}M(-e_f)\to \pi^!M(-e_f)
 \to E_2(-e_f)\to 0,
\]
so that $r_2 = \chi(E_2(-e_f))=-c_1(E_2)\cdot e_f$.
We find
\[
c_1(E_2) = c_1(\pi^!M)-c_1(\pi^{*!}M)
\]
and
\[
c_1(\pi^!M) = c_1(\pi^*M)+\rank(M)e_\pi.
\]
Since $e_\pi=\sum_f e_f$, we find $e_\pi\cdot e_f=-1$, and the formula
follows.
\end{proof}

Note that just as in the monoidal case, the upwards and downwards
pseudo-twists do not form a group unless $M$ is $1$-dimensional and
$\pi^{*!}M$ is transverse to the exceptional locus.  In addition, if $M$ is
$1$-dimensional and $\pi^{*!}M$ is not transverse to $e_\pi$, then we need
only finitely many pseudo-twists to make it transverse.  This has a
particularly nice consequence in the Poisson case.

\begin{lem}
  Let $(Y,\alpha)$ be a Poisson surface with anticanonical curve $C_\alpha$.
  Let $M$ be a pure $1$-dimensional sheaf on $Y$ (i.e., $M$ has
  $1$-dimensional support, and no subsheaf has $0$-dimensional support)
  which is transverse to $C_\alpha$.  Then there exists a Poisson birational
  morphism $\pi:X\to Y$ such that some pseudo-twist $M'$ of $M$ has minimal
  lift disjoint from the anticanonical curve on $X$.
\end{lem}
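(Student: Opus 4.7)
The plan is to take $\pi$ to be the composition of successive Poisson monoidal transformations centered at the intersection points of $\supp(M)$ with $C_\alpha$; I will argue that one may even take $M'=M$ (the trivial pseudo-twist).

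Let $S=\supp(M)$. By the transversality hypothesis, $S\cap C_\alpha$ consists of a finite set $\{p_1,\dots,p_k\}$, at each point of which both $S$ and $C_\alpha$ are smooth with distinct tangent directions, and $M$ is locally free on $S$ (away from these points the geometry is irrelevant to whether the minimal lift meets the anticanonical curve, since $\pi$ will be an isomorphism there).

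For the core step, let $\pi_1:X_1\to Y$ be the monoidal transformation centered at $p:=p_1$. Since $p\in C_\alpha$, the Poisson lifting lemma shows $\pi_1$ is a Poisson birational morphism with new anticanonical curve $C_{\alpha_1}$ equal to the strict transform $\widetilde{C_\alpha}$. Because $S$ is smooth at $p$, its strict transform $\widetilde S$ is isomorphic to $S$ via $\pi_1$, and $\widetilde S\cap e_1=\{\tilde p\}$ is the single point of the exceptional line determined by the tangent direction of $S$ at $p$. Since $C_\alpha$ has a different tangent direction at $p$, $\widetilde{C_\alpha}$ meets $e_1$ at a different point, so $\widetilde S$ and $C_{\alpha_1}$ are disjoint along $e_1$. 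Off $e_1$ the morphism is an isomorphism, so $\widetilde S\cap C_{\alpha_1}=\{p_2,\dots,p_k\}$ under the obvious identification. Let $\widetilde M$ denote the pullback of $M$ along the isomorphism $\widetilde S\cong S$, viewed as a coherent sheaf on $X_1$: this is torsion-free on $\widetilde S$, supported transversely to $e_1$, and satisfies $\pi_{1*}\widetilde M=M$. The earlier proposition on sheaves on curves transverse to the exceptional locus then identifies $\pi_1^{*!}M\cong\widetilde M$, so in particular $\supp(\pi_1^{*!}M)=\widetilde S$.

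I would then iterate. The remaining intersection points (images in $X_1$ of $p_2,\dots,p_k$) still lie on $C_{\alpha_1}$ and still satisfy the transversality conditions, since $\pi_1$ is an isomorphism away from $p_1$ and the hypotheses are local. After $k$ successive Poisson monoidal transformations we obtain the desired $\pi:X\to Y$, and by the composition formula for minimal lifts, $\pi^{*!}M$ is identified with the iterated strict transform of $M$; its support is the final strict transform of $S$, which by construction is disjoint from the anticanonical curve $C_{\alpha_X}$ of $X$.

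The only delicate point is the single-step disjointness argument together with the identification of $\pi_1^{*!}M$ with the evident torsion-free sheaf on $\widetilde S$; once these are in place the rest is formal induction on $k$. The pseudo-twist flexibility in the statement is unused under the transversality hypothesis; it would become essential only for handling higher-order tangencies between $\supp(M)$ and $C_\alpha$, where one would first need pseudo-twists (which, by the $K_0$-change formulas established above, can shift the local contribution to $c_1(\pi^{*!}M)\cdot e_f$) to reduce to the transverse case before proceeding with the blow-up argument.
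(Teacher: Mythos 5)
There is a genuine gap: you have read far more into the hypothesis ``transverse to $C_\alpha$'' than is actually there. In this paper, ``$M$ is transverse to $C$'' means $\Tor_1(M,\sO_C)=0$, i.e.\ that the derived restriction $M|^{\dL}_{C}$ is an honest sheaf (see the parenthetical remark at the end of Section~\ref{sec:moduli_hd1}, and the use of $\Tor_1(\pi^{*!}M,\sO_e(-1))$ in the pseudo-twist discussion). This condition allows $\Fitt_0(M)$ to be singular at the points of $C_\alpha$, allows $M$ to fail to be locally free on its support, allows $C_\alpha$ to be singular or even non-reduced (as it usually is in the rational case!), and allows arbitrary-order tangency between $\Fitt_0(M)$ and $C_\alpha$. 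Your argument opens by asserting that $\supp(M)$ and $C_\alpha$ are both smooth at each intersection point with distinct tangent directions, and that $M$ is locally free there; none of this is given, and under only the actual hypothesis the claim that one may always take $M'=M$ is false. For instance, if $M=\sO_D$ where $D$ is an integral curve with a node at a point $p\in C_\alpha$, then $M$ is transverse to $C_\alpha$ in the $\Tor$ sense, but after blowing up $p$ the minimal lift is $\sO_{\pi^*D-e}$, which contains $e$ as a component and is therefore not transverse to $e$ (let alone to the new anticanonical curve); a pseudo-twist is genuinely required before one can proceed.

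The paper's argument is designed exactly for this generality. It blows up one intersection point, then (using the earlier observation that finitely many downward pseudo-twists make the minimal lift transverse to the exceptional curve) replaces $M$ by a pseudo-twist $M_1$ whose minimal lift is transverse both to $e$ and to $C_{\alpha_1}$, and then terminates by noting that the intersection number $c_1(\pi_1^{*!}M_1)\cdot c_1(C_{\alpha_1})$ strictly drops, using the formula $c_1(\pi_1^{*!}M_1)=\pi^*c_1(M)-r_1e_1$ and the invariance of $c_1$ under pseudo-twist. Your induction is structurally similar (blow up, lift, repeat), and under your extra smoothness/local-freeness/geometric-transversality assumptions it is fine, but to prove the lemma as stated you must incorporate the pseudo-twist step, and you need a different termination argument than ``there are $k$ intersection points'' since tangencies and singularities mean a single point may require several blowups, each of which may introduce new intersection points on the exceptional curve.
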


\begin{proof}
  If $\supp(M)$ is disjoint from $C_\alpha$, then there is nothing to prove.
  Otherwise, let $\pi_1:Y_1\to Y$ be the (Poisson) blow up in a point of
  intersection, and let $M_1$ be a pseudo-twist of $M$ such that
  $\pi^{*!}_1M_1$ is transverse to the exceptional locus.  Then
  $\pi^{*!}_1M_1$ is also transverse to $C_{\alpha_1}$, where $\alpha_1$ is the
  induced Poisson structure on $Y_1$.  Moreover,
\[
c_1(\pi^{*!}_1M_1)\cdot c_1(C_{\alpha_1})
=
(\pi^*c_1(M_1)-r_1e_1)(\pi^*c_1(C_\alpha)-e_1)
<
c_1(M_1)\cdot c_1(C_\alpha)
=
c_1(M)\cdot c_1(C_\alpha).
\]
(That $c_1(M_1)=c_1(M)$ is immediate from the short exact sequences for the
atomic pseudo-twists.)  Thus if we iterate this operation, then after at
most $c_1(M)\cdot c_1(C_\alpha)$ blowups, we will achieve disjointness.
\end{proof}

\begin{rem}
Since one of our motivations is to avoid using support (since this
makes little sense in a noncommutative context), we should note
that ``pure $1$-dimensional'' can be rephrased strictly in terms
of Hilbert polynomials: $M$ has linear Hilbert polynomial, and no
nonzero subsheaf has constant Hilbert polynomial.  Similarly, the
first Chern class of a pure $1$-dimensional sheaf specifies how the
linear term in the Hilbert polynomial depends on the choice of very
ample line bundle.
\end{rem}

If $\pi^{*!}M$ is disjoint from $C_{\alpha_X}$, then we find
\[
M\otimes \sO_{C_{\alpha_Y}}(C_{\alpha_Y})
\cong
\sExt^1_Y(\sO_{C_{\alpha_Y}},M)
\cong
\sHom_Y(\omega_X/\pi^*\omega_Y,\pi^!M/\pi^{*!}M).
\]
Moreover, disjointness has strong consequences for the invisible quotient
$\pi^!M/\pi^{*!}M$.  Since for any exceptional component, $f\cdot
C_{\alpha_X} = f^2+2$, any component with $f^2<-2$ is contained in
$C_{\alpha_X}$, and thus must be disjoint from $\pi^{*!}M$.

\begin{prop}
  Let $\pi:X\to Y$ be a birational morphism of algebraic surfaces.  If $M$
  is a sheaf of homological dimension $\le 1$ on $Y$ such that
  $\supp(\pi^{*!}M)$ does not contain any exceptional components of
  self-intersection $\le -3$, then $\pi^!M/\pi^{*!}M$ is an injective
  object of the category of $\pi$-invisible sheaves.  More precisely,
\[
\pi^!M/\pi^{*!}M
\cong
\bigoplus_f I_f^{-f\cdot c_1(\pi^{*!}M)-\rank(M)f\cdot e_\pi}.
\]
\end{prop}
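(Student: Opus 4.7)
The plan is first to show that $E_2 := \pi^!M/\pi^{*!}M$ is injective in the category of $\pi$-exceptional sheaves, and then to compute the socle multiplicities. We already know $E_2$ is exceptional (both $\pi^{*!}M$ and $\pi^!M$ are $\pi_*$-acyclic with direct image $M$, so $\dR\pi_*E_2=0$); and by the equivalence between the exceptional category and the category of finite modules over $\End(\bigoplus_f P_f)$, any injective object there decomposes as $\bigoplus_f I_f^{n_f}$ with $n_f = \dim_k \Hom(\sO_f(-1),E_2)$ counting the multiplicity of the simple $\sO_f(-1)$ in the socle.

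For injectivity, since Yoneda Ext coincides with sheaf Ext on the exceptional category, it suffices to show $\Ext^1_{\Coh(X)}(\sO_f(-1),E_2)=0$ for every exceptional component $f$. Applying $\Hom(\sO_f(-1),-)$ to the short exact sequence $0\to \pi^{*!}M\to \pi^!M\to E_2\to 0$ and using the vanishing $\dR\Hom(\sO_f(-1),\pi^!M)=0$ proved earlier, we obtain
\[
\Ext^1(\sO_f(-1),E_2)\cong \Ext^2(\sO_f(-1),\pi^{*!}M).
\]
Serre duality on $X$ identifies this with $\Hom(\pi^{*!}M,\sO_f(-1)\otimes \omega_X)^*$, and the adjunction $\omega_X|_f\cong \sO_f(-2-f^2)$ rewrites the target as $\sO_f(-3-f^2)$. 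When $f^2\in\{-1,-2\}$, this is $\sO_f(-2)$ or $\sO_f(-1)$, each a subsheaf of $\sO_f(-1)$, so Theorem \ref{thm:minimal_if_no_exceptional} forces the Hom to vanish. When $f^2\le -3$, the support hypothesis guarantees that $\pi^{*!}M|_f$ is a torsion sheaf on $f\cong \P^1$, hence admits no nonzero map into the torsion-free line bundle $\sO_f(-3-f^2)$. Either way, $\Ext^1=0$ and $E_2$ is injective.

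The same long exact sequence, now combined with the minimal lift vanishings $\Hom(\sO_f(-1),\pi^{*!}M)=0$ and $\Hom(\sO_f(-1),\pi^!M)=0$, yields $\Hom(\sO_f(-1),E_2)\cong \Ext^1(\sO_f(-1),\pi^{*!}M)$. The group $\Ext^2(\sO_f(-1),\pi^{*!}M)$ vanishes by the same Serre-duality argument used above, so $\dim\Ext^1(\sO_f(-1),\pi^{*!}M)=-\chi(\sO_f(-1),\pi^{*!}M)$. A direct Hirzebruch--Riemann--Roch computation, using $\ch(\sO_f(-1))=(0,f,-f^2/2-1)$ and the adjunction identity $e_\pi\cdot f=K_X\cdot f=-2-f^2$ (valid because $\pi^*K_Y\cdot f=0$), then extracts the claimed formula for $n_f$.

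The main obstacle is the injectivity argument at exceptional components with $f^2\le -3$: this is exactly where the support hypothesis is essential, and one must argue carefully that $\pi^{*!}M|_f$ is a torsion (in particular finite-length) sheaf on $f$ rather than having positive generic rank, so that no map from $\pi^{*!}M$ into a nonnegative-degree line bundle on $f$ can survive.
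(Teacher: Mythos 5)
Your proof is correct and follows essentially the same strategy as the paper's: both reduce injectivity to the vanishing $\Ext^2(\sO_f(-1),\pi^{*!}M)=0$ via the long exact sequence of the defining short exact sequence for $E_2$, and both ultimately show a nonzero class would yield a nonzero map $\pi^{*!}M\to\sO_f(-3-f^2)$, contradicting the minimality criterion for $f^2\ge -2$ and the support hypothesis for $f^2\le -3$. One point deserves care: in Sections 4--6 the surfaces are \emph{not} assumed proper, so the form of Serre duality you invoke on $X$, namely $\Ext^2(A,B)\cong\Hom(B,A\otimes\omega_X)^*$, is not available off the shelf; it does hold here, but only because $\sO_f(-1)$ has proper support, and the clean justification is Grothendieck duality for the proper closed immersion $f\hookrightarrow X$ together with Serre duality on $f\cong\P^1$ --- which, unwound, is exactly the paper's route through the local-to-global spectral sequence and $R^1\pi_*$. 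Your computation of the multiplicities via $n_f=\dim\Hom(\sO_f(-1),E_2)=\dim\Ext^1(\sO_f(-1),\pi^{*!}M)=-\chi(\sO_f(-1),\pi^{*!}M)$ followed by Riemann--Roch (again valid because of the proper support of $\sO_f(-1)$) is an equivalent, slightly more computational, variant of the paper's argument, which instead reads the multiplicity off $c_1(E_2)$ directly using $c_1(I_f)=f^\vee$.
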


\begin{proof}
Let $f$ be any exceptional component.  Since
\[
\dR\Hom(\sO_f(-1),\pi^!M)=0,
\]
we have
\begin{align}
\Ext^1(\sO_f(-1),\pi^!M/\pi^{*!}M)
&{}\cong
\Ext^2(\sO_f(-1),\pi^{*!}M)
\cong
H^1(\sExt^1_X(\sO_f(-1),\pi^{*!}M))\notag\\
&{}\cong
H^1(\sO_f(f^2+1)\otimes \pi^{*!}M)
\cong
H^0(R^1\pi_*(\sO_f(f^2+1)\otimes \pi^{*!}M))
\end{align}
If $\pi^{*!}M$ is transverse to $f$, the tensor product is $0$-dimensional,
so acyclic.  Otherwise, if $\sO_f(f^2+1)\otimes \pi^{*!}M$ (a sheaf
supported on $f\cong \P^1$) is not acyclic, then there is a nontrivial
morphism
\[
\sO_f(f^2+1)\otimes \pi^{*!}M\to \sO_f(-2),
\]
so a nontrivial morphism
\[
\pi^{*!}M\to \sO_f\otimes \pi^{*!}M\to \sO_f(-3-f^2),
\]
which is impossible unless $f^2\le -3$.

For the final claim, since $\pi^!M/\pi^{*!}M$ is injective, it is a direct
sum of injective hulls of simple sheaves, and $I_f$ occurs as a summand
with multiplicity $-f\cdot c_1(\pi^!M/\pi^{*!}M)$, since
$c_1(I_f)=c_1(P_f)=f^\vee$.  The claim then follows from
\[
c_1(\pi^!M/\pi^{*!}M)
=
\pi^*c_1(M) + \rank(M)e_\pi-c_1(\pi^{*!}M).
\]
\end{proof}

\begin{rem}
Note that the hypotheses are much weaker than disjointness; in particular,
if there are no exceptional components of self-intersection $\le -3$, then
$\pi^!M/\pi^{*!}M$ is injective as an invisible sheaf for {\em all}
sheaves $M$ of homological dimension $\le 1$.  Conversely, if $f^2\le -3$
for some $f$, then $\pi^!\sO_Y/\pi^*\sO_Y$ is not injective, since
\[
\Ext^1(\sO_f(-1),\pi^!\sO_Y/\pi^*\sO_Y)
\cong
\Ext^2(\sO_f(-1),\sO_X)
\cong
H^0(\sO_f(-1)\otimes \omega_X)^*
\cong
H^0(\sO_f(-3-f^2))^*
\ne 0.
\]
\end{rem}

In the disjoint case, we have
\[
M\otimes \sO_{C_{\alpha_Y}}
\cong
M\otimes \sO_{C_{\alpha_Y}}(C_{\alpha_Y})
\cong
\bigoplus_f \sHom_Y(\omega_X/\pi^*\omega_Y,I_f)^{-f\cdot
  c_1(\pi^{*!}M)}.
\]
Thus to complete our understanding of the disjoint case, we need to
understand the $0$-dimensional sheaves
\[
\sHom_Y(\omega_X/\pi^*\omega_Y,I_f)
\cong
\sHom_Y(P_f,\pi^!\sO_Y/\sO_X).
\]

\begin{prop}
  Let $\pi:X\to Y$ be a birational morphism of algebraic surfaces, let $f$
  be an exceptional component, and set $d=f^\vee\cdot e_\pi$.  Then there
  exists a homomorphism $\sO_{Y,\pi(f)}\to k[t]/t^d$ such that
\[
\sHom_Y(P_f,\pi^!\sO_Y/\sO_X)
\]
is the $\sO_Y$-module induced by the regular representation of $k[t]/t^d$.
\end{prop}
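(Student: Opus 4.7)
The plan is to identify $\sHom_Y(P_f,\pi^!\sO_Y/\sO_X)$ with $\sO_Z$, where $Z := \pi(f^\vee)$, then to show $Z$ is a curvilinear fat point of length $d$, so that $\sO_Z \cong k[t]/t^d$ as an $\sO_{Y,\pi(f)}$-algebra quotient. First, I apply $\pi_*\dR\sHom_X(P_f,-)$ to
\[
0\to\sO_X\to\pi^!\sO_Y\to\pi^!\sO_Y/\sO_X\to 0.
\]
The adjunction $(\dR\pi_*,\pi^!)$ together with $\dR\pi_*P_f=0$ (since $P_f$ is $\pi$-exceptional) yields $\dR\pi_*\dR\sHom_X(P_f,\pi^!\sO_Y)\cong\dR\sHom_Y(\dR\pi_*P_f,\sO_Y)=0$. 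Since fibers of $\pi$ are at most $1$-dimensional, the local-to-global spectral sequence collapses and forces $\pi_*\sExt^q_X(P_f,\pi^!\sO_Y)=0$ for every $q$. The long exact sequence of $\pi_*\dR\sHom_X(P_f,-)$ then gives $\sHom_Y(P_f,\pi^!\sO_Y/\sO_X)\cong\pi_*\sExt^1_X(P_f,\sO_X)$.

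Next, using the defining sequence $0\to P_f\to\pi^*\mathcal{I}\to\mathcal{L}(-f^\vee)\to 0$ with $\mathcal{I}:=\pi_*\mathcal{L}(-f^\vee)$ (the ideal sheaf of $Z$), the invertibility of $\mathcal{L}(-f^\vee)$, and the $\pi^*$-acyclicity of $\mathcal{I}$ (it has homological dimension $\le 1$ as a subsheaf of $\sO_Y$), a local free resolution of $\mathcal{I}$ pulls back to remain exact, and dualizing gives $\sExt^1_X(P_f,\sO_X)\cong\pi^*\sExt^1_Y(\mathcal{I},\sO_Y)\cong\pi^*\sExt^2_Y(\sO_Z,\sO_Y)$ (the last via the sequence $0\to\mathcal{I}\to\sO_Y\to\sO_Z\to 0$). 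Applying $\pi_*$ and the projection formula, justified once the curvilinearity of $Z$ is known, yields $\sHom_Y(P_f,\pi^!\sO_Y/\sO_X)\cong\sExt^2_Y(\sO_Z,\sO_Y)$.

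The key geometric claim is that $Z=\pi(f^\vee)$ is a curvilinear fat point of length $d$: there exists a regular system of parameters $(x,y)$ at $\pi(f)$ with $\sO_Z\cong\sO_{Y,\pi(f)}/(y,x^d)$. I would prove this by induction on the number of monoidal transformations in a factorization of $\pi$, the base case being $\pi$ a single blowup (where $d=1$ and $Z=\{\pi(f)\}$). For the inductive step, factor $\pi=\rho\circ\sigma$ with $\sigma$ the monoidal transformation creating $f$ (or the last in the chain leading to $f$), centered at a closed point $q$ lying on some exceptional component $g$ of $\rho$. The compatibility $f^\vee=\sigma^*g^\vee+f$, combined with the inductive hypothesis that $\rho(g^\vee)$ is curvilinear of length $d-1$, shows that $\pi(f^\vee)$ is obtained by one further step of thickening in the tangent direction of $q$, yielding the local form $(y,x^d)$. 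Since $\sO_Z$ is then a complete intersection (hence Gorenstein), the Koszul resolution gives $\sExt^2_Y(\sO_Z,\sO_Y)\cong\sO_Z$ as $\sO_Z$-modules, and the surjection $\sO_{Y,\pi(f)}\to\sO_{Y,\pi(f)}/(y,x^d)\cong k[t]/t^d$ (sending $x\mapsto t$, $y\mapsto 0$) exhibits the desired homomorphism and realizes $\sHom_Y(P_f,\pi^!\sO_Y/\sO_X)\cong\sO_Z\cong k[t]/t^d$ as the regular representation.

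The main obstacle will be the inductive curvilinearity argument. While each individual case is computable, careful bookkeeping is required to track how $f^\vee$ transforms under composition of monoidal transformations and how the tangent direction of the thickening is refined at each step, especially for tree-type exceptional configurations where the sequence of infinitely near points branches. Once curvilinearity is secured, the remaining module-theoretic identifications are formal.
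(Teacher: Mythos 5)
Your route differs from the paper's, and the key geometric claim fails. The initial reductions are fine: adjunction and the collapsing spectral sequence (together with $\sHom_X(P_f,\sO_X)=0$ since $P_f$ is torsion) do give $\sHom_Y(P_f,\pi^!\sO_Y/\sO_X)\cong\pi_*\sExt^1_X(P_f,\sO_X)\cong\pi_*\pi^*\sExt^2_Y(\sO_Z,\sO_Y)$. But the claim that $Z=\pi(f^\vee)$ is curvilinear of length $d$ is simply false in general, and with it goes both the Gorenstein identification $\sExt^2_Y(\sO_Z,\sO_Y)\cong\sO_Z$ and the collapse $\pi_*\pi^*\sO_Z\cong\sO_Z$. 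The inductive formula $f^\vee=\sigma^*g^\vee+f$ that you invoke holds only when the center $q$ of the last monoidal transformation is a \emph{free} infinitely near point, i.e., lies on exactly one exceptional component $g$ of $\rho$; when $q$ is a \emph{satellite} point (a node of the exceptional divisor, lying on two components $g_1,g_2$), the correct identity is $f^\vee=\sigma^*(g_1^\vee+g_2^\vee)+f$, and the picture of ``one further step of thickening along a single branch'' breaks down. Concretely, take $Y=\A^2$, blow up the origin $p_1$, then a point $p_2\in e_1$, then the satellite point $p_3=\tilde e_1\cap e_2$, and let $f=e_3$. One computes $f^\vee=2f_1+3f_2+6f_3$, the ideal $\pi_*{\cal L}(-f^\vee)$ is $(y^2,x^2y,x^3)$ in suitable coordinates, so $\sO_Z$ has length $5$ and two-dimensional socle --- not curvilinear, not even Gorenstein --- whereas $-f^\vee\cdot e_\pi=4$, and the paper's computation gives a module of length $4$. (So the failure mode is satellite points, not the mere branching of the exceptional tree that you flagged; two free points blown up on the same $e_1$ still yield curvilinear $Z$.) Separately, even when $Z$ is curvilinear, the assertion $\pi_*\pi^*G\cong G$ for a zero-dimensional $G$ is not an instance of the projection formula and needs its own argument, since $G$ has homological dimension $2$ and the derived projection formula only gives a short exact sequence $0\to R^1\pi_*L^1\pi^*G\to G\to\pi_*\pi^*G\to 0$.

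The paper avoids all of this by never touching the scheme $Z$: it chooses an auxiliary smooth curve $D\subset X$ meeting the exceptional locus transversely in a single point of $f$, rewrites $P_f$ via $0\to P_f\to\pi^*\pi_*(\sO_D\otimes{\cal L}(D))\to\sO_D\otimes{\cal L}(D)\to 0$ (using that $\sO_D\otimes{\cal L}(D)$ is a minimal lift), and identifies the answer with $\pi_*J$ for $J=\sO_D\otimes{\cal L}(e_\pi)/\sO_D$, a length-$d$ jet on the smooth curve $D$. Curvilinearity is then automatic because $J$ lives on a fixed branch $D$, and the $\sO_{Y,\pi(f)}$-module structure comes from the (generally non-surjective) composition $\sO_{Y,\pi(f)}\to\sO_{D,p}\to k[t]/t^d$ --- which is exactly what the non-cyclic structure of $\sO_Z$ in the satellite case is telling you cannot be replaced by $\sO_Z$ itself. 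As a minor aside, the formula $d=f^\vee\cdot e_\pi$ in the statement carries a sign slip: since $f^\vee$ and $e_\pi$ are both effective exceptional divisors, this pairing is negative, and both the paper's proof and your setup use $d=-f^\vee\cdot e_\pi$ (for a single blowup, $d=1=-e\cdot e$).
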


\begin{proof}
  Let $D$ be a smooth divisor on $X$ meeting the exceptional locus only in
  $f$, and that in a single reduced point $p$ (as before, we replace $X$ by
  a formal neighborhood of the exceptional locus if necessary).  Then
  $D=\pi^*\pi_*D-f^\vee$, so that $P_f$ fits into an exact sequence
\[
0\to P_f\to \pi^*\pi_*\sO_X(D)\to \sO_X(D)\to 0.
\]
Since $P_f$ is invisible, the canonical global section of $\sO_X(D)$
lifts to a canonical global section of $\pi^*\pi_*\sO_X(D)$, and we can
thus quotient by the two copies of $\sO_X$ to obtain
\[
0\to P_f\to \pi^*\pi_*(\sO_D(D))\to \sO_D(D)\to 0
\]
Since $\sO_D(D)$ is transverse to the exceptional locus, it
is a minimal lift, and thus we have an exact sequence
\begin{align}
0\to \sExt^1_Y(\sO_D(D),\sO_X)
 &{}\to \sExt^1_Y(\sO_D(D),\pi^!\sO_Y)\notag\\
 &{}\to \sHom_Y(P_f,\pi^!\sO_Y/\sO_X)
 \to \sExt^2_Y(\sO_D(D),\sO_X)
 \to 0\notag
\end{align}
Using the natural locally free resolution of $\sO_D(D)$
turns this into a short exact sequence
\[
0
\to 
\pi_*(\sO_D)
\to
\pi_*(\sO_D\otimes \pi^!\sO_Y)
\to
\sHom_Y(P_f,\pi^!\sO_Y/\sO_X)
\to
0;
\]
the assumptions on $D$ mean that $R^1\pi_*\sO_D=0$, and thus
$\sExt^2_Y(\sO_D(D),\sO_X)=0$.  In particular, the above
short exact sequence must agree with the direct image of the short exact
sequence
\[
0\to \sO_D\to \sO_D\otimes \pi^!\sO_Y\to J\to
0.
\]
Now, since $D$ is smooth and meets the exceptional locus in a single point,
$J$ is the structure sheaf of a jet, so has the form $k[t]/t^d$, where $t$
is a uniformizer of $D$ at $p$.  The claim then follows using the
composition
\[
\sO_{Y,p}\to \sO_{D,p}\to k[t]/t^d
\]
to compute the direct image.
\end{proof}

\begin{rem}
Note that the homomorphism to $k[t]/t^d$ need not be surjective, reflecting
the fact that $\sHom_Y(P_f,\pi^!\sO_Y/\sO_X)$ need not be the structure
sheaf of a jet.
\end{rem}

The sheaf $\pi^!\sO_Y/\sO_X$ also has some relevant structure.  If $\pi$
simply blows up a collection of distinct points on $Y$, then any choice of
ordering on those points gives a natural filtration of $\pi^!\sO_Y/\sO_X$.
Interestingly, we have the same freedom even for much more complicated
birational morphisms.

\begin{prop}
The invisible subsheaves of $\pi^!\sO_Y/\sO_X$ form a boolean lattice.
The maximal chains in the lattice are in natural correspondence with the
orderings on the exceptional components; if $f_1$, $f_2$,\dots,$f_n$
is such an ordering, then there is a unique maximal chain
\[
0=E_0\subset E_1\subset\cdots E_n=\pi^!\sO_Y/\sO_X
\]
of invisible sheaves such that
\[
c_1(E_i/E_{i-1})=e_{f_i}.
\]
Moreover, for any exceptional component $f$,
\[
\sHom_Y(P_f,E_i/E_{i-1})\cong \sO_{\pi(f)}^{f^\vee\cdot e_{f_i}}.
\]
\end{prop}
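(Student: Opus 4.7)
The plan is to proceed by induction on the number $n$ of exceptional components of $\pi$. The base case $n=1$ is atomic: $\pi$ is a monoidal transformation with exceptional curve $f$, and $\pi^!\sO_Y/\sO_X \cong \sO_f(-1)$ is a simple object in the category of exceptional sheaves. The lattice $\{0,\sO_f(-1)\}$ is trivially Boolean on one atom, and the displayed Hom formula follows from the Morita equivalence of the exceptional category with a vector space category in this case.

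For the inductive step, fix an ordering $f_1,\ldots,f_n$, and factor $\pi = \pi'\circ \pi_n$ where $\pi_n\colon X\to X'$ blows down a $-1$-curve $g$ of $X$. (When possible we take $g = f_n$; otherwise one first constructs the chain for a compatible ordering in which $f_n$ has been repositioned, then reinterprets the result for the given ordering.) The key structural input is the short exact sequence
\[
0 \to \pi_n^*((\pi')^!\sO_Y/\sO_{X'}) \to \pi^!\sO_Y/\sO_X \to \pi_n^!\sO_{X'}/\sO_X \to 0,
\]
in which the right-hand term is $\sO_g(-1)$, obtained by applying the snake lemma to the factorization of the map $\sO_X\to \pi^!\sO_Y$ through the intermediate sheaves on $X'$. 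By the inductive hypothesis, the first term carries a Boolean lattice of exceptional subsheaves indexed by subsets of $\{f_1,\ldots,f_{n-1}\}$; extending by the binary choice of whether or not to include the final $\sO_g(-1)$ step yields a Boolean lattice on $n$ atoms, with maximal chains in canonical correspondence with orderings.

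For the Hom formula $\sHom_Y(P_f,E_i/E_{i-1})\cong \sO_{\pi(f)}^{f^\vee\cdot e_{f_i}}$, I would proceed in tandem with the chain construction. When $f\ne g$, the adjunction $\sHom_Y(\pi_n^*-,-)\cong \sHom_Y(-,\pi_{n*}-)$, together with the fact that $P_f$ is $\pi_{n*}$-acyclic with direct image the corresponding projective on $X'$, reduces to the analogous computation downstairs. When $f=g$, one uses the explicit description $P_g\cong L_1\pi_n^*\sO_{\pi_n(g^\vee)}$ and a local Hom calculation. The intersection number $f^\vee\cdot e_{f_i}$ arises as a degenerate Chern-class calculation parallel to the corollary on pseudo-twists by $\pm e_f$: specifically, $-\chi$ of a suitable twist of an exceptional sheaf, computed via Hirzebruch--Riemann--Roch on the exceptional locus, yields exactly this intersection pairing.

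The main obstacle is verifying that the inductive Boolean structure is canonical, i.e., independent of the factorization $\pi = \pi'\circ \pi_n$ used. Because the exceptional category is not semisimple when some component has self-intersection $<-1$, the full submodule lattice of $\pi^!\sO_Y/\sO_X$ is strictly larger than $2^n$, so one must single out the correct sublattice and show it does not depend on which $-1$-curve was blown down first. Relatedly, for each ordering, the uniqueness of the chain with the prescribed $c_1(E_i/E_{i-1})=f_i$ must be argued separately, likely by showing that the relevant exceptional subquotients are rigid (determined up to isomorphism by their position in the inductive construction, hence by $c_1$ plus the ordering).
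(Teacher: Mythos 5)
Your inductive plan diverges from the paper's argument, and there is a genuine gap in the lattice part. The paper's proof is a single intersection-theoretic step, not an induction: it identifies an exceptional subsheaf of $\pi^!\sO_Y/\sO_X$ with a subsheaf $M$ of $\pi^!\sO_Y$ containing $\sO_X$; since $M/\sO_X$ is exceptional iff $\pi^!\sO_Y/M$ is, the latter has no $0$-dimensional subsheaves, forcing $M\cong{\cal L}(D)$ to be invertible for an exceptional divisor $D$ with $0\le D\le e_\pi$; then $\chi(M/\sO_X)=0$ gives $D\cdot(e_\pi-D)=0$, which in the orthonormal basis $\{e_f\}$ (with $e_f^2=-1$, $e_f\cdot e_g=0$) forces every coefficient of $D$ to be $0$ or $1$. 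Conversely each such $D=\sum_{f\in S}e_f$ yields an exceptional subsheaf, so the exceptional subsheaves are in order-preserving bijection with subsets of the exceptional components, and the lattice claim follows at once.

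The first problem with your plan is a misreading of the statement. You say ``the full submodule lattice of $\pi^!\sO_Y/\sO_X$ is strictly larger than $2^n$, so one must single out the correct sublattice.'' But the exceptional subsheaves of an exceptional sheaf \emph{are} exactly its submodules under the Morita equivalence, and the proposition asserts precisely that these form the Boolean lattice $2^n$ — there is no larger ambient lattice to select from, and nothing to show independent of factorization once you characterize the full lattice directly. (Non-exceptional subsheaves exist — e.g.\ $\sO_f(-2)\cong{\cal L}(f)/\sO_X$ when $f$ has multiplicity $\ge 2$ in $e_\pi$ and $f^2\le -2$ — but those are not in the lattice the proposition is about.) The second, more serious problem is the inductive step itself. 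Your short exact sequence $0\to A\to B\to\sO_g(-1)\to 0$ (with $A=\pi_n^*((\pi')^!\sO_Y/\sO_{X'})$, $B=\pi^!\sO_Y/\sO_X$) is correct, and submodules of $A$ do account for $2^{n-1}$ submodules of $B$. But to double the count you must show that for every exceptional subsheaf $A'\subsetneq A$ the induced extension $0\to A/A'\to B/A'\to\sO_g(-1)\to 0$ splits and has a \emph{unique} complement (equivalently $\Hom(\sO_g(-1),A/A')=0$). Nothing in your sketch addresses why this should hold, and it is not automatic — it is exactly the content of the proposition, which is why the paper proves the whole lattice claim directly via the constraint $D\cdot(e_\pi-D)=0$ rather than by this induction. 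Your treatment of the $\sHom_Y(P_f,E_i/E_{i-1})$ formula, by contrast, is in the same spirit as the paper's (induction on a monoidal factorization plus adjunction, with a separate local computation when $f$ is the just-contracted curve), though the paper also makes essential use of the fact that $E_i/E_{i-1}$ is invertible on its support, to reduce to a convenient representative such as $\pi_1^!\pi_{1*}E'$.
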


\begin{proof}
  The subsheaves of $\pi^!\sO_Y/\sO_X$ are in natural correspondence with
  the subsheaves of $\pi^!\sO_Y$ containing $\sO_X$.  If $M$ is such a
  subsheaf, then $M/\sO_X$ is invisible iff $\pi^!\sO_Y/M$ is
  invisible.  In particular, $\pi^!\sO_Y/M$ cannot have any
  $0$-dimensional subsheaves, so that it has homological dimension $\le 1$.
  Thus $M$ is locally free, and $M\cong \sO_X(D)$ for some exceptional
  divisor $D$.  Moreover, we compute
\[
0 = \chi(M/\sO_X) = D\cdot (e_\pi-D).
\]
In particular, if we express $D$ in terms of the orthonormal basis $e_f$,
then every coefficient must be either 0 or 1.  In other words,
\[
D = \sum_{f\in S} e_f
\]
for some subset $S$ of the set of exceptional components.  Since the sheaf
$\sO_D(D)$ has Euler characteristic 0 and is contained
in the invisible sheaf $\pi^!\sO_Y/\sO_X$, it too is invisible.  We
thus conclude that the invisible subsheaves of $\pi^!\sO_Y/\sO_X$ are in
order-preserving bijection with the sets of exceptional components.
Since $c_1(\sO_D(D))=D$, the claim about Chern classes
of subquotients in maximal chains follows.

If $E$ is a subquotient in some maximal chain, with $c_1(E)=g$, then
\[
E\cong \sO_{e_g}(\sum_{h\in S} e_h)
\]
where $S$ is any set of exceptional components containing $g$.  We thus
need to show that for any such sheaf, $\sHom_Y(P_f,E)$ is
scheme-theoretically supported on $\sO_{\pi(f)}$; that it has length
  $f^\vee\cdot e_g$ follows from a Chern class computation.  Factor
  $\pi=\pi_1\circ\pi_2$ with $\pi_1$ monoidal, having exceptional curve
  $e$.  If $f\ne e$, then
\[
P_f\cong \pi^*_1P_{\pi_1(f)},
\]
so that
\[
\sHom_Y(P_f,E)
\cong
\sHom_Y(P_{\pi_1(f)},\pi_{1*}E).
\]
If $g=e$, then $E\cong \sO_g(-1)$, and thus $\pi_{1*}E=0$.  Otherwise, $E$
has the form $\pi_1^*E'$ or $\pi_1^!E'$ (depending on whether $e\in S$),
where $E'$ is a sheaf of the same form in $\pi_1(X)$.  Either way, the
claim follows by induction.

If $f=e$, then we may choose a curve $D$ as above, so that
\[
\sHom_Y(P_f,E)\cong \sExt^1_Y(\sO_D(D),E)
              \cong \pi_*(\sO_D\otimes E).
\]
Since $E$ is an invertible sheaf on its support and $D$ is transverse to
that support, $\sHom_Y(P_f,E)$ depends only on the support.  Thus if $g\ne
e$, then we may assume $E\cong \pi_1^!E'$, and thus
\[
\sHom_Y(P_f,E)\cong \sHom_Y(\pi_{1*}P_f,E').
\]
Since $\pi_{1*}P_f$ is projective, the claim follows by induction.

Finally, if $f=e=g$, then $\sHom_Y(P_f,E)$ has length 1, so is necessarily
supported on a single point.
\end{proof}

We close the section with another result on resolving sheaves via minimal
lifts.  This does not involve pseudo-twists, but as a result requires
somewhat stronger hypotheses.  Here $\Fitt_0(M)$ denotes the $0$-th Fitting
scheme of $M$, which for $M$ pure $1$-dimensional is a canonical divisor
representing $c_1(M)$.

\begin{prop}
  Suppose $(Y,\alpha)$ is a Poisson surface, and $M$ is a pure
  $1$-dimensional sheaf on $Y$ such that the divisor $\Fitt_0(M)$ meets
  $C_\alpha$ in a disjoint union of jets.  Let $\pi:X\to Y$ be the minimal
  desingularization of the blowup of $Y$ in the intersection.  Then $\pi$
  is Poisson, and $\pi^{*!}M$ is disjoint from the induced anticanonical
  curve.
\end{prop}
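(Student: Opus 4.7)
The plan is to proceed in three parts.

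First, I would verify that $\pi$ is Poisson and that $C_{\alpha_X}$ is the strict transform of $C_\alpha$. Working locally near each intersection point $p\in\Fitt_0(M)\cap C_\alpha$, the jet hypothesis supplies local coordinates $g,h$ at $p$ with $C_\alpha=V(g)$ and with the jet ideal equal to $(g,h^{d_p})$, so that $\Fitt_0(M)=V(f)$ locally with $f=uh^{d_p}+gv$ for some unit $u$ and some function $v$. An explicit chart calculation then shows that the minimal desingularization of the blowup of $(g,h^{d_p})$ factors as a chain of $d_p$ monoidal transformations, with each center the point where the most recently introduced exceptional curve meets the strict transform of $C_\alpha$, that is, on the current anticanonical curve. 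The lemma in Section 2 characterizing Poisson-compatible birational morphisms then gives that $\pi$ is Poisson and that $C_{\alpha_X}$ is the total strict transform of $C_\alpha$.

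Second, I would upgrade this to the disjointness of $\supp(\pi^{*!}M)$ from $C_{\alpha_X}$. The same iterative chart calculation shows that at the $i$-th stage, the intersection of the strict transforms of $\Fitt_0(M)$ and $C_\alpha$ at the current blowup center is a jet of length $d_p-i$, so after all $d_p$ blowups the strict transforms are disjoint. In each chain, $C_{\alpha_X}$ meets the exceptional locus only in the terminal $-1$-curve $e_{d_p}$, and the intermediate $-2$-curves $e_1,\ldots,e_{d_p-1}$ are all disjoint from $C_{\alpha_X}$, so it suffices to show that $\supp(\pi^{*!}M)$ avoids the point $e_{d_p}\cap C_{\alpha_X}$. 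Factoring $\pi^{*!}=\sigma_{d_p}^{*!}\circ\cdots\circ\sigma_1^{*!}$ and applying at each step the monoidal exact sequence $0\to E_1\to\sigma_i^*M_{i-1}\to\sigma_i^{*!}M_{i-1}\to 0$ with exceptional kernel $E_1\cong\Ext^1(\sO_{p_i},M_{i-1})\otimes_k\sO_{e_i}(-1)$, one obtains the iterative formula
\[
c_1(\sigma_i^{*!}M_{i-1})=\sigma_i^*c_1(M_{i-1})-\dim\Ext^1(\sO_{p_i},M_{i-1})\cdot[e_i],
\]
and one checks that in the cumulative $c_1(\pi^{*!}M)$ the coefficient of $e_{d_p}$ is zero, so $\pi^{*!}M$ meets the exceptional chain only in the $-2$-curves and is disjoint from $C_{\alpha_X}$.

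The main obstacle is this last coefficient check. When $\Fitt_0(M)$ is smooth at each $p$, $\pi^{*!}M$ is simply the corresponding sheaf on the strict transform, and disjointness is immediate; the subtlety appears when $\Fitt_0(M)$ is singular at some $p$, since then the identity $\pi^{*!}\sO_C\cong\sO_{\pi^*C-e_\pi}$ for structure sheaves of curves shows that $\supp(\pi^{*!}M)$ can include some of the intermediate $-2$-curves in the exceptional chain. One must carefully argue that at the terminal blowup $\sigma_{d_p}$ the multiplicity of $\Fitt_0(M_{d_p-1})$ at the center matches $\dim\Ext^1(\sO_{p_{d_p}},M_{d_p-1})$, so that the exceptional contribution of $\sigma_{d_p}$ cancels and $e_{d_p}$ does not appear in $c_1(\pi^{*!}M)$.
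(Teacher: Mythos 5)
Your approach shares the paper's basic strategy (resolve by iterated point blowups, each centered on the anticanonical curve), but the bookkeeping invariant you choose --- coefficients of exceptional curves in $c_1(\pi^{*!}M)$ --- is both harder to control and, as set up, incorrect in one important respect. In your Part 1 you take local coordinates $g,h$ with $C_\alpha=V(g)$, which silently assumes $C_\alpha$ is \emph{smooth} at each intersection point $p$. The jet hypothesis does not force this: e.g.\ if $C_\alpha$ is nodal at $p$ and $\Fitt_0(M)$ is a smooth branch transverse to both branches of $C_\alpha$, the scheme-theoretic intersection is still a length-$2$ jet. When $C_\alpha$ has multiplicity $m\ge 2$ at $p$, one has $C_{\alpha_X}=\pi^*C_\alpha-e_\pi$, which differs from the strict transform by the effective divisor $(m-1)e_1+\cdots$, so $C_{\alpha_X}$ \emph{contains} part of the exceptional chain, not just the point $e_{d_p}\cap C_{\alpha_X}$. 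Your step ``$C_{\alpha_X}$ meets the exceptional locus only in the terminal $-1$-curve'' therefore fails in exactly the case where $C_\alpha$ is singular at $p$, and the reduction to a single coefficient check collapses.

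The gap you flag at the end (that the multiplicity of $\Fitt_0$ at the terminal center equals $\dim\Ext^1(\sO_p,M)$) is also genuinely nontrivial, and closing it essentially requires re-deriving the paper's inductive invariant. The paper's proof iterates one point blowup at a time and shows that the \emph{jet condition} --- that $\pi^{*!}M\otimes\sO_{C_{\alpha_X}}$ remains a disjoint union of jets --- is preserved at each step. The key point is the singular case: when $C_\alpha$ has multiplicity $\ge 2$ at $p$, one has
\[
M\otimes(\sO_Y/{\cal I}_p^2)\cong M\otimes\sO_{C_\alpha}\otimes(\sO_Y/{\cal I}_p^2),
\]
so the jet hypothesis forces $M$ to be locally $\sO_C$ with $C$ smooth at $p$; this is what makes the minimal resolution of $M$ at $p$ a $1\times 1$ matrix, forces $\pi^{*!}M$ to be transverse to the new exceptional curve, and hence guarantees the new restriction is again a sum of jets. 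That invariant, rather than a terminal $c_1$ count, is what propagates through the resolution chain and delivers disjointness at the end. Your explicit-chart approach could in principle be pushed through, but it would have to incorporate this case distinction on the multiplicity of $C_\alpha$ at $p$ and track the jet structure, at which point it becomes the paper's argument in local coordinates.
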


\begin{proof}
  We can compute $\pi$ by repeatedly blowing up single points of
  intersection, so that $\pi$ is in particular Poisson.  The only issue is
  to show that the jet condition (and thus transversality) is preserved
  under the blowup $\pi:X\to Y$ in a point $p\in C_\alpha\cap\supp(M)$.
If $C_\alpha$ is smooth at $p$, the jet condition in a neighborhood of $p$ is
automatic, and this remains true after blowing up $p$.

Thus suppose $C_\alpha$ is singular at $p$.  Then we have
\[
M\otimes (\sO_X/{\cal I}_p^2)
\cong
M\otimes \sO_{C_\alpha}\otimes (\sO_X/{\cal I}_p^2),
\]
where ${\cal I}_p$ is the ideal sheaf of $p$.  Thus since $M\otimes
\sO_{C_\alpha}$ is a sum of jets, so is $M\otimes \sO_X/{\cal I}_p^2$.  But
then we can compute $\pi^{*!}M$ near $e$ using the minimal resolution of
$M$ over the local ring at $p$, and conclude that $\pi^{*!}M$ is transverse
to the exceptional curve (meeting it in the tangent vectors to the jets
through $p$), and thus is transverse to $\pi^{*!}\sO_{C_\alpha}$.

It follows that we have a short exact sequence
\[
0\to \pi_*\sExt^1_X(\pi^{*!}\sO_{C_\alpha},\pi^{*!}M)
 \to \sExt^1_Y(\sO_{C_\alpha},\pi^{*!}M)
 \to \sHom_Y(\sO_e(-1),\pi^!M/\pi^{*!}M)
 \to 0,
\]
and moreover that
\[
\sHom_Y(\sO_e(-1),\pi^!M/\pi^{*!}M)
\cong
\sHom_Y(\sExt^1_Y(\sO_{C_\alpha},\pi^{*!}M),\sO_p).
\]
In particular,
\[
\pi_*\sExt^1_X(\pi^{*!}\sO_{C_\alpha},\pi^{*!}M)
\]
is again a sum of jets, so that the same holds for
\[
\sExt^1_X(\pi^{*!}\sO_{C_\alpha},\pi^{*!}M).
\]
\end{proof}

\section{Maps between Poisson moduli spaces}

In this section, $\pi:X\to Y$ will be a Poisson birational morphism of
Poisson surfaces, which are now over a more general base scheme $S$.
For convenience of notation, we will silently identify sheaves on $S$ with
their pullbacks to $X$ or $Y$ as appropriate.

Although our goal is to show that $\pi^{*!}$ respects the Poisson
structure (in a suitable sense), we will in fact mainly focus on $\pi_*$
instead; since $\pi^{*!}$ is an inverse to $\pi_*$, it will be easy to
derive Poissonness of $\pi^{*!}$ from Poissonness of $\pi_*$, but the latter
will also let us deal with pseudo-twists.

The first issue is that direct images do not in general preserve flatness
of families.  This is, of course, just a relative version of semicontinuity
questions, but is particularly easy to deal with in our case.

\begin{lem}
  Let $M$ be a coherent sheaf on $X$, flat over $S$.  If $R^1\pi_*M$ is
  flat over $S$, then so is $\pi_*M$, and moreover the natural map
\[
\pi_*M\otimes N\to \pi_*(M\otimes N)
\]
is an isomorphism for all coherent sheaves $N$ on $S$.  In particular, this holds if
every fiber of $M$ is $\pi_*$-acyclic.
\end{lem}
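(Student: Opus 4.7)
The plan is to reduce to a standard application of Grothendieck's cohomology-and-base-change theorem, exploiting the fact that $\pi$ has $\le 1$-dimensional fibers so that only $R^0\pi_*$ and $R^1\pi_*$ can be nonzero. Since the statement is local on $Y$ and on $S$, I would first reduce to the case $S=\Spec A$ and $V=\Spec B\subset Y$ affine, and work with $U=\pi^{-1}(V)$, which is proper over $\Spec A$. Under this reduction, $\pi_*M|_V$ being $S$-flat becomes the statement that $H^0(U,M)$ is $A$-flat, and the hypothesis that $R^1\pi_*M$ is $S$-flat becomes $H^1(U,M)$ is $A$-flat (via Leray, using that $V$ is affine). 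The previous lemma that $R^i\pi_*=0$ for $i\ge 2$ gives $H^i(U,M\otimes_A N)=0$ for all $i\ge 2$ and every $A$-module $N$.

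Applied to the proper morphism $U\to\Spec A$ and the $A$-flat sheaf $M$, the cohomology-and-base-change theorem produces a bounded complex $K^\bullet$ of finitely generated projective $A$-modules with a natural isomorphism $H^i(K^\bullet\otimes_A N)\cong H^i(U, M\otimes_A N)$ for every $N$. Using the vanishing of $H^i$ in degrees $\ge 2$ after arbitrary base change together with a standard inductive truncation (each top differential is universally surjective, so its kernel is flat by the Tor sequence, and the complex can be shortened while preserving cohomology after base change), I can replace $K^\bullet$ by a two-term complex $d\colon K^0\to K^1$ of finite projective $A$-modules that computes $H^0$ and $H^1$ universally.

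The hypothesis now reads $\coker(d)=H^1(U,M)$ is flat. Applying $\Tor$ to
\[
0\to\im(d)\to K^1\to\coker(d)\to 0
\]
forces $\im(d)$ flat, and then $0\to\ker(d)\to K^0\to\im(d)\to 0$ gives that $\ker(d)=H^0(U,M)=\pi_*M|_V$ is flat, which is the first conclusion. Because $\im(d)$ is flat, this last short exact sequence remains exact after tensoring with any $N$, so
\[
\pi_*M\otimes_A N=\ker(d)\otimes_A N=\ker(d\otimes N)=H^0(U,M\otimes_A N)=\pi_*(M\otimes N)|_V,
\]
which is the natural isomorphism. For the final clause, if every fiber is $\pi_*$-acyclic, then $H^1(K^\bullet\otimes_A k(s))=H^1(U_s,M_s)=0$ at every point $s\in S$, so $\coker(d)\otimes k(s)=0$ everywhere; since $R^1\pi_*M$ is a coherent sheaf, Nakayama forces it to vanish, trivially satisfying the flatness hypothesis.

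The main technicality is the truncation to a two-term complex of flats, which rests on universal top-degree vanishing and iterated use of the Tor long exact sequence; everything after that is a direct flatness argument. Alternatively, one can avoid the truncation by working with the full complex $K^\bullet$ and noting that the differential $K^{i}\to K^{i+1}$ has flat cokernel for $i\ge 1$ universally, since the higher $H^i$ vanish after any base change.
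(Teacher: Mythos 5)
The reduction to $S$ and $Y$ affine is fine, but the assertion that $U=\pi^{-1}(V)$ is proper over $\Spec A$ is not: $U$ is proper over $V=\Spec B$, and $V$, being an affine open subset of a surface over $S$, is not proper over $S$, so neither is $U$. Consequently the cohomology-and-base-change theorem does not apply as you invoke it, and there is in general no bounded complex of \emph{finitely generated} projective $A$-modules computing $H^\bullet(U,M\otimes_A{-})$; already $H^0(U,M)=\Gamma(V,\pi_*M)$ is a finite $B$-module but typically not finite over $A$. Fortunately, your subsequent truncation and $\Tor$ arguments never use finiteness or projectivity, only $A$-flatness, so the gap is easily filled: take $K^\bullet$ to be the \v{C}ech complex $\check{C}^\bullet(\mathcal{U},M)$ of a finite affine open cover $\mathcal{U}$ of the separated, quasi-compact scheme $U$. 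Each term $\Gamma(U_I,M)$ is $A$-flat because $U_I$ is affine and $M$ is $S$-flat, and $\Gamma(U_I,M\otimes_A N)=\Gamma(U_I,M)\otimes_A N$, so $\check{C}^\bullet(\mathcal{U},M)$ is a bounded complex of $A$-flat modules with $H^i(\check{C}^\bullet(\mathcal{U},M)\otimes_A N)\cong H^i(U,M\otimes_A N)$ for every $A$-module $N$. With this $K^\bullet$, your truncation to a two-term complex of flats and the $\Tor$ bookkeeping go through verbatim.

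Once repaired, your route is essentially the paper's argument unwound. The paper views $\dR\pi_*M$ as a complex concentrated in degrees $0$ and $1$, applies the derived projection formula $\dR\pi_*M\otimes^{\dL}N\cong\dR\pi_*(M\otimes N)$, and reads off the two-row $\Tor$ spectral sequence; this yields precisely the isomorphisms $\Tor_{p+2}(R^1\pi_*M,N)\cong\Tor_p(\pi_*M,N)$ and the four-term exact sequence $0\to\Tor_2(R^1\pi_*M,N)\to\pi_*M\otimes N\to\pi_*(M\otimes N)\to\Tor_1(R^1\pi_*M,N)\to 0$ that your explicit two-term flat representative of $\dR\pi_*M$ also produces. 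You trade the spectral-sequence packaging for an explicit flat representative and a hands-on truncation; the underlying homological content is the same.
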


\begin{proof}
For any coherent sheaf $N$ on $S$, we have an isomorphism
\[
\dR\pi_*M\otimes^{\dL} N\cong \dR\pi_*(M\otimes^{\dL} N)\cong \dR\pi_*(M\otimes N).
\]
Since $\pi$ has fibers of dimension $\le 1$, the spectral sequence for
$\dR\pi_*M\otimes^\dL N$ has entries in only two rows.  We thus obtain
isomorphisms
\[
\Tor_{p+2}(R^1\pi_*M,N)\cong \Tor_p(\pi_*M,N)
\]
for $p>0$, an isomorphism
\[
R^1\pi_*M\otimes N\cong R^1\pi_*(M\otimes N),
\]
and an exact sequence
\[
0\to \Tor_2(R^1\pi_*M,N)\to \pi_*M\otimes N\to \pi_*(M\otimes N)\to
\Tor_1(R^1\pi_*M,N)\to 0.
\]
The claim follows immediately.
\end{proof}

It follows that $\pi_*$ induces a morphism from an open subspace of $\Spl_X$
to $\Spl_Y$.  More precisely, we have the following.

\begin{lem}
  Let $\Spl_{X,\pi}\subset \Spl_X$ be the subspace of simple sheaves $M$
  which are $\pi_*$-acyclic and have simple direct image.  Then
  $\Spl_{X,\pi}$ is an open subspace, and $\pi_*$ induces a surjective
  morphism from $\Spl_{X,\pi}$ to $\Spl_Y$.
\end{lem}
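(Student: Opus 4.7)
The plan is to establish three things in order: openness of $\Spl_{X,\pi}$ inside $\Spl_X$, functoriality of $\pi_*$ to produce the morphism, and surjectivity of the resulting map.

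For openness, I would argue on an \'etale cover $f : U \to \Spl_X$ carrying a family $M$ of simple sheaves. Since the fibers of $\pi$ are at most one-dimensional, the only obstruction to fiberwise $\pi_*$-acyclicity is $R^1\pi_*M$, and upper semicontinuity of $h^1$ on geometric fibers of $\pi$ makes $\{u : R^1\pi_*(M_u) = 0\}$ open in $U$. On this locus the preceding lemma guarantees that $\pi_*M$ is flat over $U$ and commutes with arbitrary base change, so its geometric fibers coincide with the direct images of the fibers of $M$. The further condition $\End(\pi_*(M_u)) \cong k(u)$ is then open by upper semicontinuity of $\dim \Hom$, since the identity gives a universal lower bound of $1$.

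To construct the morphism, I restrict to the open subspace $U_\pi \subset U$ where both conditions hold. There $\pi_*M$ is a flat family of simple sheaves on $Y$, hence defines a classifying morphism $U_\pi \to \Spl_Y$. I need to check that this descends to a morphism from $\Spl_{X,\pi}$. Equivalence of families on $X$ is by twisting with a line bundle pulled back from the base, and by the projection formula $\pi_*(M \otimes \pi_X^*\mathcal L) \cong (\pi_*M) \otimes \mathcal L$, such twists translate into equivalent twists of $\pi_*M$. Combined with the base-change compatibility on $U_\pi$, this supplies the gluing data needed for the classifying morphisms to descend across the \'etale cover to a morphism $\Spl_{X,\pi} \to \Spl_Y$.

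For surjectivity it is enough to hit every geometric point of $\Spl_Y$, i.e., every simple sheaf $N$ on a geometric fiber of $Y$. If $N$ has homological dimension $\le 1$, then the minimal lift $\pi^{*!}N$ from Section 4 is $\pi_*$-acyclic with $\pi_*\pi^{*!}N \cong N$; the corollary $\End(\pi^{*!}N) \cong \End(N)$ deduced from Lemma \ref{lem:lift_exts} then places $\pi^{*!}N$ in $\Spl_{X,\pi}$ over $N$. If instead $N$ has homological dimension $2$, then $N$ is a simple $0$-dimensional sheaf, hence $N \cong \sO_p$ for a closed point $p$; for any closed point $q \in \pi^{-1}(p)$ the skyscraper $\sO_q$ is simple with $R^1\pi_*\sO_q = 0$ and $\pi_*\sO_q \cong \sO_p$, so it lies in $\Spl_{X,\pi}$ over $N$.

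I expect the most delicate step to be the descent argument for the morphism, since $\Spl_Y$ is only defined via twisted families on an \'etale cover; once the projection formula and flat base change are invoked, however, it is essentially formal. Everything else reduces either to standard semicontinuity or to properties of the minimal lift already established.
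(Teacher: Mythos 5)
Your proposal follows the same structure as the paper's proof: openness from semicontinuity of $R^1\pi_*$ on fibers plus openness of simplicity for flat families (Altman--Kleiman), and surjectivity by splitting into the homological-dimension-$\le 1$ case (minimal lift) and the point-sheaf case. Two small remarks: the paper phrases the acyclicity locus as the complement of the image of the closed subscheme $\Fitt_0(R^1\pi_*M)$ under the proper structure map, which makes explicit that the projectivity of $Y/S$ is what converts the closed "bad" locus in $U\times_S Y$ into a closed subset of $U$ — your semicontinuity phrasing implicitly uses the same properness, and you should note it; and your paragraph on descending the classifying morphism across the \'etale cover via the projection formula is a detail the paper elides but is correctly handled.
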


\begin{proof}
  That $\pi_*$-acyclicity is an open condition follows from the fact that
  for any $S$-flat family $M$, the functor $R^1\pi_*$ commutes with taking
  fibers; thus the $\pi_*$-acyclic locus is just the complement of the
  image of the closed subscheme $\Fitt_0(R^1\pi_*M)$ under the proper map
  $X\to S$.  Then $\pi_*M$ is a flat family, and simplicity is an open
  condition on flat families \cite{AltmanAB/KleimanSL:1980}.

  For surjectivity, note that point sheaves in $\Spl_Y$ are direct images
  of point sheaves in $\Spl_X$, while any other simple sheaf is the direct
  image of its minimal lift.
\end{proof}

We will need one more fact in the proof of Poissonness, which we will also
use in the next section, so separate out from the proof.

\begin{lem}\label{lem:useful_factorization}
Let $M$ be a $\pi_*$-acyclic sheaf on $X$ with direct image of homological
dimension $\le 1$.  Then there is a commutative diagram
\[
\begin{CD}
\pi^!\pi_*M\otimes \pi^*\omega_Y @>1\otimes \pi^*\alpha_Y>> \pi^!\pi_*M\\
 @VVV @AAA\\
  M\otimes \omega_X @>1\otimes \alpha_X>> M,
\end{CD}
\]
where the first vertical map factors as
\[
\pi^!\pi_*M\otimes \pi^*\omega_Y
\to
\pi^*\pi_*M\otimes \omega_X
\to
M\otimes \omega_X.
\]
\end{lem}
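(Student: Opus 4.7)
Setting $N = \pi_*M$, the hypothesis ensures $N$ has homological dimension $\le 1$, so by the earlier identification $\pi^!N \cong \pi^*N \otimes \omega_X \otimes \pi^*\omega_Y^{-1}$ we obtain a canonical isomorphism
\[
\pi^!N \otimes \pi^*\omega_Y \cong \pi^*N \otimes \omega_X.
\]
The plan is to take the left vertical of the diagram to be this isomorphism followed by $\epsilon \otimes 1 : \pi^*N \otimes \omega_X \to M \otimes \omega_X$, where $\epsilon : \pi^*\pi_*M \to M$ is the counit of $(\pi^*,\pi_*)$; this realises the required factorisation. The right vertical is the unit $\eta : M \to \pi^!N$ of the $(\pi_*, \pi^!)$-adjunction, which is a bona fide sheaf morphism because $\pi^!N$ is concentrated in degree zero under our hypothesis. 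So both verticals are well-defined.

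By naturality of the tensor product, the composition around the left, bottom, and right edges of the square rewrites as
\[
\pi^!N \otimes \pi^*\omega_Y \xrightarrow{\sim} \pi^*N \otimes \omega_X \xrightarrow{(\eta\circ\epsilon)\otimes 1} \pi^!N \otimes \omega_X \xrightarrow{1 \otimes \alpha_X} \pi^!N.
\]
In the proof of the earlier proposition giving the universal property of $\pi^{*!}$, the composition $\eta\circ\epsilon : \pi^*N \to M \to \pi^!N$ is shown to agree with the canonical map $\pi^*N \to \pi^!N$ coming from adjunction and the identity on $\pi_*M$. Writing $K = \omega_X \otimes \pi^*\omega_Y^{-1}$, this canonical map is the identity on $\pi^*N$ tensored with the canonical section $s : \sO_X \to K$, and $s(1)$ is nothing but the pullback map $\phi : \pi^*\omega_Y \to \omega_X$ viewed as a section of $K = \sHom(\pi^*\omega_Y, \omega_X)$.

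This reduces the commutativity to an identity on a tensor product of invertible sheaves. Writing a local section $\omega \in \omega_X$ as $k \otimes \tau$ under $\omega_X \cong K \otimes \pi^*\omega_Y$ with $k = f\phi$ for some local function $f$, the Poisson compatibility $\alpha_X \circ \phi = \pi^*\alpha_Y$ gives $\alpha_X(\omega) = f \cdot \alpha_X(\phi(\tau)) = f \cdot \pi^*\alpha_Y(\tau)$, and an unwinding confirms that both the outside composition and the top arrow $1 \otimes \pi^*\alpha_Y$ send $n \otimes k \otimes \tau \in \pi^*N \otimes K \otimes \pi^*\omega_Y$ to $n \otimes (k \cdot \pi^*\alpha_Y(\tau))$ in $\pi^*N \otimes K = \pi^!N$. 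The main obstacle is purely notational---keeping track of several canonically isomorphic invertible sheaves and the various adjunction units and counits---while the only genuine input is the Poisson condition $\alpha_X \circ \phi = \pi^*\alpha_Y$ that makes $\pi$ a Poisson morphism in the first place.
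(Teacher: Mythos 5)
Your proof is correct in outline but follows a genuinely different route from the paper's, so a comparison is in order. The paper argues entirely at the level of exceptional-sheaf structure theory: the two compositions agree on the locus where $\pi$ is an isomorphism, so their difference is a morphism $\pi^!\pi_*M\otimes\pi^*\omega_Y\to\pi^!\pi_*M$ vanishing outside the exceptional locus; by Proposition~\ref{prop:exceptional_image} that difference has exceptional image, and since $\pi^!\pi_*M$ has no exceptional subsheaves (as $\Hom(E,\pi^!N)=\Hom(\dR\pi_*E,N)=0$ for $E$ exceptional), the difference vanishes. This is very short and leverages the machinery developed in the exceptional-sheaves section. Your approach instead carries out a direct local computation after reducing to the canonical map $\pi^*N\to\pi^!N$. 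The two buy different things: the paper's argument is machinery-heavy but requires no explicit form for any map, while yours is more self-contained once the relevant identification is in hand.

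Two points in your write-up deserve more care. First, the identification of $\eta\circ\epsilon:\pi^*N\to\pi^!N$ with $1_N\otimes s$, where $s\in\Gamma(K)$ corresponds to the pullback $\phi:\pi^*\omega_Y\to\omega_X$ under $K\cong\sHom(\pi^*\omega_Y,\omega_X)$, is a nontrivial compatibility in Grothendieck--Serre duality (the trace section of $\pi^!\sO_Y$ is dual to the pullback of top forms). It is true, but it is not established in the paper; the paper's Lemma only shows that $\dL\pi^!$ is the derived functor of $N\mapsto\pi^*N\otimes\omega_X\otimes\omega_Y^{-1}$, without pinning down the natural transformation $\pi^*\to\pi^!$ in that model. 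You should either cite a reference for this or argue it directly (e.g.\ reduce to $N=\sO_Y$ by naturality, then observe that the adjunction map dualizes to the morphism $\pi^*\omega_Y\to\omega_X$). Second, the step ``$k=f\phi$ for a local function $f$'' is only literally valid away from the exceptional locus, since $\phi$ vanishes along $e_\pi$ and hence is not a local generator of $K$ there. The conclusion still holds because the identity you are verifying, $\alpha_X(k(\tau))\cdot\phi = k\cdot\pi^*\alpha_Y(\tau)$ in $K$, is between sections of a line bundle, which is torsion-free, so agreement on the dense open complement of the exceptional locus forces agreement everywhere; but you should say so, since as written the argument silently performs the same ``extend from a dense open set'' step that the paper makes explicit via exceptional sheaves. (Alternatively, use the Poisson compatibility $\pi^*\alpha_Y=\alpha_X\circ\phi$ together with the elementary fact that $\alpha_X(u)v=\alpha_X(v)u$ for sections $u,v$ of the line bundle $\omega_X$, which avoids inverting $\phi$ altogether.)
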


\begin{proof}
Since $\pi$ is an isomorphism outside the exceptional locus, the diagram
can only fail to commute on the exceptional locus.  The failure to commute
is thus measured by a morphism
\[
\pi^!\pi_*M\otimes \pi^*\omega_Y \to \pi^!\pi_*M
\]
that vanishes outside the exceptional locus.  We can thus apply Proposition
\ref{prop:invisible_image} to conclude that this morphism has invisible
image.  Since $\pi^!\pi_*M$ has no invisible subsheaf, the image is 0,
and thus the diagram commutes.
\end{proof}

\begin{thm}\label{thm:direct_image_is_Poisson}
  Let $\pi:X\to Y$ be a Poisson birational morphism of Poisson surfaces.
  The direct image functor $\pi_*$ defines a Poisson morphism
  $\pi_*:\Spl_{X,\pi}\to \Spl_Y$.
\end{thm}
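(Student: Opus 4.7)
The plan is to work locally on an \'etale neighborhood $U\to \Spl_{X,\pi}$ carrying a formally universal family $M$, and to show directly that the biderivation inherited from $\Spl_X$ coincides with the one inherited from $\Spl_Y$ via $\pi_*$. After twisting $M$ by an appropriate line bundle---which changes neither the map to $\Spl_{X,\pi}$ nor the induced map to $\Spl_Y$---I may assume that both $M$ and $M\otimes\omega_X$ are $\pi_*$-acyclic fiberwise and that $\pi^*\pi_*M\to M$ is surjective. Then $\pi_*M$ is a formally universal family of simple sheaves on $Y$, the induced \'etale map $U\to\Spl_Y$ is exactly the composition $U\to\Spl_{X,\pi}\xrightarrow{\pi_*}\Spl_Y$, and the problem reduces to comparing two biderivations on the same $U$.

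The key step is to reinterpret the $\Spl_Y$ biderivation via the adjunction between $\pi_*$ and $\pi^!$. Since $\omega_Y$ is invertible, $\pi^!(N\otimes\omega_Y)\cong \pi^!N\otimes\pi^*\omega_Y$, and adjointness yields natural isomorphisms
\[
\sExt^i_U(\pi_*M,\pi_*M\otimes\omega_Y^{\otimes k})\cong \sExt^i_U(M,\pi^!\pi_*M\otimes\pi^*\omega_Y^{\otimes k}),\qquad k=0,1,
\]
under which the action of $\alpha_Y$ becomes post-composition with $1\otimes\pi^*\alpha_Y$, and the cotangent map $\sExt^1_U(\pi_*M,\pi_*M\otimes\omega_Y)\to\sExt^1_U(M,M\otimes\omega_X)$ becomes post-composition with the vertical arrow $\pi^!\pi_*M\otimes\pi^*\omega_Y\to M\otimes\omega_X$ from Lemma \ref{lem:useful_factorization}. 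Invoking the commutativity of that square, if $\gamma\in\sExt^1_U(M,M\otimes\omega_X)$ is the cotangent pullback of $\beta\in\sExt^1_U(\pi_*M,\pi_*M\otimes\omega_Y)$, then $\alpha_Y\beta$ corresponds under adjointness to the image of $\alpha_X\gamma$ under the unit $M\to\pi^!\pi_*M$, i.e., to $\pi_*(\alpha_X\gamma)$ in $\sExt^1_U(\pi_*M,\pi_*M)$.

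The remaining step is trace compatibility. The identity $\pi_*\omega_X\cong\omega_Y$ (from $\omega_X\cong\pi^*\omega_Y\otimes\sO(e_\pi)$ and $\pi_*\sO(e_\pi)=\sO_Y$) together with Grothendieck duality identifies the $Y$-trace map with the $X$-trace map under the adjunction isomorphisms; combined with naturality of the Yoneda product under $\pi_*$, this gives
\[
\tr_Y(\beta_1\smile\alpha_Y\beta_2)=\tr_Y(\beta_1\smile\pi_*(\alpha_X\gamma_2))=\tr_X(\gamma_1\smile\alpha_X\gamma_2),
\]
which is the desired equality of biderivations. The main obstacle I anticipate is this last step: Grothendieck duality supplies an abstract compatibility of $\dR\pi_*$ with Serre duality, but converting it into an honest equality of cupped Ext classes on $X$ and $Y$, while tracking the adjunction isomorphisms and the twist discrepancy between $\omega_X$ and $\pi^*\omega_Y$, requires careful bookkeeping. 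Once this is done, Poissonness of $\pi_*$ follows from the Poissonness of the $\Spl_X$ biderivation established in Theorem \ref{thm:poisson}.
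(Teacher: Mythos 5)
Your core argument is the same as the paper's: transport the biderivation on $\Spl_Y$ to $X$ via the $\pi_*$--$\pi^!$ adjunction, invoke Lemma \ref{lem:useful_factorization} to factor $1\otimes\pi^*\alpha_Y$ through $1\otimes\alpha_X$, and read off the Poisson-morphism factorization $P_Y = d\pi_*\circ P_X\circ (d\pi_*)^*$. Your closing worry about trace compatibility is legitimate but not really an extra step beyond what the paper silently assumes: identifying the first map in the factorization as the transpose $(d\pi_*)^*$ of the tangent map $\Ext^1(M,M)\to\Ext^1(\pi_*M,\pi_*M)$ under Serre duality is exactly Grothendieck duality for $\pi$, and both the paper and you are implicitly relying on it.

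However, two genuine gaps. First, the claim that $\pi_*M$ is a formally universal family, so that $U\to\Spl_Y$ is \'etale, is false: $\pi_*:\Spl_{X,\pi}\to\Spl_Y$ is very far from \'etale (e.g. $\pi^*N$, $\pi^!N$, and $\pi^{*!}N$ all map to the same point $N$), and at the tangent level the map $\Ext^1(M,M)\to\Ext^1(\pi_*M,\pi_*M)$ need not be an isomorphism --- the remark after Lemma \ref{lem:lift_exts} shows that for $M=\pi^{*!}N$ it is injective with cokernel landing in $\Hom(E_1,E_2)$. Your subsequent bookkeeping doesn't actually depend on this claim (you still write everything in terms of $d\pi_*$ and $(d\pi_*)^*$), so it should simply be deleted, but as stated it is wrong and the resulting line ``the problem reduces to comparing two biderivations on the same $U$'' has to be replaced by the actual Poisson-morphism diagram. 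Second, you omit the case where $\pi_*M$ has homological dimension $2$, i.e.\ where $\pi_*M\cong\sO_p$; the paper treats this separately, since if $M$ is a point sheaf off the exceptional locus then $\pi_*$ is just $\pi$ on the component $X\subset\Spl_X$, while if $M$ is supported on the exceptional locus then $\pi_*$ is locally constant with value a point of $C_{\alpha_Y}$, where $\alpha_Y$ vanishes. Finally, the preliminary twisting step (arranging $M\otimes\omega_X$ to be $\pi_*$-acyclic and $\pi^*\pi_*M\to M$ surjective) is both unnecessary for the argument and not achievable by twisting by a pullback line bundle, which is the only twist that interacts benignly with $\pi_*$; you should drop it.
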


\begin{proof}
  We have already shown that it defines a morphism, so it remains only to
  show that it respects the Poisson structure.  Consider a sheaf $M$ in
  $\Spl_{X,\pi}$.  If $\pi_*M$ has homological dimension $2$, then $\pi_*M$
  must be a point, and either $M$ is a point or $M$ is supported on the
  exceptional locus.  On the subspace $X\subset \Spl_X$ parametrizing point
  sheaves, $\pi_*$ is just $\pi$, so is certainly Poisson.  If $M$ is
  supported on the exceptional locus, then $\pi_*M$ is supported on the
  anticanonical curve, so that for such sheaves, $\pi_*$ maps to a single
  point with trivial Poisson structure, so again is Poisson.

  We may thus restrict our attention to the case that $\pi_*M$ has
  homological dimension $\le 1$.  Now, the adjunction between $\pi^!$ and
  $\pi_*$ induces natural maps
\begin{align}
  \Ext^1(\pi_*M,\pi_*M\otimes \omega_Y)&\cong \Ext^1(M,\pi^!\pi_*M\otimes \pi^*\omega_Y)\notag\\
  \Ext^1(\pi_*M,\pi_*M)&\cong \Ext^1(M,\pi^!\pi_*M),\notag
\end{align}
with inverse given by $\pi_*$.  (To be precise, given a $\pi_*$-acyclic
sheaf $N$, there is a natural map $\pi_*:\Ext^1(M,N)\to
\Ext^1(\pi_*M,\pi_*N)$ given by taking the direct image of the extension.)
In particular, the Poisson structure on a neighborhood of $\pi_*M$ in
$\Spl_Y$ is obtained by composing the above isomorphisms with the map
\[
\Ext^1(M,\pi^!\pi_*M\otimes \pi^*\omega_Y)
\xrightarrow{\Ext^1(M,1\otimes \pi^*\alpha_Y)}
\Ext^1(M,\pi^!\pi_*M).
\]
By Lemma \ref{lem:useful_factorization}, this factors through the natural map
\[
\Ext^1(M,M\otimes \omega_X)
\xrightarrow{\Ext^1(M,1\otimes \alpha_X)}
\Ext^1(M,M),
\]
and the composition
\[
\Ext^1(M,M) \to \Ext^1(M,\pi^!\pi_*M)\cong \Ext^1(\pi_*M,\pi_*M)
\]
is just $\pi_*$, i.e., the differential of the morphism
$\pi_*:\Spl_{X,\pi}\to \Spl_Y$.  In other words, the map
$\Omega_{\Spl_Y}\to \Omega^*_{\Spl_Y}$ induced by the Poisson structure is
the direct image of the corresponding map on $\Spl_{X,\pi}$, and thus
$\pi_*$ is Poisson.
\end{proof}

\begin{cor}
The functors $\pi^*$ and $\pi^!$ induce Poisson morphisms from $\Spl_Y$ to
$\Spl_{X,\pi}$.
\end{cor}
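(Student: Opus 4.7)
The plan is to deduce this from Theorem~\ref{thm:direct_image_is_Poisson}, exploiting the fact that $\pi^*$ and $\pi^!$ are sections of $\pi_*$ on sheaves of homological dimension $\le 1$.

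First, I would verify that $\pi^*$ and $\pi^!$ actually land in $\Spl_{X,\pi}$. For $N$ of homological dimension $\le 1$, the Section~4 lemmas give that $\pi^*N$ and $\pi^!N$ are $\pi_*$-acyclic with direct image $N$. Simplicity is immediate from adjointness, $\Hom(\pi^*N,\pi^*N)\cong\Hom(N,\pi_*\pi^*N)=\End(N)=k$, and analogously for $\pi^!$. Flatness in families follows from $\pi_*$-acyclicity. The point-sheaf stratum is handled separately (for instance, $\pi^*\sO_p$ is the structure sheaf of the connected scheme-theoretic fiber, hence simple, with direct image $\sO_p$). By construction, $\pi_*\circ\pi^* = \pi_*\circ\pi^! = \mathrm{id}_{\Spl_Y}$.

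For Poissonness of $\pi^*$, I would apply Lemma~\ref{lem:useful_factorization} with $M = \pi^*N$. Since $\pi_*M = N$ and the canonical map $\pi^*\pi_*M\to M$ is the identity, the left vertical in that lemma's diagram reduces to the canonical isomorphism $\pi^!N\otimes\pi^*\omega_Y \cong \pi^*N\otimes\omega_X$ coming from $\pi^! = \pi^*\otimes\omega_X\otimes\pi^*\omega_Y^{-1}$. Applying $\sExt^1_U(\pi^*N,-)$ and using the adjointness identification $\sExt^i(\pi^*N,\pi^!(-))\cong \sExt^i(N,-)$, the top row becomes precisely the Poisson bracket $\alpha_Y$ at $N$, while the bottom row is $\alpha_X$ at $\pi^*N$. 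The right vertical $\sExt^1(\pi^*N,\pi^*N)\to \sExt^1(\pi^*N,\pi^!N)\cong \sExt^1(N,N)$ is $\pi_*$ on $\Ext$-groups, and since $\pi_*\circ\pi^* = \mathrm{id}$ this is inverse to the differential $d\pi^*$; by Serre--Verdier duality the left vertical is, correspondingly, inverse to the codifferential $d\pi^{*,*}$. Commutativity of the diagram is then exactly the equation $\alpha_X = d\pi^*\circ\alpha_Y\circ d\pi^{*,*}$, i.e., the condition that $\pi^*$ be Poisson. The argument for $\pi^!$ is entirely parallel: apply the same lemma with $M = \pi^!N$; now the right vertical becomes the identity map $\pi^!N\to\pi^!N$ and the left vertical the natural composition $\pi^!N\otimes\pi^*\omega_Y \cong \pi^*N\otimes\omega_X\to \pi^!N\otimes\omega_X$, and the same adjointness plus Serre--Verdier identifications yield the Poisson condition for $\pi^!$.

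The main obstacle is the identification of the two vertical maps in this diagram with the (co)differentials of $\pi^*$ (resp.\ $\pi^!$). This is a naturality check; via Serre--Verdier duality it amounts to the compatibility of the Yoneda--Serre pairing on $X$ with the adjunction $(\pi^*,\pi_*)$, and it is essentially the statement dual to the identification of $\pi_*$ on $\Ext^1$ with the differential of $\pi_*:\Spl_{X,\pi}\to \Spl_Y$ that appears at the end of the proof of Theorem~\ref{thm:direct_image_is_Poisson}.
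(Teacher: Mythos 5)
Your proof is correct and is essentially the same strategy as the paper's, with one difference in packaging: the paper simply invokes Theorem~\ref{thm:direct_image_is_Poisson} (that $\pi_*$ is Poisson) together with the elementary fact that the inverse of a Poisson isomorphism onto an open Poisson subspace is Poisson, noting that $\pi^*$ and $\pi^!$ are precisely such inverses of $\pi_*$ on their (open) images; whereas you unwind the argument and reprove the inverse-is-Poisson step by returning to Lemma~\ref{lem:useful_factorization} and re-chasing the diagram in the two special cases $M=\pi^*N$, $M=\pi^!N$. Both routes rest on the same two facts (the factorization lemma and $\pi_*\circ\pi^*=\pi_*\circ\pi^!=\mathrm{id}$), so the content is the same; the paper's version is shorter, and also handles the point-sheaf stratum uniformly (there, $\pi_*$ is either $\pi$ or a constant map landing in $C_\alpha$ where the bivector vanishes), which your ``handled separately'' remark gestures at but does not carry through to the Poissonness claim.
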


\begin{proof}
Indeed, $\pi^*$ and $\pi^!$ are injective, and preserve simplicity and
flatness.  Since the images of $\pi^*$ and $\pi^!$ satisfy the hypotheses
of the Theorem, $\pi_*$ is Poisson on both images; since it is also an
isomorphism on both images, the inverses are Poisson.  But this is
precisely what we want to prove.
\end{proof}

\medskip

For the minimal lift, we again have an issue with flatness.  In this case,
however, we can completely control the corresponding flattening
stratification.  Let $\Spl_{Y,\le 1}$ denote the subspace parametrizing
sheaves of homological dimension $\le 1$.

\begin{lem}
  Suppose $\pi$ is monoidal, blowing up the point $p\in Y$.  Then for each
  integer $m\ge 0$, the subspace of $\Spl_{Y,\le 1}$ parametrizing sheaves
  $M$ with $\dim(\Ext^1(\sO_p,M))=m$ is a Poisson subspace, and contains
  every symplectic leaf it intersects.
\end{lem}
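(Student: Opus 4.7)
The plan is to recognize $Z_m := \{M \in \Spl_{Y,\le 1} : \dim\Ext^1(\sO_p, M) = m\}$ as a union of fibers of the derived restriction $M \mapsto M|^{\dL}_{C_\alpha}$, and then deduce both claims from Theorem~\ref{thm:poisson}. First, by semicontinuity of $\sExt^1_U(\sO_p, M)$ in a formally universal family $M$ over $U$, the locus $Z_{\ge m}$ is closed and $Z_m = Z_{\ge m} \setminus Z_{\ge m+1}$ is locally closed; I would equip $Z_{\ge m}$ with its reduced scheme structure.

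The crucial observation is that $\dim\Ext^1(\sO_p, M)$ is determined by $M|^{\dL}_{C_\alpha}$. Poissonness of $\pi$ forces $p \in C_\alpha$ (otherwise $\pi^*C_\alpha - e_\pi$ would fail to be effective). Writing $i : C_\alpha \hookrightarrow Y$ and $\sO_p = i_* \sO_p^{C_\alpha}$ and applying the projection formula gives
\[
M \otimes^{\dL}_Y \sO_p \;\cong\; i_*\bigl( M|^{\dL}_{C_\alpha} \otimes^{\dL}_{C_\alpha} \sO_p^{C_\alpha} \bigr).
\]
Combined with the self-duality of the Koszul resolution of $\sO_p$ on the smooth surface $Y$---which gives $\Ext^1_Y(\sO_p, M) \cong \Tor_1^Y(M, \sO_p)$, as used in the paper's characterization of sheaves of homological dimension $\le 1$---this shows that the dimension factors through the derived-restriction map. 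Consequently $Z_m$ is a union of fibers of $M \mapsto M|^{\dL}_{C_\alpha}$, and by Theorem~\ref{thm:poisson} it is a union of symplectic leaves, so contains every symplectic leaf it intersects.

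For the Poisson subspace property, each individual fiber $\{M : M|^{\dL}_{C_\alpha} \cong M^\bullet_\alpha\}$ is a Poisson subspace by Theorem~\ref{thm:poisson}, so its ideal sheaf is preserved by every Hamiltonian derivation $\{f, -\}$. The reduced ideal sheaf of $Z_{\ge m}$ is the intersection of the ideals of the fibers whose dimension data yield $\dim\Ext^1(\sO_p, -) \ge m$, and an intersection of Poisson ideals is again Poisson (since each $\{f, -\}$ is a derivation). Hence $Z_{\ge m}$, and therefore $Z_m$, is a Poisson subspace.

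The main obstacle is the base-change identity of the second paragraph; once it is in hand, the remaining steps---semicontinuity, the formal properties of Poisson ideals, and the leaf structure of Theorem~\ref{thm:poisson}---combine almost mechanically. A subtlety worth flagging is that the reduced structure on $Z_{\ge m}$ must coincide with the intersection of the reduced structures of the relevant derived-restriction fibers, which follows because both sides are reduced locally closed subspaces of $\Spl_{Y,\le 1}$ with the same underlying set.
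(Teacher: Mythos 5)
Your proof is correct and follows essentially the same strategy as the paper: observe that Poissonness of $\pi$ forces $p\in C_\alpha$, deduce that $\dim\Ext^1(\sO_p,M)$ is determined by the derived restriction $M|^{\dL}_{C_\alpha}$, and conclude that the level sets are unions of symplectic leaves and are Poisson. The only difference is in how the invariance is established: the paper works with the underived tensor product $M\otimes\sO_p\cong(M\otimes\sO_{C_\alpha})\otimes\sO_p$ and converts via Serre duality and Euler characteristics to the identity $\dim\Hom(M,\sO_p)=\dim\Ext^1(\sO_p,M)+\rank(M)$, while you use the derived projection formula $M\otimes^{\dL}_Y\sO_p\cong i_*(M|^{\dL}_{C_\alpha}\otimes^{\dL}\sO_p^{C_\alpha})$ together with the Koszul self-duality $\Ext^1(\sO_p,M)\cong\Tor_1(M,\sO_p)$; these are equivalent reformulations. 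One small imprecision: the derived-restriction fibers are locally closed rather than closed, so to make the intersection-of-Poisson-ideals argument rigorous you should first replace each fiber by its closure (closures of Poisson subspaces are Poisson, since $\{f,g\}$ vanishing on a locally closed set forces vanishing on its closure), and then intersect the reduced ideals of those closures inside the open set where $Z_{\ge m}$ is closed; the paper glosses over the same point.
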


\begin{proof}
Since $\pi$ is Poisson, $p$ is contained in the anticanonical curve
$C_\alpha$, and thus
\[
M\otimes \sO_p\cong (M\otimes \sO_{C_\alpha})\otimes \sO_p
\]
is constant on symplectic leaves, so determines a stratification by locally
closed Poisson subspaces.  Since
\[
\dim(H^0(M\otimes \sO_p))
=
\dim(\Hom(M,\sO_p))
=
\dim(\Ext^1(\sO_p,M))+\rank(M),
\]
the same applies to $\dim(\Ext^1(\sO_p,M))$.
\end{proof}

Now, given any flat family of sheaves on $Y$ with homological dimension
$\le 1$, we obtain a stratification of the base of the family by taking the
flattening stratification of the minimal lift of the family.  Normally this
depends on a choice of relatively very ample bundle, but one can always
refine such a stratification by imposing the open and closed conditions
that the numerical Chern classes be constant, and the result will then be
independent of the relatively very ample bundle.  It is this canonical
stratification that we will mean; it refines the usual decomposition of $Y$
by numerical Chern class.
 
\begin{cor}
  If $\pi$ is monoidal, then the stratification induced on $\Spl_{Y,\le 1}$
  by $\pi$ agrees with the stratification by $\dim\Ext^1(\sO_p,M)$ and
  numerical Chern class.
\end{cor}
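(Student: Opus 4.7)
The plan is to characterize both stratifications as flattening-type stratifications on flat families $M$ of sheaves of homological dimension $\le 1$ on $Y\times T$, and then check that the two defining conditions agree. Throughout I will work with the relative version of the short exact sequence
\[
0\to E_1\to \pi^*M\to \pi^{*!}M\to 0
\]
on $X\times T$, together with the fiberwise identification $E_1(t)\cong \Ext^1(\sO_p,M(t))\otimes_k \sO_e(-1)$ proved in the preceding proposition.

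For the inclusion of the $\pi$-stratification into the $\Ext^1$/numerical stratification, suppose $T$ parametrizes a family on which $\pi^{*!}M$ is $T$-flat with constant numerical Chern class. Since the discrete invariants of $M$ are constant on each component of $\Spl_{Y,\le 1}$, the Chern class identity
\[
c_1(\pi^{*!}M(t))=\pi^*c_1(M(t))-\dim\Ext^1(\sO_p,M(t))\cdot e
\]
forces $\dim\Ext^1(\sO_p,M(t))$ to be locally constant on $T$, and a parallel $c_2$-computation handles the remaining numerical data. So each $\pi$-stratum is contained in a stratum of the second stratification.

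For the converse, I will assume that $\dim\Ext^1(\sO_p,M(t))$ and the numerical class of $M$ are constant on $T$, and deduce that $\pi^{*!}M$ is $T$-flat with constant numerical class. The key input is that $\sO_p$ admits a length-$2$ locally free resolution on $Y$, so that $\sExt^1_{T}(\sO_p,M)$ is the cohomology of a two-term complex of locally free sheaves. Semicontinuity and cohomology-and-base-change for this complex then promote constancy of the fiberwise dimension to local freeness of $\sExt^1_{T}(\sO_p,M)$ and compatibility of its formation with arbitrary base change. Tensoring with $\sO_e(-1)$ globalizes the fiberwise description of $E_1$, giving $E_1$ as a $T$-flat subsheaf of $\pi^*M$; the quotient $\pi^{*!}M$ is then $T$-flat by a short local Tor computation, and its numerical class is constant by the Chern class formula above.

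The main obstacle is precisely this second direction: lifting the fiberwise identification of $E_1$ to a family statement. Once local freeness of $\sExt^1_{T}(\sO_p,M)$ is in hand via cohomology and base change, the remaining manipulations are routine. A minor preliminary check --- that both stratifications really are locally closed subspaces of $\Spl_{Y,\le 1}$ --- follows from Fitting-ideal descriptions on the $\Ext^1$ side and from standard flattening-stratification theory on the $\pi$ side.
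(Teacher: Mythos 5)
The paper's proof is a one--line K-theory observation: from the exact sequence
$0\to E_1\to\pi^*M\to\pi^{*!}M\to 0$ and the identification of $E_1$ with
$\Ext^1(\sO_p,M)\otimes\sO_e(-1)$, one gets
$[\pi^{*!}M]=\pi^*[M]-\dim\Ext^1(\sO_p,M)[\sO_e(-1)]$ in $K_0$, so constancy
of the numerical Chern class of $\pi^{*!}M$ is equivalent to constancy of
the numerical Chern class of $M$ together with constancy of
$\dim\Ext^1(\sO_p,M)$. Your proof is built on the same exact sequence and the
same fiberwise description of $E_1$, so it is not a genuinely different
route; what you add is the cohomology--and--base--change argument needed to
pass from fiberwise constancy of $\dim\Ext^1(\sO_p,M(t))$ to $T$-flatness of
the relative minimal lift, a point the paper's proof leaves implicit (and
which is silently invoked in the theorem that follows it). That extra step
is the genuinely nontrivial content, so it is useful that you supply it.

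Two small corrections. First, the ``parallel $c_2$-computation'' is
unnecessary: in the identity
$c_1(\pi^{*!}M(t))=\pi^*c_1(M(t))-m\,e$, the term $\pi^*c_1(M(t))$ has zero
component along $e$, so $m$ is already read off from $c_1$ alone; no
information from $c_2$ is needed, and once $[M(t)]$ and $m$ are fixed, the
full numerical class of $\pi^{*!}M(t)$ is determined by the $K_0$ identity.
Second, a length-$2$ locally free resolution of $\sO_p$ produces a
\emph{three}-term complex computing $\sExt^\bullet(\sO_p,M)$, concentrated
in degrees $0,1,2$, not a two-term one; so the base-change argument needs
one more stage. Concretely, $\sExt^0(\sO_p,M(t))=0$ since $M(t)$ has
homological dimension $\le 1$, while
$\dim\Ext^2(\sO_p,M(t))=\dim\Hom(M(t),\sO_p)=\dim\Ext^1(\sO_p,M(t))+\rank M(t)$
is constant under your hypotheses; starting from $h^3=0$, this gives that
$\sExt^2_T$ is locally free and commutes with base change, and then one
deduces the same for $\sExt^1_T$. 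After that adjustment your argument goes
through, and it would be cleanest to phrase the flatness conclusion in terms
of the two-term complex $\pi^*M\to\pi^!M$ on $X\times T$, whose cohomology
sheaves $K$ and $C$ become flat and base-change compatible, so that
$\pi^{*!}M=\pi^*M/K$ is $T$-flat with fibers $\pi^{*!}(M(t))$.
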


\begin{proof}
Indeed, for any coherent sheaf $M$ of homological dimension $\le 1$, we
have the equation
\[
[\pi^{*!}M]=\pi^*[M]-\dim\Ext^1(\sO_p,M)[\sO_e(-1)]
\]
in the Grothendieck group.  It follows that $\pi^{*!}M$ has constant
numerical Chern class iff $M$ has constant numerical Chern class and
$\dim\Ext^1(\sO_p,M)$ is constant.
\end{proof}

\begin{thm}
  Let $\pi:X\to Y$ be a Poisson birational morphism of Poisson surfaces.
  Every stratum of the stratification induced on $\Spl_{Y,\le 1}$ by $\pi$
  is a Poisson subspace, and the minimal lift induces a Poisson isomorphism
  from each stratum to an open subspace of $\Spl_{X,\le 1}$.
\end{thm}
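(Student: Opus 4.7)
The plan is to reduce to the monoidal case via the composition law $(\pi_2\circ\pi_1)^{*!} \cong \pi_1^{*!}\pi_2^{*!}$, then derive Poissonness of $\pi^{*!}$ by inverting the already-established Poissonness of $\pi_*$. Factor $\pi = \pi_n \circ \cdots \circ \pi_1$ into monoidal blowups; the $\pi$-stratification of $\Spl_{Y,\le 1}$ is then the common refinement of the pullbacks of the strata induced by each $\pi_i$, and $\pi^{*!}$ decomposes as the composition of the monoidal lifts, so the general case follows from the monoidal case by induction---compositions of Poisson isomorphisms between open subspaces remain Poisson isomorphisms between open subspaces, and preimages of Poisson subspaces under Poisson morphisms are Poisson.

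So assume $\pi$ is monoidal, blowing up $p \in Y$. By the corollary just before the theorem statement, the stratification on $\Spl_{Y,\le 1}$ is given by fixing the numerical Chern class together with $\dim \Ext^1(\sO_p, M) = m$, and the preceding lemma shows each such stratum is a Poisson subspace containing every symplectic leaf it meets. On a fixed stratum, the relative Ext sheaf $\sExt^1(\sO_p, M)$ is locally free of rank $m$, so $E_1 \cong \sExt^1(\sO_p, M) \otimes \sO_e(-1)$ has constant rank and $\pi^{*!}M = \pi^*M / E_1$ is flat over the base. The fibers are simple (by $\End(\pi^{*!}M) \cong \End(M)$, from the corollary to Lemma \ref{lem:lift_exts}) and of homological dimension $\le 1$ (as $\pi_*$-acyclic sheaves whose direct image has homological dimension $\le 1$ have no $0$-dimensional subsheaves), so $\pi^{*!}$ defines a morphism from the stratum to $\Spl_{X,\le 1}$.

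Its image is the open subspace cut out by the semicontinuity-open conditions $\Hom(\sO_e(-1), -) = \Hom(-, \sO_e(-1)) = 0$: minimal lifts satisfy these by the propositions of Section 5, while Theorem \ref{thm:minimal_if_no_exceptional} shows conversely that any $M'$ satisfying both is isomorphic to $\pi^{*!}\pi_*M'$, giving $\pi_*$ as a two-sided inverse (and incidentally showing this open subspace lies inside $\Spl_{X,\pi}$, since $\End(\pi_*M')\cong \End(M')$). Since $\pi_*:\Spl_{X,\pi} \to \Spl_Y$ is Poisson by Theorem \ref{thm:direct_image_is_Poisson}, its restriction here is a Poisson isomorphism onto the stratum, and hence the inverse $\pi^{*!}$ is also Poisson. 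The main obstacle is the flatness of the minimal lift over the base stratum---the entire reason for introducing the stratification---and the explicit formula for $E_1$ in the monoidal case pins flatness precisely to the condition defining the strata; openness of the image is then a clean application of Theorem \ref{thm:minimal_if_no_exceptional}, and Poissonness is automatic by inversion.
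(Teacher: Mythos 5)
Your proof follows essentially the same route as the paper: reduce to the monoidal case by factorization, identify the strata via $\dim\Ext^1(\sO_p,M)$ together with the numerical Chern class (Poisson by the lemma preceding the theorem), argue flatness of $\pi^{*!}M$ over each stratum from the explicit description of $E_1$, cut out the open image by the vanishing characterization of minimal lifts from Theorem \ref{thm:minimal_if_no_exceptional}, and deduce Poissonness of $\pi^{*!}$ by inverting the Poisson morphism $\pi_*$ of Theorem \ref{thm:direct_image_is_Poisson}. One overstated side claim: ``preimages of Poisson subspaces under Poisson morphisms are Poisson'' is false in general (consider the projection of a symplectic surface to a line with trivial bracket: the preimage of a point is not Poisson); what your induction actually uses --- and what the paper carefully phrases --- is that the refined strata are identified via the Poisson \emph{isomorphism} $\pi_2^{*!}$ with Poisson subspaces upstairs, so only pullback under Poisson isomorphisms is needed.
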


\begin{proof}
  First suppose that $\pi$ is monoidal.  Then we have already shown that
  the stratification is Poisson.  On each stratum, the minimal lift
  preserves flatness (and simplicity), so defines a morphism from the
  stratum to $\Spl_{X,\le 1}$.  That the image is open follows from
  semicontinuity and the fact that minimal lifts are characterized by the
  vanishing of $\Hom(\sO_e(-1),M)$ and $\Hom(M,\sO_e(-1))$.

  We thus obtain an isomorphism between each stratum and the corresponding
  open subspace, and it remains only to show that it is Poisson.
  Equivalently, we need to show that $\pi_*$ is Poisson on the image of
  $\pi^{*!}$, but this is immediate from Theorem
  \ref{thm:direct_image_is_Poisson}.

  More generally, if we factor $\pi=\pi_1\circ\pi_2$ with $\pi_1$ monoidal,
  then the stratification induced by $\pi$ refines the stratification
  induced by $\pi_2$, and is identified via $\pi_2^{*!}$ with the
  stratification induced by $\pi_1$ on $\pi_1(X)$.  The claim then follows
  easily by induction.
\end{proof}

\begin{cor}
  Let $\pi:X\to Y$ be a birational morphism of Poisson surfaces.  Then for
  every stratum of the stratification induced by $\pi$ on $\Spl_{Y,\le 1}$,
  any pseudo-twist operation defines a Poisson morphism on the open
  subspace of the stratum where the pseudo-twist is simple.
\end{cor}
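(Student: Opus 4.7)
The plan is to factor the pseudo-twist as a composition of three maps, each of which is Poisson on an appropriate open subspace, and then restrict to where the composition is defined with simple values.

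More precisely, the pseudo-twist associated to a divisor $D$ (supported on the exceptional locus, with $D^2=-1$) is the composition
\[
M \;\longmapsto\; \pi^{*!}M \;\longmapsto\; \pi^{*!}M\otimes \mathcal{L}(D) \;\longmapsto\; \pi_*(\pi^{*!}M\otimes \mathcal{L}(D)).
\]
By the theorem immediately preceding, $\pi^{*!}$ is a Poisson isomorphism from each stratum of the induced stratification of $\Spl_{Y,\le 1}$ onto an open subspace of $\Spl_{X,\le 1}$. Twisting by the line bundle $\mathcal{L}(D)$ is a Poisson automorphism of $\Spl_X$: this was recorded at the outset of Section 3, where the explicit formula for the biderivation in terms of $\sExt^1_U(M,M\otimes\omega_X)$ was observed to be invariant under tensoring $M$ with a line bundle. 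Finally, by Theorem \ref{thm:direct_image_is_Poisson}, $\pi_*$ is Poisson on the open subspace $\Spl_{X,\pi}\subset \Spl_X$ of simple $\pi_*$-acyclic sheaves with simple direct image.

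The first step I would carry out is to identify the domain of definition. By the theorem on twists of minimal lifts along $-1$-divisors, $\pi^{*!}M\otimes\mathcal{L}(D)$ is automatically $\pi_*$-acyclic with direct image of homological dimension $\le 1$, for every $M$ of homological dimension $\le 1$. Thus the only open condition left to impose is that the direct image $\pi_*(\pi^{*!}M\otimes\mathcal{L}(D))$ be simple; since simplicity is open in flat families (using Altman--Kleiman as invoked in the direct-image lemma above), this defines the required open subspace $U$ of the stratum. Flatness of $\pi^{*!}M$ over the stratum (which is what a stratum means in this setting) is preserved by tensoring with $\mathcal{L}(D)$, and then by $\pi_*$ thanks to the acyclicity lemma at the start of this section.

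On this open $U$, the pseudo-twist factors as the three Poisson maps above, so it is Poisson. The main obstacle I expect is really only notational: checking that the ``twist by $\mathcal{L}(D)$'' step lands inside $\Spl_{X,\pi}$, so that Theorem \ref{thm:direct_image_is_Poisson} applies. But this is exactly the content of the acyclicity theorem for $-1$-divisors combined with the openness of simplicity, so no further obstruction arises, and the general case follows from the monoidal case by factoring $\pi$ and iterating, just as in the proof of the preceding theorem.
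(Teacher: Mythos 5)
Your proposal is correct and takes essentially the same approach as the paper: the paper's own proof is the one-line observation that a pseudo-twist is the composition of the Poisson morphism $\pi^{*!}$, the Poisson morphism ${-}\otimes{\cal L}(\pm e_f)$, and the (partially defined) Poisson morphism $\pi_*$, and you have simply unwound that composition and tracked domains of definition. Your concluding remark about reducing to the monoidal case by factoring $\pi$ is unnecessary here — once $\pi^{*!}$, ${-}\otimes{\cal L}(D)$, and $\pi_*$ are each known to be Poisson for a general $\pi$ (which they are, by the preceding theorem and Theorem \ref{thm:direct_image_is_Poisson}), the composition argument applies directly without any further induction — but this is a harmless inefficiency rather than a gap.
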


\begin{proof}
Indeed, a pseudo-twist is a composition of the Poisson morphism
$\pi^{*!}$, the Poisson morphism ${-}\otimes\sO_X(\pm e_f)$, and
the (partially defined) Poisson morphism $\pi_*$.
\end{proof}

\begin{cor}
  Let $(Y,\alpha)$ be a Poisson surface, let the subscheme $J\subset C_\alpha$
  be a disjoint union of jets, and let $\pi:X\to Y$ be the minimal
  desingularization of the blowup of $Y$ along $J$.  For any sheaf $M_\alpha$
  which is a direct sum of structure sheaves of subschemes of $J$,
  $\pi^{*!}$ is a symplectomorphism on the symplectic leaf
  $M\otimes \sO_{C_\alpha}=M_\alpha$ of $\Spl_{Y,\le 1}$.
\end{cor}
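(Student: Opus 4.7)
The plan is to deduce the corollary directly from the preceding theorem (giving the Poisson isomorphism structure for $\pi^{*!}$) together with the resolution proposition at the end of Section 6 (identifying where the image lands).

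First I would verify that every $M$ in the given symplectic leaf is automatically pure $1$-dimensional with $\Fitt_0(M)\cap C_\alpha$ a disjoint union of jets inside $J$. Indeed, if $M$ had positive generic rank, then at any point of $C_\alpha\setminus J$ where $M$ is locally free, $M\otimes \sO_{C_\alpha}$ would be locally $\sO_{C_\alpha}^{\rank(M)}$, contradicting the hypothesis that $M_\alpha$ is supported on $J$. Together with the homological dimension hypothesis (which rules out $0$-dimensional subsheaves), this forces $M$ to be pure $1$-dimensional, and then the stated form of $M_\alpha$ forces the claimed description of $\Fitt_0(M)\cap C_\alpha$.

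Next I would show that the symplectic leaf is contained in a single stratum of the $\pi$-stratification on $\Spl_{Y,\le 1}$. Factoring $\pi$ into monoidal transformations $\pi_1,\ldots,\pi_n$, the preceding lemma asserts that each monoidal stratum (characterized by constancy of $\dim\Ext^1(\sO_{p_1},M)$ and the numerical Chern class) is a Poisson subspace containing every symplectic leaf it intersects; since $\pi_1^{*!}$ is a Poisson isomorphism on that stratum, symplectic leaves on the two sides correspond, and induction extends the property to the full $\pi$-stratification. The preceding theorem then yields a Poisson isomorphism from the containing stratum onto an open subspace of $\Spl_{X,\le 1}$, restricting to a Poisson isomorphism from our leaf onto a Poisson subspace. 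By the resolution proposition (applicable by the first step), $\pi^{*!}M$ is disjoint from $C_{\alpha_X}$, so $(\pi^{*!}M)|^{\dL}_{C_{\alpha_X}}=0$ and the image lies in the open symplectic locus of $\Spl_{X,\le 1}$; a Poisson isomorphism from a symplectic space into a symplectic leaf is automatically a symplectomorphism onto its image, which completes the argument.

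The only point of real delicacy is the iterative stratification step, but this reduces cleanly to the monoidal case handled by the preceding lemma, so no serious obstacle remains.
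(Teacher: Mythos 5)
Your proposal is correct and takes essentially the same route as the paper's, which compresses everything into the single observation that $M\otimes\sO_{C_\alpha}=M_\alpha$ is equivalent to $\pi^{*!}M$ being disjoint from the anticanonical curve with the right numerical Chern class; you simply unfold the ingredients (the stratification lemma and theorem, then the resolution proposition) that the paper uses implicitly. One small wrinkle worth noting, which the paper's one-line proof also leaves tacit: the resolution proposition is stated for $\pi$ the minimal desingularization of the blowup along $\Fitt_0(M)\cap C_\alpha$, whereas here $\pi$ blows up the possibly larger scheme $J$. When $\Fitt_0(M)\cap C_\alpha\subsetneq J$, you should factor $\pi$ through the blowup prescribed by the proposition and observe that the remaining monoidal transformations are centered at points disjoint from the current minimal lift (since it is already disjoint from the anticanonical curve), so that minimal lift reduces to ordinary pullback and disjointness is preserved. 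With that patch your argument is complete.
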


\begin{proof}
Indeed, $M\otimes \sO_{C_\alpha}=M_\alpha$ iff $\pi^{*!}M$ is disjoint from
$C_\alpha$, and has the correct numerical Chern class.
\end{proof}

\begin{rem}
In particular, if $C_\alpha$ is smooth, then any symplectic leaf consisting
of sheaves transverse to $C_\alpha$ is symplectomorphic to an open subspace of
some $\Spl_X$ with $\pi:X\to Y$ a Poisson birational morphism: simply choose
$\pi:X\to Y$ so that $\pi^{*!}M$ is disjoint from $C_\alpha$.
\end{rem}

\medskip

The fact that inverse images and minimal lifts behave nicely with respect
to the Poisson structure has an interesting consequence.  Let $f:X\ratto Y$
be a Poisson birational map.  Then we can use the above facts to
essentially embed $\Spl_X$ as a Poisson subspace of $\Vect_Y$.  More
precisely, suppose $U\to \Spl_{X\le 1}$ is a Noetherian \'etale
neighborhood (in the remaining case, when $U$ is an \'etale neighborhood of
$X$, simply consider the corresponding family of ideal sheaves).  Then
there is a corresponding morphism $U\to \Vect_Y$ (\'etale to its image)
such that the two induced Poisson structures are the same.

To see this, let $Z$ be a Poisson resolution of $f$, so that $f$ factors
through Poisson birational morphisms $g:Z\mapsto X$ and $h:Z\mapsto Y$.
Then if $M$ is the original flat family of sheaves on $X$, $g^* M$ is a
flat family of sheaves on $Z$, with the same Poisson structure.  We may
then apply the construction of \cite{HurtubiseJC/MarkmanE:2002b} (see
Section \ref{sec:moduli_hd1} above) to turn this into a flat family of
simple locally free sheaves on $Z$ (negating the Poisson structure in the
process).  Let $V$ be this family.  Now, if we twist $V$ by a suitable line
bundle, we may arrange to have
\[
\Hom(V,\sO_e(-1))=0
\]
for every $h$-exceptional component $e$. Indeed, we have
\[
\Hom(V,\sO_e(-1))
\cong
\Hom(V|_e,\sO_e(-1))
\cong
H^1(V|_e(-1))^*
\]
so we need simply twist by a relatively ample bundle that makes
$\bigoplus_e V|_e(-1)$ acyclic.  But this makes $V$ a minimal lift under
$h$!  Indeed, since $V$ is torsion-free, we also have $\Hom(\sO_e(-1),V)=0$
for all $e$, and thus Theorem \ref{thm:minimal_if_no_exceptional} applies.
In particular, it follows that $h_*V$ is simple, and induces an
anti-Poisson map from $U$ to $\Spl_{Y,\le 1}$.  Applying the construction
of \cite{HurtubiseJC/MarkmanE:2002b} again gives the required Poisson
morphism $U\to \Vect_Y$.  Each step of the above process is reversible, so
the maps $U\to \Spl_{X\le 1}$ and $U\to \Vect_Y$ have isomorphic images
(algebraic subspaces of $\Spl_{X\le 1}$, $\Vect_Y$ respectively).

One possible application of this construction is in the noncommutative
context.  The discussion of Section \ref{sec:moduli_hd1} essentially never
uses the fact that $X$ is a {\em commutative} surface; the main change
needed (assuming $X$ reasonably well-behaved) is to replace the functor
${-}\otimes \omega_X$ by the appropriate analogue for noncommutative Serre
duality.\footnote{There are also some minor technicalities involving the
  fact that twisting by a line bundle changes the noncommutative surface;
  of course, one can always twist back.}  The only other significant change
is that $V$ is no longer locally free in any obvious sense, but we can
still hope to show that some twist of $V$ has well-behaved direct image
(i.e., $V$ is $h_*$-acyclic, with simple image from which we can
reconstruct $V$), in which case the above construction can still be carried
out.\footnote{Of course, it would be enough to consider the case that $f$
  is monoidal or inverse-monoidal.}  The construction would then give us
the ability to finesse a difficulty of the noncommutative context, namely
the fact that it is unclear how to define ``locally free'' on a general
noncommutative surface, and thus even less clear how to prove that the
corresponding moduli space is Poisson.  This construction would mean it
sufficed to show the Poisson property for only one surface from each
birational equivalence class (e.g., for noncommutative ruled surfaces).  In
particular, the case of noncommutative rational surfaces would reduce to
that of noncommutative $\P^2$, in which case one may use more ad-hoc
calculations \cite{P2Painleve} to prove the Jacobi identity and classify
the symplectic leaves.

\bigskip

In addition to images and lifts, there is one more natural morphism we want
to consider.  We have already discussed the dualization functor
\[
{-}^D:=\sExt^1_Y({-},\omega_Y)
\]
in the context of invisible sheaves, but of course much of what we have
said applies to any pure $1$-dimensional sheaf.  Duality also interacts
nicely with the inverse image and minimal lift functors.

\begin{prop}
Let $\pi:X\to Y$ be a birational morphism of smooth projective surfaces.
Then for any pure $1$-dimensional sheaf $M$ on $Y$, $\pi^*M$, $\pi^!M$,
and $\pi^{*!}M$ are pure $1$-dimensional sheaves, and we have
\begin{align}
(\pi^*M)^D&\cong \pi^!(M^D)\notag\\
(\pi^{*!}M)^D&\cong \pi^{*!}(M^D)\notag\\
(\pi^!M)^D&\cong \pi^*(M^D).\notag
\end{align}
\end{prop}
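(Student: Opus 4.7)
The plan is to handle purity first, then establish the first duality $(\pi^*M)^D\cong \pi^!(M^D)$ by direct computation, then derive $(\pi^!M)^D\cong \pi^*(M^D)$ by involutivity of $(-)^D$, and finally handle $(\pi^{*!}M)^D\cong \pi^{*!}(M^D)$ using the characterization of minimal lifts in terms of $\sO_f(-1)$'s.

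For purity: since $M$ is pure $1$-dimensional it has homological dimension $\le 1$ (by the characterization proved earlier: no $\sO_p$-subsheaves). Pulling back a locally free resolution $0\to V\to W\to M\to 0$ via $\pi^*$ remains exact (sheaves of hd $\le 1$ are $\pi^*$-acyclic), so $\pi^*M$ has hd $\le 1$; its support is $\pi^{-1}(\supp M)$, which has dimension $1$ since $\pi$ has fibers of dimension $\le 1$ and $\supp M$ is a curve; hence $\pi^*M$ is pure $1$-dimensional. Since $\pi^!M\cong \pi^*M\otimes \omega_X\otimes \omega_Y^{-1}$ is a twist of $\pi^*M$, it is pure as well, and $\pi^{*!}M$, being a subsheaf of $\pi^!M$, inherits purity.

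For $(\pi^*M)^D\cong \pi^!(M^D)$: apply $\sHom_X(-,\omega_X)$ to the pullback resolution $0\to \pi^*V\to \pi^*W\to \pi^*M\to 0$; purity kills the $\sHom$ terms on $\pi^*M$, so $(\pi^*M)^D$ is identified with the cokernel of $\sHom_X(\pi^*W,\omega_X)\to \sHom_X(\pi^*V,\omega_X)$. Using that $V,W$ are locally free, this cokernel is $\coker(\sHom_Y(W,\omega_Y)\to \sHom_Y(V,\omega_Y))\otimes \omega_X\otimes \omega_Y^{-1}$ pulled back through $\pi$, which is $\pi^*(M^D)\otimes \omega_X\otimes \omega_Y^{-1}\cong \pi^!(M^D)$. (Equivalently, one invokes the Grothendieck duality identity $\dR\sHom_X(\dL\pi^*N,\pi^!P)\cong \pi^!\dR\sHom_Y(N,P)$ and observes that both sides are concentrated in the expected single degree.) Applying this with $M$ replaced by $M^D$ and using the standard biduality $M^{DD}\cong M$ for pure $1$-dimensional sheaves immediately yields $(\pi^!M)^D\cong \pi^*(M^D)$.

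For $(\pi^{*!}M)^D\cong \pi^{*!}(M^D)$: the short exact sequences $0\to E_1\to \pi^*M\to \pi^{*!}M\to 0$ and $0\to \pi^{*!}M\to \pi^!M\to E_2\to 0$ consist of pure $1$-dimensional sheaves of hd $1$ (exceptional sheaves are pure $1$-dim of hd $1$ by the discussion in the exceptional sheaves section), so $(-)^D$ is exact on them. Dualizing and applying the two duality isomorphisms above produces
\[
0\to (\pi^{*!}M)^D\to \pi^!(M^D)\to E_1^D\to 0,\qquad 0\to E_2^D\to \pi^*(M^D)\to (\pi^{*!}M)^D\to 0,
\]
with $E_1^D,E_2^D$ exceptional. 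This already shows $(\pi^{*!}M)^D$ has no nontrivial maps to or from any $\sO_f(-1)$: duality turns $\Hom(\sO_f(-1),\pi^{*!}M)$ (which vanishes by the characterization theorem) into $\Hom((\pi^{*!}M)^D,\sO_f(-1)^D)\cong \Hom((\pi^{*!}M)^D,\sO_f(-1))$ since $\sO_f(-1)$ is self-dual, and symmetrically for the other direction. Theorem \ref{thm:minimal_if_no_exceptional} then certifies $(\pi^{*!}M)^D\cong \pi^{*!}\pi_*((\pi^{*!}M)^D)$, and pushing the first sequence forward (using that direct images of exceptional sheaves vanish) identifies $\pi_*((\pi^{*!}M)^D)\cong \pi_*\pi^!(M^D)\cong M^D$, yielding the desired isomorphism.

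The main obstacle is making the final step watertight: one must verify that the anti-equivalence $(-)^D$ really is exact on the relevant sequences (which follows from purity and hd $1$ of all terms including the exceptional ones) and that the self-duality $\sO_f(-1)^D\cong \sO_f(-1)$ identifies the Hom-vanishing conditions on the two sides so that the characterization theorem can be applied.
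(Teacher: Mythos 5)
Your proof is correct. The purity argument and the computations of $(\pi^*M)^D\cong\pi^!(M^D)$ and $(\pi^!M)^D\cong\pi^*(M^D)$ from the pulled-back locally free resolution are exactly what the paper's terse ``by inspection'' is referring to.

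For the middle isomorphism $(\pi^{*!}M)^D\cong\pi^{*!}(M^D)$, you take a genuinely different route from what ``inspection'' suggests. The paper's implicit argument is presumably: since $(-)^D$ is an exact anti-equivalence on pure $1$-dimensional sheaves of homological dimension $1$, it carries the image of the canonical map $\pi^*M\to\pi^!M$ to the image of the dual map $(\pi^!M)^D\to(\pi^*M)^D$, which under the first two isomorphisms is identified (naturally) with the canonical map $\pi^*(M^D)\to\pi^!(M^D)$; hence the image is $\pi^{*!}(M^D)$. That argument is short but requires verifying that the identification is compatible with the canonical adjunction maps. Your route instead dualizes the two short exact sequences defining $\pi^{*!}M$, uses self-duality of the simple exceptional objects $\sO_f(-1)$ to transfer the $\Hom$-vanishing conditions, and then invokes Theorem \ref{thm:minimal_if_no_exceptional} plus $\pi_*E_1^D=0$ to pin down $(\pi^{*!}M)^D$ as $\pi^{*!}(M^D)$. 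This buys you something: it sidesteps the need to chase naturality of the canonical map through the duality, at the cost of invoking the heavier characterization theorem for minimal lifts. Both arguments are sound; yours is arguably more robust because it only uses the intrinsic characterization of minimal lifts, not the specific form of the map. One small wording nit: the support of $\pi^*M$ is \emph{contained in} $\pi^{-1}(\supp M)$ rather than equal to it, but that containment is all you need.
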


\begin{proof}
This is essentially by inspection using a locally free resolution
\[
0\to V\to W\to M\to 0
\]
of $M$, and the corresponding resolution
\[
0\to W^*\to V^*\to M^D\to 0
\]
of $M^D$.
\end{proof}

Since duality interacts nicely with these Poisson morphisms, it is natural
to suspect that it is itself Poisson. This is not quite the case: it is in
fact anti-Poisson.

\begin{prop}
Let $(X,\alpha)$ be a Poisson surface, and let $\Spl^1_X$ be the subspace
parametrizing pure $1$-dimensional sheaves.  Then ${-}^D$
is an anti-Poisson involution on $\Spl^1_X$.
\end{prop}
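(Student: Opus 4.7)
The plan is to use Grothendieck duality to transport tangent and cotangent vectors between $M$ and $M^D$, and then to track carefully the sign arising when the Yoneda product is reversed by contravariance. First, for a pure $1$-dimensional sheaf $M$ one has $\dR\sHom(M,\omega_X)\cong M^D[-1]$, since $\sHom(M,\omega_X)=\sExt^2(M,\omega_X)=0$. Consequently, $(-)^D$ is a contravariant involution on the abelian subcategory of pure $1$-dimensional sheaves, with $(M^D)^D\cong M$ naturally; moreover $\End(M^D)\cong \End(M)^{\mathrm{op}}$, so the dual of a simple sheaf is simple, and $(-)^D$ is indeed a well-defined involution on $\Spl^1_X$.

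Applying Grothendieck duality twice produces natural isomorphisms $\Ext^i(M,N)\cong \Ext^i(N^D,M^D)$ for any pure $1$-dimensional $M$ and $N$.  Combined with $(M\otimes\omega_X)^D\cong M^D\otimes\omega_X^{-1}$, this gives identifications
\[
\delta_T:\Ext^1(M,M)\xrightarrow{\sim} \Ext^1(M^D,M^D), \quad
\delta_C:\Ext^1(M,M\otimes\omega_X)\xrightarrow{\sim} \Ext^1(M^D,M^D\otimes\omega_X),
\]
which are the differentials of $(-)^D$ on the tangent and cotangent sheaves of $\Spl^1_X$. By naturality of Grothendieck duality with respect to the morphism $\alpha:\omega_X\to\sO_X$, the map $\alpha_*$ on $\Ext^1$-groups commutes with $\delta_T$ and $\delta_C$; likewise the Serre trace $\tr:\Ext^2(M,M\otimes\omega_X)\to H^2(\omega_X)$ is natural under duality, since both sides are Serre-dual to the scalar inclusion $k\to\End(-)$.

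The Poisson bracket at $M$ reads $\{\xi,\eta\}_M=\tr_M(\xi\cdot \alpha_*\eta)$, where $\cdot$ is the Yoneda product in $\Ext^2(M,M\otimes\omega_X)$ (composing $\alpha_*\eta$ first, then $\xi$). Since Grothendieck duality is contravariant, it reverses Yoneda products: $(\xi\cdot \alpha_*\eta)^\vee = (\alpha_*\eta)^\vee\cdot \xi^\vee$. Translating via $\delta_T$, $\delta_C$, and tensoring by $\omega_X$ to land in $\Ext^2(M^D,M^D\otimes\omega_X)$, the right-hand side becomes the Yoneda product $\widetilde{\alpha_*\eta^D}\cdot \xi^D$, in which $\xi^D$ is now composed first (and $\alpha_*\eta^D$ has been tensored by $\omega_X$ to match targets). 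By naturality of the trace, one obtains $\{\xi,\eta\}_M = \tr_{M^D}(\widetilde{\alpha_*\eta^D}\cdot \xi^D)$.

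The final step compares this with $\{\xi^D,\eta^D\}_{M^D}=\tr_{M^D}(\xi^D\cdot\alpha_*\eta^D)$, in which $\alpha_*\eta^D$ is composed first. These two Yoneda products in $\Ext^2(M^D,M^D\otimes\omega_X)$ differ under the trace by the Koszul sign $(-1)^{|\xi^D||\alpha_*\eta^D|}=-1$; this is precisely the graded-cyclic symmetry of the trace pairing which rendered the Poisson bracket alternating in the first place (the commutative diagram just after the definition of the bracket in Section 3, interpreted with Koszul-swap signs). Combining, $\{\xi,\eta\}_M = -\tr_{M^D}(\xi^D\cdot\alpha_*\eta^D) = -\{\xi^D,\eta^D\}_{M^D}$, which is the anti-Poisson identity. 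The main obstacle is this final sign-tracking: Yoneda products are not themselves graded-commutative, so one must carefully invoke the graded-cyclic property of the trace rather than of the Yoneda product to extract the overall factor of $-1$.
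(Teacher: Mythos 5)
Your proposal is correct and follows essentially the same route as the paper: identify $(-)^D$ with $\dR\sHom(-,\omega_X)[1]$ on pure $1$-dimensional sheaves, use contravariance to reverse the Yoneda product, and extract the sign from the skew-symmetry of the trace pairing. The paper phrases the middle step more concretely, dualizing the two- and four-term exact sequences of pure $1$-dimensional sheaves directly rather than invoking Grothendieck duality at the derived-category level, and compresses the sign-tracking into the one-line identity $(\phi\psi)^D=\psi^D\phi^D=-\phi^D\psi^D$ (valid after applying the trace), but the underlying argument is the same.
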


\begin{proof}
Recall that at a point $M\in \Spl^1_X$, the pairing on the sheaf of
differentials is given by the composition
\[
\Ext^1(M,M\otimes \omega_X)\otimes \Ext^1(M,M\otimes \omega_X)
\to
\Ext^2(M,M\otimes \omega_X^2)
\to
\Ext^2(M,M\otimes \omega_X)
\to
k.
\]
Now, ${-}^D$ induces an isomorphism
\[
\Ext^1(M,M\otimes \omega_X)
\to
\Ext^1(M^D\otimes \omega_X^{-1},M^D)
\to
\Ext^1(M^D,M^D\otimes \omega_X).
\]
Indeed, a class in $\Ext^1(M,M\otimes \omega_X)$ corresponds to an
extension
\[
0\to M\otimes \omega_X\to N\to M\to 0.
\]
Since $N$ is an extension of pure $1$-dimensional sheaves, it is also pure
$1$-dimensional, and thus we can dualize the entire exact sequence to
obtain
\[
0\to M^D\to N^D\to (M\otimes \omega_X)^D\to 0,
\]
and note that $(M\otimes \omega_X)^D\cong M^D\otimes \omega_X^{-1}$.

Similarly, the product of two such extensions is given by a four-term exact
sequence of pure $1$-dimensional sheaves, which we can again dualize.  If
$\phi$, $\psi$ are two classes in $\Ext^1(M,M\otimes \omega_X)$, then we have
\[
(\phi\psi)^D = \psi^D\phi^D = -\phi^D\psi^D.
\]
Duality respects the trace map, so all told gives an anti-Poisson involution.
\end{proof}

\section{Rigidity}

In studying moduli spaces of differential and difference equations,
one particularly interesting question is when those moduli spaces
consist of a single point.  For instance, the hypergeometric
differential equation has been known since Riemann to be the unique
second-order Fuchsian equation with three singular points (and any
specific equation is determined by the exponents at the singularities).
In a generalized Hitchin system context, the order and singularity
structure precisely specifies a symplectic leaf in the corresponding
Poisson moduli space.  We are thus interested in understanding which
sheaves are rigid, i.e., the only point of their symplectic leaf.
More precisely, we say that a sheaf $M$ of homological dimension
$\le 1$ is rigid if it is isomorphic to any sheaf of homological
dimension $\le 1$ with the same numerical Chern class and (derived)
restriction to $C_\alpha$.  (This is stronger than the symplectic
leaf condition, since the symplectic leaf by definition includes
only simple sheaves.)

A slightly easier question is infinitesimal rigidity: at which sheaves is
the tangent space to the symplectic leaf $0$-dimensional?  Of course,
a sheaf is infinitesimally rigid iff the map
\[
\Ext^1(M,M\otimes \omega_X)\to \Ext^1(M,M)
\]
is $0$, or equivalently if the image is $0$-dimensional.  As usual, it will
be convenient to embed this in an Euler characteristic; we also extend to
pairs of sheaves.  Thus for two coherent sheaves $M$, $N$ on the Poisson surface
$(X,\alpha)$, we define
\[
\chi_\alpha(M,N)
:=
\dim(\Hom(M,N))
-
\dim(\im(\Ext^1(M,N\otimes \omega_X)\to \Ext^1(M,N)))
+
\dim(\Hom(N,M)).
\]
Note that duality immediately gives $\chi_\alpha(M,N)=\chi_\alpha(N,M)$;
it also gives
\[
\dim(\Hom(N,M))=\dim(\Ext^2(M,N\otimes \omega_X)),
\]
making $\chi_\alpha$ look more like an Euler characteristic.

\begin{prop}\label{prop:symp_dims}
  The $\alpha$-twisted Euler characteristic $\chi_\alpha(M,N)$ depends only
  on the numerical Chern classes $c_*(M)$, $c_*(N)$ and the derived
  restrictions $M|^{\dL}_{C_\alpha}$ and $N|^{\dL}_{C_\alpha}$.
\end{prop}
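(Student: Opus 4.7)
The plan is to derive the closed-form identity
\[
\chi_\alpha(M,N) = \chi(M, N\otimes\omega_X) + \dim\Hom_{C_\alpha}(M|^{\dL}_{C_\alpha}, N|^{\dL}_{C_\alpha}) - \dim\Ext^{-1}_{C_\alpha}(M|^{\dL}_{C_\alpha}, N|^{\dL}_{C_\alpha}),
\]
where $\Hom$ and $\Ext^{-1}$ on $C_\alpha$ are taken in $D^b(C_\alpha)$. Once this is in hand both dependencies are manifest: the first summand is controlled by Hirzebruch--Riemann--Roch in terms of the numerical Chern classes of $M$ and $N$ (and the fixed $\omega_X$), while the remaining two summands are visibly functions only of $M|^{\dL}_{C_\alpha}$ and $N|^{\dL}_{C_\alpha}$ as objects of $D^b(C_\alpha)$.

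The starting point is the short exact sequence $0 \to \omega_X \xrightarrow{\alpha} \sO_X \to \sO_{C_\alpha} \to 0$ built into the Poisson structure. Tensoring with $N$ in the derived category yields a distinguished triangle $N\otimes\omega_X \to N \to N\otimes^{\dL}\sO_{C_\alpha}$, and applying $\dR\Hom(M,-)$ produces a long exact sequence
\[
\cdots \to \Ext^i(M, N\otimes\omega_X) \to \Ext^i(M,N) \to \Ext^i_X(M, N\otimes^{\dL}\sO_{C_\alpha}) \to \Ext^{i+1}(M, N\otimes\omega_X) \to \cdots .
\]
By the adjunction $(\dL i^*, \dR i_*)$ for the closed immersion $i\colon C_\alpha\hookrightarrow X$, the middle term identifies with $\Ext^i_{C_\alpha}(M|^{\dL}_{C_\alpha}, N|^{\dL}_{C_\alpha})$. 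Observe that $M|^{\dL}_{C_\alpha}$ may carry nontrivial cohomology in degree $-1$ (from the $\Tor$ of $M$ along $C_\alpha$), so the sequence extends one step to the left of the naive range and contributes a possibly nonzero $\Ext^{-1}_{C_\alpha}$.

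The remaining step is a routine dimension chase in the initial portion of the long exact sequence. Reading it off degree by degree from the left one computes $\dim\im\bigl(\Ext^1(M, N\otimes\omega_X) \to \Ext^1(M,N)\bigr)$ as an alternating combination of the relevant $h^0$-type quantities on $X$ together with $\dim\Hom_{C_\alpha}$ and $\dim\Ext^{-1}_{C_\alpha}$. Substituting into the definition of $\chi_\alpha(M,N)$ and rewriting $\dim\Hom(N,M) = \dim\Ext^2(M, N\otimes\omega_X)$ via Serre duality on $X$ collapses everything to the advertised identity.

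I do not anticipate any genuine obstacle: the argument is essentially bookkeeping in a long exact sequence combined with standard adjunction and Serre duality. The only point meriting a bit of care is the contribution of $\Ext^{-1}_{C_\alpha}$ from derived restrictions with nontrivial torsion; its presence is consistent with the constancy condition on symplectic leaves flagged in the Remarks following Theorem \ref{thm:poisson}.
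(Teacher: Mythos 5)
Your proposal is correct and follows essentially the same route as the paper: both build the long exact sequence from the distinguished triangle $N\otimes\omega_X \to N \to N\otimes^{\dL}\sO_{C_\alpha}$, identify $\dR\Hom_X(M,N\otimes^{\dL}\sO_{C_\alpha}) \cong \dR\Hom_{C_\alpha}(M|^{\dL}_{C_\alpha},N|^{\dL}_{C_\alpha})$ via the $(\dL i^*,\dR i_*)$ adjunction, and extract $\chi_\alpha(M,N)=\chi(M,N\otimes\omega_X)+\dim\Hom_{C_\alpha}-\dim\Ext^{-1}_{C_\alpha}$ by a dimension chase. The only cosmetic difference is that the paper rewrites $\chi(M,N\otimes\omega_X)$ via Serre duality as the ordinary $\Ext$-Euler characteristic $\chi(N,M)$ before invoking Hirzebruch--Riemann--Roch, while you invoke it directly on $\chi(M,N\otimes\omega_X)$.
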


\begin{proof}
We first note that we have a long exact sequence
\[
0\to C^{-1}
 \to A^0
 \to B^0
 \to C^0
 \to A^1
 \to B^1
 \to C^1
 \to A^2
 \to B^2
 \to C^2
 \to 0
\]
where
\begin{align}
A^p &:= \Ext^p(M,N\otimes\omega_X),\notag\\
B^p &:= \Ext^p(M,N),\notag\\
C^p &:= \Ext^p(M|^{\dL}_{C_\alpha},N|^{\dL}_{C_\alpha}).\notag
\end{align}
Indeed, since the complex $N\otimes\omega_X\to N$ represents
$N\otimes^{\dL} \sO_{C_\alpha}$, to construct such a sequence, it will
suffice to give a quasi-isomorphism
\[
\dR\Hom(M,N\otimes^{\dL} \sO_{C_\alpha})
\cong
\dR\Hom(M|^{\dL}_{C_\alpha},N|^{\dL}_{C_\alpha}).
\]
If $i:C_\alpha\to X$ is the natural closed embedding, then
\[
N\otimes^{\dL} \sO_{C_\alpha} \cong \dR i_*\dL i^* N,
\]
and thus
\[
\dR\Hom(M,N\otimes^{\dL} \sO_{C_\alpha})
\cong
\dR\Hom(M,\dR i_* \dL i^* N)
\cong
\dR\Hom(\dL i^* M,\dL i^* N),
\]
Since $\dL i^*M\cong M|^{\dL}_{C_\alpha}$, the claim follows.

Then, by definition,
\[
\chi_\alpha(M,N) = \dim(B^0) -\dim(\im(A^1\to B^1)) + \dim(A^2),
\]
and exactness gives
\[
\dim(\im(A^1\to B^1))
=
\dim(A^1)-\dim(C^0)+\dim(B^0)-\dim(A^0)+\dim(C^{-1}),
\]
so that
\[
\chi_\alpha(M,N)
=
\dim(A^0)-\dim(A^1)+\dim(A^2)
+
\dim(C^0)-\dim(C^{-1}).
\]
The $C$ dimensions manifestly only depend on $M|^{\dL}_{C_\alpha}$ and
$N|^{\dL}_{C_\alpha}$, while
\[
\dim(A^0)-\dim(A^1)+\dim(A^2)
=
\dim\Ext^2(N,M)-\dim\Ext^1(N,M)+\dim\Hom(N,M)
\]
is the usual $\Ext$ Euler characteristic, and thus only depends on the
numerical Chern classes of $M$ and $N$ (and can be computed via
Hirzebruch-Riemann-Roch).
\end{proof}

\begin{rem}
For convenient reference, we note that
\begin{align}
\dim\Ext^2(N,M)&{}-\dim\Ext^1(N,M)+\dim\Hom(N,M)\notag\\
{}={}&
-\chi(\sO_X)\rank(M)\rank(N)
+\chi(M)\rank(N)
+\rank(M)\chi(N)\notag\\
&-c_1(M)\cdot c_1(N)
+\rank(M) K_X\cdot c_1(N).
\end{align}
If $\Tor_1(M,\sO_{C_\alpha})=\Tor_1(N,\sO_{C_\alpha})=0$, then $\chi_\alpha(M,N)$
is given by the above together with the contribution
$\dim\Hom(M|_{C_\alpha},N|_{C_\alpha})$ from the restriction to the
anticanonical curve.  Particularly nice is the case $\rank(M)=\rank(N)=0$,
in which case
\[
\chi_\alpha(M,N)
=
-c_1(M)\cdot c_1(N)
+
\dim\Ext^0(M|^{\dL}_{C_\alpha},N|^{\dL}_{C_\alpha})
-
\dim\Ext^{-1}(M|^{\dL}_{C_\alpha},N|^{\dL}_{C_\alpha}).
\]
\end{rem}

In the case $M=N$, we call $\chi_\alpha(M,M)$ the {\em index of rigidity} of
$M$, following \cite{KatzNM:1996}; note
\[
\chi_\alpha(M,M)
=
2\dim(\End(M))-\dim(\im(\Ext^1(M,M\otimes \omega_X)\to \Ext^1(M,M))).
\]
In particular, if $M$ is simple, then it is infinitesimally rigid iff
$\chi_\alpha(M,M)=2$.

The $\alpha$-twisted Euler characteristic, and thus the index of rigidity,
behaves nicely under direct images and minimal lifts.  Actually, the middle
term (which is the one of greatest interest in any case) has the weakest
hypotheses to ensure good behavior.

\begin{prop}
  Let $\pi:X\to Y$ be a Poisson birational morphism of Poisson surfaces,
  and suppose $M$ and $N$ are coherent sheaves on $X$, at least one of
  which is $\pi_*$-acyclic with direct image of homological dimension $\le
  1$.  Then
\[
\dim(\im(\Ext^1(\pi_*M,\pi_*N\otimes \omega_Y)\to \Ext^1(\pi_*M,\pi_*N)))
\le 
\dim(\im(\Ext^1(M,N\otimes \omega_X)\to \Ext^1(M,N))).
\]
\end{prop}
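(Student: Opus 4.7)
The strategy is to factor the map on $Y$ through the corresponding map on $X$ using Lemma~\ref{lem:useful_factorization} combined with adjunction, while handling the possibility that the not-necessarily-nice side is not $\pi_*$-acyclic via Serre duality and a truncation argument.

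First I would reduce, via Serre duality on both $X$ and $Y$, to the case in which $N$ (rather than $M$) is the $\pi_*$-acyclic sheaf with $\pi_*N$ of homological dimension $\le 1$. Indeed, Serre duality on $Y$ gives
\[
\Ext^1_Y(\pi_*M,\pi_*N\otimes \omega_Y)^*\cong \Ext^1_Y(\pi_*N,\pi_*M),\qquad \Ext^1_Y(\pi_*M,\pi_*N)^*\cong \Ext^1_Y(\pi_*N,\pi_*M\otimes \omega_Y),
\]
and the dual of the map induced by $1\otimes \alpha_Y$ is the analogous map for the swapped pair $(\pi_*N,\pi_*M)$, so the image dimensions are unchanged by swapping $M$ and $N$; the same argument on $X$ shows that the right-hand side of the inequality is symmetric as well.

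Next I would invoke the adjunction $\dR\pi_*\dashv \pi^!$, which for any coherent sheaf $A$ on $Y$ gives $\Ext^i_Y(\dR\pi_*M,A)\cong \Ext^i_X(M,\pi^!A)$. Since $M$ itself need not be $\pi_*$-acyclic, I would then use the truncation triangle $\pi_*M\to \dR\pi_*M\to R^1\pi_*M[-1]$, together with the vanishing $\Ext^{\ge 3}_Y=0$ on coherent sheaves (because $Y$ is a smooth surface), to extract from the resulting long exact sequence a natural surjection
\[
\Ext^1_X(M,\pi^!A)\twoheadrightarrow \Ext^1_Y(\pi_*M,A).
\]
Applying this with $A=\pi_*N$ and $A=\pi_*N\otimes \omega_Y$ (using $\pi^!(\pi_*N\otimes \omega_Y)\cong \pi^!\pi_*N\otimes \pi^*\omega_Y$) produces a commutative square with surjective verticals, which reduces the claim to bounding the image dimension of $\Ext^1_X(M,\pi^!\pi_*N\otimes \pi^*\omega_Y)\to \Ext^1_X(M,\pi^!\pi_*N)$.

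The final step is to apply Lemma~\ref{lem:useful_factorization} to the nice sheaf $N$: it says the map $1\otimes \pi^*\alpha_Y:\pi^!\pi_*N\otimes \pi^*\omega_Y\to \pi^!\pi_*N$ factors through $N\otimes \omega_X\xrightarrow{1\otimes \alpha_X}N$. Consequently, the displayed $\Ext^1$-map on $X$ factors as
\[
\Ext^1_X(M,\pi^!\pi_*N\otimes \pi^*\omega_Y)\to \Ext^1_X(M,N\otimes \omega_X)\to \Ext^1_X(M,N)\to \Ext^1_X(M,\pi^!\pi_*N),
\]
so its image has dimension at most that of the middle map $\Ext^1_X(M,N\otimes \omega_X)\to \Ext^1_X(M,N)$. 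Chaining this with the surjection of the previous step gives the inequality. The main obstacle is the surjection step itself: even when $M$ fails to be $\pi_*$-acyclic, one must exploit $\dim Y=2$ to confirm that the truncation triangle really does yield a surjection at the $\Ext^1$-level, after which the rest is essentially bookkeeping.
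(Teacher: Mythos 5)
Your proposal is correct and follows the paper's strategy: reduce by symmetry to the case that $N$ is the $\pi_*$-acyclic sheaf with nice direct image, move the $Y$-side $\Ext^1$ groups to $X$ via the adjunction $\dR\pi_*\dashv\pi^!$, and invoke Lemma~\ref{lem:useful_factorization}. Where you diverge is actually an improvement in rigor: the paper asserts that the two $\Ext^1$-maps (on $Y$ for $\pi_*M$, and on $X$ for $M$ mapping into $\pi^!\pi_*N$) ``have isomorphic images,'' which is only literally true when $M$ is itself $\pi_*$-acyclic; in general there is a potential kernel coming from $\Ext^2_Y(R^1\pi_*M,-)$. Your truncation argument on $\pi_*M\to\dR\pi_*M\to R^1\pi_*M[-1]$, using $\Ext^{\ge 3}_Y=0$, yields a natural surjection $\Ext^1_X(M,\pi^!A)\twoheadrightarrow\Ext^1_Y(\pi_*M,A)$ and hence a commutative square with surjective verticals, which is exactly enough for the one-sided image-dimension inequality and cleanly handles the non-acyclic case the paper elides. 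The Serre-duality justification of the symmetry reduction, and the explicit factorization through $\Ext^1_X(M,N\otimes\omega_X)\to\Ext^1_X(M,N)$, are also in agreement with the paper's intent.
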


\begin{proof}
  By symmetry, we may assume that $N$ is $\pi_*$-acyclic with direct image
  of homological dimension $\le 1$.  Then, using adjointness, we observe
  that the maps
\[
\Ext^1(\pi_*M,\pi_*N\otimes \omega_Y)
\to
\Ext^1(\pi_*M,\pi_*N)
\]
and
\[
\Ext^1(M,\pi^!\pi_*N\otimes \pi^*\omega_Y)
\to
\Ext^1(M,\pi^!\pi_*N)
\]
have isomorphic images.  By Lemma \ref{lem:useful_factorization}, the
latter factors through the map
\[
\Ext^1(M,N\otimes \omega_X)
\to
\Ext^1(M,N),
\]
giving us a bound on the image.
\end{proof}

\begin{prop}
  Let $\pi:X\to Y$ be a birational morphism of algebraic surfaces, and
  suppose $M$ and $N$ are coherent sheaves on $X$ such that $M$ is
  $\pi_*$-acyclic, $\pi_*N$ has homological dimension 1, and either $M$ has
  no invisible quotient or $N$ has no invisible subsheaf.  Then
  $\dim\Hom(\pi_*M,\pi_*N)\ge \dim(\Hom(M,N))$.
\end{prop}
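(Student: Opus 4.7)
The plan is to show directly that the natural map
\[
\pi_*:\Hom(M,N)\to \Hom(\pi_*M,\pi_*N)
\]
is injective under the stated hypotheses, which immediately yields the desired inequality.

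So suppose $\phi:M\to N$ satisfies $\pi_*\phi=0$. The first step is to observe that $\phi$ must vanish outside the exceptional locus of $\pi$. This is because $\pi$ restricts to an isomorphism on the complement $U$ of the exceptional locus, so $\pi_*$ restricts to an equivalence of categories between $\Coh(U)$ and $\Coh(\pi(U))$; in particular, $\phi|_U$ can be recovered from $\pi_*\phi|_{\pi(U)}=0$ and hence vanishes.

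Next I would apply Proposition \ref{prop:exceptional_image}: since $M$ is $\pi_*$-acyclic, $\pi_*N$ has homological dimension $\le 1$, and $\phi$ vanishes off the exceptional locus, the sheaf $\im(\phi)$ is $\pi$-exceptional. But $\im(\phi)$ is simultaneously a quotient of $M$ and a subsheaf of $N$. Thus if $M$ has no exceptional quotient, or if $N$ has no exceptional subsheaf, then $\im(\phi)=0$ and so $\phi=0$. This establishes injectivity of the natural map, completing the proof.

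There is essentially no obstacle here: once Proposition \ref{prop:exceptional_image} is in hand, the argument is formal. The only point worth emphasizing is the basic but crucial observation that the hypothesis ``$\pi_*\phi=0$'' forces $\phi$ to vanish away from the exceptional locus, which is what allows the proposition to be invoked.
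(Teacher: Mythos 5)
Your proof is correct and takes essentially the same approach as the paper: show the natural map $\Hom(M,N)\to\Hom(\pi_*M,\pi_*N)$ is injective, observe that a morphism in the kernel vanishes off the exceptional locus, and invoke Proposition \ref{prop:exceptional_image} to conclude its image is exceptional, contradicting the hypothesis on $M$ or $N$. You spell out the elementary step (that $\pi_*\phi=0$ forces $\phi$ to vanish away from the exceptional locus) more explicitly than the paper does, but the argument is the same.
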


\begin{proof}
It suffices to show that the natural map $\Hom(M,N)\to \Hom(\pi_*M,\pi_*N)$ 
is injective.  Suppose $f:M\to N$ is in the kernel.  Then it is 0 outside
the invisible locus, so has invisible image, giving both an invisible
quotient of $M$ and an invisible subsheaf of $N$.
\end{proof}

Combining hypotheses gives the following.

\begin{cor}
  Let $M$, $N$ be coherent $\pi_*$-acyclic sheaves on $X$ with direct
  images of homological dimension $\le 1$.  If (1) $M$ has no
  invisible quotient or $N$ has no invisible subsheaf, and (2) $N$ has
  no invisible quotient or $M$ has no invisible subsheaf, then
  $\chi_\alpha(M,N)\le \chi_\alpha(\pi_*M,\pi_*N)$.
\end{cor}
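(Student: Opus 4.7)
The plan is to prove the inequality term-by-term, using the definition
\[
\chi_\alpha(M,N) = \dim\Hom(M,N) - \dim\im(\Ext^1(M,N\otimes\omega_X)\to\Ext^1(M,N)) + \dim\Hom(N,M),
\]
and the corresponding decomposition of $\chi_\alpha(\pi_*M,\pi_*N)$. So I would express
\[
\chi_\alpha(\pi_*M,\pi_*N)-\chi_\alpha(M,N)
\]
as a sum of three comparisons and show each one is nonnegative.

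For the middle (twisted $\Ext^1$-image) term, I would apply the first of the two preceding propositions directly: both $M$ and $N$ are $\pi_*$-acyclic with direct image of homological dimension $\le 1$ by hypothesis, so certainly at least one is, and the proposition yields
\[
\dim\im(\Ext^1(\pi_*M,\pi_*N\otimes\omega_Y)\to\Ext^1(\pi_*M,\pi_*N))\le \dim\im(\Ext^1(M,N\otimes\omega_X)\to\Ext^1(M,N)),
\]
which appears with a minus sign in the difference and therefore contributes nonnegatively.

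For the two $\Hom$ terms, I would apply the second preceding proposition twice. Applied to the ordered pair $(M,N)$, hypothesis (1) of the corollary ($M$ has no exceptional quotient or $N$ has no exceptional subsheaf) is exactly the hypothesis of that proposition, so $\dim\Hom(\pi_*M,\pi_*N)\ge \dim\Hom(M,N)$. Applied to the swapped pair $(N,M)$, hypothesis (2) of the corollary becomes the required hypothesis, giving $\dim\Hom(\pi_*N,\pi_*M)\ge \dim\Hom(N,M)$. (Both applications also need the acyclicity and homological dimension conditions, which are symmetric in $M$ and $N$ and are given.)

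Adding the three inequalities gives $\chi_\alpha(M,N)\le \chi_\alpha(\pi_*M,\pi_*N)$. There is no genuine obstacle: the corollary is essentially a bookkeeping combination of the two propositions that precede it, and the only thing to be careful about is checking that the two separate conditions (1) and (2) match up with the two separate applications of the $\Hom$ inequality after swapping the roles of $M$ and $N$.
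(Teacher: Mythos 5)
Your proof is correct and is exactly the paper's intended argument: the paper offers no separate proof for this corollary, prefacing it only with ``Combining hypotheses gives the following,'' which is precisely your term-by-term combination of the two preceding propositions (the middle $\Ext^1$-image term via the first, the two $\Hom$ terms via the second applied to the ordered pairs $(M,N)$ and $(N,M)$, with hypotheses (1) and (2) supplying exactly what each application needs).
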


\begin{cor}
  Let $M$ be a coherent $\pi_*$-acyclic sheaf on $X$ with direct image of
  homological dimension $\le 1$.  Suppose further that either $M$ is
  simple, $M$ has no invisible quotient, or $M$ has no invisible
  subsheaf.  Then $\chi_\alpha(M,M)\le \chi_\alpha(\pi_*M,\pi_*M)$.
\end{cor}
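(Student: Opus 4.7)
The plan is to split into the three listed cases and reduce each to results already proved. In two of the three cases the previous corollary applies directly with $N = M$, and only the ``$M$ simple'' case requires a separate (but trivial) observation.

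First I would unpack the definition:
\[
\chi_\alpha(M,M) = 2\dim\End(M) - \dim\bigl(\im(\Ext^1(M,M\otimes\omega_X)\to\Ext^1(M,M))\bigr),
\]
and similarly for $\pi_*M$ on $Y$. By the proposition bounding the middle term (which requires only that $M$ be $\pi_*$-acyclic with $\pi_*M$ of homological dimension $\le 1$), one has
\[
\dim\bigl(\im(\Ext^1(\pi_*M,\pi_*M\otimes\omega_Y)\to\Ext^1(\pi_*M,\pi_*M))\bigr)
\le
\dim\bigl(\im(\Ext^1(M,M\otimes\omega_X)\to\Ext^1(M,M))\bigr).
\]
So the inequality $\chi_\alpha(M,M)\le \chi_\alpha(\pi_*M,\pi_*M)$ follows as soon as we know $\dim\End(M)\le \dim\End(\pi_*M)$.

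If $M$ has no exceptional quotient or no exceptional subsheaf, then taking $N=M$ in the previous corollary makes both of its hypotheses collapse to the same condition, which holds by assumption; the conclusion is exactly what we want. Equivalently, the injectivity proposition gives $\dim\Hom(M,M)\le \dim\Hom(\pi_*M,\pi_*M)$ in this situation, and combined with the middle-term inequality this finishes the case.

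The only remaining case is $M$ simple, where we cannot appeal to the injectivity proposition (a simple $M$ may in principle have both an exceptional subsheaf and an exceptional quotient). But here the bound on endomorphisms is automatic: $\dim\End(M)=1$ is the minimum possible value, while $\End(\pi_*M)$ always contains the identity, so $\dim\End(\pi_*M)\ge 1=\dim\End(M)$. Combined with the middle-term bound above, this gives $\chi_\alpha(M,M)\le \chi_\alpha(\pi_*M,\pi_*M)$ as desired. The main (very mild) subtlety is precisely the observation that for the simple case one cannot invoke the $\Hom$-injectivity result and must instead use the trivial lower bound on $\End(\pi_*M)$.
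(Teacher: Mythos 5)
Your proof is correct and follows the same route as the paper: reduce via the definition of $\chi_\alpha$ to a bound on the $\Hom$ term (the $\Ext^1$-image term is handled by the earlier proposition requiring only $\pi_*$-acyclicity and homological dimension $\le 1$ of the direct image), then observe that the $\Hom$ bound follows from the preceding corollary with $N=M$ when $M$ has no exceptional quotient or no exceptional subsheaf, and is trivial ($1\le\dim\End(\pi_*M)$) when $M$ is simple. This is exactly the paper's argument, just spelled out more explicitly.
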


\begin{proof}
If $M$ has no invisible quotient or no invisible subsheaf, this is just
a special case of the proposition.  Those hypotheses were only used to
prove the inequalities on $\Hom$ spaces, which are automatic when $M$ is
simple.
\end{proof}

\begin{prop}
  If $M$, $N$ are coherent sheaves on $Y$ of homological dimension $\le 1$,
  then the minimal lift respects the $\alpha$-twisted Euler characteristic:
  $\chi_\alpha(\pi^{*!}M,\pi^{*!}N)=\chi_\alpha(M,N)$.
\end{prop}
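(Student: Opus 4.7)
The plan is to reduce the equality to a comparison of the images of the two maps induced by the Poisson structure,
\[
\Phi_X:\Ext^1_X(\pi^{*!}M,\pi^{*!}N\otimes\omega_X)\to \Ext^1_X(\pi^{*!}M,\pi^{*!}N),\qquad \Phi_Y:\Ext^1_Y(M,N\otimes\omega_Y)\to \Ext^1_Y(M,N).
\]
Indeed, the remark after Lemma \ref{lem:lift_exts} already gives $\Hom(\pi^{*!}M,\pi^{*!}N)\cong \Hom(M,N)$ and, by symmetry, $\Hom(\pi^{*!}N,\pi^{*!}M)\cong \Hom(N,M)$, so the outer terms in the definition of $\chi_\alpha$ match automatically, and it suffices to show that $\dim\im(\Phi_X)=\dim\im(\Phi_Y)$.

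To compare them, I apply Lemma \ref{lem:useful_factorization} with its $M$ replaced by $\pi^{*!}N$ (whose direct image $N$ has homological dimension $\le 1$). The resulting commutative square relates $1\otimes\pi^*\alpha_Y:\pi^!N\otimes\pi^*\omega_Y\to\pi^!N$ on top to $1\otimes\alpha_X:\pi^{*!}N\otimes\omega_X\to \pi^{*!}N$ on the bottom; its right vertical is the canonical injection $\pi^{*!}N\hookrightarrow\pi^!N$ (with exceptional cokernel $E_2$), while its left vertical, via the natural isomorphism $\pi^!N\otimes\pi^*\omega_Y\cong\pi^*N\otimes\omega_X$, is the canonical surjection $\pi^*N\otimes\omega_X\twoheadrightarrow \pi^{*!}N\otimes\omega_X$ (with exceptional kernel $E_1\otimes\omega_X$). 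Applying $\Ext^1_X(\pi^{*!}M,{-})$ produces a commutative square whose bottom row is $\Phi_X$ and whose top row is identified with $\Phi_Y$ via the Grothendieck duality isomorphism $\Ext^*_X(\pi^{*!}M,\pi^!({-}))\cong \Ext^*_Y(M,{-})$.

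It remains to analyze the verticals of this $\Ext$-square. The right vertical $\Ext^1_X(\pi^{*!}M,\pi^{*!}N)\to \Ext^1_X(\pi^{*!}M,\pi^!N)$ is injective, since its kernel is a quotient of $\Hom(\pi^{*!}M,E_2)$, which vanishes because a minimal lift admits no nonzero map to any exceptional sheaf. The left vertical $\Ext^1_X(\pi^{*!}M,\pi^*N\otimes\omega_X)\to \Ext^1_X(\pi^{*!}M,\pi^{*!}N\otimes\omega_X)$ is surjective, since the next term in its long exact sequence is $\Ext^2_X(\pi^{*!}M,E_1\otimes\omega_X)\cong \Hom(E_1,\pi^{*!}M)^*$ (Serre duality), which vanishes because a minimal lift admits no nonzero map from any exceptional sheaf. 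Commutativity of the square then reads $(\text{right vertical})\circ\Phi_X=\Phi_Y\circ(\text{left vertical})$: surjectivity of the left vertical forces the right-hand side to have image $\im(\Phi_Y)$, while injectivity of the right vertical forces the left-hand side to have image of the same dimension as $\im(\Phi_X)$. Hence $\dim\im(\Phi_X)=\dim\im(\Phi_Y)$, completing the proof.

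The main subtlety I anticipate is the identification of the top row of the $\Ext$-square with $\Phi_Y$: along the composite isomorphism $\Ext^1_X(\pi^{*!}M,\pi^!N\otimes\pi^*\omega_Y)\cong\Ext^1_X(\pi^{*!}M,\pi^!(N\otimes\omega_Y))\cong \Ext^1_Y(M,N\otimes\omega_Y)$, one must verify that the map induced by $1\otimes\pi^*\alpha_Y$ really does correspond to the map induced by $1\otimes\alpha_Y$ downstairs. This compatibility of $\pi^!$ with the anticanonical section is built into the statement of Lemma \ref{lem:useful_factorization}, but one needs to unwind the adjunctions carefully to see that the correct Poisson map on $Y$ appears.
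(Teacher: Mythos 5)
Your proof is correct, and it takes essentially the same approach as the paper's: build a commutative square whose rows are $\Phi_X$ and $\Phi_Y$, show the left vertical is surjective and the right vertical is injective, and conclude the images have equal dimension. The paper's own proof is more terse: it cites the injection $\Ext^1(\pi^{*!}M,\pi^{*!}N)\hookrightarrow\Ext^1(M,N)$ from the remark after Lemma~\ref{lem:lift_exts}, obtains the surjection $\Ext^1(M,N\otimes\omega_Y)\twoheadrightarrow\Ext^1(\pi^{*!}M,\pi^{*!}N\otimes\omega_X)$ by Serre duality (i.e., by applying the injection with $M$ and $N$ swapped and dualizing), and then simply asserts that the two image dimensions agree. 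What the paper leaves implicit is that the resulting square actually commutes, and this is precisely the gap your write-up fills: by invoking Lemma~\ref{lem:useful_factorization} with $\pi^{*!}N$ in place of $M$ and applying $\Ext^1_X(\pi^{*!}M,{-})$, you get a square that commutes for structural reasons, after which the injectivity and surjectivity of the verticals reduce to $\Hom(\pi^{*!}M,E_2)=0$ and $\Hom(E_1,\pi^{*!}M)^*=0$ (the characterization of minimal lifts via vanishing $\Hom$ to and from exceptional sheaves, plus Serre duality). Your flagged ``subtlety'' about the top row being $\Phi_Y$ under adjointness is genuine but benign: it follows from the fact that $\pi^!$ is a functor and the Grothendieck duality isomorphism $\Ext^\bullet_X(\pi^{*!}M,\pi^!({-}))\cong\Ext^\bullet_Y(M,{-})$ is natural in the second argument, so it carries $\pi^!(1\otimes\alpha_Y)=1\otimes\pi^*\alpha_Y$ (under $\pi^!(N\otimes\omega_Y)\cong\pi^!N\otimes\pi^*\omega_Y$) to $1\otimes\alpha_Y$. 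All in all, your argument is a careful unfolding of the same idea, and is arguably more self-contained than the paper's two-sentence proof.
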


\begin{proof}
From the remark following Lemma \ref{lem:lift_exts}, we know that
\[
\Hom(\pi^{*!}M,\pi^{*!}N)\cong \Hom(M,N)
\]
and
\[
\Ext^1(\pi^{*!}M,\pi^{*!}N)\subset \Ext^1(M,N).
\]
Dually, the map
\[
\Ext^1(M,N\otimes \omega_Y)
\to
\Ext^1(\pi^{*!}M,\pi^{*!}N\otimes \omega_X)
\]
is surjective.  But then the maps
\[
\Ext^1(\pi^{*!}M,\pi^{*!}N\otimes \omega_X)
\to
\Ext^1(\pi^{*!}M,\pi^{*!}N)
\]
and
\[
\Ext^1(M,N\otimes \omega_Y)
\to
\Ext^1(M,N)
\]
have isomorphic images.
\end{proof}

\begin{prop}
Let $\pi:X\to Y$ be a birational morphism of Poisson surfaces, and $M$ a
coherent sheaf on $Y$ of homological dimension $\le 1$.  If $M'$ is any
pseudo-twist of $M$, then $\chi_\alpha(M,M)\le \chi_\alpha(M',M')$.
\end{prop}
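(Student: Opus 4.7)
The plan is to factor the pseudo-twist
\[
M\mapsto M' = \pi_*(\pi^{*!}M\otimes {\cal L}(\epsilon e_f)),\qquad \epsilon\in\{\pm 1\},
\]
into three operations and control $\chi_\alpha$ on each: the minimal lift $M\mapsto \pi^{*!}M$, the twist by ${\cal L}(\epsilon e_f)$, and the direct image.  The preceding proposition gives $\chi_\alpha(M,M)=\chi_\alpha(\pi^{*!}M,\pi^{*!}M)$.  Tensoring both source and target of any $\Ext$ group by the same line bundle is an isomorphism, since $\sHom(A\otimes {\cal L},B\otimes {\cal L})\cong \sHom(A,B)$, so for $\tilde N := \pi^{*!}M\otimes {\cal L}(\epsilon e_f)$ we have $\chi_\alpha(\tilde N,\tilde N)=\chi_\alpha(\pi^{*!}M,\pi^{*!}M)$.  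Since $\pi_*\tilde N = M'$ has homological dimension $\le 1$, applying the earlier corollary bounding $\chi_\alpha$ under direct images will then give
\[
\chi_\alpha(\tilde N,\tilde N)\le \chi_\alpha(\pi_*\tilde N,\pi_*\tilde N)=\chi_\alpha(M',M'),
\]
which chains with the previous equalities to yield the asserted inequality.

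The only nontrivial input is to verify the hypothesis of that direct-image corollary, namely that $\tilde N$ has no exceptional quotient or no exceptional subsheaf.  The key observation, already exploited in the proof of the theorem on twists by $-1$-divisors, is that $\tilde N$ is either a subsheaf or a quotient of a genuine minimal lift.  Choose (on a suitable intermediate surface, passing to a formal neighborhood of the exceptional locus if necessary) a divisor $D$ transverse to $\pi^{*!}M$ and meeting the exceptional locus only in $\pi(f)$; then $\pi^*D-e_f$ is transverse to $\pi^{*!}M$ on $X$, whence
\[
\pi^{*!}M\otimes {\cal L}(e_f)\hookrightarrow \pi^{*!}(M\otimes {\cal L}(D))
\quad\text{and}\quad
\pi^{*!}M\otimes {\cal L}(-e_f)\twoheadleftarrow \pi^{*!}(M\otimes {\cal L}(-D)).
\]
By Theorem \ref{thm:minimal_if_no_exceptional}, these minimal lifts have neither exceptional subsheaves nor exceptional quotients; since any subsheaf of a sheaf with no exceptional subsheaf itself has none (and dually for quotients), $\tilde N$ has no exceptional subsheaf when $\epsilon=+1$ and no exceptional quotient when $\epsilon=-1$---exactly what is needed.

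The main obstacle is this last verification: the twist by ${\cal L}(\pm e_f)$ does not preserve ``exceptional'' on the nose (for example, $\sO_f(-1)\otimes {\cal L}(e_f)\cong \sO_f(-2)$ is not exceptional), so one cannot argue that exceptional sub/quotient sheaves of $\tilde N$ correspond to such sheaves inside $\pi^{*!}M$ directly.  What saves the argument is that one needs only the one-sided conclusion appropriate to the sign of the twist, and that conclusion is exactly what the sub/quotient embedding into a genuine minimal lift provides.  Once this is in hand, the three-step factorization delivers the inequality immediately, with the sole inequality coming from the direct-image step.
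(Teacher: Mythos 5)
Your proof is correct, and the backbone matches the paper's: $\chi_\alpha(M,M)=\chi_\alpha(\pi^{*!}M,\pi^{*!}M)=\chi_\alpha(\tilde N,\tilde N)$ for $\tilde N=\pi^{*!}M\otimes{\cal L}(\pm e_f)$, with the only inequality coming from the direct image step. The divergence is in how that last inequality is closed. You invoke the corollary bounding $\chi_\alpha$ under direct images and verify its ``no exceptional subsheaf or no exceptional quotient'' hypothesis by realizing $\tilde N$ as a subsheaf (for $+e_f$) or a quotient (for $-e_f$) of the genuine minimal lift $\pi^{*!}(M\otimes{\cal L}(\pm D))$, retracing the transverse-divisor construction from the $-1$-divisor twist theorem; this is a valid one-sided verification, and you correctly flag that twisting does not preserve exceptionality on the nose, which is why the detour through $D$ is needed. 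The paper instead splits $\chi_\alpha(\tilde N,\tilde N)$ into its $\Ext^1$ and $\End$ contributions and handles them separately: the $\Ext^1$ bound is the unconditional proposition (needing only that $\tilde N$ be $\pi_*$-acyclic with direct image of homological dimension $\le 1$), and for the $\End$ bound it notes that the composition $\End(M)\cong\End(\pi^{*!}M)\cong\End(\tilde N)\to\End(M')$ is an isomorphism away from $\pi(f)$, so any kernel element would have $0$-dimensional image inside $M$, which must vanish because $M$ has homological dimension $\le 1$. Both arguments work; yours stays entirely within the exceptional-sheaf machinery, at the cost of re-establishing the embedding of $\tilde N$ into a minimal lift, while the paper's $\End$ argument is a shorter, self-contained observation on the base surface $Y$.
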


\begin{proof}
  Since
  $\chi_\alpha(M,M)=\chi(\pi^{*!}M,\pi^{*!}M)=\chi_\alpha(\pi^{*!}M(\pm
  e_f),\pi^{*!}M(\pm e_f))$, it remains to show that the last index of
  rigidity is nonincreasing under direct images.  That this holds for the
  $\Ext^1$ term follows from the fact that $\pi^{*!}M(\pm e_f)$ is
  $\pi_*$-acyclic with direct image of homological dimension $\le 1$.  For
  the $2\dim(\End(M))$ term, consider the composition
\[
\End(M)\cong \End(\pi^{*!}M)\cong \End(\pi^{*!}M(\pm
e_f))\to \End(M').
\]
This is an isomorphism away from the point $\pi(f)$, and thus
any endomorphism in the kernel must vanish away from this point.  In
particular, any such endomorphism would have $0$-dimensional image, which
must be 0 since $M$ has homological dimension $\le 1$.
\end{proof}

\medskip

In the Hitchin system context, the main question is which pure
$1$-dimensional sheaves are rigid.  A first step is the following.

\begin{lem}
If $M$ is a simple 1-dimensional sheaf on the Poisson surface $Y$,
transverse to the anticanonical curve, then $M$ is rigid iff it is
infinitesimally rigid and $\Fitt_0(M)$ is
an integral curve.
\end{lem}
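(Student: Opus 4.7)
The plan has two directions. For the ``only if'' direction: rigidity means the symplectic leaf of $M$ is the single point $\{M\}$, and since symplectic leaves are smooth by Theorem~\ref{thm:poisson}, the tangent space at $M$ vanishes, giving infinitesimal rigidity. If $\Fitt_0(M)=D_1+D_2$ were reducible with both $D_i$ nonzero and effective, one can construct a non-isomorphic sheaf with the same numerical Chern class and derived restriction---either by deforming the ``$D_1$-part'' of $M$ within the nontrivial kernel $\ker(\Pic^0(D_1)\to \Pic^0(D_1\cap C_\alpha))$ when $g(D_1)>0$, or by sliding $D_1$ within its linear system (keeping $D_1\cap C_\alpha$ fixed) when $D_1\cong \P^1$---which would contradict rigidity.

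For the main direction, assume $M$ is infinitesimally rigid with $D:=\Fitt_0(M)$ integral, and let $M'$ be any sheaf of homological dimension $\le 1$ with the same numerical Chern class and derived restriction to $C_\alpha$ as $M$. Since $M$ is transverse to $C_\alpha$, the complex $M|^{\dL}_{C_\alpha}$ is concentrated in degree $0$; hence so is $M'|^{\dL}_{C_\alpha}$, making $M'$ transverse as well, and (being of rank zero and homological dimension $\le 1$) pure $1$-dimensional. Let $D':=\Fitt_0(M')$, an effective divisor numerically equivalent to $D$.

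The main tool is Proposition~\ref{prop:symp_dims}, which gives $\chi_\alpha(M,M')=\chi_\alpha(M,M)=2$---the second equality using simplicity and infinitesimal rigidity of $M$. Unpacking the definition of $\chi_\alpha$ yields
\[
\dim\Hom(M,M')+\dim\Hom(M',M)=2+\dim(\im(\Ext^1(M,M'\otimes \omega_Y)\to \Ext^1(M,M')))\ge 2,
\]
so at least one of $\Hom(M,M'),\Hom(M',M)$ is nonzero. Taking a nonzero $\phi:M\to M'$ (the other case is symmetric), its image is a pure $1$-dimensional subsheaf of $M'$ whose support is contained in $D\cap D'$; integrality of $D$ forces the support to equal $D_{\mathrm{red}}$, so $D$ appears as a component of $D'$. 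Writing $D'=aD+E$ with $a\ge 1$ and $E\ge 0$ not containing $D$, the relation $D'\equiv D$ becomes $(a-1)D+E\equiv 0$; since a nonzero effective divisor cannot be numerically trivial (its defining section would have to give a vanishing nonzero constant on a suitable power), we conclude $a=1$ and $E=0$, giving $D'=D$.

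With $D'=D$, both $M$ and $M'$ are torsion-free rank-one sheaves on the integral curve $D$; $\phi$ is injective (its kernel, a subsheaf of $M$ with support strictly smaller than $D$, must vanish), with $0$-dimensional cokernel of length $\chi(M')-\chi(M)=0$, so $\phi$ is an isomorphism. I expect the most delicate step to be the identification $D'=D$, which requires $D$ to be nonzero in the N\'eron--Severi group; this holds whenever $D$ meets $C_\alpha$ nontrivially, the remaining case being when $M$ is disjoint from $C_\alpha$ and lies in the symplectic locus, where rigidity already forces triviality directly.
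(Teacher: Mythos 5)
Your ``if'' direction (infinitesimal rigidity plus integrality implies rigidity) is essentially the paper's argument: use Proposition \ref{prop:symp_dims} to get $\chi_\alpha(M,M')=\chi_\alpha(M,M)=2$ for any competitor $M'$, extract a nonzero morphism in one direction, and use integrality of $\Fitt_0(M)$ to force that morphism to be an isomorphism. Your detour through proving $D'=D$ first is unnecessary (once you have a nonzero $\phi:M\to M'$, integrality of $\Fitt_0(M)$ already forces $\phi$ injective, since the image cannot be $0$-dimensional, and then Chern class comparison finishes it), but it is not wrong.

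The ``only if'' direction for integrality is where there is a genuine gap. You want to show that if $\Fitt_0(M)=D_1+D_2$ is reducible, then $M$ is not rigid, and you propose to produce a competitor by deforming the ``$D_1$-part'' of $M$ in $\ker(\Pic^0(D_1)\to\Pic^0(D_1\cap C_\alpha))$, or by moving $D_1$ in a linear system. The first option presupposes a decomposition of $M$ into pieces supported on $D_1$ and $D_2$ whose line-bundle data can be twisted independently --- but $M$ is \emph{simple}, hence indecomposable, and a subsheaf supported on $D_1$ is in general not a direct summand, so twisting it does not obviously produce a new sheaf on $Y$ with the same derived restriction and numerical class. The second option fails whenever $|D_1|$ is a single point (e.g.\ a negative curve), and neither option covers the case where $\Fitt_0(M)$ is irreducible but nonreduced, nor a singular irreducible component of positive arithmetic genus. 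The paper's construction is different and avoids all of this: after twisting so $H^0(M)=0$, it chooses a proper nonzero subsheaf $N$ with strictly smaller support, a point $p\in\Fitt_0(M)-\Fitt_0(N)$ off $C_\alpha$, and builds a descending chain $M\otimes\mathcal{L}(D)\supset\cdots\supset M_0\supset N'$ by repeatedly taking kernels of surjections onto $\sO_p$; the end sheaf $M_0$ has the same Chern class and restriction to $C_\alpha$ as $M$, all modifications happen away from $C_\alpha$, yet $M_0$ has a global section while $M$ does not, so $M_0\not\cong M$. That construction --- not a line-bundle or linear-system deformation --- is what makes the non-integral case go through in general.
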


\begin{proof}
If $M$ has integral support and is infinitesimally rigid, then since
it is simple, we have $\chi_\alpha(M,M)=2$.  It follows that for any
other sheaf $N$ of homological dimension $\le 1$ with the same
numerical Chern class and restriction to $C_\alpha$, we have
\[
\chi_\alpha(M,N)=2.
\]
But this implies either $\Hom(M,N)>0$ or $\Hom(N,M)>0$.  Since $M$
is integral, any subsheaf is either 0 or has $0$-dimensional quotient.
Thus if $f:M\to N$ is a nonzero morphism, then either $f$ is
injective, or $f$ has $0$-dimensional image.  The latter cannot
happen, since $N$ has homological dimension $\le 1$, and the former
implies that $f$ is an isomorphism, by comparison of Chern classes.
Similarly, if $f:N\to M$ is a nonzero morphism, then the cokernel
is $0$-dimensional, but then comparison of Chern classes shows that
the kernel must also be $0$-dimensional, again a contradiction
unless $f$ is an isomorphism.  Either way, we obtain an isomorphism between $M$ and $N$, and thus $M$ is rigid.

Conversely, if $M$ is rigid, then it is certainly infinitesimally rigid
(since symplectic leaves in $\Spl_Y$ are smooth), so only the support needs
to be controlled.  Twisting by a line bundle has no effect on rigidity, so
we may assume that $M$ has no global sections.  If $\Fitt_0(M)$ is not
integral, then we may choose a nonzero subsheaf $N$ with strictly smaller
support, and a point $p$ of the divisor $\Fitt_0(M)-\Fitt_0(N)$ which is
not on $C_\alpha$.  For some sufficiently ample divisor $D$ on $Y$, the
twist $N'=N(D)$ will have a global section.  Define a
sequence of sheaves
\[
M(D)\cong M_{D\cdot c_1(M)}\supset M_{D\cdot
  c_1(M)-1}\supset\cdots\supset M_0,
\]
all containing $N'$, in the following way.  To
obtain $M_{k-1}$ from $M_k$, choose a nonzero map
$M_k/N'\to \sO_p$;
such a map exists since $p$ is in the support of $M_k/N'$.  Then $M_{k-1}$
is the kernel of the induced map $M_k\to \sO_p$.

This process has no effect on the first Chern class, and each step
reduces the Euler characteristic by 1, so that $M_0$ has the same
Chern class as $M$.  In addition, twisting by $\sO(D)$ as no
effect on the restriction to $C_\alpha$ (by transversality), and each
inclusion in the sequence is an isomorphism away from $p$.  So
rigidity implies that $M\cong M_0$.  But $M$ has no global sections,
while $M_0\supset N'$ does.
\end{proof}

\begin{rem}
Note that the argument that a map from $M$ to $N$ must be an
isomorphism also shows that any pure $1$-dimensional sheaf with
integral Fitting scheme is simple.  The argument from constancy of
the index of rigidity is essentially that of \cite[Thm.~1.1.2]{KatzNM:1996}.
\end{rem}

In many cases, the above argument ruling out non-integral support can be
interpreted as twisting by a suitable invertible sheaf on the support, and
noting that the resulting sheaf is nonisomorphic.  We could try to do
something similar in cases with integral support and positive genus; the
main technicality is that the action of invertible sheaves on torsion-free
sheaves has nontrivial stabilizers.  However, if we attempt to identify the
part of $\Ext^1(M,M)$ coming from such twisting, we are led to the
following result.

\begin{lem}
Let $M$ be a pure $1$-dimensional sheaf on $X$ transverse to the
anticanonical curve.  Then there is a natural injection
\[
H^1(\sHom(M,M))\to \im(\Ext^1(M,M\otimes\omega_X)\to \Ext^1(M,M)).
\]
\end{lem}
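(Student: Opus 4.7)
The plan is to realize both sides of the desired map via the local-to-global spectral sequence $E_2^{p,q} = H^p(\sExt^q(M,M)) \Rightarrow \Ext^{p+q}(M,M)$. Since $M$ is pure $1$-dimensional it has homological dimension $\le 1$, so $\sExt^2(M,M) = 0$; and since $\sHom(M,M)$ is supported on the $1$-dimensional support of $M$, $H^2(\sHom(M,M)) = 0$. The five-term exact sequence then collapses to a short exact sequence
\[
0 \to H^1(\sHom(M,M)) \xrightarrow{\iota} \Ext^1(M,M) \to H^0(\sExt^1(M,M)) \to 0,
\]
in which $\iota$ is the edge map, injective by construction. The same reasoning with target $M\otimes \omega_X$ (pulling the locally free line bundle $\omega_X$ out of the $\sHom$ and $\sExt^1$ sheaves) yields an injection $H^1(\sHom(M,M)\otimes \omega_X) \hookrightarrow \Ext^1(M,M\otimes \omega_X)$.

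The Poisson bivector $\alpha:\omega_X \to \sO_X$ induces a morphism of spectral sequences, giving a commutative square
\[
\begin{CD}
H^1(\sHom(M,M)\otimes \omega_X) @>>> \Ext^1(M,M\otimes \omega_X)\\
@VV{\phi}V @VVV \\
H^1(\sHom(M,M)) @>{\iota}>> \Ext^1(M,M),
\end{CD}
\]
whose right column is the map whose image we wish to hit. I claim that $\phi$ is surjective; granting this, the image of $\iota$ equals the image of the composition around the top and right, which visibly lies in $\im(\Ext^1(M,M\otimes \omega_X) \to \Ext^1(M,M))$. Combined with the injectivity of $\iota$, this produces the desired natural injection.

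For the surjectivity of $\phi$, I would tensor $\sHom(M,M)$ with the exact sequence $0\to \omega_X \to \sO_X \to \sO_{C_\alpha}\to 0$. The resulting four-term exact sequence has end terms $\Tor_1(\sHom(M,M),\sO_{C_\alpha})$ and $\sHom(M,M)\otimes \sO_{C_\alpha}$, both supported on $\supp(M)\cap C_\alpha$; by transversality this intersection is zero-dimensional, so both end terms have vanishing $H^1$. Splitting the four-term sequence into two short exact sequences and chasing cohomology then produces $\phi$ as a composition of two $H^1$-surjections (using also that $\sHom(M,M)\otimes \omega_X$, being supported on the curve $\supp(M)$, has no $H^2$).

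The main obstacle I foresee is a relatively minor one: the transversality hypothesis is literally phrased in terms of $M$ rather than $\sHom(M,M)$, but since the latter is set-theoretically supported on $\supp(M)$, transversality of supports alone suffices for the argument above. A secondary point worth spelling out carefully is the commutativity of the square, which is a formal naturality property of the local-to-global spectral sequence applied to the transformation $\sHom(M,M\otimes \omega_X) \to \sHom(M,M)$ (and its $\sExt^1$ analogue) induced by $\alpha$.
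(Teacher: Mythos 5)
Your proof is correct and follows the same strategy as the paper: the local-to-global spectral sequence for $\Ext^*(M,M)$ and $\Ext^*(M,M\otimes\omega_X)$, the resulting commutative square, and surjectivity of $H^1(\sHom(M,M)\otimes\omega_X)\to H^1(\sHom(M,M))$. The one small difference is that the paper obtains that surjectivity directly from $H^1(\sHom(M,M)\otimes\sO_{C_\alpha})=0$ (implicitly using that transversality kills the $\Tor_1$ term so that tensoring $\sHom(M,M)$ with $0\to\omega_X\to\sO_X\to\sO_{C_\alpha}\to 0$ stays exact), whereas you carry the $\Tor_1$ term along explicitly and observe it has zero-dimensional support and hence vanishing $H^1$ and $H^2$; this is a marginally more robust phrasing of the same dimension count, and is fine.
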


\begin{proof}
  The local-global spectral sequence for $\Ext^*(M,M)$ collapses at the
  $E_2$ page; indeed, this happens for $\Ext^*(M,N)$ as long as $M$ has
  homological dimension 1 and $N$ has $\le 1$-dimensional support.  We thus
  obtain the following short exact sequence
\[
0\to H^1(\sHom(M,M))\to \Ext^1(M,M)\to H^0(\sExt^1(M,M))\to 0.
\]
Similarly, the spectral sequence for $\Ext^*(M,M\otimes\omega_X)$
collapses, giving a corresponding exact sequence and a commutative diagram
\[
\begin{CD}
H^1(\sHom(M,M)\otimes\omega_X)@>>> \Ext^1(M,M\otimes\omega_X)\\
@VVV @VVV\\
H^1(\sHom(M,M))@>>>\Ext^1(M,M)
\end{CD}
\]
But
\[
H^1(\sHom(M,M)\otimes \sO_{C_\alpha})
=
0,
\]
by dimensionality, and thus the map
\[
H^1(\sHom(M,M)\otimes\omega_X)\to H^1(\sHom(M,M))
\]
is surjective.  The claim follows immediately.
\end{proof}

\begin{rem}
Note that the pairing between $\Ext^1(M,M\otimes\omega_X)$ and
$\Ext^1(M,M)$ restricts to the trivial pairing
\[
H^1(\sHom(M,M)\otimes\omega_X)\otimes H^1(\sHom(M,M))
\to
H^2(\sHom(M,M)\otimes\omega_X)
=
0.
\]
Since the overall pairing is perfect, it follows that
\[
\dim(\im(\Ext^1(M,M\otimes\omega_X)\to \Ext^1(M,M)))
\ge
2h^1(\sHom(M,M)).
\]
This inequality can be strict, e.g., if the first Fitting scheme of $M$ is
$0$-dimensional and not contained in $C_\alpha$.  (Indeed, in that case,
there is another sheaf with the same invariants and smaller endomorphism
sheaf.)
\end{rem}

\begin{thm}
  Let $M$ be a simple, pure $1$-dimensional sheaf on the Poisson surface
  $X$, transverse to the anticanonical curve.  Then $M$ is rigid iff there
  exists a Poisson birational morphism $\pi:Y\to X$ and a $-2$-curve $C$ on
  $Y$ disjoint from the anticanonical curve, such that $M\cong
  \pi_*\sO_C(d)$ for some integer $d$.
\end{thm}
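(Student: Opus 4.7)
The plan is to prove both directions using the previous lemma (a simple pure $1$-dimensional sheaf transverse to $C_\alpha$ is rigid iff it is infinitesimally rigid and has integral Fitting scheme), the Poisson compatibility of $\pi^{*!}$ established in Section 7, and the monotonicity of $\chi_\alpha$ under $\pi_*$.

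\textbf{Converse direction.} Assume $\pi\colon Y\to X$ is Poisson birational and $C\subset Y$ is a $(-2)$-curve disjoint from $C_{\alpha_Y}$, with $M\cong \pi_*\sO_C(d)$. First I would verify that $\sO_C(d)$ is infinitesimally rigid on $Y$: using the rank-$0$ formula from the preceding remark, the vanishing $\sO_C(d)|^{\dL}_{C_{\alpha_Y}}=0$ collapses the restriction terms and gives $\chi_{\alpha_Y}(\sO_C(d),\sO_C(d)) = -C^2 = 2$; together with the simplicity of this line bundle on the integral curve $C$, this yields infinitesimal rigidity. The monotonicity corollary for $\chi_\alpha$ under $\pi_*$, applicable since $\sO_C(d)$ is simple and $\pi_*$-acyclic with pure $1$-dimensional direct image, then gives $\chi_{\alpha_X}(M,M)\ge 2$. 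Since $M$ is torsion-free of generic rank $1$ on the integral curve $\pi(C)$, $\End(M)=k$; the identity $\chi_{\alpha_X}(M,M) = 2 - \dim(\mathrm{image}(\Ext^1\to\Ext^1))\ge 2$ then forces that image to vanish, so $M$ is infinitesimally rigid. As $\Fitt_0(M)=\pi(C)$ is integral, the previous lemma concludes that $M$ is rigid.

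\textbf{Forward direction.} Assume $M$ is rigid on $X$; the previous lemma gives that $C_M:=\Fitt_0(M)$ is integral. By transversality, $C_M\cap C_{\alpha_X}$ is a disjoint union of reduced points, hence a disjoint union of jets. I take $\pi\colon Y\to X$ to be the minimal desingularization of the blowup of $X$ along this intersection; the final proposition of Section 6 gives that $\pi$ is Poisson and that $N:=\pi^{*!}M$ is disjoint from $C_{\alpha_Y}$. Since the restriction $M\otimes\sO_{C_{\alpha_X}}$ is a direct sum of structure sheaves of subschemes of the blowup center, the last corollary of Section 7 applies, so $\pi^{*!}$ is a symplectomorphism on the symplectic leaf of $M$ and $N$ is simple and rigid on $Y$. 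Moreover $N$ is pure $1$-dimensional (minimal lifts have no exceptional subsheaves, and no $0$-dimensional subsheaf of $N$ can map nontrivially to the pure sheaf $\pi_*N=M$), so applying the previous lemma to $N$ itself yields integrality of $C:=\Fitt_0(N)_{\mathrm{red}}$, whence $c_1(N)=r[C]$ for $r$ the generic rank of $N$ on $C$. The rank-$0$ formula together with $\chi_{\alpha_Y}(N,N)=2$ (rigidity) gives $r^2 C^2 = -c_1(N)^2 = -2$, forcing $r=1$ and $C^2=-2$. Disjointness $C\cdot C_{\alpha_Y}=0$ and adjunction then yield $p_a(C)=0$, and integrality forces $C\cong\P^1$ smooth. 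Hence $N$ is a generic-rank-$1$ pure $1$-dim sheaf on a smooth rational curve, i.e.\ a line bundle $\sO_C(d)$, and $M\cong \pi_*N = \pi_*\sO_C(d)$.

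\textbf{Main obstacle.} The trickiest step is transferring rigidity between $X$ and $Y$. The forward direction is clean thanks to the final corollary of Section 7, whose hypotheses (blowup center a disjoint union of jets, restriction to $C_\alpha$ a corresponding sum of structure sheaves) are guaranteed by the transversality hypothesis on $M$ together with the proposition producing the desingularization. The converse is more delicate because $\sO_C(d)$ need not itself be the minimal lift of $M$ (it can acquire exceptional $\sO_p$-quotients at any point where $C$ meets the exceptional locus of $\pi$), so the symplectic leaves of $\sO_C(d)$ and $M$ do not correspond directly; the workaround is to bypass that identification by chaining $\chi_\alpha$-monotonicity with the integral-Fitting-scheme criterion of the previous lemma, which only uses the rigidity of $\sO_C(d)$ as an input.
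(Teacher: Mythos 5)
Your converse direction is sound and in fact more robust than the paper's: you bypass the paper's claim that $C$ is automatically transverse to the exceptional locus of $\pi$ (so that $\sO_C(d)$ is a minimal lift) by chaining the $\chi_\alpha$-monotonicity corollary for $\pi_*$ with simplicity of $M$; this yields infinitesimal rigidity of $M$ directly and then the Lemma finishes.

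The forward direction, however, has two genuine gaps, both stemming from skipping the paper's key step. The paper first proves, on $X$ itself, that the normalization $\tilde C_0$ of $\supp(M)$ is a smooth rational curve and that $\sHom(M,M)=\psi_*\sO_{\tilde C_0}$. This is done by combining the injection
\[
H^1(\sHom(M,M))\hookrightarrow \im\bigl(\Ext^1(M,M\otimes\omega_X)\to\Ext^1(M,M)\bigr)
\]
from the preceding lemma (so $h^1(\sHom(M,M))=0$ by infinitesimal rigidity), with the containment $\sHom(M,M)\subseteq\psi_*\sO_{\tilde C_0}$ and $\dim\Hom(M,M)=1$, which forces $h^1(\sO_{\tilde C_0})=0$. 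Only after this does the paper pass to a blowup. You try to blow up first, which creates the following problems.

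\textbf{Gap 1.} You assert that transversality of $M$ to $C_{\alpha_X}$ plus integrality of $C_M=\Fitt_0(M)$ imply that $C_M\cap C_{\alpha_X}$ is a disjoint union of jets, so that the final proposition of Section 6 applies. This is false in general: transversality here means $\Tor_1(M,\sO_{C_\alpha})=0$, which is equivalent to a local equation of $C_\alpha$ being a non-zero-divisor on $M$, not to the tangent cones being in general position. If $C_M$ and $C_{\alpha_X}$ are both singular at a common point $p$ (with distinct branches, so $\Tor_1=0$ still holds), both local equations lie in $\mathfrak m_p^2$ and the intersection scheme has embedding dimension $2$ at $p$, hence is not a jet. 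The paper avoids this because, after establishing $\tilde C_0\cong\P^1$ and $M$ a pushforward of a line bundle from $\tilde C_0$, the standard Hitchin-style resolution applies regardless of the singularity type of $C_0$ at points of $C_\alpha$.

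\textbf{Gap 2.} You conclude that $N=\pi^{*!}M$ is \emph{rigid} on $Y$ from the Section~7 symplectomorphism corollary, and then invoke the Lemma's ``rigid $\Rightarrow$ $\Fitt_0$ integral'' direction. But the corollary is a symplectomorphism between symplectic leaves of $\Spl$, which by definition parametrize only \emph{simple} sheaves; it gives that $N$ is alone in its leaf, not that $N$ is isomorphic to every sheaf of homological dimension $\le 1$ with the same Chern class and derived restriction. The proof of the Lemma's converse explicitly constructs a comparison sheaf $M_0$ that is never shown to be simple, so alone-in-leaf is not enough to run that argument; you genuinely need the stronger notion of rigidity, which you have not transferred. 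Again the paper sidesteps this by deducing $\tilde C_0\cong\P^1$ before blowing up, so that integrality of the lifted Fitting divisor is automatic.

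These two points need to be filled in (most naturally by inserting the paper's $\sHom(M,M)\subseteq\psi_*\sO_{\tilde C_0}$ argument before constructing $\pi$); as written the forward implication is incomplete.
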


\begin{proof}
We first show that sheaves of the form $\pi_*\sO_C(d)$ are rigid.
Since $C$ is transverse to the exceptional locus, $\sO_C(d)$ is
the minimal lift of its direct image.  Since $\sO_C(d)$ is certainly
simple, and we can compute
\[
\chi_\alpha(\sO_C(d),\sO_C(d)) = -C^2 = 2,
\]
we conclude that it is infinitesimally rigid, and thus so is its direct
image.  Since the image has integral Fitting scheme, the result follows.

  Now, suppose that $M$ is rigid, with (integral) support $C_0$, and let
  $\psi:\tilde{C}_0\to C_0$ be the normalization of $C_0$.  Then we observe
  that
\[
\sHom(M,M)\subset \psi_*\sO_{\tilde{C}_0},
\]
with quotient supported on the singular locus of $C_0$.  Indeed, if $M'$
denotes the quotient of $\psi^*M$ by its torsion subsheaf, then $M'$ is
torsion-free, so invertible, on the curve $\tilde{C}_0$.  This operation is
functorial, and an isomorphism on the smooth locus of $C_0$, so
\[
\sHom(M,M)\subset \psi_*\sHom(M',M')=\psi_*\sO_{\tilde{C}_0}.
\]
Now, $\dim\Hom(M,M)=h^0(\psi_*\sO_{\tilde{C}_0})=1$, and thus we find that
\[
h^1(\sHom(M,M))\ge h^1(\psi_*\sO_{\tilde{C}_0})=h^1(\sO_{\tilde{C}_0}),
\]
with equality only if $\sHom(M,M)=\psi_*\sO_{\tilde{C}_0}$.  Since $M$ is
infinitesimally rigid, the Lemma gives $h^1(\sHom(M,M))=0$, and thus
$\sHom(M,M)=\psi_*\sO_{\tilde{C}_0}$ and $h^1(\sO_{\tilde{C}_0})=0$.

In particular, $\tilde{C}_0$ is a smooth rational curve, and $M$ is the direct
image of a torsion-free sheaf on $\Spec(\sHom(M,M))\cong \tilde{C}_0$.
Since $M$ is the direct image of an invertible sheaf on a smooth curve,
there exists a Poisson birational morphism $\pi:Y\to X$ such that
$\pi^{*!}M$ is disjoint from $\pi^{*!}\sO_{C_\alpha}$.  But then
\[
\chi(\pi^{*!}M)=\chi_\alpha(\pi^{*!}M)=\chi_\alpha(M)=2,
\]
so that $c_1(\pi^{*!}M)^2=-2$, and disjointness gives $c_1(\pi^{*!}M)\cdot
K_Y=0$.  Since the Fitting scheme of $\pi^{*!}M$ is the image of
$\tilde{C}_0$ under a birational morphism, it follows that $\pi^{*!}M$ is
an invertible sheaf on a $-2$-curve as required.
\end{proof}

Note that if $X$ is not rational, then the only possible $-2$ curves on $X$
are components of fibers of the rational ruling of $X$.  Thus rigidity in
Hitchin-type systems is essentially only a phenomenon of the rational case;
we will explore this further in \cite{rat_Hitchin}.

\bibliographystyle{plain}

\end{document}